\pgfplotsset{compat=1.18}
\newcommand\tsout{\bgroup\markoverwith{\textcolor{red}{\rule[0.5ex]{2pt}{1.4pt}}}\ULon}
\newcommand{\stkout}[1]{\ifmmode\text{\tsout{\ensuremath{#1}}}\else\tsout{#1}\fi}
\theoremstyle{definition}
\newtheorem{theorem}{Theorem}[section]
\newtheorem{lemma}[theorem]{Lemma}
\newtheorem{proposition}[theorem]{Proposition}
\newtheorem{definition}[theorem]{Definition}
\newtheorem{remark}[theorem]{Remark}
\numberwithin{equation}{section}
\newcommand{\dif}{\mathrm{d}}
\newcommand{\bff}{\boldsymbol}
\newcommand{\bb}{\mathbb}
\newcommand{\dt}{\mathrm{d}t}
\newcommand{\ds}{\mathrm{d}s}
\newcommand{\dW}{\mathrm{d}W}
\newcommand{\R}{\mathcal{R}}
\newcommand{\red}[1]{{\color{red}{#1}}}
\newcommand{\blue}[1]{{\color{blue}{#1}}}
\newcommand{\norm}[2]{\left\|{#1}\right\|_{#2}}
\newcommand{\inpro}[2]{\left\langle#1,#2\right\rangle}
\newcommand{\abs}[1]{\left|{#1}\right|}
\newcommand{\one}{\mathds{1}}
\def\be{\begin{equation}\label}
\def\ee{\end{equation}}
\def\bd{\begin{definition}\label}
\def\ed{\end{definition}}
\def\bt{\begin{theorem}\label}
\def\et{\end{theorem}}
\def\bl{\begin{lemma}\label}
\def\el{\end{lemma}}
\def\br{\begin{remark}\label}
\def\er{\end{remark}}
\def\bal{\[\begin{aligned}}
\def\eal{\end{aligned}\]}
\begin{document}
\setcounter{page}{1}

\title[A mixed FEM for a class of fourth-order SPDEs with multiplicative noise]{A mixed finite element method for a class of fourth-order stochastic evolution equations with multiplicative noise}

\author[Beniamin Goldys]{Beniamin Goldys}
\address{School of Mathematics and Statistics, The University of Sydney, Sydney 2006, Australia}
\email{\textcolor[rgb]{0.00,0.00,0.84}{beniamin.goldys@sydney.edu.au}}

\author[Agus L. Soenjaya]{Agus L. Soenjaya}
\address{School of Mathematics and Statistics, The University of New South Wales, Sydney 2052, Australia}
\email{\textcolor[rgb]{0.00,0.00,0.84}{a.soenjaya@unsw.edu.au}}

\author[Thanh Tran]{Thanh Tran}
\address{School of Mathematics and Statistics, The University of New South Wales, Sydney 2052, Australia}
\email{\textcolor[rgb]{0.00,0.00,0.84}{thanh.tran@unsw.edu.au}}

\date{\today}

\begin{abstract}
We develop a fully discrete, semi-implicit mixed finite element method for approximating solutions to a class of fourth-order stochastic partial differential equations (SPDEs) with non-globally Lipschitz and non-monotone nonlinearities, perturbed by spatially smooth multiplicative Gaussian noise. The proposed scheme is applicable to a range of physically relevant nonlinear models, including the stochastic Landau--Lifshitz--Baryakhtar (sLLBar) equation, the stochastic convective Cahn--Hilliard equation with mass source, and the stochastic regularised Landau--Lifshitz--Bloch (sLLB) equation, among others. To overcome the difficulties posed by the interplay between the nonlinearities and the stochastic forcing, we adopt a `truncate-then-discretise' strategy: the nonlinear term is first truncated before discretising the resulting modified problem. We show that the strong solution to the truncated system converges in probability to that of the original problem. A fully discrete numerical scheme is then proposed for the truncated problem. Assuming initial data in $\mathbb{H}^2$, we utilise parabolic smoothing estimates and the temporal H\"older continuity of the solution to establish both convergence in probability and strong convergence (with quantitative rates) for the two fields used in the mixed formulation. Numerical simulations are provided to support the theoretical results.
\end{abstract}
\maketitle

\tableofcontents

\section{Introduction}
Motivated by physical applications, we consider the following fourth-order system of nonlinear SPDEs with non-monotone nonlinearities, perturbed by a spatially smooth multiplicative Gaussian noise:
\begin{subequations}\label{equ:sllbar}
	\begin{alignat}{2}
		&\dif \bff{u}
		= 
		\big( \lambda_1 \bff{H}
		- \lambda_2 \Delta \bff{H} 
		- \gamma \bff{u} \times \bff{H} 
		+ \mathcal{S}(\bff{u}) \big)\, \dt
		\; &&
		\nonumber\\
		&\qquad \quad
		+
		G(\bff{u}) \,\dif W(t) 
		\; && \quad\text{for $(t,\bff{x})\in(0,T)\times \mathscr{D}$,}
		\label{equ:sllbar a}
		\\[1ex]
		&\bff{H}
		= 
		\Delta \bff{u}
		+
		f(\bff{u})
		\; && \quad\text{for $(t,\bff{x})\in(0,T)\times\mathscr{D}$,}
		\label{equ:sllbar b}
		\\[1ex]
		&\bff{u}(0,\bff{x})= \bff{u}_0(\bff{x}) 
		\; && \quad\text{for } \bff{x}\in \mathscr{D},
		\label{equ:sllbar c}
		\\[1ex]
		&\frac{\partial \bff{u}}{\partial \bff{n}}= \bff{0}, 
		\;\displaystyle{\frac{\partial \bff{H}}{\partial \bff{n}}= \bff{0}} 
		\; && \quad\text{for } (t,\bff{x})\in (0,T) \times \partial \mathscr{D},
		\label{equ:sllbar d}
	\end{alignat}
\end{subequations}
where $\mathscr{D}\subset\bb{R}^d$, $d\leq 3$, is a bounded regular domain, and $\bff{u}:\Omega\times [0,T]\times \mathscr{D}\to \bb{R}^3$ is a vector-valued random variable. Here, $W$ is a real-valued Wiener process on a filtered probability space $(\Omega, \mathcal{F}, \bb{P}, \{\mathcal{F}_t\}_{t\geq 0})$ with respect to the usual filtration, and $G(\bff{u})$ is a Lipschitz function of $\bff{u}$ satisfying certain assumptions (details are elaborated in Section~\ref{subsec:assump}). The forcing term $f(\bff{u}):= \kappa \mu \bff{u}- \kappa |\bff{u}|^2 \bff{u}$ arises from the Ginzburg--Landau theory,
which is the negative variational derivative of $V(\bff{u}):=\kappa (\abs{\bff{u}}^2-\mu)^2 /4$, a double-well potential function. Define 
\[
\mathcal{S}(\bff{u}):= \mathcal{M}(\bff{u})+ \mathcal{C}(\bff{u}),
\]
where $\mathcal{M}(\bff{u})$ is a mass source term with at most quadratic growth and $\mathcal{C}(\bff{u})$ is a convective term given by
\begin{equation}\label{equ:torque}
	\mathcal{C}(\bff{u})
	:= 
	\beta_1 (\bff{\nu} \cdot \nabla) \bff{u} 
	+ 
	\beta_2 \bff{u} \times (\bff{\nu} \cdot \nabla) \bff{u},
\end{equation}
where $\bff{\nu}$ is specified in~\eqref{equ:nu}.
All numerical coefficients are non-negative.

The problem~\eqref{equ:sllbar} describes various problems in physics. When $\lambda_1,\lambda_2$, and $\gamma$ are positive, problem~\eqref{equ:sllbar} is the \emph{stochastic Landau--Lifshitz--Baryakhtar} (sLLBar) system with spin current~\cite{GolSoeTra24b, SoeTra23, XuLiuZha24}, which can be seen as a Cahn--Hilliard-type regularisation of the \emph{stochastic Landau--Lifshitz--Bloch} (sLLB) equation in micromagnetics~\cite{BrzGolLe20, Eva_etal12, LeSoeTra24, Soe24}. When $\gamma=0$, \eqref{equ:sllbar} is the stochastic bi-flux reaction-diffusion system~\cite{BevGalSimRio13} if $\beta_2=0$, a stochastic population growth/dispersal model with long-range effects~\cite{CohMur81} if $\beta_1=\beta_2=0$, the \emph{Cahn--Hilliard-Cook} (CHC) equation~\cite{KovLarMes11} if $\lambda_1=\beta_1=\mathcal{M}(\bff{u})=0$ (and the noise is additive), the \emph{stochastic convective Cahn--Hilliard equation with mass source} (sCHm)~\cite{Luc21, LeeKimYoo21} if $\lambda_1=\beta_2=0$, and the \emph{stochastic convective Allen--Cahn/Cahn--Hilliard} (sAC/CH) equation~\cite{AntKarMil16} if $\beta_2=0$.

The development of numerical methods for physically relevant SPDEs with non-globally Lipschitz and non-monotone nonlinearities perturbed by multiplicative noise is an active area of research (see e.g. \cite{BreDod21, CuiHon19, HuaShe23, HutJen15} and many others). As~\eqref{equ:sllbar} is a fourth-order equation, a conforming finite element method to solve the equation directly would require $C^1$-elements, which can be computationally costly. Numerically treating the problem in mixed form allows us to work with $C^0$-conforming finite elements and use the mixed finite element method (see~\eqref{equ:euler}), at the expense of introducing an auxiliary unknown and performing a more delicate analysis. To the best of our knowledge, no numerical scheme has been proposed for the problem~\eqref{equ:sllbar} in its generality, not even for the sLLBar equation (with or without spin current), the sCHm equation, or the sAC/CH equation.

On a related note, several numerical schemes have been proposed in the literature for the CHC equation with \emph{additive} noise, including a $C^1$-conforming semi-discrete scheme~\cite{ChaCaoZou18}, a fully implicit scheme~\cite{FurKovLarLin18}, and a fully explicit scheme combined with spectral Galerkin method~\cite{CaiQiWan23} (see also~\cite{FenLiZha20} for gradient-type noise, where strong convergence of an implicit scheme in $H^{-1}$ is shown). Note that even in this setting ($\gamma=\beta_2=0$), numerical schemes of implicit/explicit-type proposed here have not been analysed before. Furthermore, adding a mass source and a convective/precession term in~\eqref{equ:sllbar} causes a nontrivial difficulty in the analysis due to the non-conservative mass and the loss of gradient flow structure, which is already encountered even in the deterministic case~\cite{Lam22, ShiYanCuiMir24}.
On the other hand, setting $\lambda_2=0$ in~\eqref{equ:sllbar} gives the sLLB equation. A $C^1$-conforming finite element scheme for a regularised version of the sLLB equation (which is simpler than our problem \eqref{equ:sllbar}) is proposed in~\cite{GolJiaLe24}. As we can consider \eqref{equ:sllbar} to be a physically relevant regularisation of sLLB, our scheme provides a more practical method to approximate the solution to sLLB by taking $\lambda_2$ sufficiently small (in light of the convergence result for sLLBar to sLLB in~\cite[Section~8]{GolSoeTra24b}).

Regarding the error analysis, we describe here some difficulties at the discrete level that need to be overcome. To this end, let $A$ denote the Neumann Laplacian, $A_h$ the discrete Laplacian, and $\Pi_h$ the orthogonal projection onto some finite element space $\bb{V}_h$. Firstly, loosely speaking, the mixed finite element method aims to approximate $\bff{u}$ and $\bff{H}$ simultaneously, using finite element functions which belong to $\bb{H}^1$ (but not $\bb{H}^2$). This already makes the analysis of the mixed finite element scheme more challenging than its $C^1$-conforming counterpart, even in the deterministic case. 
Furthermore, the presence of the finite element projection $\Pi_h$ in front of the nonlinearities present in~\eqref{equ:sllbar} destroys some dissipativity properties of the continuous problem. For instance, while there exists a $C>0$ such that for any sufficiently regular function $\bff{v}$, 
\begin{align*}
	-\inpro{f(\bff{v})}{\bff{v}} \geq -C,
	\text{ and }
	-\inpro{\nabla f(\bff{v})}{\nabla \bff{v}} \geq -C\norm{\nabla\bff{v}}{\bb{L}^2}^2,
\end{align*}
we notice that for $\bff{v}_h\in \bb{V}_h$,
\begin{align*}
	-\inpro{\nabla \Pi_h f(\bff{v}_h)}{\nabla \bff{v}_h} \not\geq -C\norm{\nabla \bff{v}_h}{\bb{L}^2}^2.
\end{align*}
As such, moment bounds for $\norm{\bff{u}_h}{\bb{L}^2}$ and $\norm{\nabla \bff{u}_h}{\bb{L}^2}$, where $\bff{u}_h$ is the finite element approximation of $\bff{u}$, are difficult to attain. 

Similar difficulties are encountered in~\cite{FurKovLarLin18} and~\cite{FenLiZha20} for a simpler model. In their case, however, these issues could be overcome by exploiting the fact that the (scalar-valued) Cahn--Hilliard--Cook equation is the $H^{-1}$-gradient flow of a Lyapunov functional together with the mass conservation property to derive a moment bound for the $H^{-1}$ norm of the finite element solution. This bound could then be bootstrapped and used to derive moment bounds in stronger norms and strong convergence of the scheme in the $H^{-1}$ or $L^2$ norms. However, it is not clear how to adapt such arguments to our case since~\eqref{equ:sllbar} is \emph{not} a gradient flow in the presence of the cross product term, the forcing term $f(\bff{u})$, and the convective term $\mathcal{C}(\bff{u})$. Furthermore, these nonlinearities are non-monotone, thus the general results from~\cite{GyoMil09, LiuQia21} do not apply and the analysis needs to proceed differently here.
We also remark that the nonlinear term $\Delta f(\bff{u})$ is absent in the regularised sLLB model considered in~\cite{GolJiaLe24}. In addition, since $C^1$-elements are employed there, the complications described in this and the preceding paragraph do not arise in that work.

We outline the approach taken in this paper as follows. To overcome the difficulties mentioned before, while still employing $C^0$-conforming elements, we adopt the idea from~\cite{SheYan10} in the deterministic case by first truncating the potential function so as to have at most quadratic growth at infinity. Such pointwise truncation is both physically reasonable and a common practice~\cite{CafMul95, ConMelSul11, DorPie24, KesNocSch04, WanYu18}. In doing so, the forcing function $f$ is approximated by a globally Lipschitz $C^2$-smooth function $f_R$. We also truncate the mass source~\cite{LeeKimYoo21}. We show that the strong solution to the problem with truncated potential converges in probability to that of~\eqref{equ:sllbar} as the truncation parameter $R\uparrow\infty$. Note that even after these modifications, the problem~\eqref{equ:sllbar} is still a system of SPDEs with non-monotone and non-globally Lipschitz nonlinearities due to the cross product term $\bff{u}\times \bff{H}$ and the non-variational term $\mathcal{C}(\bff{u})$, thus the analysis is not straightforward.  

A mixed finite element method is proposed for the truncated problem. In the absence of cross-product terms (corresponding to the sCHm or sAC/CH equations), the resulting fully discrete scheme is of IMEX type. The error analysis is carried out within the variational framework for SPDEs and assumes initial data in $\bb{H}^2$, which guarantees the existence and uniqueness of analytically strong pathwise solutions~\cite{GolSoeTra24b}. No additional or unrealistic regularity assumptions are imposed. We establish stability estimates of the scheme in strong norms, including bounds on higher moments, and exploit parabolic smoothing properties together with the temporal H\"older continuity of the solution. These ingredients are combined to derive error bounds localised on events of large probability, leading to convergence in probability and strong convergence with quantitative rates for each field in the mixed formulation, following the approach of~\cite{BesMil19, CarPro12}. The main results are stated in Theorem~\ref{the:uhn un 1 L2}, Theorem~\ref{the:uhn prob}, and Theorem~\ref{the:uhn strong}.
Numerical experiments to corroborate the theoretical results are described in Section~\ref{sec:num exp}.

\section{Preliminaries}

\subsection{Notations}
We begin by defining some notations used in this paper. Let $\mathscr{D}$ be a convex Lipschitz domain or a domain with $C^2$-smooth boundary. The function space $\bb{L}^p := \bb{L}^p(\mathscr{D}; \bb{R}^3)$ denotes the usual space of $p$-th integrable functions defined on~$\mathscr{D}$ and taking values in $\bb{R}^3$, and $\bb{W}^{k,p} := \bb{W}^{k,p}(\mathscr{D}; \bb{R}^3)$ denotes the usual Sobolev space of 
functions on $\mathscr{D} \subset \bb{R}^d$ taking values in $\bb{R}^3$. We
write $\bb{H}^k := \bb{W}^{k,2}$.
The partial derivative
$\partial/\partial x_i$ will be written by $\partial_i$ for short. The partial derivative of~$f$ with respect to time $t$ will be denoted by $\partial_t$. The operator $\Delta$ denotes the Neumann Laplacian acting on $\bb{R}^3$-valued functions with domain
\[
	\text{D}(\Delta):= \left\{ \bff{v}\in \bb{H}^2 : \frac{\partial\bff{v}}{\partial \bff{n}}=0 \text{ on } \partial\mathscr{D} \right\}.
\]

If $X$ is a Banach space, $L^p(0,T; X)$ and $W^{k,p}(0,T;X)$ denote respectively the usual Lebesgue and Sobolev spaces of {strongly measurable} functions on $(0,T)$ taking values in $X$. The space $L^p(\Omega; X)$ denotes the space of {strongly measurable} $X$-valued random variable with finite $p$-th moment, where $(\Omega,\mathcal{F},\bb{P})$ is a probability space. For an interval $J$, the space $\mathcal{C}^0(J;X)$ and $\mathcal{C}^\alpha(J;X)$ denote respectively the space of continuous functions and the space of $\alpha$-H\"older continuous functions on $J$ taking values in $X$. For brevity, we will denote the spaces $L^p(0,T;X)$, $\mathcal{C}^0([0,T];X)$, and $\mathcal{C}^\alpha([0,T];X)$ by $L^p_T(X)$, $\mathcal{C}^0_T(X)$, and $\mathcal{C}^\alpha_T(X)$, respectively.

Throughout, we denote the scalar product in a Hilbert space $H$ by $\langle \cdot, \cdot\rangle_H$ and its corresponding norm by $\|\cdot\|_H$. The expectation of a random variable $Y$ will be denoted by $\bb{E}[Y]$. We do not distinguish between the scalar product of $\bb{L}^2$ vector-valued functions taking values in $\bb{R}^3$ and the scalar product of $\bb{L}^2$ matrix-valued functions taking values in $\bb{R}^{3\times 3}$, and denote them by $\langle\cdot,\cdot\rangle$.

In various estimates, the constant $C$ denotes a
generic constant which takes different values at different occurrences. If
the dependence of $C$ on some variables, e.g.~$R$ and $T$, is highlighted, we will write $C_{R,T}$.

\subsection{Assumptions}\label{subsec:assump}
Let $\varphi_R \in C_c^2(\bb{R}^3)$ be a $C^2$-smooth bump function whose support lies in the closed ball $B_{2R}(\bff{0})$, such that
\begin{equation}\label{equ:bump}
	\varphi_R(\bff{x})=
	\begin{cases}
		1, & \text{if } |\bff{x}| \leq R\\
		0, & \text{if } |\bff{x}| \geq 2R.
	\end{cases}
\end{equation}
The functions $f_R:\bb{R}^3\to \bb{R}^3$ and $\mathcal{M}_R:\bb{R}^3\to\bb{R}^3$ are defined by
	\begin{align}
    \label{equ:f R}
		f_R(\bff{u}):= \varphi_R(\bff{u}) f(\bff{u}),
        \\
    \label{equ:M R}
        \mathcal{M}_R(\bff{u}):= \varphi_R(\bff{u}) \mathcal{M}(\bff{u}),
	\end{align}
For equation~\eqref{equ:sllbar}, we set $\lambda_1=\lambda_2=\kappa=\mu=1$ for simplicity. We further assume the following:
\begin{enumerate}
	\item The map $G:\bb{R}^3\to \bb{R}^3$ is Lipschitz continuous with Lipschitz constant $C_G$. Moreover, there exists a constant $C>0$ such that
    \begin{align*}
        \norm{\nabla G(\bff{v})-\nabla G(\bff{w})}{\bb{L}^2}
        &\leq
        C \norm{\nabla\bff{v}-\nabla \bff{w}}{\bb{L}^2}, \quad\forall \bff{v},\bff{w}\in \bb{H}^1,
        \\
        \norm{G(\bff{v})}{\bb{H}^2}
        &\leq
        C\big(1+\norm{\bff{v}}{\bb{H}^2}\big),
        \quad\forall \bff{v}\in \bb{H}^2.
    \end{align*}
	\item The spin current vector field $\bff{\nu}\in L^\infty\big(\bb{R}^+;\bb{L}^\infty(\mathscr{D};\bb{R}^d)\big)$ is given. For simplicity, set
    \begin{equation}\label{equ:nu}
    \norm{\bff{\nu}}{L^\infty(\bb{R}^+;\bb{L}^\infty(\mathscr{D};\bb{R}^d))}=1.
    \end{equation}
\end{enumerate}
We remark that our results are also valid more generally for noise of the form $G(\bff{u}) \,\mathrm{d}\bff{W}$, where $\bff{W}$ is a $\mathrm{D}(\Delta)$-valued $Q$-Wiener process of the form $\bff{W}(t)= \sum_{k=1}^\infty \sqrt{q_k} \bff{e}_k W_k(t)$, where $\{W_k\}_{k\in \bb{N}}$ is a sequence of independent real-valued Brownian motions, $\{\bff{e}_k\}_{k\in \bb{N}}$ is an orthonormal basis of $\mathrm{D}(\Delta)$ such that $Q\bff{e}_k= q_k \bff{e}_k$, and $\mathrm{Tr}(Q):=\sum_{k=1}^\infty q_k<\infty$. In this case, we assume $G:\bb{H}^s\to \mathcal{L}(\bb{H}^s)$ is Lipschitz continuous for $s=1$ and is of linear growth for $s=2$. The assumptions here cover the case where $G(\bff{u})$ is the identity operator (additive noise) or the noise term given in~\cite{GolSoeTra24b} for the sLLBar equation. We consider just a single real-valued Brownian motion in this paper for simplicity of presentation.

\subsection{Existence, uniqueness, and regularity of solution}

The existence, uniqueness, and regularity of the (probabilistically and analytically) strong solution to the problem~\eqref{equ:sllbar} are studied in~\cite{GolSoeTra24b}. We summarise the relevant results here. First, define a self-adjoint operator
\begin{equation*}
		A:= \Delta^2 -\Delta, \quad \text{with }
		\mathrm{D}(A)= \mathrm{D}(\Delta^2).
\end{equation*}
The following theorem is essentially shown in~\cite[Theorem~2.2, Theorem~2.3, and Lemma~4.10]{GolSoeTra24b}.

\begin{theorem}
Let $\bff{u}_0\in \mathrm{D}(A^{\frac12})$ and $T>0$ be given. There exists a unique pathwise solution $\bff{u}$ of the problem~\eqref{equ:sllbar} with the following regularity: for any $\beta\in [0,\frac12]$, $\alpha\in [0,\frac12 -\beta]$, and $p\in[1,\infty)$,
\[
    \bff{u} \in L^p\big(\Omega; \mathcal{C}^\alpha([0,T]; \mathrm{D}(A^\beta))\big) \cap L^p\big(\Omega; L^2(0,T;\mathrm{D}(A))\big)\red{.}
\]
Moreover, the solution enjoys the smoothing property on $(0,T]$: for any $\beta\in [\frac12, 1)$, $\alpha\in (0,1-\beta)$, and $p\in[1,\infty)$,
\[
    \bff{u} \in L^p\big(\Omega; \mathcal{C}^\alpha((0,T]; \mathrm{D}(A^\beta))\big).
\]
\end{theorem}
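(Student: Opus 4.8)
\emph{Proof plan.} My plan is to recast~\eqref{equ:sllbar} as a single semilinear stochastic evolution equation and to analyse it within the analytic-semigroup/variational framework. Substituting~\eqref{equ:sllbar b} into~\eqref{equ:sllbar a} and using the pointwise identity $\bff{u}\times f(\bff{u})=\bff{0}$ (both summands of $f(\bff{u})$ being parallel to $\bff{u}$), one obtains
\[
\dif\bff{u}+A\bff{u}\,\dt=\bff{F}(\bff{u})\,\dt+G(\bff{u})\,\dif W(t),\qquad \bff{u}(0)=\bff{u}_0,
\]
where $A=\Delta^2-\Delta$ is self-adjoint and non-negative with $\inpro{A\bff{v}}{\bff{v}}=\norm{\Delta\bff{v}}{\bb{L}^2}^2+\norm{\nabla\bff{v}}{\bb{L}^2}^2$, so that $-A$ generates a bounded analytic semigroup $\{e^{-tA}\}_{t\ge0}$ on $\bb{L}^2$ and $\mathrm{D}(A^{1/2})\simeq\bb{H}^2$, $\mathrm{D}(A)\simeq\bb{H}^4$ (subject to the boundary conditions), and
\[
\bff{F}(\bff{u}):=f(\bff{u})-\Delta f(\bff{u})-\gamma\,\bff{u}\times\Delta\bff{u}+\mathcal{M}(\bff{u})+\mathcal{C}(\bff{u})
\]
collects the nonlinear terms, which are neither globally Lipschitz nor monotone. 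I would then: (i) construct solutions by a Galerkin scheme combined with a compactness/martingale-solution argument; (ii) prove pathwise uniqueness via an energy estimate on the difference of two solutions, localised by stopping times; (iii) deduce a probabilistically strong, pathwise-unique solution by the Gy\"ongy--Krylov lemma (or Yamada--Watanabe); and (iv) bootstrap the space-time regularity from the mild formulation using analytic-semigroup smoothing.

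\emph{A priori estimates.} For the Galerkin approximations $\bff{u}^{(n)}$ I would first apply It\^o's formula to $\norm{\bff{u}^{(n)}}{\bb{L}^2}^{2p}$, exploiting $\inpro{A\bff{u}}{\bff{u}}=\norm{\Delta\bff{u}}{\bb{L}^2}^2+\norm{\nabla\bff{u}}{\bb{L}^2}^2$, the favourable sign of $-\norm{\bff{u}}{\bb{L}^4}^4$ coming from the double-well potential, the orthogonality $\inpro{\bff{w}}{\bff{w}\times\bff{v}}=0$, the normalisation $\norm{\bff{\nu}}{\bb{L}^\infty}=1$ together with Young's inequality for the convective term, and the linear growth of $G$ for the It\^o correction; BDG and Gronwall then give, for every $p\in[1,\infty)$,
\[
\bb{E}\Big[\sup_{t\in[0,T]}\norm{\bff{u}^{(n)}(t)}{\bb{L}^2}^{2p}\Big]+\bb{E}\Big[\Big(\int_0^T\norm{\bff{u}^{(n)}(t)}{\bb{H}^2}^2\,\dt\Big)^{p}\Big]\le C\big(1+\bb{E}\norm{\bff{u}_0}{\bb{L}^2}^{2p}\big).
\]
The decisive higher estimate comes from It\^o's formula for $\norm{A^{1/2}\bff{u}^{(n)}}{\bb{L}^2}^{2}=\norm{\Delta\bff{u}^{(n)}}{\bb{L}^2}^2+\norm{\nabla\bff{u}^{(n)}}{\bb{L}^2}^2$, and \textbf{this is the step I expect to be the main obstacle}: since $\bff{F}$ contains the superlinear second-order terms $\Delta f(\bff{u})$ and $\bff{u}\times\Delta\bff{u}$, a crude bound on $\inpro{A\bff{u}}{\bff{F}(\bff{u})}$ produces a differential inequality of the type $\tfrac{\dif}{\dt}\norm{\bff{u}}{\bb{H}^2}^2\lesssim 1+\norm{\bff{u}}{\bb{H}^2}^6$, which may blow up in finite time. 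Instead one must integrate $\inpro{A\bff{u}}{\bff{F}(\bff{u})}$ by parts and carefully extract the dissipative and cancellation structure---the algebraic form of the Ginzburg--Landau nonlinearity and the pointwise orthogonality involving the cross product---so that the a priori unbounded contributions are either absorbed into the dissipation $\tfrac12\norm{A\bff{u}}{\bb{L}^2}^2$ or, by Gagliardo--Nirenberg interpolation in $d\le3$, reduced to terms whose coefficient is integrable in time thanks to the lower-order bound $\bff{u}\in L^2_T(\bb{H}^2)$ just obtained; with $\norm{A^{1/2}G(\bff{u})}{\bb{L}^2}^2\lesssim 1+\norm{\bff{u}}{\bb{H}^2}^2$ for the It\^o correction and a further BDG/Gronwall step, this should yield uniform bounds for $\bff{u}^{(n)}$ in $L^p(\Omega;\mathcal{C}^0_T(\bb{H}^2))\cap L^p(\Omega;L^2_T(\bb{H}^4))$.

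\emph{Passage to the limit, uniqueness, strong solution.} Combining these bounds with a fractional-in-time estimate on $\bff{u}^{(n)}$, the laws of $\bff{u}^{(n)}$ are tight on a path space where the relevant embeddings are compact (Aubin--Lions--Simon), so Prokhorov's theorem and a Skorokhod representation produce an almost surely convergent subsequence on a new stochastic basis whose limit is a martingale solution, and weak lower semicontinuity of the norms transfers the a priori bounds to it. Pathwise uniqueness I would obtain from an $\bb{L}^2$- (or $\bb{H}^{-1}$-) estimate on the difference of two solutions driven by the same $W$: using that $\bff{F}$ is locally Lipschitz on bounded subsets of $\bb{H}^2$ and localising with stopping times defined through the $\bb{H}^2$-norms, a Gronwall argument forces the difference to vanish. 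The Gy\"ongy--Krylov lemma then promotes this to a unique probabilistically strong solution on the original filtered probability space, carrying $\bff{u}\in L^p(\Omega;\mathcal{C}^0_T(\bb{H}^2))\cap L^p(\Omega;L^2_T(\bb{H}^4))$; interpolation between $\mathcal{C}^0_T(\bb{H}^2)$ and the time-H\"older bound in $\bb{L}^2$ gives the full range $\beta\in[0,\tfrac12]$, $\alpha\in[0,\tfrac12-\beta]$.

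\emph{Temporal H\"older continuity and smoothing on $(0,T]$.} Finally I would pass to the mild formulation
\[
\bff{u}(t)=e^{-tA}\bff{u}_0+\int_0^t e^{-(t-s)A}\bff{F}(\bff{u}(s))\,\ds+\int_0^t e^{-(t-s)A}G(\bff{u}(s))\,\dif W(s)
\]
and use $\norm{A^\theta e^{-tA}}{\mathcal{L}(\bb{L}^2)}\lesssim t^{-\theta}$ and $\norm{(I-e^{-tA})A^{-\theta}}{\mathcal{L}(\bb{L}^2)}\lesssim t^{\theta}$. Since $\bff{F}(\bff{u})\in\bb{L}^2$ pointwise in $t$ (by the $\mathcal{C}^0_T(\bb{H}^2)$ bound) and $\norm{G(\bff{u}(s))}{\bb{H}^2}\lesssim 1+\norm{\bff{u}(s)}{\bb{H}^2}$, the two deterministic terms are H\"older in $\mathrm{D}(A^\beta)$ with the optimal exponent on $[0,T]$ (the constraint $\alpha\le\tfrac12-\beta$ being exactly the regularity of $e^{-tA}\bff{u}_0$ at $t=0$), while the stochastic convolution is handled by the factorisation method and BDG; this gives $\bff{u}\in L^p(\Omega;\mathcal{C}^\alpha_T(\mathrm{D}(A^\beta)))$ for $\beta\in[0,\tfrac12]$, $\alpha\in[0,\tfrac12-\beta]$. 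On $(0,T]$ the orbit $t\mapsto e^{-tA}\bff{u}_0$ becomes analytic, with $\norm{A^\beta e^{-tA}\bff{u}_0}{\bb{L}^2}\lesssim t^{-(\beta-1/2)}\norm{\bff{u}_0}{\mathrm{D}(A^{1/2})}$ for $\beta\ge\tfrac12$, and the analogous estimates for the two convolution terms (whose integrands lie in $\bb{L}^2=\mathrm{D}(A^0)$, hence gain up to almost two derivatives) together with a short bootstrap push the spatial index up to, but not including, $1$ and relax the H\"older constraint to $\alpha+\beta<1$, at the price of the integrable singularity $t^{-(\beta-1/2)}$ near $t=0$; this yields $\bff{u}\in L^p(\Omega;\mathcal{C}^\alpha((0,T];\mathrm{D}(A^\beta)))$ for $\beta\in[\tfrac12,1)$, $\alpha\in(0,1-\beta)$, which completes the argument.
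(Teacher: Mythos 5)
First, a point of comparison: the paper does not prove this theorem at all — it is quoted from \cite[Theorem~2.2, Theorem~2.3, Lemma~4.10]{GolSoeTra24b}, so there is no internal argument to match your proposal against line by line. Your outline (recast \eqref{equ:sllbar} as $\dif\bff{u}+A\bff{u}\,\dt=\bff{F}(\bff{u})\,\dt+G(\bff{u})\,\dif W$, Galerkin plus stochastic compactness, pathwise uniqueness by localisation, Gy\"ongy--Krylov, then mild formulation and semigroup smoothing) is the standard route and almost certainly the shape of the argument in the cited reference; the reduction $\bff{u}\times\bff{H}=\bff{u}\times\Delta\bff{u}$ and the $\bb{L}^2$-level estimates you describe are correct.

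However, as a proof your proposal has a genuine gap exactly at the step you yourself flag as the obstacle: the global-in-time moment bound in $\mathcal{C}^0_T(\mathrm{D}(A^{1/2}))\cap L^2_T(\mathrm{D}(A))$. You assert that $\inpro{A\bff{u}}{\bff{F}(\bff{u})}$ can be handled by ``integrating by parts and carefully extracting the dissipative and cancellation structure'' so that the contributions of $\Delta f(\bff{u})$ and $\bff{u}\times\Delta\bff{u}$ are absorbed into $\tfrac12\norm{A\bff{u}}{\bb{L}^2}^2$ or controlled via Gagliardo--Nirenberg with a time-integrable coefficient, but you never exhibit the cancellations or the interpolation exponents, and a crude estimate (as you note) gives a superlinear Gronwall inequality that does not close. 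This is precisely the nontrivial content of the existence theorem for sLLBar — everything downstream (compactness, uniqueness, the H\"older and smoothing estimates) is routine once this bound is in hand — so the plan cannot be accepted as a proof without carrying out that computation. A secondary inaccuracy occurs in the smoothing step: for the \emph{stochastic} convolution, integrands lying merely in $\bb{L}^2=\mathrm{D}(A^0)$ yield only $\mathrm{D}(A^{1/2-\epsilon})$-regularity (the square-integrability of $\norm{A^\theta e^{-(t-s)A}}{\mathcal{L}(\bb{L}^2)}$ requires $\theta<\tfrac12$), not ``almost two derivatives''; to reach $\beta$ up to (but excluding) $1$ you must use the assumption $\norm{G(\bff{v})}{\bb{H}^2}\leq C(1+\norm{\bff{v}}{\bb{H}^2})$, i.e. $\mathrm{D}(A^{1/2})$-valued integrands, which you state earlier but do not invoke where it is actually needed.
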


We now consider problem~\eqref{equ:sllbar} with truncated nonlinearities, with all numerical coefficients set to 1. More precisely, for each $R>0$, the pair $(\bff{u}_R, \bff{H}_R)$ satisfies
\begin{subequations}\label{equ:trunc sllbar}
	\begin{alignat}{2}
		&\dif \bff{u}_R
		= 
		\big( \bff{H}_R
		- \Delta \bff{H}_R 
		- \bff{u}_R \times \bff{H}_R 
		+ \widetilde{\mathcal{S}}(\bff{u}_R) \big)\, \dt
		+
		G(\bff{u}_R) \,\dif W(t)
		\; && \quad\text{for $(t,\bff{x})\in(0,T)\times \mathscr{D}$,}
  \label{equ:trunc sllbar a}
		\\[1ex]
		&\bff{H}_R
		= 
		\Delta \bff{u}_R
		+
		f_R(\bff{u}_R)
		\; && \quad\text{for $(t,\bff{x})\in(0,T)\times\mathscr{D}$,}
  \label{equ:trunc sllbar b}
		\\[1ex]
		&\bff{u}_R(0,\bff{x})= \bff{u}_{0}(\bff{x}) 
		\; && \quad\text{for } \bff{x}\in \mathscr{D},
  \label{equ:trunc sllbar c}
		\\[1ex]
			&\frac{\partial \bff{u}_R}{\partial \bff{n}}= \bff{0}, 
		\;\displaystyle{\frac{\partial \bff{H}_R}{\partial \bff{n}}= \bff{0}} 
		\; && \quad\text{for } (t,\bff{x})\in (0,T) \times \partial \mathscr{D},
  \label{equ:trunc sllbar d}
	\end{alignat}
\end{subequations}
i.e. the problem~\eqref{equ:sllbar} with $f(\bff{u})$ replaced by $f_R(\bff{u}_R)$, and
\begin{equation}\label{equ:P u}
\widetilde{\mathcal{S}}(\bff{u}_R):= \mathcal{C}(\bff{u}_R)+ \mathcal{M}_R(\bff{u}_R).
\end{equation}

A variational formulation for the problem~\eqref{equ:trunc sllbar} can be written as follows: For every $t\in [0,T]$ and $\bb{P}$-a.s., $(\bff{u}_R,\bff{H}_R)$ solves
\begin{equation}\label{equ:weakform}
	\begin{alignedat}{1}
    \inpro{\bff{u}_R(t)}{\bff{\chi}} 
    &= 
    \inpro{\bff{u}_0}{\bff{\chi}}
    +
    \int_0^t \inpro{\bff{H}_R(s)}{\bff{\chi}} \ds
    +
    \int_0^t \inpro{\nabla \bff{H}_R(s)}{\nabla \bff{\chi}} \ds
    \\
    &\quad
    -
    \int_0^t \inpro{\bff{u}_R(s)\times \bff{H}(s)}{\bff{\chi}} \ds
    +
    \int_0^t \inpro{\widetilde{\mathcal{S}}(\bff{u}_R(s))}{\bff{\chi}} \ds 
        \\
    &\quad
    +
    \int_0^t \inpro{G(\bff{u}_R(s))}{\bff{\chi}} \mathrm{d}W(s),
    \\
    \inpro{\bff{H}_R(t)}{\bff{\phi}}
    &=
    -\inpro{\nabla \bff{u}_R(t)}{\nabla \bff{\phi}} 
    +
    \inpro{f_R(\bff{u}_R(t))}{\bff{\phi}},
    \end{alignedat}
\end{equation}
for all $\bff{\chi}, \bff{\phi}\in \bb{H}^1$. This variational formulation will be used for the analysis of the numerical method proposed in Section~\ref{sec:mixed fem}.
By similar argument as in \cite{GolSoeTra24b}, we have the following result.

\begin{proposition}\label{pro:Holder u}
Let $\bff{u}_0\in \mathrm{D}(A^{\frac12})$ and $T>0$ be given. For each $R>0$, there exists a unique pathwise solution $\bff{u}_R$ of the problem~\eqref{equ:trunc sllbar} with the following regularity: for any $\beta\in [0,\frac12]$, $\alpha\in [0,\frac12 -\beta]$, and $p\in[1,\infty)$,
\[
    \bff{u}_R \in L^p\big(\Omega; \mathcal{C}^\alpha([0,T]; \mathrm{D}(A^\beta))\big) \cap L^p\big(\Omega; L^2(0,T;\mathrm{D}(A))\big).
\]
Moreover, the solution enjoys the smoothing property on $(0,T]$: for any $\beta\in [\frac12, 1)$, $\alpha\in (0,1-\beta)$, and $p\in [1,\infty)$,
\[
    \bff{u}_R \in L^p\big(\Omega; \mathcal{C}^\alpha((0,T]; \mathrm{D}(A^\beta))\big).
\]
\end{proposition}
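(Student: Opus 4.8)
The plan is to mirror the analysis of the untruncated system~\eqref{equ:sllbar} carried out in~\cite{GolSoeTra24b}, observing that the truncation only makes matters easier: since $f$ and $\mathcal{M}$ have polynomial growth, the functions $f_R=\varphi_R f$ and $\mathcal{M}_R=\varphi_R\mathcal{M}$ of~\eqref{equ:f R}--\eqref{equ:M R} are compactly supported and $C^2$, hence globally Lipschitz with bounded first and second derivatives. The only genuinely delicate nonlinearities — the gyroscopic term $\bff{u}_R\times\bff{H}_R$ and the non-variational convective term $\mathcal{C}(\bff{u}_R)$ — are unaffected by the truncation and are handled exactly as in that reference. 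Substituting~\eqref{equ:trunc sllbar b} into~\eqref{equ:trunc sllbar a} and recalling $A=\Delta^2-\Delta$, the pair of equations collapses to a single semilinear SPDE $\dif\bff{u}_R=\big(-A\bff{u}_R+\mathcal{N}_R(\bff{u}_R)\big)\dt+G(\bff{u}_R)\,\dif W$ with $\mathcal{N}_R(\bff{v}):=f_R(\bff{v})-\Delta f_R(\bff{v})-\bff{v}\times\big(\Delta\bff{v}+f_R(\bff{v})\big)+\widetilde{\mathcal{S}}(\bff{v})$, whose leading part is governed by the analytic semigroup $e^{-tA}$ on the scale $\mathrm{D}(A^\beta)$; here $\mathrm{D}(A^{1/2})=\bb{H}^2$ (with the Neumann condition) and, in $d\leq 3$, the embeddings $\bb{H}^2\hookrightarrow\bb{W}^{1,4}\cap\bb{L}^\infty$ ensure that $\mathcal{N}_R$ maps $\mathrm{D}(A^{1/2})$ into $\bb{L}^2$, using $\Delta f_R(\bff{v})=f_R''(\bff{v})(\nabla\bff{v},\nabla\bff{v})+f_R'(\bff{v})\Delta\bff{v}$ and the boundedness of $f_R',f_R''$.

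\emph{A priori estimates.} I would run a Faedo--Galerkin scheme in the eigenbasis of $A$ and apply the It\^o formula to $\tfrac12\|\bff{u}_R\|_{\bb{L}^2}^2$ and to $\tfrac12\|A^{1/2}\bff{u}_R\|_{\bb{L}^2}^2$. The first uses the pointwise identities $\inpro{\bff{u}_R\times\bff{H}_R}{\bff{u}_R}=0$ and $\inpro{\bff{u}_R\times(\bff{\nu}\cdot\nabla)\bff{u}_R}{\bff{u}_R}=0$, the coercivity $\inpro{A\bff{u}_R}{\bff{u}_R}=\|A^{1/2}\bff{u}_R\|_{\bb{L}^2}^2$, the boundedness of $f_R,f_R',\mathcal{M}_R$, the normalisation $\|\bff{\nu}\|_{L^\infty(\bb{R}^+;\bb{L}^\infty)}=1$, and the Lipschitz continuity of $G$ (together with its $\bb{H}^2$-linear growth for the higher estimate); interpolating $\|\nabla\bff{u}_R\|_{\bb{L}^2}^2\leq\varepsilon\|\Delta\bff{u}_R\|_{\bb{L}^2}^2+C_\varepsilon\|\bff{u}_R\|_{\bb{L}^2}^2$ lets the lower-order contributions be absorbed into $-\|A^{1/2}\bff{u}_R\|_{\bb{L}^2}^2$, and a Gronwall argument with the Burkholder--Davis--Gundy inequality gives the $L^p(\Omega;\mathcal{C}^0_T(\bb{L}^2))\cap L^p(\Omega;L^2_T(\mathrm{D}(A^{1/2})))$ bound. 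In the higher estimate the cross and convective terms require care: after integrating by parts to redistribute derivatives in $\inpro{A\bff{u}_R}{\bff{u}_R\times\bff{H}_R}$ and $\inpro{A\bff{u}_R}{\mathcal{C}(\bff{u}_R)}$, one uses the embeddings above and Gagliardo--Nirenberg interpolation against the dissipative term $-\|A\bff{u}_R\|_{\bb{L}^2}^2$, combined with the bounds just obtained, to close a Gronwall inequality for $\|A^{1/2}\bff{u}_R\|_{\bb{L}^2}^2$; the term $\Delta f_R(\bff{u}_R)$ is harmless precisely because $f_R',f_R''$ are bounded. This yields $\bff{u}_R\in L^p(\Omega;\mathcal{C}^0_T(\mathrm{D}(A^{1/2})))\cap L^p(\Omega;L^2_T(\mathrm{D}(A)))$.

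\emph{Existence, uniqueness, and regularity.} Tightness of the laws of the Galerkin solutions (via Aubin--Lions--Simon in time and compact Sobolev embeddings in space) with a Skorokhod--Jakubowski representation theorem yields a martingale solution; pathwise uniqueness — proved by an energy estimate for the difference of two solutions, in which the globally Lipschitz $f_R,\mathcal{M}_R,G$ are immediate and the differences of the cross and convective terms are treated as in~\cite{GolSoeTra24b} after a stopping-time localisation — upgrades this to the unique probabilistically strong solution via the Gy\"ongy--Krylov lemma. For the temporal regularity I would use the mild formulation $\bff{u}_R(t)=e^{-(t-t_0)A}\bff{u}_R(t_0)+\int_{t_0}^t e^{-(t-s)A}\mathcal{N}_R(\bff{u}_R(s))\,\ds+\int_{t_0}^t e^{-(t-s)A}G(\bff{u}_R(s))\,\dif W(s)$ together with $\|A^\beta e^{-tA}\|_{\mathcal{L}(\bb{L}^2)}\leqs t^{-\beta}$: with $t_0=0$, the moment bounds on $\mathcal{N}_R(\bff{u}_R)$ make the deterministic part H\"older continuous with values in $\mathrm{D}(A^\beta)$ for every $\beta<1$, while the stochastic convolution, handled by the factorisation method and Kolmogorov's continuity theorem, supplies the binding constraint $\alpha+\beta<\tfrac12$, giving $\bff{u}_R\in L^p(\Omega;\mathcal{C}^\alpha([0,T];\mathrm{D}(A^\beta)))$ for $\beta\in[0,\tfrac12]$, $\alpha\in[0,\tfrac12-\beta]$. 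For the smoothing on $(0,T]$, for a.e.\ $t_0\in(0,T)$ one has $\bff{u}_R(t_0)\in\mathrm{D}(A)$ by the $L^2_T(\mathrm{D}(A))$ bound; restarting the mild formula at such $t_0$, with $\mathcal{N}_R(\bff{u}_R)\in\bb{L}^2$ on $[t_0,T]$ and the stochastic convolution now improved since $G(\bff{u}_R)$ is $\bb{H}^2$-valued by the linear-growth assumption, places $\bff{u}_R$ into $\mathcal{C}^\alpha((0,T];\mathrm{D}(A^\beta))$ for every $\beta\in[\tfrac12,1)$, $\alpha\in(0,1-\beta)$, with the stated moments.

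\emph{Main obstacle.} As in the untruncated analysis, the one genuinely delicate point is that $\bff{H}_R$ already carries two derivatives of $\bff{u}_R$, so $\bff{u}_R\times\bff{H}_R$ behaves like a quasilinear second-order perturbation of a fourth-order operator; closing the $\mathrm{D}(A)$ and the smoothing estimates hinges on carefully playing this term (and the convective cross term) off against the fourth-order dissipation via Gagliardo--Nirenberg interpolation in $d\leq 3$, rather than on any crude bound. Everything involving $f$ and $\mathcal{M}$ — the only departure from~\cite{GolSoeTra24b} — is rendered trivial by the truncation.
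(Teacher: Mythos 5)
Your proposal is correct and takes essentially the same route as the paper: the paper gives no standalone proof of Proposition~\ref{pro:Holder u}, asserting only that it follows ``by similar argument as in'' \cite{GolSoeTra24b}, and your sketch (reduction to a single semilinear SPDE with $A=\Delta^2-\Delta$, Galerkin plus It\^o energy estimates, compactness and pathwise uniqueness, then the mild formulation with analytic-semigroup bounds and the factorisation method for the H\"older regularity and the restart argument for the smoothing on $(0,T]$) is precisely that argument adapted to the truncated system, with the correct observation that the truncation only renders $f_R$ and $\mathcal{M}_R$ globally Lipschitz with bounded derivatives while the cross-product and convective terms are treated exactly as in the untruncated case. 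At the level of detail the paper itself supplies, there is no gap to flag.
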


\begin{proposition}
Let $\bff{u}_R$ and $\bff{u}$ be the solution to the problems \eqref{equ:sllbar} and \eqref{equ:trunc sllbar}, respectively. Then $\bff{u}_R(t)\to \bff{u}(t)$ a.s. on $[0,T]$ as $R\uparrow\infty$. Furthermore, for any $\varepsilon>0$ and $p\geq 1$,
\[
    \bb{P}\left[\sup_{t\in [0,T]} \norm{\bff{u}_R(t)-\bff{u}(t)}{\bb{H}^2} > \varepsilon\right] \leq C_p R^{-p},
\] 
where $C_p$ is a constant depending on $T, p$, and $\mathscr{D}$.
\end{proposition}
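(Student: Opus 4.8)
The plan is to exploit the fact that the cut-off $\varphi_R$ is \emph{inactive} --- that is, $\varphi_R(\bff{u})\equiv 1$, hence $f_R(\bff{u})=f(\bff{u})$ and $\mathcal{M}_R(\bff{u})=\mathcal{M}(\bff{u})$ --- for as long as $\bff{u}$ stays pointwise bounded by $R$, and to combine this with pathwise uniqueness for the two problems. The entire estimate then collapses to a tail bound for $\sup_{t\in[0,T]}\norm{\bff{u}(t)}{\bb{L}^\infty}$, which is available because the regularity theorem above gives $\bff{u}\in L^p\big(\Omega;\mathcal{C}^0_T(\bb{H}^2)\big)$ for every $p$ (take $\beta=\tfrac12$, $\alpha=0$, and use $\mathrm{D}(A^{1/2})\hookrightarrow\bb{H}^2$, which holds by elliptic regularity for the Neumann Laplacian since $\norm{A^{1/2}\bff{v}}{\bb{L}^2}^2=\norm{\Delta\bff{v}}{\bb{L}^2}^2+\norm{\nabla\bff{v}}{\bb{L}^2}^2$), together with the Sobolev embedding $\bb{H}^2(\mathscr{D})\hookrightarrow\bb{L}^\infty(\mathscr{D})$ valid for $d\leq 3$, whose constant I denote by $c_S$.

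Concretely, I would set $\tau_R:=\inf\{t\in[0,T]:\norm{\bff{u}(t)}{\bb{L}^\infty}>R\}$ with $\inf\emptyset:=T$; this is a stopping time because $t\mapsto\norm{\bff{u}(t)}{\bb{L}^\infty}$ is continuous and adapted (so $\{\tau_R<t\}=\bigcup_{q\in\bb{Q}\cap[0,t)}\{\norm{\bff{u}(q)}{\bb{L}^\infty}>R\}\in\mathcal{F}_t$). For $t\leq\tau_R$ one has $|\bff{u}(t,\bff{x})|\leq R$ for a.e.\ $\bff{x}$, hence $\varphi_R(\bff{u}(t))\equiv 1$, so the pair $(\bff{u},\bff{H})$ with $\bff{H}=\Delta\bff{u}+f(\bff{u})$ satisfies the truncated variational system~\eqref{equ:weakform} on the stochastic interval $[0,\tau_R]$. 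Since $(\bff{u}_R,\bff{H}_R)$ solves the same system globally by Proposition~\ref{pro:Holder u}, and both start from $\bff{u}_0$, the pathwise uniqueness estimate for the truncated problem (an energy/Gronwall argument as in~\cite{GolSoeTra24b}, which is insensitive to replacing $[0,T]$ by the random interval $[0,\tau_R]$) forces $\bff{u}_R(t)=\bff{u}(t)$ for all $t\in[0,\tau_R]$. In particular, on the event $\big\{\sup_{t\in[0,T]}\norm{\bff{u}(t)}{\bb{L}^\infty}\leq R\big\}\subseteq\{\tau_R=T\}$ we obtain $\bff{u}_R\equiv\bff{u}$ on $[0,T]$. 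I expect this localization step to be the only delicate point: one must verify carefully that $\bff{u}$ restricted to $[0,\tau_R]$ is a genuine solution of~\eqref{equ:weakform} (the subtlety being the locality of the It\^o integral, handled via the stopped equation) and that the uniqueness of Proposition~\ref{pro:Holder u} is stable under stopping; everything else is routine.

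Taking complements in the inclusion just obtained gives, for every $\varepsilon>0$,
\[
\bb{P}\Big[\sup_{t\in[0,T]}\norm{\bff{u}_R(t)-\bff{u}(t)}{\bb{H}^2}>\varepsilon\Big]
\;\leq\;
\bb{P}\Big[\sup_{t\in[0,T]}\norm{\bff{u}(t)}{\bb{L}^\infty}>R\Big]
\;\leq\;
\bb{P}\Big[\sup_{t\in[0,T]}\norm{\bff{u}(t)}{\bb{H}^2}>R/c_S\Big],
\]
and Chebyshev's inequality applied with exponent $p$, together with $\bb{E}\big[\sup_{t\in[0,T]}\norm{\bff{u}(t)}{\bb{H}^2}^p\big]\leq C\,\bb{E}\big[\sup_{t\in[0,T]}\norm{\bff{u}(t)}{\mathrm{D}(A^{1/2})}^p\big]<\infty$, yields the bound $C_p R^{-p}$ with $C_p=C^{p}c_S^{p}\,\bb{E}\big[\sup_{t}\norm{\bff{u}(t)}{\mathrm{D}(A^{1/2})}^p\big]$, depending only on $T$, $p$, and $\mathscr{D}$ (through $c_S$ and the moment bound over $[0,T]$). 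Finally, for the almost sure statement: since $\bff{u}\in L^p\big(\Omega;\mathcal{C}^0_T(\bb{H}^2)\big)$, the random variable $M(\omega):=c_S\sup_{t\in[0,T]}\norm{\bff{u}(t,\omega)}{\bb{H}^2}$ is finite for a.e.\ $\omega$, and for every such $\omega$ and every $R>M(\omega)$ the preceding argument gives $\bff{u}_R(\cdot,\omega)=\bff{u}(\cdot,\omega)$ on $[0,T]$; hence $\bff{u}_R(t,\omega)\to\bff{u}(t,\omega)$ (the sequence is eventually constant in $R$), uniformly in $t\in[0,T]$, which is stronger than the asserted pointwise convergence.
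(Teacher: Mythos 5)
Your proof is correct and follows essentially the same route as the paper: localize with a stopping time up to which the cutoff is inactive, identify $\bff{u}_R$ with $\bff{u}$ there by pathwise uniqueness of the truncated problem, and conclude with Chebyshev's inequality using the finite moments of $\sup_{t\leq T}\norm{\bff{u}(t)}{\bb{H}^2}$ guaranteed by the regularity theorem. The only (minor) difference is that you stop at the first time $\norm{\bff{u}(t)}{\bb{L}^\infty}$ exceeds $R$, which makes the identity $\varphi_R(\bff{u})\equiv 1$ immediate and defers the Sobolev embedding constant to the final Chebyshev step, whereas the paper works with two $\bb{H}^2$-based stopping times for $\bff{u}_R$ and $\bff{u}$; this is a presentational variant of the same argument, not a different method.
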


\begin{proof}
For each $R>0$, let
\begin{align*}
    \tau_R &:= \inf \left\{t\geq 0: \norm{\bff{u}_R(t)}{\bb{H}^2} \geq R \right\} \wedge T,
    \\
    \sigma_R &:= \inf \left\{t\geq 0: \norm{\bff{u}(t)}{\bb{H}^2} \geq R \right\} \wedge T.
\end{align*}
Then $\bff{u}_R(t)=\bff{u}(t)$ a.s. for all $t\leq \tau_R$, and $\tau_R\uparrow T$ as $R\uparrow \infty$, which implies $\bff{u}_R(t)\to \bff{u}(t)$ a.s. on $[0,T]$ as $R\uparrow\infty$. Furthermore, for any $\varepsilon>0$,
\begin{align*}
    \bb{P}\left[\sup_{t\in [0,T]} \norm{\bff{u}_R(t)-\bff{u}(t)}{\bb{H}^2} > \varepsilon\right] 
    &\leq 
    \bb{P}\left[\sup_{t\in [0,\tau_R]} \norm{\bff{u}_R(t)-\bff{u}(t)}{\bb{H}^2} > \varepsilon\right] 
    +
    \bb{P}\left[\tau_R<T\right]
    \\
    &=
    \bb{P} \left[\left\{\tau_R<T\right\} \cap \left\{\sigma_R=T\right\}\right]
    +
    \bb{P} \left[\left\{\tau_R<T\right\} \cap \left\{\sigma_R< T\right\}\right]
    \\
    &\leq 
    \frac{1}{R^p} \bb{E} \left[\norm{\bff{u}}{L^\infty(\bb{H}^2)}^p \right],
\end{align*}
as required.
\end{proof}
As such, we can approximate the solution to the problem~\eqref{equ:sllbar} using~\eqref{equ:trunc sllbar} by taking $R$ sufficiently large. From this point onwards, we will focus on the numerical approximation of \eqref{equ:trunc sllbar} and write $\bff{u}$ in place of $\bff{u}_R$.

\subsection{Finite element approximation}

Let $\{\mathcal{T}_h\}_{h>0}$ be a family of quasi-uniform triangulations of $\mathscr{D}\subset \bb{R}^d$ with maximal mesh-size $h$, and let $\bb{V}_h\subset \bb{W}^{1,\infty}$ be the Lagrange finite element space 
\begin{equation*}
	\bb{V}_h := \{\bff{\phi}\in C(\overline{\mathscr{D}}; \bb{R}^3): \bff{\phi}|_K \in \bb{P}_1(K;\bb{R}^3), \; \forall K \in \mathcal{T}_h\},
\end{equation*}
where $\bb{P}_1(K; \bb{R}^3)$ denotes the space of linear polynomials on $K$ taking values in $\bb{R}^3$.
Let $T>0$ be fixed and $k$ be the time-step size. Furthermore, let $\bff{u}_h^n$ and $\bff{H}_h^n$, respectively, be the approximation in $\bb{V}_h$ of $\bff{u}_R(t)$ and $\bff{H}_R(t)$ at time $t=t_n:=nk$, where $n=0,1,2,\ldots, N$ and $N=\lfloor T/k \rfloor$. 

We begin by defining several operators which will be used in the analysis. Firstly, there exists an orthogonal projection operator $\Pi_h: \bb{L}^2 \to \bb{V}_h$ such that
\begin{align}\label{equ:orth proj}
	\inpro{\Pi_h \bff{v}-\bff{v}}{\bff{\chi}}=0,
	\quad
	\forall \bff{\chi}\in \bb{V}_h.
\end{align}
The operator $\Pi_h$ is stable~\cite{BanYse14, CroTho87, DouDupWah74} in $\bb{L}^p$ and $\bb{W}^{1,p}$ for $p\in [1,\infty)$: there exists a constant $C$ independent of $\bff{v}$ such that
\begin{align}
\label{equ:proj stab Lp}
    \norm{\Pi_h \bff{v}}{\bb{L}^p} &\leq C\norm{\bff{v}}{\bb{L}^p}, \quad \forall \bff{v}\in \bb{L}^p,
    \\
\label{equ:proj stab W1p}
    \norm{\nabla \Pi_h \bff{v}}{\bb{L}^p} &\leq C \norm{\nabla\bff{v}}{\bb{L}^p}, \quad \forall \bff{v}\in \bb{W}^{1,p}.
\end{align}
Moreover, it has the following approximation property:
\begin{align}\label{equ:proj approx}
	\norm{\bff{v}- \Pi_h\bff{v}}{\bb{L}^p}
	+
	h \norm{\nabla( \bff{v}-\Pi_h\bff{v})}{\bb{L}^p}
	\leq
	Ch^s \norm{\bff{v}}{\bb{W}^{s,p}}, \quad s\in \{1,2\}.
\end{align}
We mainly use~\eqref{equ:proj stab Lp}, \eqref{equ:proj stab W1p}, and~\eqref{equ:proj approx} for $p=2$.

Secondly, define the Ritz projection $\R_h:\bb{H}^1 \to \bb{V}_h$ by
\begin{align}\label{equ:Ritz}
    \inpro{\nabla \R_h \bff{v}-\nabla \bff{v}}{\nabla \bff{\chi}}=0, \quad \forall \bff{\chi}\in\bb{V}_h, 
    \quad \text{such that} \quad \inpro{\R_h \bff{v}-\bff{v}}{\bff{1}}=0.
\end{align}
The stability and approximation properties of the Ritz projection~\cite{LeyLi21, RanSco82} are assumed to hold. In particular, for $p\in (1,\infty)$,
\begin{align}
    \label{equ:Ritz approx}
    \norm{\bff{v}-\R_h \bff{v}}{\bb{L}^p}
    +
    h\norm{\nabla(\bff{v}-\R_h \bff{v})}{\bb{L}^p}
    &\leq
    Ch^s \norm{\bff{v}}{\bb{W}^{s,p}}, \quad s\in \{1,2\}.
\end{align}
Finally, the discrete Laplacian operator $\Delta_h: \bb{V}_h \to \bb{V}_h$ is defined by
\begin{align}\label{equ:disc laplacian}
	\inpro{\Delta_h \bff{v}_h}{\bff{\chi}}
	=
	- \inpro{\nabla \bff{v}_h}{\nabla \bff{\chi}},
	\quad 
	\forall \bff{v}_h, \bff{\chi} \in \bb{V}_h.
\end{align}
Consequently, for any $p,q \in [1,\infty]$ such that $1/p+1/q=1$, by H\"older's inequality we have
\begin{align}
    \label{equ:interp disc Lap nab L2}
    \norm{\nabla \bff{v}_h}{\bb{L}^2}^2
    &\leq
    \norm{\bff{v}_h}{\bb{L}^p} \norm{\Delta_h \bff{v}_h}{\bb{L}^q},
    \\
    \label{equ:interp disc Lap Delta L2}
    \norm{\Delta_h \bff{v}_h}{\bb{L}^2}^2
    &\leq
    \norm{\nabla \bff{v}_h}{\bb{L}^p} \norm{\nabla\Delta_h \bff{v}_h}{\bb{L}^q}.
\end{align}

\subsection{Identities and inequalities}

Some identities and inequalities that are frequently used in the analysis are collected in this section. Recall that $f(\bff{v})=\bff{v}-|\bff{v}|^2 \bff{v}$, where $\bff{v}:\mathscr{D}\to \bb{R}^3$. For $\varphi_R$ and $f_R$ defined in~\eqref{equ:bump} and~\eqref{equ:f R}, respectively, we have the following identities:
\begin{align}
    \label{equ:nab f}
    \nabla f(\bff{v})
    &=
    \nabla \bff{v}
    -
    2 \bff{v} (\bff{v}\cdot \nabla\bff{v}) 
	- |\bff{v}|^2 \nabla \bff{v},
    \\
    \label{equ:Delta f}
    \Delta f(\bff{v})
    &=
    \Delta \bff{v}
    -
    2|\nabla \bff{v}|^2 \bff{v} 
	- 2(\bff{v}\cdot \Delta \bff{v})\bff{v} 
	- 4 \nabla \bff{v} \ (\bff{v}\cdot \nabla\bff{v})^\top
	- |\bff{v}|^2 \Delta \bff{v},
    \\
    \label{equ:nab fR}
    \nabla f_R(\bff{v})
    &=
    \nabla \big[\varphi_R(\bff{v})\big] \big(f(\bff{v})\big)^\top 
    +
    \varphi_R(\bff{v}) \nabla \big[f(\bff{v})\big],
    \\
    \label{equ:Delta fR}
    \Delta f_R(\bff{v})
    &=
    \varphi_R(\bff{v}) \Delta \big[f(\bff{v})\big]
    +
    f(\bff{v}) \Delta\big[\varphi_R(\bff{v})\big]
    +
    2\nabla \big[f(\bff{v})\big] \cdot \nabla \big[\varphi_R (\bff{v})\big]^\top.
    \\
    \label{equ:Delta phi v}
    \Delta \varphi_R (\bff{v})
    &=
     \big(D\varphi_R(\bff{v})\big) \Delta \bff{v}
     +
    \sum_{i=1}^d \big(\partial_i \bff{v}\big)^\top \big(D^2 \varphi_R(\bff{v})\big) \big(\partial_i \bff{v}\big).
\end{align}
where $D\varphi_R(\bff{v})$ and $D^2 \varphi_R(\bff{v})$ denotes, respectively, the Jacobian and the Hessian of $\varphi_R$ evaluated at $\bff{v}$.


\begin{lemma}
	Let $\mathscr{D} \subset \bb{R}^d$ be an open bounded domain with Lipschitz boundary and $\epsilon>0$ be
	given. Then there exists a positive constant $C$ such that the following
	inequalities hold:
	\begin{enumerate}
		\renewcommand{\labelenumi}{\theenumi}
		\renewcommand{\theenumi}{{\rm (\roman{enumi})}}
        \item For any $\bff{v}\in \bb{H}^1$ and $p \in (2,6)$,
        \begin{align}\label{equ:v Lp interp}
            \norm{\bff{v}}{\bb{L}^p}
            \leq
            C_\epsilon \norm{\bff{v}}{\bb{L}^2}^2
            +
            \epsilon \norm{\nabla \bff{v}}{\bb{L}^2}^2.
        \end{align}
  
		\item For any $\bff{v},\bff{w} \in \bb{H}^s$, where $s>d/2$,
		\begin{align}
		\label{equ:prod Hs mat dot}
			\norm{\bff{v} \odot \bff{w}}{\bb{H}^s}
			&\leq
			C \norm{\bff{v}}{\bb{H}^s}
			\norm{\bff{w}}{\bb{H}^s},
		\end{align}
		Here $\odot$ denotes either the dot product or cross product.
  
        \item Let $\mathscr{D}$ be a convex polygonal or polyhedral domain with globally quasi-uniform triangulation. Let $\Delta_h$ be the discrete Laplacian operator defined in~\eqref{equ:disc laplacian}. For any $\bff{v}_h\in \bb{V}_h$,
		\begin{align}
			\label{equ:disc lapl L infty}
			\norm{\bff{v}_h}{\bb{L}^\infty}
			&\leq
			C \norm{\bff{v}_h}{\bb{L}^2}^{1-\frac{d}{4}} \left(\norm{\bff{v}_h}{\bb{L}^2}^\frac{d}{4} + \norm{\Delta_h \bff{v}_h}{\bb{L}^2}^\frac{d}{4} \right),
			\\
			\label{equ:disc lapl L6}
			\norm{\nabla \bff{v}_h}{\bb{L}^6}
			&\leq
			C \norm{\Delta_h \bff{v}_h}{\bb{L}^2}.
		\end{align}
	\end{enumerate}
\end{lemma}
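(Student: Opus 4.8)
The plan is to treat the three assertions separately, since they are of quite different nature. Estimate \eqref{equ:v Lp interp} I would deduce from the Gagliardo--Nirenberg interpolation inequality. Setting $\theta:=d\big(\tfrac12-\tfrac1p\big)$, which lies in $(0,1)$ because $p<6$ and $d\le 3$, the bounded-domain form of Gagliardo--Nirenberg gives $\norm{\bff v}{\bb L^p}\le C\norm{\bff v}{\bb L^2}^{1-\theta}\norm{\nabla\bff v}{\bb L^2}^{\theta}+C\norm{\bff v}{\bb L^2}$ for all $\bff v\in\bb H^1$. Squaring this and applying Young's inequality to the resulting product term, with the free parameter tuned so that the coefficient of $\norm{\nabla\bff v}{\bb L^2}^2$ equals $\epsilon$, yields \eqref{equ:v Lp interp} with $C_\epsilon$ absorbing all remaining constants. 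This step is elementary.

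For \eqref{equ:prod Hs mat dot} I would use that $\bb H^s(\mathscr D)$ is a Banach algebra whenever $s>d/2$. A bounded Stein-type extension operator $E:\bb H^s(\mathscr D)\to\bb H^s(\bb R^d)$ reduces the estimate to $\bb R^d$, and since both the dot and the cross product are bilinear in the Cartesian components with constant coefficients, it reduces further to the scalar estimate $\norm{gh}{H^s(\bb R^d)}\le C\norm{g}{H^s}\norm{h}{H^s}$. I would prove the latter on the Fourier side: Peetre's inequality $\langle\xi\rangle^s\le C(\langle\xi-\eta\rangle^s+\langle\eta\rangle^s)$ gives $\langle\xi\rangle^s|\widehat{gh}(\xi)|\le C\big[(\langle\cdot\rangle^s|\widehat g|)*|\widehat h|+|\widehat g|*(\langle\cdot\rangle^s|\widehat h|)\big](\xi)$, and then Young's convolution inequality together with $\norm{\widehat g}{L^1}\le\norm{\langle\cdot\rangle^{-s}}{L^2}\norm{g}{H^s}$ --- the $L^2$-norm being finite precisely because $2s>d$ --- finishes it. For the application $s=2$, $d\le 3$, one may alternatively stay on $\mathscr D$ and use the Leibniz rule together with H\"older's inequality and the embeddings $\bb H^2\hookrightarrow\bb L^\infty$ and $\bb H^1\hookrightarrow\bb L^4$.

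The estimates \eqref{equ:disc lapl L infty}--\eqref{equ:disc lapl L6}, which I expect to be the main obstacle, I would obtain by comparison with a continuous elliptic problem on the convex polyhedral domain. For \eqref{equ:disc lapl L infty}: given $\bff v_h\in\bb V_h$, set $\bff g_h:=\bff v_h-\Delta_h\bff v_h\in\bb V_h$ and let $\bff w\in\mathrm D(\Delta)$ solve $-\Delta\bff w+\bff w=\bff g_h$. Convexity provides $\bb H^2$-elliptic regularity, so $\norm{\bff w}{\bb H^2}\le C\norm{\bff g_h}{\bb L^2}\le C\big(\norm{\bff v_h}{\bb L^2}+\norm{\Delta_h\bff v_h}{\bb L^2}\big)$; by \eqref{equ:disc laplacian}, $\bff v_h$ is exactly the finite element approximation of $\bff w$ with respect to the form $\inpro{\nabla\cdot}{\nabla\cdot}+\inpro{\cdot}{\cdot}$, so the Aubin--Nitsche estimate $\norm{\bff w-\bff v_h}{\bb L^2}\le Ch^2\norm{\bff w}{\bb H^2}$ combined with the inverse inequality $\norm{\Delta_h\bff v_h}{\bb L^2}\le Ch^{-2}\norm{\bff v_h}{\bb L^2}$ gives $\norm{\bff w}{\bb L^2}\le C\norm{\bff v_h}{\bb L^2}$. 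I would then split $\norm{\bff v_h}{\bb L^\infty}\le\norm{\mathcal I_h\bff w}{\bb L^\infty}+\norm{\bff v_h-\mathcal I_h\bff w}{\bb L^\infty}$ with $\mathcal I_h$ the nodal interpolant: the first term is controlled by $\norm{\bff w}{\bb L^\infty}$, hence by the continuous Agmon inequality $\norm{\bff w}{\bb L^\infty}\le C\norm{\bff w}{\bb L^2}^{1-d/4}\norm{\bff w}{\bb H^2}^{d/4}$ (valid for $d\le 3$) and the two bounds just derived, which is precisely the right-hand side of \eqref{equ:disc lapl L infty}; the second term is estimated by the inverse inequality $\norm{\cdot}{\bb L^\infty}\le Ch^{-d/2}\norm{\cdot}{\bb L^2}$ and the standard $\bb L^2$-interpolation/error bounds, after which a further application of the inverse inequality converts the surplus powers of $h$ into powers of $\big(\norm{\bff v_h}{\bb L^2}+\norm{\Delta_h\bff v_h}{\bb L^2}\big)$ and again reproduces the claimed right-hand side. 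For \eqref{equ:disc lapl L6} I would run the same comparison with $\bff g_h:=\Delta_h\bff v_h$ (which has zero mean) and $\bff w$ the zero-mean solution of $\Delta\bff w=\bff g_h$, so that $\norm{\bff w}{\bb H^2}\le C\norm{\Delta_h\bff v_h}{\bb L^2}$; since $\nabla\bff v_h=\nabla\R_h\bff w$ by \eqref{equ:Ritz} and \eqref{equ:disc laplacian}, and $\bb H^1\hookrightarrow\bb L^6$ for $d\le 3$, the assumed $\bb W^{1,6}$-stability of the Ritz projection yields $\norm{\nabla\bff v_h}{\bb L^6}=\norm{\nabla\R_h\bff w}{\bb L^6}\le C\norm{\nabla\bff w}{\bb L^6}\le C\norm{\bff w}{\bb H^2}\le C\norm{\Delta_h\bff v_h}{\bb L^2}$.

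The delicate point in part~(iii) is the bookkeeping: one must arrange the finite element error estimates, the inverse inequalities, and the continuous Agmon/Sobolev inequalities so that every extraneous power of $h$ cancels and the interpolation exponent $1-d/4$ in \eqref{equ:disc lapl L infty} comes out exactly --- this is where the convexity of $\mathscr D$ (for $\bb H^2$-regularity and the duality estimate) and the quasi-uniformity of the mesh (for the inverse inequalities) both enter. Parts~(i) and~(ii) are standard.
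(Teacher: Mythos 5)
Your proposal is correct, but it does more work than the paper, which disposes of this lemma by citation: (i) is attributed to the Gagliardo--Nirenberg inequalities, (ii) is quoted from \cite[Lemma~2.2]{SoeTra23b}, and (iii) from \cite[Appendix~A]{GuiLiWan22}. Your self-contained arguments are the standard ones and they do close: for (ii), the extension-plus-Fourier proof of the algebra property of $\bb{H}^s$, $s>d/2$ (Peetre plus Young, with $\norm{\widehat g}{L^1}\leq \norm{\langle\cdot\rangle^{-s}}{L^2}\norm{g}{\bb{H}^s}$), is exactly the classical route, and the componentwise reduction of $\odot$ is harmless; for (iii), the comparison function $\bff{w}$ solving the Neumann problem $-\Delta\bff{w}+\bff{w}=\bff{v}_h-\Delta_h\bff{v}_h$ really does have $\bff{v}_h$ as its Galerkin projection in the $\bb{H}^1$ inner product, convexity gives the $\bb{H}^2$ bound and the Aubin--Nitsche estimate, and your bookkeeping is consistent: since $2-\tfrac d2=2\bigl(1-\tfrac d4\bigr)$, the surplus factor $h^{2-d/2}\norm{\bff{w}}{\bb{H}^2}$ is absorbed precisely via the inverse estimate $\norm{\Delta_h\bff{v}_h}{\bb{L}^2}\leq Ch^{-2}\norm{\bff{v}_h}{\bb{L}^2}$, reproducing the Agmon-type exponents in \eqref{equ:disc lapl L infty}; the $\bb{W}^{1,6}$-stability of $\R_h$ you invoke for \eqref{equ:disc lapl L6} is among the Ritz-projection properties the paper explicitly assumes, so that step is consistent with the paper's framework (the mean-zero Neumann problem with datum $\Delta_h\bff{v}_h$ is well posed, as you note). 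Two small remarks: in (i), squaring Gagliardo--Nirenberg yields the estimate with $\norm{\bff{v}}{\bb{L}^p}^2$ on the left, not the literal \eqref{equ:v Lp interp}, whose unsquared left-hand side is scaling-inconsistent (replace $\bff{v}$ by $\lambda\bff{v}$, $\lambda\to0$); the squared version is the true and intended statement, so your derivation is the right one. In (iii), the nodal interpolant requires $\bff{w}\in C^0$, which is guaranteed by $\bb{H}^2\hookrightarrow C^0$ only because $d\leq3$ --- worth saying explicitly, since it is where the dimension restriction enters your splitting.
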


\begin{proof}
	Inequality~\eqref{equ:v Lp interp} follows from the Gagliardo--Nirenberg inequalities. Inequality~\eqref{equ:prod Hs mat dot} is shown in~\cite[Lemma~2.2]{SoeTra23b}. The estimates~\eqref{equ:disc lapl L infty} and~\eqref{equ:disc lapl L6} are shown in~\cite[Appendix A]{GuiLiWan22}).
\end{proof}

Some estimates for the nonlinear terms are derived in the following lemmas.

\begin{lemma}
Let $\varphi_R$ and $f_R$ be the maps defined in~\eqref{equ:bump} and~\eqref{equ:f R}, respectively. Let $p,q\in [1,\infty]$. Then there exists a positive constant $C_R$, depending on $R$, such that the following inequalities hold:
\begin{enumerate}
    \item For any $\bff{v}:\mathscr{D}\to \bb{R}^3$,
    \begin{align}
        \label{equ:fR v L2}
        \norm{\nabla f_R(\bff{v})}{\bb{L}^p}
        &\leq
        C_R \norm{\nabla \bff{v}}{\bb{L}^p},
        \\
        \label{equ:fR v H2}
        \norm{\Delta f_R(\bff{v})}{\bb{L}^p}
        &\leq
        C_R \left( \norm{\nabla \bff{v}}{\bb{L}^{2p}}^2 + \norm{\Delta \bff{v}}{\bb{L}^p}\right).
    \end{align}
    \item Suppose that $1/p+1/q=1/2$. For any $\bff{v}:\mathscr{D}\to \bb{R}^3$ and $\bff{w}:\mathscr{D}\to \bb{R}^3$,
    \begin{align}
        \label{equ:fR v w H1}
        \norm{\nabla f_R(\bff{v})- \nabla f_R(\bff{w})}{\bb{L}^2}
        &\leq
        C_R \left(1+\norm{\bff{v}}{\bb{L}^\infty}^3\right) \norm{\nabla\bff{v}-\nabla\bff{w}}{\bb{L}^2}
        \nonumber\\
        &\quad
        +
        C_R \left(1+\norm{\bff{v}}{\bb{L}^\infty}^3\right) \norm{\nabla\bff{v}}{\bb{L}^p} \norm{\bff{v}-\bff{w}}{\bb{L}^q}.
    \end{align}
\end{enumerate}
\end{lemma}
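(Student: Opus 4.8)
The plan is to prove the three estimates \eqref{equ:fR v L2}, \eqref{equ:fR v H2}, and \eqref{equ:fR v w H1} by systematically exploiting the product rule identities \eqref{equ:nab fR}--\eqref{equ:Delta phi v} together with the fact that $\varphi_R$ and all its derivatives are bounded and compactly supported: in particular $\varphi_R$, $D\varphi_R$, $D^2\varphi_R$ vanish outside $B_{2R}(\bff{0})$, so every term containing at least one factor of $\varphi_R$ or one of its derivatives is \emph{automatically pointwise bounded} and, crucially, is supported on the set $\{|\bff v|\le 2R\}$, on which $|f(\bff v)|\lesssim_R 1$, $|Df(\bff v)|\lesssim_R 1$, etc. This is the mechanism that converts the cubic growth of $f$ into $R$-dependent constants.

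First, for \eqref{equ:fR v L2}: apply \eqref{equ:nab fR}, $\nabla f_R(\bff v)=\nabla[\varphi_R(\bff v)]\,(f(\bff v))^\top+\varphi_R(\bff v)\nabla[f(\bff v)]$. The second term is bounded pointwise by $\|\varphi_R\|_{L^\infty}\,|\nabla f(\bff v)|$; using \eqref{equ:nab f} and restricting to $\{|\bff v|\le 2R\}$ gives $|\varphi_R(\bff v)\nabla f(\bff v)|\lesssim_R |\nabla\bff v|$. For the first term, $\nabla[\varphi_R(\bff v)]=(D\varphi_R(\bff v))\nabla\bff v$, so $|\nabla[\varphi_R(\bff v)]\,(f(\bff v))^\top|\le \|D\varphi_R\|_{L^\infty}\,|f(\bff v)|\,|\nabla\bff v|\lesssim_R|\nabla\bff v|$ on the support. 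Taking $\bb L^p$ norms yields \eqref{equ:fR v L2}. For \eqref{equ:fR v H2}, start from \eqref{equ:Delta fR}; each of the three terms is handled similarly. The term $\varphi_R(\bff v)\Delta[f(\bff v)]$ is controlled, via \eqref{equ:Delta f} and the support restriction, by $C_R(|\nabla\bff v|^2+|\Delta\bff v|)$. The term $f(\bff v)\Delta[\varphi_R(\bff v)]$ uses \eqref{equ:Delta phi v}: $\Delta\varphi_R(\bff v)=(D\varphi_R(\bff v))\Delta\bff v+\sum_i(\partial_i\bff v)^\top(D^2\varphi_R(\bff v))(\partial_i\bff v)$, so on the support this is $\lesssim_R(|\Delta\bff v|+|\nabla\bff v|^2)$, and multiplying by the bounded $|f(\bff v)|$ keeps the same bound. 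The cross term $2\nabla[f(\bff v)]\cdot\nabla[\varphi_R(\bff v)]^\top$ is bounded by $C_R|\nabla\bff v|^2$ on the support. Summing and taking $\bb L^p$ norms (noting $\||\nabla\bff v|^2\|_{\bb L^p}=\|\nabla\bff v\|_{\bb L^{2p}}^2$) gives \eqref{equ:fR v H2}.

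The genuinely delicate estimate is \eqref{equ:fR v w H1}, the Lipschitz-type bound in $\bb H^1$, because one must track the difference $\nabla f_R(\bff v)-\nabla f_R(\bff w)$ and the supports of $\varphi_R(\bff v)$ and $\varphi_R(\bff w)$ need not coincide. The approach is to write $\nabla f_R(\bff v)-\nabla f_R(\bff w)$ using \eqref{equ:nab fR} and insert telescoping differences so that in each resulting term only one factor carries the difference $\bff v-\bff w$ (or $\nabla\bff v-\nabla\bff w$) while the remaining factors are either bounded (the $\varphi_R$-type factors, via their global Lipschitz continuity and boundedness) or controlled by $\|\bff v\|_{\bb L^\infty}$-powers coming from $f$ and its derivatives. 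Concretely, decompose $\varphi_R(\bff v)\nabla[f(\bff v)]-\varphi_R(\bff w)\nabla[f(\bff w)]=\varphi_R(\bff v)\big(\nabla[f(\bff v)]-\nabla[f(\bff w)]\big)+\big(\varphi_R(\bff v)-\varphi_R(\bff w)\big)\nabla[f(\bff w)]$, and similarly for $\nabla[\varphi_R(\bff v)](f(\bff v))^\top-\nabla[\varphi_R(\bff w)](f(\bff w))^\top$. Using the explicit form \eqref{equ:nab f}, $\nabla[f(\bff v)]-\nabla[f(\bff w)]$ expands into terms like $(\nabla\bff v-\nabla\bff w)$, $(|\bff v|^2-|\bff w|^2)\nabla\bff v$, $|\bff w|^2(\nabla\bff v-\nabla\bff w)$, $\bff v(\bff v\cdot(\nabla\bff v-\nabla\bff w))$, $(\bff v-\bff w)(\bff v\cdot\nabla\bff v)$, etc.; the differences of cubic terms produce factors $|\bff v|+|\bff w|$ times $|\bff v-\bff w|$, which are absorbed into the $(1+\|\bff v\|_{\bb L^\infty}^3)$ prefactor once one also bounds $\|\bff w\|_{\bb L^\infty}$ — but here one must be slightly careful, since the statement only features $\|\bff v\|_{\bb L^\infty}$; the resolution is that any pure factor of $\bff w$ appears multiplied by a $\varphi_R$-type factor evaluated at $\bff w$, hence is supported on $\{|\bff w|\le 2R\}$ and bounded by $R$, so only $\|\bff v\|_{\bb L^\infty}$-powers survive outside the support-localised terms. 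The two factors $\|\nabla\bff v\|_{\bb L^p}\|\bff v-\bff w\|_{\bb L^q}$ with $1/p+1/q=1/2$ arise precisely from the terms in which the difference sits on a zeroth-order factor ($\bff v-\bff w$ or $\varphi_R(\bff v)-\varphi_R(\bff w)$, the latter bounded by $\|D\varphi_R\|_{L^\infty}|\bff v-\bff w|$) while a gradient $\nabla\bff v$ or $\nabla\bff w$ remains; Hölder with exponents $p,q,2$ then distributes the $\bb L^2$ norm across the product. Collecting all terms and applying the triangle inequality gives \eqref{equ:fR v w H1}.

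The main obstacle is the bookkeeping in \eqref{equ:fR v w H1}: one must enumerate roughly a dozen terms arising from expanding $\nabla f_R(\bff v)-\nabla f_R(\bff w)$ via \eqref{equ:nab f} and \eqref{equ:nab fR}, and for each term decide whether it is support-localised (hence yields a pure $C_R$) or genuinely contributes an $\bb L^\infty$-norm factor, and whether the difference falls on a gradient factor (contributing to the $\|\nabla\bff v-\nabla\bff w\|_{\bb L^2}$ term) or a zeroth-order factor (contributing to the $\|\nabla\bff v\|_{\bb L^p}\|\bff v-\bff w\|_{\bb L^q}$ term). Once this classification is set up carefully — in particular observing that every occurrence of a bare $\bff w$ or $\nabla\bff w$ is paired with a $\varphi_R(\bff w)$-factor and is therefore harmless — the individual estimates are routine applications of Hölder's inequality and the boundedness of $\varphi_R, D\varphi_R, D^2\varphi_R$.
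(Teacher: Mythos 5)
Your treatment of \eqref{equ:fR v L2} and \eqref{equ:fR v H2} is correct and is essentially the paper's argument: pointwise bounds from \eqref{equ:nab f}--\eqref{equ:Delta phi v}, the localisation $\abs{\bff v}\le 2R$ on the support of $\varphi_R$, $D\varphi_R$, $D^2\varphi_R$, and then $\bb L^p$ norms (with $\norm{\,\abs{\nabla\bff v}^2}{\bb L^p}=\norm{\nabla\bff v}{\bb L^{2p}}^2$).

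For \eqref{equ:fR v w H1} your guiding principle (every bare factor of $\bff w$ must be accompanied by $\varphi_R$ or one of its derivatives evaluated at $\bff w$) is exactly the paper's mechanism, but the explicit telescoping you wrote does not implement it: you split $\varphi_R(\bff v)\nabla[f(\bff v)]-\varphi_R(\bff w)\nabla[f(\bff w)]=\varphi_R(\bff v)\big(\nabla[f(\bff v)]-\nabla[f(\bff w)]\big)+\big(\varphi_R(\bff v)-\varphi_R(\bff w)\big)\nabla[f(\bff w)]$. In the first term the expansion of $\nabla[f(\bff v)]-\nabla[f(\bff w)]$ contains $\abs{\bff w}^2(\nabla\bff v-\nabla\bff w)$ and $\bff w\big(\bff w\cdot(\nabla\bff v-\nabla\bff w)\big)$, while $\varphi_R(\bff v)$ only enforces $\abs{\bff v}\le 2R$; these $\bff w$-factors are uncontrolled (take $\abs{\bff w}$ arbitrarily large where $\abs{\bff v}\le R$), and replacing $\abs{\bff w}\le\abs{\bff v}+\abs{\bff v-\bff w}$ produces terms quadratic in the difference, which are not of the stated form. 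The second term has the same defect: $\nabla[f(\bff w)]$ carries $\abs{\bff w}^2\abs{\nabla\bff w}$ against the factor $\varphi_R(\bff v)-\varphi_R(\bff w)$, which does not vanish for large $\abs{\bff w}$. The repair is simply to swap the roles, which is what the paper does: use $\varphi_R(\bff w)\big(\nabla[f(\bff v)]-\nabla[f(\bff w)]\big)+\big(\varphi_R(\bff v)-\varphi_R(\bff w)\big)\nabla[f(\bff v)]$, and analogously $\nabla\big[\varphi_R(\bff v)-\varphi_R(\bff w)\big](f(\bff v))^\top+\nabla\big[\varphi_R(\bff w)\big](f(\bff v)-f(\bff w))^\top$, together with writing $\nabla\bff w=(\nabla\bff w-\nabla\bff v)+\nabla\bff v$ wherever a bare $\nabla\bff w$ sits next to $D\varphi_R(\bff w)$. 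Then every surviving $\bb L^\infty$ power is of $\bff v$, one obtains the pointwise bound $C_R(1+\abs{\bff v}^3)\abs{\nabla\bff v-\nabla\bff w}+C_R(1+\abs{\bff v}^3)\abs{\nabla\bff v}\,\abs{\bff v-\bff w}$, and H\"older with exponents $(\infty,2)$ and $(\infty,p,q)$ gives \eqref{equ:fR v w H1}. With that correction your proof coincides with the paper's.
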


\begin{proof}
Firstly, by~\eqref{equ:nab f} and \eqref{equ:nab fR} it is clear that we have
\begin{align*}
    \abs{\nabla f_R(\bff{v})}
    &\leq
    \abs{D\varphi_R(\bff{v})} \abs{\nabla \bff{v}} \abs{f(\bff{v})}
    +
    \abs{\varphi_R(\bff{v})} \abs{\bff{v}}^2 \abs{\nabla \bff{v}}
    \leq
    C_R \abs{\nabla \bff{v}},
\end{align*}
which implies~\eqref{equ:fR v L2}. Similarly, noting \eqref{equ:Delta f}, \eqref{equ:Delta fR}, and \eqref{equ:Delta phi v} we have
\begin{align*}
    \abs{\Delta f_R(\bff{v})} 
    &\leq 
    \abs{\varphi_R(\bff{v})} \left(\abs{\Delta\bff{v}}+ 6\abs{\nabla\bff{v}}^2 \abs{\bff{v}} + 3 \abs{\bff{v}}^2 \abs{\Delta\bff{v}}\right)
    \\
    &\quad
    +
    \left(\abs{\bff{v}}+\abs{\bff{v}}^3\right) \left(\abs{D\varphi_R(\bff{v})} \abs{\Delta \bff{v}}+ d \abs{\nabla \bff{v}}^2 \abs{D^2\varphi_R(\bff{v})}\right) 
    \\
    &\quad
    +
    \abs{D\varphi_R(\bff{v})} \abs{\nabla\bff{v}} \left(\abs{\nabla\bff{v}}+3 \abs{\bff{v}}^2 \abs{\nabla\bff{v}}\right)
    \\
    &\leq
    C_R \left(\abs{\nabla\bff{v}}^2 + \abs{\Delta \bff{v}}\right),
\end{align*}
which implies~\eqref{equ:fR v H2}. 
Next, using \eqref{equ:nab fR} again, we have
\begin{align}\label{equ:abs nab fr}
    \abs{\nabla f_R(\bff{v})-\nabla f_R(\bff{w})}
    &\leq
    \abs{\nabla\big[\varphi_R(\bff{v})-\varphi_R(\bff{w})\big]} \abs{f(\bff{v})}
    +
    \abs{\nabla\big[\varphi_R(\bff{w})\big]} \abs{f(\bff{v})-f(\bff{w})}
    \nonumber\\
    &\quad
    +
    \abs{\varphi_R(\bff{v})-\varphi_R(\bff{w})} \abs{\nabla\big[f(\bff{v})\big]}
    +
    \abs{\varphi_R(\bff{w})} \abs{\nabla\big[f(\bff{v})-f(\bff{w})\big]}.
\end{align}
We now estimate each term on the right-hand side. Firstly,
\begin{align*}
    \abs{\nabla\big[\varphi_R(\bff{v})-\varphi_R(\bff{w})\big]} \abs{f(\bff{v})}
    &\leq
    \abs{D\varphi_R(\bff{w})} \abs{\nabla\bff{v}-\nabla\bff{w}} \abs{f(\bff{v})}
    +
    \abs{D\varphi_R(\bff{w})-D\varphi_R(\bff{v})} \abs{\nabla\bff{v}} \abs{f(\bff{v})}
    \\
    &\leq
    C_R \left(1+\abs{\bff{v}}^3\right) \abs{\nabla\bff{v}-\nabla\bff{w}}
    +
    C_R \left(1+\abs{\bff{v}}^3 \right) \abs{\nabla\bff{v}} \abs{\bff{v}-\bff{w}}.
\end{align*}
Similarly, we have
\begin{align*}
    \abs{\nabla\big[\varphi_R(\bff{w})\big]} \abs{f(\bff{v})-f(\bff{w})}
    &\leq
    \abs{D\varphi_R(\bff{w})} \abs{\nabla\bff{w}-\nabla\bff{v}} \abs{f(\bff{v})-f(\bff{w})}
    +
    \abs{D\varphi_R(\bff{w})} \abs{\nabla\bff{v}} \abs{f(\bff{v})-f(\bff{w})}
    \\
    &\leq
    C_R \left(1+\abs{\bff{v}}^3\right) \abs{\nabla\bff{v}-\nabla\bff{w}}
    +
    C_R \left(1+\abs{\bff{v}}^2\right) \abs{\nabla \bff{v}} \abs{\bff{v}-\bff{w}},
\end{align*}
and
\begin{align*}
    \abs{\varphi_R(\bff{v})-\varphi_R(\bff{w})} \abs{\nabla\big[f(\bff{v})\big]}
    &\leq
    C_R \left(1+\abs{\bff{v}}^2\right) \abs{\nabla \bff{v}} \abs{\bff{v}-\bff{w}}.
\end{align*}
Finally, for the last term, we note that
\begin{align*}
    \nabla f(\bff{v})-\nabla f(\bff{w})
    &=
    \left(\nabla \bff{v}-\nabla\bff{w}\right)
    -2
    \Big(\bff{v}((\bff{v}-\bff{w})\cdot\nabla\bff{v})
    +
    (\bff{v}-\bff{w})(\bff{w}\cdot\nabla\bff{v})
    +
    \bff{w}(\bff{w}\cdot (\nabla\bff{v}-\nabla\bff{w}))\Big)
    \\
    &\quad
    -
    \left(((\bff{v}-\bff{w})\cdot (\bff{v}+\bff{w})) \nabla\bff{v}
    +
    \abs{\bff{w}}^2 (\nabla\bff{v}-\nabla\bff{w})\right).
\end{align*}
This implies
\begin{align*}
    \abs{\varphi_R(\bff{w})} \abs{\nabla\big[f(\bff{v})-f(\bff{w})\big]}
    &\leq
    C_R\abs{\nabla\bff{v}-\nabla\bff{w}}
    +
    C_R \left(1+\abs{\bff{v}}\right) \abs{\nabla\bff{v}} \abs{\bff{v}-\bff{w}}.
\end{align*}
Thus, continuing from \eqref{equ:abs nab fr} we obtain
\begin{align*}
    \abs{\nabla f_R(\bff{v})-\nabla f_R(\bff{w})}
    &\leq
    C_R \left(1+\abs{\bff{v}}^3\right) \abs{\nabla\bff{v}-\nabla\bff{w}}
    +
    C_R \left(1+\abs{\bff{v}}^3\right) \abs{\nabla \bff{v}} \abs{\bff{v}-\bff{w}},
\end{align*}
from which \eqref{equ:fR v w H1} follows by H\"older's inequality.
\end{proof}

\begin{lemma}
Let $\mathcal{C}$ be the map defined in \eqref{equ:torque} and $\bff{\nu}$ be given. For each $\epsilon>0$, there exists a positive constant $C$ such that for any $\bff{v}\in \bb{H}^1$ and $\bff{w}\in \bb{L}^\infty$,
\begin{align}
	\label{equ:Rv v}
	\big| \inpro{\mathcal{C}(\bff{v})}{\bff{v}}_{\bb{L}^2} \big|
	&\leq 
	C\left(1+\norm{\bff{\nu}}{\bb{L}^\infty(\mathscr{D};\bb{R}^d)}^2\right) \norm{\bff{v}}{\bb{L}^2}^2
	+
	\epsilon \norm{\nabla \bff{v}}{\bb{L}^2}^2,
	\\
    \label{equ:Rv w infty}
    \big| \inpro{\mathcal{C}(\bff{v})}{\bff{w}}_{\bb{L}^2} \big|
	&\leq
    C \left(1+\norm{\bff{\nu}}{\bb{L}^\infty(\mathscr{D};\bb{R}^d)}^2\right) \left(1+ \norm{\bff{w}}{\bb{L}^\infty}^2 \right) \norm{\bff{v}}{\bb{L}^2}^2
    +
    \epsilon \norm{\nabla \bff{v}}{\bb{L}^2}^2.
\end{align}
Now, let $p,q,r \in [1,\infty]$ be such that $1/p+1/q+1/r=1$. For each $\epsilon>0$, there exists a positive constant $C$ such that for any $\bff{v}\in \bb{W}^{1,q}\cap \bb{L}^p$ and $\bff{w} \in \bb{L}^r$,
\begin{align} 
	\label{equ:Rv w}
	\big| \inpro{\mathcal{C}(\bff{v})}{\bff{w}}_{\bb{L}^2} \big|
	&\leq
	C \norm{\bff{\nu}}{\bb{L}^\infty(\mathscr{D};\bb{R}^d)}^2 
    \left(1 
	+
	\norm{\bff{v}}{\bb{L}^p}^2 \right) \norm{\nabla \bff{v}}{\bb{L}^q}^2
	+
	\epsilon \norm{\bff{w}}{\bb{L}^r}^2.
\end{align}
\end{lemma}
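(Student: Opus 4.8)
The plan is to split the convective term into its transport part and its precession part,
\[
    \mathcal{C}(\bff{v}) = \beta_1\,(\bff{\nu}\cdot\nabla)\bff{v} + \beta_2\,\bff{v}\times(\bff{\nu}\cdot\nabla)\bff{v},
\]
and to bound the contribution of each part to the three pairings separately, using only H\"older's inequality, the pointwise bound $|\bff{\nu}(\bff{x})|\leq\norm{\bff{\nu}}{\bb{L}^\infty(\mathscr{D};\bb{R}^d)}$, the elementary identity $(\bff{v}\times\bff{a})\cdot\bff{v}=0$, and Young's inequality to push any gradient factor onto the term carrying the small parameter $\epsilon$. All the coefficients $\beta_1,\beta_2$ are non-negative constants and are absorbed into the generic constant $C$.

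For~\eqref{equ:Rv v} I would first note that the precession contribution $\beta_2\inpro{\bff{v}\times(\bff{\nu}\cdot\nabla)\bff{v}}{\bff{v}}_{\bb{L}^2}$ vanishes identically, since $\bff{v}\times(\bff{\nu}\cdot\nabla)\bff{v}$ is pointwise orthogonal to $\bff{v}$. For the transport contribution, H\"older's inequality gives $\big|\inpro{(\bff{\nu}\cdot\nabla)\bff{v}}{\bff{v}}_{\bb{L}^2}\big|\leq\norm{\bff{\nu}}{\bb{L}^\infty(\mathscr{D};\bb{R}^d)}\norm{\nabla\bff{v}}{\bb{L}^2}\norm{\bff{v}}{\bb{L}^2}$, and Young's inequality (splitting off the gradient with weight $\epsilon$) yields~\eqref{equ:Rv v}. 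I would prove~\eqref{equ:Rv w infty} in the same spirit: for the precession part, factor out $\norm{\bff{w}}{\bb{L}^\infty}$ and then pair $\bff{v}$ with $\nabla\bff{v}$ in $\bb{L}^2$ (picking up $\norm{\bff{\nu}}{\bb{L}^\infty}$), and apply Young's inequality to split off $\epsilon\norm{\nabla\bff{v}}{\bb{L}^2}^2$, leaving a term proportional to $\norm{\bff{w}}{\bb{L}^\infty}^2\norm{\bff{v}}{\bb{L}^2}^2$; for the transport part, integrate by parts to move the derivative off $\bff{v}$ (the boundary term vanishing under the natural condition $\bff{\nu}\cdot\bff{n}=0$ on $\partial\mathscr{D}$) and then use H\"older and Young once more. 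Summing the two contributions and enlarging constants with harmless `$1+$' factors gives~\eqref{equ:Rv w infty}.

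For~\eqref{equ:Rv w} no integration by parts is needed. H\"older's inequality with the three exponents $1/p+1/q+1/r=1$ bounds the precession part by $C\norm{\bff{v}}{\bb{L}^p}\norm{\bff{\nu}}{\bb{L}^\infty}\norm{\nabla\bff{v}}{\bb{L}^q}\norm{\bff{w}}{\bb{L}^r}$, while H\"older with the pair $1/q+1/q'=1$ bounds the transport part by $C\norm{\bff{\nu}}{\bb{L}^\infty}\norm{\nabla\bff{v}}{\bb{L}^q}\norm{\bff{w}}{\bb{L}^{q'}}$. Since $1/q'=1-1/q>1-1/p-1/q=1/r$ one has $q'<r$, so on the bounded domain $\mathscr{D}$ one may replace $\norm{\bff{w}}{\bb{L}^{q'}}$ by $C\norm{\bff{w}}{\bb{L}^r}$; a final application of Young's inequality absorbing $\norm{\bff{w}}{\bb{L}^r}$ with weight $\epsilon$ produces~\eqref{equ:Rv w}.

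I do not expect any real obstacle here: the lemma is essentially a bookkeeping exercise combining H\"older's and Young's inequalities with the orthogonality identity $(\bff{v}\times\bff{a})\cdot\bff{v}=0$. The two points that require a little attention are (i) arranging in~\eqref{equ:Rv v} and~\eqref{equ:Rv w infty} that every gradient factor ends up on the term carrying the tunable constant $\epsilon$, which in~\eqref{equ:Rv w infty} forces an integration by parts in the transport term (hence a word on the boundary behaviour of $\bff{\nu}$), and (ii) in~\eqref{equ:Rv w}, verifying the exponent inequality $q'<r$ so that the $\bb{L}^{q'}$ norm of $\bff{w}$ can be subsumed into its $\bb{L}^r$ norm on $\mathscr{D}$ before Young's inequality is applied.
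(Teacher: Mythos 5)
Your treatment of \eqref{equ:Rv v} and \eqref{equ:Rv w} is correct and essentially coincides with the paper's argument. For \eqref{equ:Rv v} the paper only says the bound ``follows directly from Young's inequality'', and your explicit use of the pointwise identity $(\bff{v}\times\bff{a})\cdot\bff{v}=0$ is precisely the cancellation that makes this true (a direct H\"older bound on the precession term would produce a cubic expression that $\epsilon\norm{\nabla\bff{v}}{\bb{L}^2}^2$ cannot absorb in $d=3$). For \eqref{equ:Rv w} the paper applies H\"older with the triple $(p,q,r)$ and puts the slack exponent on $\bff{\nu}$, using $\norm{\bff{\nu}}{\bb{L}^p(\mathscr{D};\bb{R}^d)}\le C\norm{\bff{\nu}}{\bb{L}^\infty(\mathscr{D};\bb{R}^d)}$ on the bounded domain, whereas you put it on $\bff{w}$ via $\norm{\bff{w}}{\bb{L}^{q'}}\le C\norm{\bff{w}}{\bb{L}^r}$ (note only $q'\le r$ holds, with equality when $p=\infty$, which is all you need); after Young's inequality the two computations are identical.

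The gap is in \eqref{equ:Rv w infty}. The integration by parts you propose for the transport term is not available under the stated hypotheses: $\bff{w}$ is only in $\bb{L}^\infty$ (no weak derivative), $\bff{\nu}$ is only $\bb{L}^\infty$ in space (so $\nabla\cdot\bff{\nu}$ is undefined), and the boundary condition $\bff{\nu}\cdot\bff{n}=0$ is not assumed anywhere in the paper; even granting all of this, the terms produced would involve derivatives of $\bff{w}$ and of $\bff{\nu}$ and do not reduce to the claimed right-hand side. In fact no argument of this type can yield \eqref{equ:Rv w infty} exactly as written: the $\beta_1$-contribution $\inpro{(\bff{\nu}\cdot\nabla)\bff{v}}{\bff{w}}$ is linear in $\bff{v}$ while the right-hand side is quadratic, so the stated bound fails for small multiples of any $\bff{v}$ with nonvanishing pairing. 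The direct H\"older--Young estimate (which is what the paper's one-line proof amounts to) gives, for that part, $\epsilon\norm{\nabla\bff{v}}{\bb{L}^2}^2 + C\norm{\bff{\nu}}{\bb{L}^\infty(\mathscr{D};\bb{R}^d)}^2\norm{\bff{w}}{\bb{L}^2}^2$, i.e.\ with $\norm{\bff{w}}{\bb{L}^2}^2$ in place of the factor $\norm{\bff{v}}{\bb{L}^2}^2$. Since \eqref{equ:Rv w infty} is never invoked later in the paper nothing downstream is affected, but your proof as written does not (and cannot) establish the stated form; you should either prove the corrected version or flag the statement, rather than appeal to an unjustified integration by parts.
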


\begin{proof}
Inequalities \eqref{equ:Rv v} and \eqref{equ:Rv w infty} follow directly from Young's inequality. To show \eqref{equ:Rv w}, we apply H\"older's and Young's inequalities to obtain
\begin{align*}
	\big| \inpro{\mathcal{C}(\bff{v})}{\bff{w}}_{\bb{L}^2} \big|
	&\leq
	C \norm{\bff{\nu}}{\bb{L}^p(\mathscr{D};\bb{R}^d)} \norm{\nabla \bff{v}}{\bb{L}^q} \norm{\bff{w}}{\bb{L}^r}
	+
	\norm{\bff{\nu}}{\bb{L}^\infty(\mathscr{D};\bb{R}^d)} \norm{\bff{v}}{\bb{L}^p} \norm{\nabla \bff{v}}{\bb{L}^q} \norm{\bff{w}}{\bb{L}^r}
	\\
	&\leq
	C \norm{\bff{\nu}}{\bb{L}^\infty(\mathscr{D};\bb{R}^d)}^2 
	\left(1 
	+
	\norm{\bff{v}}{\bb{L}^p}^2 \right) \norm{\nabla \bff{v}}{\bb{L}^q}^2
	+
	\epsilon \norm{\bff{w}}{\bb{L}^r}^2,
\end{align*}
as required. This completes the proof of the lemma.
\end{proof}

\section{A fully-discrete mixed finite element method}\label{sec:mixed fem}

In this section, we propose a mixed finite element method for~\eqref{equ:sllbar} with partially implicit Euler time-stepping. We start with $\bff{u}_h^0= \Pi_h \bff{u}(0) \in \bb{V}_h$. Let $t_n\in [0,T]$, where $n\in \{1,2,\ldots, N\}$ and $N=\lfloor T/k \rfloor$, given $\bff{u}_h^{n-1} \in \bb{V}_h$, we find $\mathcal{F}_{t_n}$-adapted and $\bb{V}_h\times \bb{V}_h$-valued random variables $\{(\bff{u}_h^n,\bff{H}_h^n)\}$ satisfying $\bb{P}$-a.s.,
\begin{align}\label{equ:euler}
	\left\{
	\begin{alignedat}{1}
		\inpro{\bff{u}_h^n-\bff{u}_h^{n-1}}{\bff{\chi}_h}
		&=
		k \inpro{\bff{H}_h^n}{\bff{\chi}_h}
		+
		k \inpro{\nabla \bff{H}_h^n}{\nabla \bff{\chi}_h}
		-
		k\gamma\inpro{\bff{u}_h^n \times \bff{H}_h^n}{\bff{\chi}_h}
		+
		k\inpro{\mathcal{C}(\bff{u}_h^n)}{\bff{\chi}_h}
		\\
		&\quad
        +
		k\inpro{\mathcal{M}_R(\bff{u}_h^{n-1})}{\bff{\chi}_h}
		+
		\inpro{G(\bff{u}_h^{n-1})}{\bff{\chi}_h} \overline{\Delta}W^n,
		\\
		\inpro{\bff{H}_h^n}{\bff{\phi}_h} 
		&=
		-\inpro{\nabla \bff{u}_h^n}{\nabla \bff{\phi}_h}
		+
		\inpro{f_R(\bff{u}_h^{n-1})}{\bff{\phi}_h},
	\end{alignedat}
	\right.
\end{align}
for all $\bff{\chi}_h, \bff{\phi}_h \in \bb{V}_h$. Here, $\overline{\Delta}W^n:= W(t_n)-W(t_{n-1}) \sim \mathcal{N}(0,k)$. 

In particular, when $\gamma=\beta_2=0$, this is a fully-discrete IMEX-type scheme for the sCHm or the sAC/CH equations. Subsequently, we set $\gamma=1$ in the analysis for ease of presentation.

\begin{lemma}
There exists a sequence $\{(\bff{u}_h^n,\bff{H}_h^n)\}$ of $\bb{V}_h\times\bb{V}_h$-valued random variables which solves~\eqref{equ:euler}.
\end{lemma}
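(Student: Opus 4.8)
The plan is to prove existence of a solution to the nonlinear algebraic system \eqref{equ:euler} at each time step by a fixed-point/degree-theoretic argument. Fix $n$ and treat $\bff{u}_h^{n-1}\in\bb{V}_h$ (and the Brownian increment $\overline{\Delta}W^n$) as given, measurable data. First I would eliminate $\bff{H}_h^n$: the second equation of \eqref{equ:euler} says $\bff{H}_h^n = \Delta_h \bff{u}_h^n + \Pi_h f_R(\bff{u}_h^{n-1})$, which is an explicit affine expression in the unknown $\bff{u}_h^n$ (recall \eqref{equ:disc laplacian} and \eqref{equ:orth proj}). Substituting this into the first equation reduces \eqref{equ:euler} to a single equation for $\bff{u}_h^n\in\bb{V}_h$ of the form $\Phi(\bff{u}_h^n)=0$, where $\Phi:\bb{V}_h\to\bb{V}_h$ is continuous (indeed polynomial in the components, since $\mathcal{C}$, $\mathcal{M}_R$ and the cross product are smooth, and $\varphi_R$ is $C^2$).

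Next I would apply the standard corollary of Brouwer's fixed-point theorem (sometimes attributed to Browder): if $\Phi:\bb{V}_h\to\bb{V}_h$ is continuous and there exists $\rho>0$ such that $\inpro{\Phi(\bff{v})}{\bff{v}}\geq 0$ for all $\bff{v}$ with $\norm{\bff{v}}{\bb{L}^2}=\rho$, then $\Phi$ has a zero in the closed ball of radius $\rho$. The key computation is the coercivity estimate: testing $\Phi(\bff{v})$ against $\bff{v}$, the cross-product term $\gamma\inpro{\bff{v}\times\bff{H}}{\bff{v}}$ contributes only through its component orthogonal to $\bff{v}$ but $\bff{v}\times\bff{H}$ is perpendicular to $\bff{v}$, so $\inpro{\bff{v}\times(\Delta_h\bff{v}+\Pi_h f_R(\bff{u}_h^{n-1}))}{\bff{v}}$ — wait, more carefully, $\inpro{\bff{v}\times\bff{g}}{\bff{v}}=0$ pointwise for any $\bff{g}$, so the cross-product term vanishes identically. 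The term $k\inpro{\bff{H}_h^n}{\bff{v}}+k\inpro{\nabla\bff{H}_h^n}{\nabla\bff{v}} = k\inpro{\bff{H}_h^n}{\bff{v}} - k\inpro{\bff{H}_h^n}{\Delta_h\bff{v}}$ must be rewritten using $\bff{H}_h^n=\Delta_h\bff{v}+\Pi_h f_R(\bff{u}_h^{n-1})$; the dominant contribution is $-k\norm{\Delta_h\bff{v}}{\bb{L}^2}^2 - k\norm{\nabla\bff{v}}{\bb{L}^2}^2$ plus lower-order terms controlled via \eqref{equ:interp disc Lap nab L2}, Young's inequality, and the bounds \eqref{equ:Rv v}, \eqref{equ:fR v L2}, together with the fact that $\norm{f_R(\bff{u}_h^{n-1})}{\bb{L}^2}$ and $\norm{G(\bff{u}_h^{n-1})}{\bb{L}^2}$ are finite (deterministic once $\omega$ is fixed). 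One obtains
\[
\inpro{\Phi(\bff{v})}{\bff{v}} \geq \norm{\bff{v}}{\bb{L}^2}^2 + ck\norm{\Delta_h\bff{v}}{\bb{L}^2}^2 + ck\norm{\nabla\bff{v}}{\bb{L}^2}^2 - C_{R}(1 + \norm{\bff{u}_h^{n-1}}{\bb{L}^2}^2 + |\overline{\Delta}W^n|^2),
\]
which is $\geq 0$ once $\norm{\bff{v}}{\bb{L}^2}=\rho$ with $\rho$ large enough (depending on $\omega$), giving a zero $\bff{u}_h^n$ and hence, via the affine relation, $\bff{H}_h^n$.

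Finally, for the measurability/adaptedness claim: I would note that the solution can be selected measurably with respect to the data $(\bff{u}_h^{n-1},\overline{\Delta}W^n)$ by a standard measurable-selection argument — e.g. the set-valued map $\omega\mapsto\{\bff{v}:\Phi_\omega(\bff{v})=0\}$ is closed-valued and measurable (the graph is measurable since $\Phi$ depends continuously on the data), so it admits a measurable selector; since $\bff{u}_h^{n-1}$ is $\mathcal{F}_{t_{n-1}}$-measurable and $\overline{\Delta}W^n$ is $\mathcal{F}_{t_n}$-measurable, the selected $(\bff{u}_h^n,\bff{H}_h^n)$ is $\mathcal{F}_{t_n}$-measurable. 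Induction on $n$, starting from the deterministic $\bff{u}_h^0=\Pi_h\bff{u}(0)$, then produces the full sequence. The main obstacle is the coercivity bookkeeping in the presence of the discrete-Laplacian-squared term coming from the biharmonic part: one must be careful that the negative $-k\norm{\Delta_h\bff{v}}{\bb{L}^2}^2$ coming from $-k\inpro{\Delta_h\bff{v}}{\Delta_h\bff{v}}$ — actually this enters with a \emph{favourable} sign in $-\inpro{\Phi(\bff{v})}{\bff{v}}$, so let me restate: the term $k\inpro{\bff{H}_h^n}{\bff{v}-\Delta_h\bff{v}}$... the bookkeeping here, ensuring all the $\Pi_h f_R$ and $\mathcal{C}$ cross-terms are absorbed into $\epsilon k\norm{\Delta_h\bff{v}}{\bb{L}^2}^2 + \epsilon k\norm{\nabla\bff{v}}{\bb{L}^2}^2$ and a data-dependent constant, is the only delicate point; uniqueness is \emph{not} claimed here and need not be addressed.
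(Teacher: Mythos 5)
Your proposal follows essentially the same route as the paper: eliminate $\bff{H}_h^n$ through the affine relation $\bff{H}_h^n=\Delta_h\bff{u}_h^n+\Pi_h f_R(\bff{u}_h^{n-1})$, reduce \eqref{equ:euler} for fixed $\omega$ to a zero problem $\mathcal{G}_n^\omega(\bff{v})=0$ for a continuous map on $\bb{V}_h$, apply the corollary of Brouwer's fixed point theorem requiring $\inpro{\mathcal{G}_n^\omega(\bff{v})}{\bff{v}}\geq 0$ on a sphere whose radius depends on the data, and obtain $\mathcal{F}_{t_n}$-measurability by a measurable-selection argument plus induction on $n$; this is exactly the paper's proof (which cites \cite[Theorem~2.2]{BanBrzNekPro14} for the measurability step). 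Your observations that the cross-product term drops out when tested against $\bff{v}$ and that the biharmonic part contributes $+k\norm{\Delta_h\bff{v}}{\bb{L}^2}^2$ are correct.

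One bookkeeping point needs fixing: the coercivity bound you display, with coefficient $1$ on $\norm{\bff{v}}{\bb{L}^2}^2$ and no restriction on $k$, cannot hold as stated. The transport part $\beta_1(\bff{\nu}\cdot\nabla)\bff{v}$ of $k\inpro{\mathcal{C}(\bff{v})}{\bff{v}}$ is genuinely quadratic in $\bff{v}$, and after Young's inequality it leaves a term $-Ck\norm{\bff{v}}{\bb{L}^2}^2$ that cannot be absorbed by $k\norm{\nabla\bff{v}}{\bb{L}^2}^2$ or $k\norm{\Delta_h\bff{v}}{\bb{L}^2}^2$ (these do not control the $\bb{L}^2$ norm); similar $-Ck\norm{\bff{v}}{\bb{L}^2}^2$ remainders arise from the $f_R$-terms depending on how they are split. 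Consequently the sign condition on the sphere only holds under a step-size restriction $k\leq k_0$ with $k_0$ depending on $R$ and the coefficients but not on $h$, $n$, or $\omega$; the paper imposes $k<(1+2C_R)^{-1}$ and obtains the prefactor $\tfrac12(1-k-2C_Rk)$ in front of $\norm{\bff{v}}{\bb{L}^2}^2$. With this (harmless) restriction added, your argument is complete and coincides with the paper's.
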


\begin{proof}
Fix $\omega\in\Omega$. We aim to use induction and a form of Brouwer's fixed point theorem to show the existence of a sequence $\{\bff{u}_h^n(\omega)\}_{n=1}^N$ solving \eqref{equ:euler}. Suppose that $\bff{u}_h^0(\omega), \bff{u}_h^1(\omega),\ldots,\bff{u}_h^{n-1}(\omega)$ are given. Consider a continuous map $\mathcal{G}_n^\omega: \bb{V}_h \to \bb{V}_h$ defined by
\begin{align*}
    \mathcal{G}_n^\omega (\bff{v})
    &=
    \bff{v}-\bff{u}_h^{n-1}(\omega)
    -
    k \Delta_h \bff{v}
    -
    k \Pi_h f_R\big(\bff{u}_h^{n-1}(\omega)\big)
    +
    k\Delta_h^2 \bff{v}
    +
    k\Delta_h \Pi_h f_R\big(\bff{u}_h^{n-1}(\omega)\big)
        \\
    &\quad
    +
    k \bff{v} \times \left(\Delta_h \bff{v}+ \Pi_h f_R(\bff{u}_h^{n-1}(\omega))\right)
    -
    k \Pi_h \mathcal{C}(\bff{v})
    -
    k \Pi_h \mathcal{M}_R(\bff{u}_h^{n-1}(\omega))
    \\
    &\quad
    -
    \Pi_h G(\bff{u}_h^{n-1}(\omega)) \overline{\Delta}W^n(\omega).
\end{align*}
For all $\bff{v}\in\bb{V}_h$, by Young's inequality, Lipschitz continuity of $G$, $f_R$, and~$\mathcal{M}_R$, and \eqref{equ:fR v L2} we have
\begin{align*}
    \inpro{\mathcal{G}_n^\omega(\bff{v})}{\bff{v}}
    &\geq
    \frac12 \norm{\bff{v}}{\bb{L}^2}^2
    -
    \frac12 \norm{\bff{u}_h^{n-1}(\omega)}{\bb{L}^2}^2
    +
    k\norm{\nabla \bff{v}}{\bb{L}^2}^2
    +
    k\norm{\Delta_h \bff{v}}{\bb{L}^2}^2
    -
    k\inpro{\Pi_h f_R(\bff{u}_h^{n-1}(\omega))}{\bff{v}}
    \\
    &\quad
    -
    k\inpro{\nabla\Pi_h f_R(\bff{u}_h^{n-1}(\omega))}{\nabla\bff{v}}
    -
    k\inpro{(\bff{\nu}\cdot \nabla)\bff{v}}{\bff{v}}
    -
    k\inpro{\mathcal{M}_R(\bff{v})}{\bff{v}}
    \\
    &\quad
    -
    \inpro{G(\bff{u}_h^{n-1}(\omega)) \overline{\Delta}W^n(\omega)}{\bff{v}}
    \\
    &\geq
    \frac12 \norm{\bff{v}}{\bb{L}^2}^2
    -
    \frac12 \norm{\bff{u}_h^{n-1}(\omega)}{\bb{L}^2}^2
    +
    k\norm{\nabla \bff{v}}{\bb{L}^2}^2
    +
    k\norm{\Delta_h \bff{v}}{\bb{L}^2}^2
    -
    2C_R k \norm{\bff{u}_h^{n-1}(\omega)}{\bb{L}^2}^2
    -
    \frac{k}{8} \norm{\bff{v}}{\bb{L}^2}^2
    \\
    &\quad
    -
    \frac{C_R k}{4} \norm{\bff{u}_h^{n-1}(\omega)}{\bb{L}^2}^2
    -
    k\norm{\Delta_h \bff{v}}{\bb{L}^2}^2
    -
    k \norm{\nabla \bff{v}}{\bb{L}^2}^2
    -
    \frac{k}{4} \norm{\bff{v}}{\bb{L}^2}^2
    -
    C_R k \norm{\bff{v}}{\bb{L}^2}^2
    \\
    &\quad
    -
    2C_G  \norm{\bff{u}_h^{n-1}(\omega)}{\bb{L}^2}^2 \abs{\overline{\Delta} W^n(\omega)}^2
    -
    \frac{k}{8}\norm{\bff{v}}{\bb{L}^2}^2
    \\
    &=
    \frac12 \left(1-k-2C_R k\right) \norm{\bff{v}}{\bb{L}^2}^2
    -
    \frac14 \left(9C_R k+ 8C_G \abs{\overline{\Delta} W^n(\omega)}^2 \right) \norm{\bff{u}_h^{n-1}(\omega)}{\bb{L}^2}^2.
\end{align*}
Now, suppose that $k<(1+2C_R)^{-1}$, and set $\beta:=1-k-2C_R k>0$. Let
\[
\mathcal{B}_n:=\left\{\bff{\varphi}\in \bb{V}_h: \norm{\bff{\varphi}}{\bb{L}^2}^2= 4\beta^{-1} \Lambda_n(\omega)\right\},
\]
where
\[
    \Lambda_n(\omega):= \frac14 \left(9C_R k+ 8C_G \abs{\overline{\Delta} W^n(\omega)}^2 \right) \norm{\bff{u}_h^{n-1}(\omega)}{\bb{L}^2}^2 <\infty.
\]
Then we have $\inpro{\mathcal{G}_n^\omega(\bff{v})}{\bff{v}}\geq 0$ for all $\bff{v}\in \mathcal{B}_n(\omega)$. Brouwer's fixed point theorem~\cite[Corollary VI.1.1]{GirRav86} implies the existence of $\bff{u}_h^n(\omega)$ such that $\mathcal{G}_n^\omega\big(\bff{u}_h^n(\omega)\big)=0$, thus also of $\bff{H}_h^n(\omega)=\Delta_h \bff{u}_h^n(\omega) +\Pi_h f_R\big(\bff{u}_h^{n-1}(\omega)\big)$. The $\mathcal{F}_{t_n}$-measurability of the map $\bff{u}_h^n:\Omega\to \bb{V}_h$, thus also of $\bff{H}_h^n$, follows from the same argument as in~\cite[Theorem~2.2]{BanBrzNekPro14}.
\end{proof}

We establish some stability estimates in the following lemmas.

\begin{lemma}\label{lem:stab L2}
Let $p\in [1,\infty)$ be a natural number. Suppose that $\{(\bff{u}_h^n, \bff{H}_h^n)\}$ satisfies~\eqref{equ:euler}. There exists a positive constant $C$ such that
\begin{align}\label{equ:E u 2p general}
	&\bb{E} \left[ \max_{l\leq n} \norm{\bff{u}_h^l}{\bb{L}^2}^{2^p} \right]
	+
	\bb{E}\left[ \sum_{j=1}^n \norm{\bff{u}_h^j-\bff{u}_h^{j-1}}{\bb{L}^2}^2 \norm{\bff{u}_h^j}{\bb{L}^2}^{2^p-2} \right]
    \nonumber\\
	&\quad
	+
	\bb{E} \left[ \sum_{j=1}^n k \left(\norm{\nabla \bff{u}_h^j}{\bb{L}^2}^2 
	+ \norm{\Delta_h \bff{u}_h^j}{\bb{L}^2}^2 \right) \norm{\bff{u}_h^j}{\bb{L}^2}^{2^p-2}  \right]
	+
	\bb{E}\left[ \left(k \sum_{j=1}^n \norm{\nabla \bff{u}_h^j}{\bb{L}^2}^2 \right)^{2^{p-1}} \right] 
	\nonumber\\
	&\quad
	+
	\bb{E} \left[
	\left(k \sum_{j=1}^n \norm{\Delta_h \bff{u}_h^j}{\bb{L}^2}^2 \right)^{2^{p-1}} \right]
	+
	{\bb{E} \left[
	\left(k \sum_{j=1}^n \norm{\bff{H}_h^j}{\bb{L}^2}^2 \right)^{2^{p-1}} \right]}
	\leq
	C.
\end{align}
where $C$ depends on $T$, $p$, $R$, and $\norm{\bff{u}_0}{\bb{L}^2}$, but is independent of $n$, $h$, and $k$.
\end{lemma}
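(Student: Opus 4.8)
The plan is to prove the estimate by induction on $p$, establishing the base case $p=1$ (i.e. the $\bb{L}^2$-estimate) first and then bootstrapping to higher powers $2^p$. For the base case, I would test the first equation of \eqref{equ:euler} with $\bff{\chi}_h = \bff{u}_h^j$ and the second with $\bff{\phi}_h = \bff{H}_h^j$ (and also with $\bff{\phi}_h = \Delta_h \bff{u}_h^j$, or alternatively use the identity $\inpro{\bff{H}_h^j}{\Delta_h \bff{u}_h^j} = \dots$), exploiting the algebraic identity $\inpro{\bff{u}_h^j - \bff{u}_h^{j-1}}{\bff{u}_h^j} = \tfrac12(\norm{\bff{u}_h^j}{\bb{L}^2}^2 - \norm{\bff{u}_h^{j-1}}{\bb{L}^2}^2 + \norm{\bff{u}_h^j - \bff{u}_h^{j-1}}{\bb{L}^2}^2)$. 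The cross-product term $\inpro{\bff{u}_h^j \times \bff{H}_h^j}{\bff{u}_h^j}$ vanishes identically. Substituting $\bff{H}_h^j = \Delta_h \bff{u}_h^j + \Pi_h f_R(\bff{u}_h^{j-1})$ produces the good dissipative terms $-k\norm{\nabla \bff{u}_h^j}{\bb{L}^2}^2 - k\norm{\Delta_h \bff{u}_h^j}{\bb{L}^2}^2$ on the right (after moving to the left), while the $f_R$-contributions are controlled using \eqref{equ:fR v L2}, \eqref{equ:proj stab Lp}, and \eqref{equ:proj stab W1p} together with Young's inequality to absorb a small fraction of $k\norm{\nabla\bff{u}_h^j}{\bb{L}^2}^2 + k\norm{\Delta_h\bff{u}_h^j}{\bb{L}^2}^2$; the convective term $\mathcal{C}(\bff{u}_h^j)$ is handled by \eqref{equ:Rv v} and the mass-source term $\mathcal{M}_R(\bff{u}_h^{j-1})$ by its quadratic growth. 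The key subtlety is that $f_R$ and $\mathcal{M}_R$ are evaluated at the \emph{previous} step $\bff{u}_h^{j-1}$, so these terms are $\mathcal{F}_{t_{j-1}}$-measurable and the noise increment has the martingale structure needed later.

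For the stochastic terms, I would expand $\norm{\bff{u}_h^j}{\bb{L}^2}^2 = \norm{\bff{u}_h^{j-1} + (\bff{u}_h^j - \bff{u}_h^{j-1})}{\bb{L}^2}^2$ carefully: the Itô-type correction $\inpro{\Pi_h G(\bff{u}_h^{j-1})\overline{\Delta}W^j}{\bff{u}_h^j}$ splits into a martingale part $\inpro{\Pi_h G(\bff{u}_h^{j-1})}{\bff{u}_h^{j-1}}\overline{\Delta}W^j$ (whose sum over $j$ is a discrete martingale, so it vanishes in expectation and is controlled in the $\max_{l\le n}$ norm by the Burkholder--Davis--Gundy inequality) plus a remainder bounded using $\abs{\overline{\Delta}W^j}^2$ and $\bb{E}[\abs{\overline{\Delta}W^j}^2 \mid \mathcal{F}_{t_{j-1}}] = k$, which contributes $C_G k \norm{\bff{u}_h^{j-1}}{\bb{L}^2}^2$ after taking conditional expectations. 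Summing over $j$, taking the maximum over $l \le n$, then expectation, and applying the discrete Gronwall lemma (using $k < (1+2C_R)^{-1}$ so the implicit terms at step $j$ can be absorbed) yields the $p=1$ bound.

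For the inductive step, assuming the result holds for $p-1$, I would multiply the per-step $\bb{L}^2$-identity by $\norm{\bff{u}_h^j}{\bb{L}^2}^{2^p - 2}$ before summing — more precisely, apply the elementary inequality $a^{2^{p-1}} - b^{2^{p-1}} \le 2^{p-1} a^{2^{p-1}-1}(a-b)$ with $a = \norm{\bff{u}_h^j}{\bb{L}^2}^2$, $b = \norm{\bff{u}_h^{j-1}}{\bb{L}^2}^2$ to telescope $\max_{l\le n}\norm{\bff{u}_h^l}{\bb{L}^2}^{2^p}$. All deterministic terms then carry the weight $\norm{\bff{u}_h^j}{\bb{L}^2}^{2^p-2}$ and are handled as before; the martingale term becomes $\sum_j \norm{\bff{u}_h^{j-1}}{\bb{L}^2}^{2^p-2}\inpro{\Pi_h G(\bff{u}_h^{j-1})}{\bff{u}_h^{j-1}}\overline{\Delta}W^j$, to which BDG applies, giving a quadratic-variation term $\bb{E}[(\sum_j k \norm{\bff{u}_h^{j-1}}{\bb{L}^2}^{2\cdot 2^p - 2})^{1/2}]$ that is split by Young's inequality into $\varepsilon \bb{E}[\max_l \norm{\bff{u}_h^l}{\bb{L}^2}^{2^p}]$ (absorbed on the left) plus $C\bb{E}[\sum_j k \norm{\bff{u}_h^{j-1}}{\bb{L}^2}^{2^p}]$ (handled by discrete Gronwall). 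The terms involving powers of $\big(k\sum_j \norm{\nabla\bff{u}_h^j}{\bb{L}^2}^2\big)^{2^{p-1}}$ etc. follow by raising the summed dissipation inequality to the power $2^{p-1}$ and using the bounds already obtained; the $\norm{\bff{H}_h^j}{\bb{L}^2}$ bound follows from $\bff{H}_h^j = \Delta_h\bff{u}_h^j + \Pi_h f_R(\bff{u}_h^{j-1})$, \eqref{equ:fR v L2} applied to $f_R$ itself (i.e. $\abs{f_R(\bff{v})}\le C_R$), and the already-established bounds on $\norm{\Delta_h\bff{u}_h^j}{\bb{L}^2}$. The main obstacle I anticipate is bookkeeping the stochastic remainder terms at the higher-moment level — ensuring every piece arising from expanding $\norm{\bff{u}_h^j}{\bb{L}^2}^{2^p}$ around the $\mathcal{F}_{t_{j-1}}$-measurable $\norm{\bff{u}_h^{j-1}}{\bb{L}^2}^{2^p}$ (cross terms with $\overline{\Delta}W^j$, the $\abs{\overline{\Delta}W^j}^2$ corrections, and higher powers of the increment) is either a genuine martingale, is $O(k)$ after conditioning, or can be absorbed — and correctly invoking the smallness condition on $k$ so the fully implicit terms at step $j$ (which appear on both sides) can be moved to the left. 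Everything else is a routine, if lengthy, application of Young's inequality, the discrete Burkholder--Davis--Gundy inequality, and the discrete Gronwall lemma.
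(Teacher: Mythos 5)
Your proposal is correct and follows essentially the same route as the paper: test with $\bff{u}_h^n$ and substitute the discrete constitutive equation (tested with $\bff{u}_h^n$ and $-\Delta_h\bff{u}_h^n$) to extract the dissipative terms, exploit the vanishing cross product, split the noise term into a martingale part (Burkholder--Davis--Gundy plus the tower property) and an absorbable remainder, close with discrete Gronwall, bootstrap higher moments by multiplying the per-step inequality by powers of $\norm{\bff{u}_h^j}{\bb{L}^2}$, obtain the last three terms by raising the summed inequality to the power $2^{p-1}$, and read off the $\bff{H}_h^j$ bound from $\bff{H}_h^j=\Delta_h\bff{u}_h^j+\Pi_h f_R(\bff{u}_h^{j-1})$. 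The only (cosmetic) difference is that the paper iterates the doubling by multiplying each established inequality by $\norm{\bff{u}_h^n}{\bb{L}^2}^{2^p}$, retaining the exact square-difference term $\big(\norm{\bff{u}_h^n}{\bb{L}^2}^{2^{p-1}}-\norm{\bff{u}_h^{n-1}}{\bb{L}^2}^{2^{p-1}}\big)^2$ on the left to absorb the non-adapted weight in the stochastic term, whereas your one-sided convexity inequality discards it and forces you to absorb those cross terms against $\norm{\bff{u}_h^j-\bff{u}_h^{j-1}}{\bb{L}^2}^2\norm{\bff{u}_h^j}{\bb{L}^2}^{2^p-2}$ instead — the bookkeeping issue you correctly flag.
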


\begin{proof}
We begin the proof by showing the inequality for $p=1$. Setting $\bff{\chi}_h= \bff{u}_h^n$, we have
\begin{align}\label{equ:uhn min uhn L2}
    \frac12 \left(\norm{\bff{u}_h^n}{\bb{L}^2}^2 - \norm{\bff{u}_h^{n-1}}{\bb{L}^2}^2 \right)
    +
    \frac12 \norm{\bff{u}_h^n- \bff{u}_h^{n-1}}{\bb{L}^2}^2
    &=
    k\inpro{\bff{H}_h^n}{\bff{u}_h^n}
    +
    k\inpro{\nabla \bff{H}_h^n}{\nabla\bff{u}_h^n}
    +
    k\inpro{\mathcal{C}(\bff{u}_h^n)}{\bff{u}_h^n}
    \nonumber \\
    &\quad
    +
    k\inpro{\mathcal{M}_R(\bff{u}_h^{n-1})}{\bff{u}_h^n}
    +
    \inpro{G(\bff{u}_h^{n-1})}{\bff{u}_h^n} \overline{\Delta}W^n.
\end{align}
Successively taking $\bff{\phi}_h= \bff{u}_h^n$ and $\bff{\phi}_h= -\Delta_h \bff{u}_h^n$, then substituting the results to~\eqref{equ:uhn min uhn L2}, we obtain
\begin{align}\label{equ:ineq uhn L2}
    &\frac12 \left(\norm{\bff{u}_h^n}{\bb{L}^2}^2 - \norm{\bff{u}_h^{n-1}}{\bb{L}^2}^2 \right)
    +
    \frac12 \norm{\bff{u}_h^n- \bff{u}_h^{n-1}}{\bb{L}^2}^2
    +
    k\norm{\nabla \bff{u}_h^n}{\bb{L}^2}^2
    +
    k\norm{\Delta_h \bff{u}_h^n}{\bb{L}^2}^2
    \nonumber \\
    &=
    k\inpro{f_R(\bff{u}_h^{n-1})}{\bff{u}_h^n}
    -
    k\inpro{f_R(\bff{u}_h^{n-1})}{\Delta_h \bff{u}_h^n}
    +
    k\inpro{\mathcal{C}(\bff{u}_h^n)}{\bff{u}_h^n}
    +
    k\inpro{\mathcal{M}_R(\bff{u}_h^{n-1})}{\bff{u}_h^n}
    \nonumber\\
    &\quad
    +
    \inpro{G(\bff{u}_h^{n-1})}{\bff{u}_h^{n-1}} \overline{\Delta}W^n 
    + 
    \inpro{G(\bff{u}_h^{n-1})}{\bff{u}_h^n-\bff{u}_h^{n-1}} \overline{\Delta} W^n
    \nonumber\\
    &=: J_1+J_2+\cdots+J_6.
\end{align}
We will estimate each term on the last line. By Lipschitz continuity of $f_R$, $G$, and $\mathcal{M}_R$, \eqref{equ:Rv w}, and Young's inequality we have
\begin{align*}
    \abs{J_1}
    &\leq
    C_R k \norm{\bff{u}_h^{n-1}}{\bb{L}^2}^2 + \frac{k}{2} \norm{\bff{u}_h^n}{\bb{L}^2}^2,
    \\
    \abs{J_2}
    &\leq
    C_R k \norm{\bff{u}_h^{n-1}}{\bb{L}^2}^2
    +
    \frac{k}{2} \norm{\Delta_h \bff{u}_h^n}{\bb{L}^2}^2,
    \\
    \abs{J_3}+\abs{J_4}
    &\leq
    C_R k \norm{\bff{u}_h^{n-1}}{\bb{L}^2}^2
    + \frac18 \norm{\bff{u}_h^n-\bff{u}_h^{n-1}}{\bb{L}^2}^2
    + \frac{k}{2} \norm{\nabla \bff{u}_h^n}{\bb{L}^2}^2,
\end{align*}
We aim to estimate the moments of the last two terms. To this end, note that by Young's inequality,
\begin{align}\label{equ:g h L2}
    \abs{J_6}
    &\leq
    \frac14 \norm{\bff{u}_h^n-\bff{u}_h^{n-1}}{\bb{L}^2}^2
    +
    \norm{G(\bff{u}_h^{n-1})}{\bb{L}^2}^2 \abs{\overline{\Delta} W^n}^2.
\end{align}
By the tower property of the conditional expectation and the independence of the Wiener increment, we infer that
\begin{align}\label{equ:E Delta W L2 1}
    \bb{E}\left[ \max_{l\leq n} \sum_{j=1}^l \norm{G(\bff{u}_h^{j-1})}{\bb{L}^2}^2  \abs{\overline{\Delta} W^j}^2\right]
    &=
    \sum_{j=1}^n \bb{E}\left[\bb{E}\Big[\norm{G(\bff{u}_h^{j-1})}{\bb{L}^2}^2 \abs{\overline{\Delta} W^j}^2 \Big| \mathcal{F}_{t_{j-1}}\Big] \right]
    \nonumber\\
    &=
    \sum_{j=1}^n \bb{E}\left[ \norm{G(\bff{u}_h^{j-1})}{\bb{L}^2}^2 \bb{E}\Big[\abs{\overline{\Delta} W^j}^2 \Big| \mathcal{F}_{t_{j-1}}\Big] \right]
    \nonumber\\
    &\leq
    Ck \sum_{j=1}^n  \bb{E} \left[1+\norm{\bff{u}_h^{j-1}}{\bb{L}^2}^{2} \right].
\end{align}
Noting the assumptions on $G$, we also have by the Burkholder--Davis--Gundy inequality,
\begin{align}\label{equ:E Delta W L2 2}
    \bb{E} \left[ \max_{l\leq n} \sum_{j=1}^l \inpro{G(\bff{u}_h^{j-1})}{\bff{u}_h^{j-1}} \overline{\Delta}W^j \right]
    &\leq
    C\bb{E} \left[\left(k \sum_{j=1}^n \left(1+\norm{\bff{u}_h^{j-1}}{\bb{L}^2}^2\right) \norm{ \bff{u}_h^{j-1}}{\bb{L}^2}^2 \right)^{\frac12} \right]
    \nonumber\\
    &\leq
    C\bb{E} \left[ \max_{j\leq n} \left(1+\norm{\bff{u}_h^{j-1}}{\bb{L}^2}^2\right)^\frac12 \left(k \sum_{j=1}^n \norm{\bff{u}_h^{j-1}}{\bb{L}^2}^2 \right)^{\frac12} \right] 
    \nonumber\\
    &\leq
    C+
    \frac14 \bb{E}\left[\max_{l\leq n} \norm{\bff{u}_h^l}{\bb{L}^2}^2\right]
    +
    C \bb{E}\left[ \sum_{j=1}^n k\norm{\bff{u}_h^{j-1}}{\bb{L}^2}^2 \right].
\end{align}
As such, summing \eqref{equ:ineq uhn L2} over $j\in \{1,2,\ldots, l\}$, taking the maximum over $l$, applying the expected value, and absorbing the second term in~\eqref{equ:E Delta W L2 2} to the right-hand side, we infer from~\eqref{equ:g h L2}, \eqref{equ:E Delta W L2 1}, and~\eqref{equ:E Delta W L2 2} that
\begin{align*}
    &\bb{E} \left[ \max_{l\leq n} \norm{\bff{u}_h^l}{\bb{L}^2}^{2} \right] 
    +
    \bb{E}\left[ \sum_{j=1}^n \norm{\bff{u}_h^j-\bff{u}_h^{j-1}}{\bb{L}^2}^2\right]
    +
    \bb{E} \left[ \sum_{j=1}^n k \left(\norm{\nabla \bff{u}_h^n}{\bb{L}^2}^2 + \norm{\Delta_h \bff{u}_h^n}{\bb{L}^2}^2 \right)  \right]
    \\
    &\quad \leq
    \norm{\bff{u}_h^0}{\bb{L}^2}^{2}
    +
    C\left(1 +k \sum_{j=1}^n \bb{E} \left[ \max_{l\leq j} \norm{\bff{u}_h^{l-1}}{\bb{L}^2}^{2} \right] \right),
\end{align*}
where $C$ depends on $T$ and $R$. The first inequality then follows by the discrete Gronwall lemma.

Next, we aim to prove the second inequality. We will show the case $p=2$ in detail. Multiplying~\eqref{equ:ineq uhn L2} by $\norm{\bff{u}_h^n}{\bb{L}^2}^2$, we obtain
\begin{align}\label{equ:2est I1 to I5}
	&\frac14 \left(\norm{\bff{u}_h^n}{\bb{L}^2}^4 - \norm{\bff{u}_h^{n-1}}{\bb{L}^2}^4 + \left(\norm{\bff{u}_h^n}{\bb{L}^2}^2- \norm{\bff{u}_h^{n-1}}{\bb{L}^2}^2\right)^2 \right)
	+
	\frac12 \norm{\bff{u}_h^n- \bff{u}_h^{n-1}}{\bb{L}^2}^2 \norm{\bff{u}_h^n}{\bb{L}^2}^2
	\nonumber \\
	&
	+
	k\norm{\nabla \bff{u}_h^n}{\bb{L}^2}^2 \norm{\bff{u}_h^n}{\bb{L}^2}^2
	+
	k\norm{\Delta_h \bff{u}_h^n}{\bb{L}^2}^2 \norm{\bff{u}_h^n}{\bb{L}^2}^2
	\nonumber \\
	&=
	k\inpro{f_R(\bff{u}_h^{n-1})}{\bff{u}_h^n} \norm{\bff{u}_h^n}{\bb{L}^2}^2
	-
	k\inpro{f_R(\bff{u}_h^{n-1})}{\Delta_h \bff{u}_h^n} \norm{\bff{u}_h^n}{\bb{L}^2}^2
	+
	k\inpro{\mathcal{C}(\bff{u}_h^n)}{\bff{u}_h^n} \norm{\bff{u}_h^n}{\bb{L}^2}^2
    \nonumber\\
    &\quad
    +
    k\inpro{\mathcal{M}_R(\bff{u}_h^{n-1})}{\bff{u}_h^n} \norm{\bff{u}_h^n}{\bb{L}^2}^2
	+
	\inpro{G(\bff{u}_h^{n-1}) \overline{\Delta}W^n}{\bff{u}_h^{n-1}}  \norm{\bff{u}_h^n}{\bb{L}^2}^2
	\nonumber\\
	&\quad
	+ 
	\inpro{G(\bff{u}_h^{n-1}) \overline{\Delta} W^n}{\bff{u}_h^n-\bff{u}_h^{n-1}}  \norm{\bff{u}_h^n}{\bb{L}^2}^2
	\nonumber\\
	&= I_1+I_2+\cdots+I_6.
\end{align}
From the corresponding estimates for $J_1$ to $J_4$ in \eqref{equ:ineq uhn L2}, the first four terms can be estimated as:
\begin{align*}
	\abs{I_1}
	&\leq
	\frac{k}{16} \norm{\bff{u}_h^n-\bff{u}_h^{n-1}}{\bb{L}^2}^2 \norm{\bff{u}_h^n}{\bb{L}^2}^2
	+
	Ck\norm{\bff{u}_h^n}{\bb{L}^2}^4,
	\\
	\abs{I_2}
	&\leq
	\frac{k}{16} \norm{\Delta_h \bff{u}_h^n}{\bb{L}^2}^2 \norm{\bff{u}_h^n}{\bb{L}^2}^2
	+
	Ck\norm{\bff{u}_h^n}{\bb{L}^2}^4 + Ck\norm{\bff{u}_h^{n-1}}{\bb{L}^2}^4,
	\\
	\abs{I_3}+\abs{I_4}
	&\leq
	\frac{k}{16} \norm{\nabla \bff{u}_h^n}{\bb{L}^2}^2 \norm{\bff{u}_h^n}{\bb{L}^2}^2
    +
    \frac{k}{16} \norm{\bff{u}_h^n-\bff{u}_h^{n-1}}{\bb{L}^2}^2 \norm{\bff{u}_h^n}{\bb{L}^2}^2
	+
	Ck\norm{\bff{u}_h^n}{\bb{L}^2}^4.
\end{align*}
For the term $I_5$, by Young's inequality we have
\begin{align}\label{equ:I4 sto}
	I_5
	&=
	\inpro{G(\bff{u}_h^{n-1}) \overline{\Delta}W^n}{\bff{u}_h^{n-1}} \left[\left(\norm{\bff{u}_h^n}{\bb{L}^2}^2- \norm{\bff{u}_h^{n-1}}{\bb{L}^2}^2\right) + \norm{\bff{u}_h^{n-1}}{\bb{L}^2}^2 \right]
	\nonumber\\
	&\leq
	\frac{1}{16} \left(\norm{\bff{u}_h^n}{\bb{L}^2}^2- \norm{\bff{u}_h^{n-1}}{\bb{L}^2}^2\right)^2
	+
	C \norm{\bff{u}_h^{n-1}}{\bb{L}^2}^4 \abs{\overline{\Delta} W^n}^2
	+
	\inpro{G(\bff{u}_h^{n-1}) \overline{\Delta}W^n}{\bff{u}_h^{n-1}}  \norm{\bff{u}_h^{n-1}}{\bb{L}^2}^2.
\end{align}
Similarly, for the term $I_6$ we infer that
\begin{align}\label{equ:I5 sto}
	I_6
	&\leq
	C \norm{\bff{u}_h^{n-1}}{\bb{L}^2}^2 |\overline{\Delta} W^n|^2 \norm{\bff{u}_h^n}{\bb{L}^2}^2
	+
	\frac{1}{16} \norm{\bff{u}_h^n-\bff{u}_h^{n-1}}{\bb{L}^2}^2 \norm{\bff{u}_h^n}{\bb{L}^2}^2
	\nonumber\\
	&\leq
	\frac{1}{16} \left(\norm{\bff{u}_h^n}{\bb{L}^2}^2- \norm{\bff{u}_h^{n-1}}{\bb{L}^2}^2\right)^2
	+
	C\norm{\bff{u}_h^{n-1}}{\bb{L}^2}^4  \left(|\overline{\Delta} W^n|^4 + |\overline{\Delta} W^n|^2 \right)
	\nonumber\\
	&\quad
	+
	\frac{1}{16} \norm{\bff{u}_h^n-\bff{u}_h^{n-1}}{\bb{L}^2}^2 \norm{\bff{u}_h^n}{\bb{L}^2}^2.
\end{align}
Altogether, for sufficiently small $k$, we obtain
\begin{align}\label{equ:14 un 4}
	&\frac14 \left(\norm{\bff{u}_h^n}{\bb{L}^2}^4 - \norm{\bff{u}_h^{n-1}}{\bb{L}^2}^4\right) 
	+ 
	\frac18 \left(\norm{\bff{u}_h^n}{\bb{L}^2}^2- \norm{\bff{u}_h^{n-1}}{\bb{L}^2}^2\right)^2
	+
	\frac14 \norm{\bff{u}_h^n- \bff{u}_h^{n-1}}{\bb{L}^2}^2 \norm{\bff{u}_h^n}{\bb{L}^2}^2
	\nonumber \\
	&
	+
	\frac{k}{4} \norm{\nabla \bff{u}_h^n}{\bb{L}^2}^2 \norm{\bff{u}_h^n}{\bb{L}^2}^2
	+
	\frac{k}{4} \norm{\Delta_h \bff{u}_h^n}{\bb{L}^2}^2 \norm{\bff{u}_h^n}{\bb{L}^2}^2
	\nonumber \\
	&\leq
	Ck\norm{\bff{u}_h^n}{\bb{L}^2}^4 + Ck\norm{\bff{u}_h^{n-1}}{\bb{L}^2}^4
	+
	C\norm{\bff{u}_h^{n-1}}{\bb{L}^2}^4  \left(|\overline{\Delta} W^n|^4 + |\overline{\Delta} W^n|^2 \right)
	\nonumber\\
	&\quad
	+
	\inpro{G(\bff{u}_h^{n-1}) \overline{\Delta}W^n}{\bff{u}_h^{n-1}}  \norm{\bff{u}_h^{n-1}}{\bb{L}^2}^2.
\end{align}
We need to estimate the moment of the right-hand side of the last inequality. To this end, note that by the Burkholder--Davis--Gundy inequality, similarly to~\eqref{equ:E Delta W L2 2} we have
\begin{align}\label{equ:E sto}
	\bb{E} \left[ \max_{l\leq n} \sum_{j=1}^l \inpro{G(\bff{u}_h^{j-1}) \overline{\Delta}W^j}{\bff{u}_h^{j-1}}  \norm{\bff{u}_h^{j-1}}{\bb{L}^2}^2 \right]
	&\leq
	C\bb{E} \left[\left(k \sum_{j=1}^n \left(1+\norm{\bff{u}_h^{j-1}}{\bb{L}^2}^2\right) \norm{ \bff{u}_h^{j-1}}{\bb{L}^2}^6 \right)^{\frac12} \right]
	\nonumber\\
	&\leq
	C+
	\frac14 \bb{E}\left[\max_{l\leq n} \norm{\bff{u}_h^l}{\bb{L}^2}^4\right]
	+
	C \bb{E}\left[ \sum_{j=1}^n k\norm{\bff{u}_h^{j-1}}{\bb{L}^2}^4 \right].
\end{align}
With this estimate, we can continue from~\eqref{equ:14 un 4}. Summing~\eqref{equ:14 un 4} over $j\in \{1,2,\ldots,l\}$, taking the maximum over $l$, applying the expected value as before, we deduce the required inequality for $p=2$ by the discrete Gronwall lemma, except for the last two terms on the left-hand side. For general $p\geq 2$, the inductive step is as follows: once we obtain inequality of the form
\begin{align}\label{equ:ineq 2p}
	&\frac{1}{2^{p}} \left(\norm{\bff{u}_h^n}{\bb{L}^2}^{2^{p}} - \norm{\bff{u}_h^{n-1}}{\bb{L}^2}^{2^{p}} \right) 
	+ 
	\frac{1}{2^{p+1}} \left(\norm{\bff{u}_h^n}{\bb{L}^2}^{2^{p-1}}- \norm{\bff{u}_h^{n-1}}{\bb{L}^2}^{2^{p-1}} \right)^2 
	+
	\frac{1}{2^{p}} \norm{\bff{u}_h^n- \bff{u}_h^{n-1}}{\bb{L}^2}^2 \norm{\bff{u}_h^n}{\bb{L}^2}^{2^{p}-2}
	\nonumber \\
	&
	+
	k\norm{\nabla \bff{u}_h^n}{\bb{L}^2}^2 \norm{\bff{u}_h^n}{\bb{L}^2}^{2^{p}-2}
	+
	k\norm{\Delta_h \bff{u}_h^n}{\bb{L}^2}^2 \norm{\bff{u}_h^n}{\bb{L}^2}^{2^{p}-2}
	\nonumber\\
	&\leq
	Ck \norm{\bff{u}_h^n}{\bb{L}^2}^{2^{p}}
	+
	Ck \norm{\bff{u}_h^{n-1}}{\bb{L}^2}^{2^{p}}
	+
	C \norm{\bff{u}_h^{n-1}}{\bb{L}^2}^{2^p} \left(|\overline{\Delta} W^n|^{2^p} + |\overline{\Delta} W^n|^{2^{p-1}} \right)
	\nonumber\\
	&\quad
	+
	\inpro{G(\bff{u}_h^{n-1}) \overline{\Delta}W^n}{\bff{u}_h^{n-1}}  \norm{\bff{u}_h^{n-1}}{\bb{L}^2}^{2^p-2},
\end{align}
we multiply it by $\norm{\bff{u}_h^n}{\bb{L}^2}^{2^p}$. Note that the above inequality for $p=2$ is~\eqref{equ:14 un 4}. In this manner, we obtain
\begin{align*}
	&\frac{1}{2^{p+1}} \left(\norm{\bff{u}_h^n}{\bb{L}^2}^{2^{p+1}} - \norm{\bff{u}_h^{n-1}}{\bb{L}^2}^{2^{p+1}} \right) 
	+ 
	\frac{1}{2^{p+2}} \left(\norm{\bff{u}_h^n}{\bb{L}^2}^{2^{p}}- \norm{\bff{u}_h^{n-1}}{\bb{L}^2}^{2^{p}} \right)^2
	+
	\frac{1}{2^{p+1}} \norm{\bff{u}_h^n- \bff{u}_h^{n-1}}{\bb{L}^2}^2 \norm{\bff{u}_h^n}{\bb{L}^2}^{2^{p+1}-2}
	\nonumber \\
	&
	+
	k\norm{\nabla \bff{u}_h^n}{\bb{L}^2}^2 \norm{\bff{u}_h^n}{\bb{L}^2}^{2^{p+1}-2}
	+
	k\norm{\Delta_h \bff{u}_h^n}{\bb{L}^2}^2 \norm{\bff{u}_h^n}{\bb{L}^2}^{2^{p+1}-2}
	\nonumber\\
	&\leq
	Ck \norm{\bff{u}_h^n}{\bb{L}^2}^{2^{p+1}}
	+
	Ck \norm{\bff{u}_h^{n-1}}{\bb{L}^2}^{2^{p+1}}
	+
	C \norm{\bff{u}_h^{n-1}}{\bb{L}^2}^{2^p}  \left(|\overline{\Delta} W^n|^{2^p} + |\overline{\Delta} W^n|^{2^{p-1}} \right) \norm{\bff{u}_h^n}{\bb{L}^2}^{2^p}
	\nonumber\\
	&\quad
	+
	\inpro{G(\bff{u}_h^{n-1}) \overline{\Delta}W^n}{\bff{u}_h^{n-1}}  \norm{\bff{u}_h^{n-1}}{\bb{L}^2}^{2^{p+1}-2}
	\\
	&=: Ck \norm{\bff{u}_h^n}{\bb{L}^2}^{2^{p+1}}
	+
	Ck \norm{\bff{u}_h^{n-1}}{\bb{L}^2}^{2^{p+1}}
	+ S_1+S_2.
\end{align*}
For the term $S_1$, we add and subtract $\norm{\bff{u}_h^{n-1}}{\bb{L}^2}^{2^p}$, and apply Young's inequality to obtain
\begin{align*}
	S_1
	&\leq
	\frac{1}{2^{p+3}} \left(\norm{\bff{u}_h^n}{\bb{L}^2}^{2^{p-1}}- \norm{\bff{u}_h^{n-1}}{\bb{L}^2}^{2^{p-1}} \right)^2 
	+
	C\norm{\bff{u}_h^{n-1}}{\bb{L}^2}^{2^{p+1}} \left(|\overline{\Delta} W^n|^{2^{p+1}} + |\overline{\Delta} W^n|^{2^p} \right),
\end{align*}
and thus after rearranging, we obtain inequality of the form \eqref{equ:ineq 2p} with $p$ replaced by $p+1$. Now, for the term $S_2$, we can estimate its moment by the same argument as in~\eqref{equ:E sto}:
\begin{align*}
	&\bb{E} \left[ \max_{l\leq n} \sum_{j=1}^l \inpro{G(\bff{u}_h^{j-1}) \overline{\Delta}W^j}{\bff{u}_h^{j-1}}  \norm{\bff{u}_h^{j-1}}{\bb{L}^2}^{2^{p+1}-2} \right]
	\\
	&\leq
	C\bb{E} \left[\left(k \sum_{j=1}^n \left(1+\norm{\bff{u}_h^{j-1}}{\bb{L}^2}^4\right) \norm{ \bff{u}_h^{j-1}}{\bb{L}^2}^{2^{p+2}-4} \right)^{\frac12} \right]
	\nonumber\\
	&\leq
	C+
	\frac{1}{2^{p+2}} \bb{E}\left[\max_{l\leq n} \norm{\bff{u}_h^l}{\bb{L}^2}^{2^{p+1}} \right]
	+
	C \bb{E}\left[ \sum_{j=1}^n k\norm{\bff{u}_h^{j-1}}{\bb{L}^2}^{2^{p+1}} \right].
\end{align*}
Summing~\eqref{equ:ineq 2p} over $j\in \{1,2,\ldots, l\}$, taking the maximum over $l$ and the expected value, we obtain \eqref{equ:E u 2p general} for general $p$, except for the last two terms on the left-hand side. In particular, we have shown for any $q\geq 1$,
\begin{align}\label{equ:E max un q}
    \bb{E} \left[ \max_{l\leq n} \norm{\bff{u}_h^l}{\bb{L}^2}^{q} \right] \leq C,
\end{align}

Finally, we sum~\eqref{equ:ineq uhn L2} over $j\in \{1,2,\ldots, n\}$ and raise it to the $2^{p-1}$-th power. Noting~\eqref{equ:g h L2} and applying similar argument as before yield
\begin{align}\label{equ:E uh 2 4}
	&\norm{\bff{u}_h^n}{\bb{L}^2}^{2^p}
	+
	\left(k \sum_{j=1}^n \norm{\nabla \bff{u}_h^j}{\bb{L}^2}^2 \right)^{2^{p-1}}
	+
	\left(k \sum_{j=1}^n \norm{\Delta_h \bff{u}_h^j}{\bb{L}^2}^2 \right)^{2^{p-1}}
	\nonumber\\
	&\leq 
	C\left(k \sum_{j=1}^n \norm{\bff{u}_h^{j-1}}{\bb{L}^2}^2\right)^{2^{p-1}}
    +
	\left(\sum_{j=1}^n \norm{G(\bff{u}_h^{j-1}) \overline{\Delta} W^j}{\bb{L}^2}^2 \right)^{2^{p-1}}
	+
	\left(\sum_{j=1}^n \inpro{G(\bff{u}_h^{j-1})}{\bff{u}_h^{j-1}} \overline{\Delta} W^n\right)^{2^{p-1}}
	\nonumber\\
	&=:
	R_1+R_2+R_3.
\end{align}
The expected value of $R_1$ is clearly bounded by \eqref{equ:E max un q}. For $R_2$, we have by Jensen's inequality, \eqref{equ:E max un q}, and the same argument as in~\eqref{equ:E Delta W L2 1},
\begin{align*}
    \bb{E}[R_2] \leq 
    Cn^{2^{p-1}-1} k^{2^{p-1}} \sum_{j=1}^n \bb{E}\left[1+\norm{\bff{u}_h^{j-1}}{\bb{L}^2}^{2^p} \right]
    \leq C.
\end{align*}
For the term $R_3$, by the Burkholder--Davis--Gundy and the Jensen inequalities, noting the assumption on $G$, we obtain
\begin{align*}
    \bb{E}[R_3]
    &\leq
    C_p\, \bb{E}\left[\left(\sum_{j=1}^n k \norm{G(\bff{u}_h^{j-1})}{\bb{L}^2}^2 \norm{\bff{u}_h^{j-1}}{\bb{L}^2}^2\right)^{2^{p-2}} \right]
    \\
    &\leq
    C_p \, \bb{E}\left[\max_{j\leq n} \norm{\bff{u}_h^{j-1}}{\bb{L}^2}^{2^{p-1}} \left(k\sum_{j=1}^n \Big(1+\norm{\bff{u}_h^{j-1}}{\bb{L}^2}^2\Big) \right)^{2^{p-2}} \right]
    \\
    &\leq
    \frac14 \bb{E}\left[\max_{j\leq n} \norm{\bff{u}_h^{j-1}}{\bb{L}^2}^{2^{p}} \right]
    +
    Ck^{2^{p-1}} \bb{E}\left[ \left(\sum_{j=1}^n \Big(1+\norm{\bff{u}_h^{j-1}}{\bb{L}^2}^2\Big) \right)^{2^{p-1}} \right]
    \\
    &\leq
    \frac14 \bb{E}\left[\max_{j\leq n} \norm{\bff{u}_h^{j-1}}{\bb{L}^2}^{2^{p}} \right]
    +
    CT
    +
    C_T  \bb{E}\left[\max_{j\leq n} \norm{\bff{u}_h^{j-1}}{\bb{L}^2}^{2^{p}} \right]
    \leq C.
\end{align*}
This completes the proof of inequality~\eqref{equ:E u 2p general}, {upon noting the definition of $\bff{H}_h^n$ in the second equation of~\eqref{equ:euler}.}
\end{proof}

\begin{lemma}\label{lem:stab H1}
Suppose that $\{(\bff{u}_h^n, \bff{H}_h^n)\}$ satisfies~\eqref{equ:euler}. There exists a positive constant $C$ such that
\begin{align*}
	\bb{E} \left[ \max_{l\leq n} \norm{\bff{u}_h^l}{\bb{H}^1}^{2} \right] 
    +
    \bb{E}\left[\sum_{j=1}^n \norm{\bff{u}_h^j-\bff{u}_h^{j-1}}{\bb{H}^1}^2 \right]
    +
    \bb{E} \left[ \sum_{j=1}^n k \norm{\bff{H}_h^j}{\bb{H}^1}^2  \right]
	\leq 
	C,
\end{align*}
where $C$ depends on $T$, $R$, and $\norm{\bff{u}_0}{\bb{H}^1}$, but is independent of $n$, $h$, and $k$.
\end{lemma}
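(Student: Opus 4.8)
The plan is to test the first equation of~\eqref{equ:euler} with $\bff{\chi}_h=-\Delta_h\bff{u}_h^n$, the discrete analogue of pairing with $-\Delta\bff{u}$. Since $\bff{u}_h^n-\bff{u}_h^{n-1}\in\bb{V}_h$, the left-hand side equals $\tfrac12\big(\norm{\nabla\bff{u}_h^n}{\bb{L}^2}^2-\norm{\nabla\bff{u}_h^{n-1}}{\bb{L}^2}^2+\norm{\nabla(\bff{u}_h^n-\bff{u}_h^{n-1})}{\bb{L}^2}^2\big)$. On the right I substitute $\bff{H}_h^n=\Delta_h\bff{u}_h^n+\Pi_h f_R(\bff{u}_h^{n-1})$ from the second equation; using~\eqref{equ:disc laplacian}, the linear terms $k\inpro{\bff{H}_h^n}{-\Delta_h\bff{u}_h^n}+k\inpro{\nabla\bff{H}_h^n}{\nabla(-\Delta_h\bff{u}_h^n)}$ produce the two dissipative contributions $-k\norm{\Delta_h\bff{u}_h^n}{\bb{L}^2}^2-k\norm{\nabla\Delta_h\bff{u}_h^n}{\bb{L}^2}^2$ (moved to the left) together with remainders $k\inpro{\nabla\bff{u}_h^n}{\nabla\Pi_h f_R(\bff{u}_h^{n-1})}$ and $-k\inpro{\nabla\Pi_h f_R(\bff{u}_h^{n-1})}{\nabla\Delta_h\bff{u}_h^n}$, which by~\eqref{equ:fR v L2} and~\eqref{equ:proj stab W1p} are $\leq\epsilon k\big(\norm{\nabla\bff{u}_h^n}{\bb{L}^2}^2+\norm{\nabla\Delta_h\bff{u}_h^n}{\bb{L}^2}^2\big)+C_Rk\norm{\nabla\bff{u}_h^{n-1}}{\bb{L}^2}^2$. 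Crucially, the cross-product term collapses: since $\bff{a}\times\bff{b}\cdot\bff{b}=0$ pointwise, $-k\inpro{\bff{u}_h^n\times\bff{H}_h^n}{-\Delta_h\bff{u}_h^n}=k\inpro{\bff{u}_h^n\times\Pi_h f_R(\bff{u}_h^{n-1})}{\Delta_h\bff{u}_h^n}$, which by Sobolev embedding, boundedness of $f_R$, and~\eqref{equ:proj stab Lp} is $\leq\epsilon k\norm{\Delta_h\bff{u}_h^n}{\bb{L}^2}^2+C_Rk(1+\norm{\bff{u}_h^n}{\bb{H}^1}^2)$; the $\mathcal{M}_R$ term is handled identically. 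Note this test function avoids the term $\inpro{f_R(\bff{u}_h^{n-1})}{\bff{u}_h^n-\bff{u}_h^{n-1}}$ entirely.

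The main obstacle is the convective cross-product term $k\beta_2\inpro{\bff{u}_h^n\times(\bff{\nu}\cdot\nabla)\bff{u}_h^n}{-\Delta_h\bff{u}_h^n}$; its continuous counterpart is controlled by integration by parts and algebraic cancellations that $\Delta_h$ does not respect, so a naive bound produces a term super-quadratic in $\norm{\nabla\bff{u}_h^n}{\bb{L}^2}$. I bound it (using $\norm{\bff\nu}{\bb{L}^\infty}=1$) by $\epsilon k\norm{\Delta_h\bff{u}_h^n}{\bb{L}^2}^2+Ck\norm{\bff{u}_h^n}{\bb{L}^\infty}^2\norm{\nabla\bff{u}_h^n}{\bb{L}^2}^2$ and then exploit the biharmonic dissipation $k\norm{\nabla\Delta_h\bff{u}_h^n}{\bb{L}^2}^2$ that the choice $\bff{\chi}_h=-\Delta_h\bff{u}_h^n$ makes available. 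Bootstrapping~\eqref{equ:disc lapl L infty} with~\eqref{equ:interp disc Lap Delta L2} yields, for $d\leq 3$, a discrete Agmon inequality in which $\norm{\nabla\Delta_h\bff{v}_h}{\bb{L}^2}$ enters with exponent strictly less than $1$ (concretely, $\norm{\bff{v}_h}{\bb{L}^\infty}\leq C\norm{\bff{v}_h}{\bb{L}^2}+C\norm{\bff{v}_h}{\bb{L}^2}^{1/4}\norm{\nabla\bff{v}_h}{\bb{L}^2}^{3/8}\norm{\nabla\Delta_h\bff{v}_h}{\bb{L}^2}^{3/8}$ when $d=3$), while~\eqref{equ:interp disc Lap nab L2}--\eqref{equ:interp disc Lap Delta L2} give $\norm{\nabla\bff{v}_h}{\bb{L}^2}\leq C\norm{\bff{v}_h}{\bb{L}^2}^{2/3}\norm{\nabla\Delta_h\bff{v}_h}{\bb{L}^2}^{1/3}$. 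Substituting these and applying Young's inequality, every occurrence of $\norm{\nabla\Delta_h\bff{u}_h^n}{\bb{L}^2}$ carries exponent strictly below $2$ and is absorbed into $\epsilon k\norm{\nabla\Delta_h\bff{u}_h^n}{\bb{L}^2}^2$, leaving only $Ck\norm{\bff{u}_h^n}{\bb{L}^2}^2\norm{\nabla\bff{u}_h^n}{\bb{L}^2}^2$ and a term $Ck\norm{\bff{u}_h^n}{\bb{L}^2}^m$ for some fixed power $m$; the point is that all the residual powers are of $\norm{\bff{u}_h^n}{\bb{L}^2}$, which is controlled to every order by~\eqref{equ:E max un q}.

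The stochastic term requires care because $\Delta_h\bff{u}_h^n$ is $\mathcal{F}_{t_n}$- and not $\mathcal{F}_{t_{n-1}}$-measurable: writing $\inpro{G(\bff{u}_h^{n-1})}{-\Delta_h\bff{u}_h^n}=\inpro{\nabla\Pi_h G(\bff{u}_h^{n-1})}{\nabla\bff{u}_h^n}$ and splitting $\nabla\bff{u}_h^n=\nabla\bff{u}_h^{n-1}+\nabla(\bff{u}_h^n-\bff{u}_h^{n-1})$, the first piece is a genuine martingale increment (treated below by Burkholder--Davis--Gundy exactly as in Lemma~\ref{lem:stab L2}) and the second is $\leq\tfrac14\norm{\nabla(\bff{u}_h^n-\bff{u}_h^{n-1})}{\bb{L}^2}^2+\abs{\overline{\Delta}W^n}^2\norm{\nabla\Pi_h G(\bff{u}_h^{n-1})}{\bb{L}^2}^2$, the last factor being $\leq C(1+\norm{\nabla\bff{u}_h^{n-1}}{\bb{L}^2}^2)$ by the assumption on $\nabla G$ and~\eqref{equ:proj stab W1p}.

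Finally I sum over $j\in\{1,\dots,l\}$, choose $\epsilon$ small to absorb the $\epsilon k(\norm{\Delta_h\bff{u}_h^j}{\bb{L}^2}^2+\norm{\nabla\Delta_h\bff{u}_h^j}{\bb{L}^2}^2)$ and $\tfrac14\norm{\nabla(\bff{u}_h^j-\bff{u}_h^{j-1})}{\bb{L}^2}^2$ into the dissipative left-hand side, take $\max_{l\leq n}$, then $\bb{E}$. The martingale term gives, after Burkholder--Davis--Gundy and Young, $\tfrac14\bb{E}[\max_{l\leq n}\norm{\nabla\bff{u}_h^l}{\bb{L}^2}^2]+C+Ck\sum_j\bb{E}[\norm{\nabla\bff{u}_h^{j-1}}{\bb{L}^2}^2]$ (absorb the first back); the term $\bb{E}[\sum_j\abs{\overline{\Delta}W^j}^2(1+\norm{\nabla\bff{u}_h^{j-1}}{\bb{L}^2}^2)]=k\sum_j\bb{E}[1+\norm{\nabla\bff{u}_h^{j-1}}{\bb{L}^2}^2]$ by the tower property; $\bb{E}[\sum_j Ck\norm{\bff{u}_h^j}{\bb{L}^2}^2\norm{\nabla\bff{u}_h^j}{\bb{L}^2}^2]\leq C\bb{E}[\max_j\norm{\bff{u}_h^j}{\bb{L}^2}^2\cdot k\sum_j\norm{\nabla\bff{u}_h^j}{\bb{L}^2}^2]<\infty$ by Cauchy--Schwarz with~\eqref{equ:E u 2p general} and~\eqref{equ:E max un q}; and $\bb{E}[\sum_j Ck\norm{\bff{u}_h^j}{\bb{L}^2}^m]\leq CT\,\bb{E}[\max_j\norm{\bff{u}_h^j}{\bb{L}^2}^m]<\infty$ by~\eqref{equ:E max un q}. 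This leaves an inequality of the form $\bb{E}[\max_{l\leq n}\norm{\nabla\bff{u}_h^l}{\bb{L}^2}^2]+\bb{E}[\sum_j\norm{\nabla(\bff{u}_h^j-\bff{u}_h^{j-1})}{\bb{L}^2}^2]+\bb{E}[\sum_j k(\norm{\Delta_h\bff{u}_h^j}{\bb{L}^2}^2+\norm{\nabla\Delta_h\bff{u}_h^j}{\bb{L}^2}^2)]\leq C+Ck\sum_{j\leq n}\bb{E}[\max_{l\leq j}\norm{\nabla\bff{u}_h^{l-1}}{\bb{L}^2}^2]$, to which the discrete Gronwall lemma applies. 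The bound on $\bb{E}[\sum_j k\norm{\bff{H}_h^j}{\bb{H}^1}^2]$ then follows from $\bff{H}_h^j=\Delta_h\bff{u}_h^j+\Pi_h f_R(\bff{u}_h^{j-1})$, \eqref{equ:fR v L2}, \eqref{equ:proj stab W1p}, and the estimates just obtained, while the $\bb{L}^2$-part of $\norm{\bff{u}_h^j-\bff{u}_h^{j-1}}{\bb{H}^1}^2$ is supplied by Lemma~\ref{lem:stab L2}.
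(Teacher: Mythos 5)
Your argument is correct, but it follows a genuinely different route from the paper. The paper derives the $\bb{H}^1$ bound from the discrete energy structure of the mixed form: it tests the first equation of~\eqref{equ:euler} with $\bff{\chi}_h=\bff{H}_h^n$ and the second with $\bff{\phi}_h=\bff{u}_h^n-\bff{u}_h^{n-1}$, which makes $k\norm{\bff{H}_h^n}{\bb{L}^2}^2+k\norm{\nabla\bff{H}_h^n}{\bb{L}^2}^2$ appear directly as the dissipation, and then needs the auxiliary test choices $\bff{\phi}_h=\Pi_h G(\bff{u}_h^{n-1})$ and $\bff{\chi}_h=\Pi_h f_R(\bff{u}_h^{n-1})$ (equations \eqref{equ:un un1 Hn}--\eqref{equ:fun un1}) to convert the otherwise intractable terms $\inpro{G(\bff{u}_h^{n-1})}{\bff{H}_h^n}\overline{\Delta}W^n$ and $\inpro{f_R(\bff{u}_h^{n-1})}{\bff{u}_h^n-\bff{u}_h^{n-1}}$ into controllable ones. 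You instead test with $\bff{\chi}_h=-\Delta_h\bff{u}_h^n$ and substitute $\bff{H}_h^n=\Delta_h\bff{u}_h^n+\Pi_h f_R(\bff{u}_h^{n-1})$, gaining the dissipation $k\big(\norm{\Delta_h\bff{u}_h^n}{\bb{L}^2}^2+\norm{\nabla\Delta_h\bff{u}_h^n}{\bb{L}^2}^2\big)$ and recovering the bound on $k\sum_j\norm{\bff{H}_h^j}{\bb{H}^1}^2$ algebraically via \eqref{equ:fR v L2} and \eqref{equ:proj stab W1p}; this buys you the pointwise cancellation $\bff{u}_h^n\times\Delta_h\bff{u}_h^n\cdot\Delta_h\bff{u}_h^n=0$ and lets you bypass the paper's auxiliary test-function manipulations for the noise and $f_R$ terms (your martingale splitting of $\inpro{\nabla\Pi_h G(\bff{u}_h^{n-1})}{\nabla\bff{u}_h^n}\overline{\Delta}W^n$ mirrors the paper's treatment of its terms $I_8$, $I_9$). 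The price is a heavier treatment of the $\beta_2$ convective term: where the paper disposes of it through \eqref{equ:Rv w}, \eqref{equ:disc lapl L6} and the $\Delta_h$ bounds of Lemma~\ref{lem:stab L2}, you must invoke the discrete Agmon inequality \eqref{equ:disc lapl L infty} bootstrapped with \eqref{equ:interp disc Lap nab L2}--\eqref{equ:interp disc Lap Delta L2} so that $\norm{\nabla\Delta_h\bff{u}_h^n}{\bb{L}^2}$ enters with exponent strictly below $2$ and can be absorbed, leaving residuals that are pure powers of $\norm{\bff{u}_h^n}{\bb{L}^2}$ controlled by the arbitrarily high moments of Lemma~\ref{lem:stab L2} — your exponent bookkeeping for $d=3$ checks out. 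Two minor points to make explicit in a write-up: the current-index Gronwall terms $Ck\norm{\nabla\bff{u}_h^n}{\bb{L}^2}^2$ require $k$ sufficiently small to be absorbed (as the paper also implicitly assumes), and the $\bb{L}^2$ parts of $\norm{\bff{u}_h^j-\bff{u}_h^{j-1}}{\bb{H}^1}$ and of $\bff{H}_h^j$ must be quoted from Lemma~\ref{lem:stab L2}, as you indeed do.
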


\begin{proof}
We set $\bff{\chi}_h= \bff{H}_h^n$ and $\bff{\phi}_h= \bff{u}_h^n-\bff{u}_h^{n-1}$ in \eqref{equ:euler} to obtain
\begin{align}
    \label{equ:un un1 Hn}
	\inpro{\bff{u}_h^n-\bff{u}_h^{n-1}}{\bff{H}_h^n}
	&=
	k \norm{\bff{H}_h^n}{\bb{L}^2}^2
	+
	k \norm{\nabla \bff{H}_h^n}{\bb{L}^2}^2
	+
	k\inpro{\mathcal{C}(\bff{u}_h^n)}{\bff{H}_h^n}
    +
    k\inpro{\mathcal{M}_R(\bff{u}_h^{n-1})}{\bff{H}_h^n}
    \nonumber\\
    &\quad
	+
	\inpro{G(\bff{u}_h^{n-1})}{\bff{H}_h^n} \overline{\Delta}W^n,
	\\
    \label{equ:Hn un un1}
	\inpro{\bff{H}_h^n}{\bff{u}_h^n-\bff{u}_h^{n-1}}
	&=
	-
	\frac{1}{2} \left(\norm{\nabla \bff{u}_h^{n}}{\bb{L}^2}^2 - \norm{\nabla \bff{u}_h^{n-1}}{\bb{L}^2}^2 \right)
	-
	\frac{1}{2}\norm{\nabla \bff{u}_h^{n}- \nabla \bff{u}_h^{n-1}}{\bb{L}^2}^2
    \nonumber\\
    &\quad
    +	
    \inpro{f_R(\bff{u}_h^{n-1})}{\bff{u}_h^n- \bff{u}_h^{n-1}}.
\end{align}
Next, putting $\bff{\phi}_h= \Pi_h G(\bff{u}_h^{n-1})$ yields
\begin{align}\label{equ:Gun Hn Wn}
    \inpro{G(\bff{u}_h^{n-1})}{\bff{H}_h^n} \overline{\Delta}W^n
    &=
    \inpro{\nabla\Pi_h G(\bff{u}_h^{n-1})}{\nabla\bff{u}_h^{n-1}} \overline{\Delta}W^n
    -
    \inpro{\nabla \Pi_h G(\bff{u}_h^{n-1})}{\nabla \bff{u}_h^n-\nabla \bff{u}_h^{n-1}} \overline{\Delta}W^n
    \nonumber\\
    &\quad
    +
    \inpro{\Pi_h G(\bff{u}_h^{n-1})}{f_R(\bff{u}_h^{n-1})} \overline{\Delta}W^n.
\end{align}
Furthermore, taking $\bff{\chi}_h= \Pi_h f_R(\bff{u}_h^{n-1})$ gives
\begin{align}\label{equ:fun un1}
    \inpro{f_R(\bff{u}_h^{n-1})}{\bff{u}_h^n- \bff{u}_h^{n-1}}
    &=
    k\inpro{\bff{H}_h^n}{f_R(\bff{u}_h^{n-1})}
    +
    k\inpro{\nabla \bff{H}_h^n}{\nabla\Pi_h f_R(\bff{u}_h^{n-1})}
    \nonumber\\
    &\quad
    -
    k\inpro{\bff{u}_h^n\times \bff{H}_h^n}{\Pi_h f_R(\bff{u}_h^{n-1})}
    +
    k\inpro{\mathcal{C}(\bff{u}_h^n)}{\Pi_h f_R(\bff{u}_h^{n-1})}
    \nonumber\\
    &\quad
    +
    k\inpro{\mathcal{M}_R(\bff{u}_h^{n-1})}{\Pi_h f_R(\bff{u}_h^{n-1})}
    +
    \inpro{G(\bff{u}_h^{n-1})}{\Pi_h f_R(\bff{u}_h^{n-1})} \overline{\Delta} W^n.
\end{align}
Subtracting \eqref{equ:Hn un un1} from \eqref{equ:un un1 Hn}, then adding the resulting expression with \eqref{equ:Gun Hn Wn} and \eqref{equ:fun un1}, we obtain
\begin{align}\label{equ:nab un nab un1}
	&\frac{1}{2} \left(\norm{\nabla \bff{u}_h^{n}}{\bb{L}^2}^2 - \norm{\nabla \bff{u}_h^{n-1}}{\bb{L}^2}^2 \right)
	+
	\frac{1}{2}\norm{\nabla \bff{u}_h^{n}- \nabla \bff{u}_h^{n-1}}{\bb{L}^2}^2
	+
	k\norm{\bff{H}_h^n}{\bb{L}^2}^2
	+
	k\norm{\nabla \bff{H}_h^n}{\bb{L}^2}^2
	\nonumber\\
	&=
	- k\inpro{\mathcal{C}(\bff{u}_h^n)}{\bff{H}_h^n}
    - k\inpro{\mathcal{M}_R(\bff{u}_h^{n-1})}{\bff{H}_h^n}
    +
    k\inpro{\bff{H}_h^n}{f_R(\bff{u}_h^{n-1})}
    +
    k\inpro{\nabla \bff{H}_h^n}{\nabla\Pi_h f_R(\bff{u}_h^{n-1})}
    \nonumber\\
    &\quad
    -
    k\inpro{\bff{u}_h^n\times \bff{H}_h^n}{\Pi_h f_R(\bff{u}_h^{n-1})}
    +
    k\inpro{\mathcal{C}(\bff{u}_h^n)}{\Pi_h f_R(\bff{u}_h^{n-1})}
    +
    k\inpro{\mathcal{M}_R(\bff{u}_h^{n-1})}{\Pi_h f_R(\bff{u}_h^{n-1})}
    \nonumber\\
    &\quad
    -
	\inpro{\nabla\Pi_h G(\bff{u}_h^{n-1})}{\nabla\bff{u}_h^{n-1}} \overline{\Delta}W^n
    -
    \inpro{\nabla \Pi_h G(\bff{u}_h^{n-1})}{\nabla \bff{u}_h^n-\nabla \bff{u}_h^{n-1}} \overline{\Delta}W^n
    \nonumber\\
    &=: I_1+I_2+\cdots+I_9.
\end{align}
We need to bound each term on the last line. 
For the first two terms, by \eqref{equ:Rv w}, the Gagliardo--Nirenberg and Young inequalities, it is clear that
\begin{align}\label{equ:est 1}
    \abs{I_1}+\abs{I_2}
    &\leq
    Ck\norm{\bff{u}_h^{n-1}}{\bb{L}^2}^2
    +
    Ck \left(1+ \norm{\bff{u}_h^n}{\bb{L}^2}^2 \right) \norm{\nabla \bff{u}_h^n}{\bb{L}^4}^2
    +
    \frac{k}{8} \norm{\bff{H}_h^n}{\bb{L}^2}^2
    +
    \frac{k}{8} \norm{\nabla \bff{H}_h^n}{\bb{L}^2}^2.
\end{align}
For the terms $I_3$ and $I_4$, by the assumptions on $f_R$, Young's inequality and~\eqref{equ:fR v L2} we have
\begin{align}
    \label{equ:est 2}
    \abs{I_3}
    +
    \abs{I_4}
    &\leq 
    C_R k \norm{\bff{u}_h^{n-1}}{\bb{L}^2}^2 + C_R k \norm{\nabla \bff{u}_h^{n-1}}{\bb{L}^2}^2 + \frac{k}{4} \norm{\bff{H}_h^n}{\bb{L}^2}^2 + \frac{k}{4} \norm{\nabla \bff{H}_h^n}{\bb{L}^2}^2.
\end{align}
For the term $I_5$, by Young's inequality, the Sobolev embedding $\bb{H}^1\hookrightarrow \bb{L}^4$, and the stability of $\Pi_h$, we infer
\begin{align}\label{equ:est 4}
    \abs{I_5}
    &\leq
    C_R k \norm{\bff{u}_h^n}{\bb{L}^4} \norm{\bff{H}_h^n}{\bb{L}^4} \norm{\Pi_h f_R(\bff{u}_h^{n-1})}{\bb{L}^2}
    \nonumber\\
    &\leq
    Ck \norm{\bff{u}_h^n}{\bb{L}^4}^2 \norm{\bff{u}_h^{n-1}}{\bb{L}^2}^2
    +
    \frac{k}{4} \norm{\bff{H}_h^n}{\bb{L}^2}^2
    +
    \frac{k}{4} \norm{\nabla \bff{H}_h^n}{\bb{L}^2}^2.
\end{align}
For the terms $I_6$ and $I_7$, we again apply~\eqref{equ:Rv w} and the Lipschitz continuity of $\mathcal{M}_R$ to obtain
\begin{align}\label{equ:est 5}
    \abs{I_6} + \abs{I_7}
    &\leq
    Ck\norm{\bff{u}_h^{n-1}}{\bb{L}^2}^2
    +
    Ck \left(1+\norm{\bff{u}_h^{n-1}}{\bb{L}^\infty}^2\right) \norm{\bff{u}_h^n}{\bb{L}^2}^2
    +
    Ck \norm{\nabla \bff{u}_h^n}{\bb{L}^2}^2
    \nonumber \\
    &\leq
    Ck\norm{\bff{u}_h^{n-1}}{\bb{L}^2}^2
    +
    Ck \left(1+\norm{\bff{u}_h^{n-1}}{\bb{L}^2}^2 + \norm{\Delta_h \bff{u}_h^{n-1}}{\bb{L}^2}^2 \right) \norm{\bff{u}_h^n}{\bb{L}^2}^2
    +
    Ck \norm{\nabla \bff{u}_h^n}{\bb{L}^2}^2,
\end{align}
where in the last step we also used~\eqref{equ:disc lapl L infty} and Young's inequality. 
For the term $I_9$, we have
\begin{align}\label{equ:est 7} 
    \abs{I_9}
    &\leq
    \frac{1}{4} \norm{\nabla \bff{u}_h^n-\nabla \bff{u}_h^{n-1}}{\bb{L}^2}^2
    +
    4 \norm{\nabla \Pi_h G(\bff{u}_h^{n-1})}{\bb{L}^2}^2 \abs{\overline{\Delta} W^n}^2
    \nonumber\\
    &\leq
    \frac{1}{4} \norm{\nabla \bff{u}_h^n-\nabla \bff{u}_h^{n-1}}{\bb{L}^2}^2
    +
    C \norm{\bff{u}_h^{n-1}}{\bb{H}^1}^2 \abs{\overline{\Delta} W^n}^2,
\end{align}
where in the last step we used the $\bb{H}^1$-stability of the $\bb{L}^2$-projection~\cite{BanYse14} and the definition of $G$.
The term $I_8$ in~\eqref{equ:nab un nab un1} remains as is for now.

We now collect all the estimates~\eqref{equ:est 1}, \eqref{equ:est 2}, 
 \eqref{equ:est 4}, \eqref{equ:est 5}, \eqref{equ:est 7}, and continue from~\eqref{equ:nab un nab un1}, taking care to absorb appropriate terms to the left-hand side. Summing over $j\in \{1,2,\ldots,l\}$, taking the maximum over $l$, and applying the expected value, we obtain
\begin{align}\label{equ:E nab uhl}
    &\bb{E} \left[ \max_{l\leq n} \norm{\nabla \bff{u}_h^l}{\bb{L}^2}^2 \right] 
    +
    \bb{E}\left[ \sum_{j=1}^n \norm{\nabla \bff{u}_h^j- \nabla \bff{u}_h^{j-1}}{\bb{L}^2}^2  \right]
    +
    \bb{E} \left[ \sum_{j=1}^n k \norm{\bff{H}_h^j}{\bb{H}^1}^2 \right]
    \nonumber\\
    &\quad 
    \leq
    \norm{\bff{u}_h^0}{\bb{H}^1}^{2}
    +
    C\, \bb{E} \left[ \max_{l\leq n} \norm{\bff{u}_h^l}{\bb{L}^2}^{2} \right]
    +
    C\, \bb{E}\left[ \sum_{j=1}^n k \left(1+\norm{\bff{u}_h^j}{\bb{L}^2}^2 \right) \norm{\nabla \bff{u}_h^j}{\bb{L}^4}^2 \right]  
    \nonumber\\
    &\quad
    +
    C\, \bb{E}\left[ \sum_{j=1}^n k \norm{\bff{u}_h^j}{\bb{L}^4}^2 \norm{\bff{u}_h^{j-1}}{\bb{L}^2}^2  \right]
    +
    C\, \bb{E} \left[ \sum_{j=1}^n k \left(1+\norm{\bff{u}_h^{j-1}}{\bb{L}^2}^2 + \norm{\Delta_h \bff{u}_h^{j-1}}{\bb{L}^2}^2 \right) \norm{\bff{u}_h^j}{\bb{L}^2}^2  \right]
    \nonumber\\
    &\quad
    +
    \bb{E} \left[ \max_{l\leq n} \sum_{j=1}^l \inpro{G(\bff{u}_h^{j-1})}{\Delta_h \bff{u}_h^{j-1}} \overline{\Delta} W^j \right]
    +
    C\, \bb{E} \left[  \sum_{j=1}^n \norm{\bff{u}_h^{j-1}}{\bb{H}^1}^2 \abs{\overline{\Delta} W^j}^2 \right]
    \nonumber\\
    &=:J_1+J_2+\cdots+J_7.
\end{align}

It remains to estimate the last five terms on the right-hand side. Firstly, the term $J_3$ is bounded by \eqref{equ:disc lapl L6} and Lemma~\ref{lem:stab L2}.
Next, we have
\begin{align*}
    J_4
    &\leq
    C\, \bb{E} \left[ \left(\max_{l\leq n} \norm{\bff{u}_h^{l-1}}{\bb{L}^2}^2\right) \left(\sum_{j=1}^n k \norm{\bff{u}_h^j}{\bb{L}^4}^2\right) \right]
    \leq
    C
    +
    C\, \bb{E}\left[\left( \sum_{j=1}^n k \norm{\bff{u}_h^j}{\bb{H}^1}^2 \right)^{2}\right] \leq C.
\end{align*}
The term $J_5$ can be similarly bounded. 
For the term $J_6$, by the Burkholder--Davis--Gundy inequality and the $\bb{H}^1$-stability of $\Pi_h$, we obtain
\begin{align*}
    \bb{E} \left[ \max_{l\leq n} \sum_{j=1}^l \inpro{G(\bff{u}_h^{j-1})}{\Delta_h \bff{u}_h^{j-1}} \overline{\Delta}W^j \right]
    &\leq
    C\bb{E} \left[\left(k \sum_{j=1}^n \left(1+\norm{\bff{u}_h^{j-1}}{\bb{L}^2}^2\right) \norm{\Delta_h \bff{u}_h^{j-1}}{\bb{L}^2}^2 \right)^{\frac12} \right]
    \\
    &\leq
    C\bb{E} \left[ \max_{j\leq n} \left(1+\norm{\bff{u}_h^{j-1}}{\bb{L}^2}^2\right)^\frac12 \left(k \sum_{j=1}^n \norm{\Delta_h \bff{u}_h^{j-1}}{\bb{L}^2}^2 \right)^{\frac12} \right] 
    \\
    &\leq
    C+
    C \bb{E}\left[ \sum_{j=1}^n k\norm{\Delta_h \bff{u}_h^{j-1}}{\bb{L}^2}^2 \right] \leq C,
\end{align*}
where in the last step we used Lemma~\ref{lem:stab L2}.
Similarly for the last term, we infer from the independence of the Wiener increment that
\begin{align*}
    J_7
    &\leq
    C \bb{E} \left[ \sum_{j=1}^n k \norm{\bff{u}_h^{j-1}}{\bb{H}^1}^{2} \right] \leq C.
\end{align*}
Substituting these estimates back into~\eqref{equ:E nab uhl}, we deduce the required inequality.
\end{proof}

In the following, we assume that $\beta_2=0$ in \eqref{equ:torque}. This estimate will be used only in Theorem~\ref{the:uhn strong}.

\begin{lemma}\label{lem:stab H1 high}
Suppose that $\beta_2=0$ in \eqref{equ:torque} and let $p\in [1,\infty)$ be a natural number. There exists a positive constant $C$ such that
\begin{align}\label{equ:H1 high}
	\bb{E} \left[ \max_{l\leq n} \norm{\bff{u}_h^l}{\bb{H}^1}^{2^p} \right] 
    +
    \bb{E} \left[ \sum_{j=1}^n k \norm{\bff{H}_h^j}{\bb{H}^1}^2 \norm{\nabla \bff{u}_h^j}{\bb{L}^2}^{2^{p-1}}  \right]
    +
    \bb{E}\left[ \left(k \sum_{j=1}^n \norm{\bff{H}_h^j}{\bb{H}^1}^2 \right)^{2^{p-1}} \right] 
	\leq 
	C,
\end{align}
where $C$ depends on $T, R$, and $\norm{\bff{u}_0}{\bb{H}^1}$, but is independent of $n$, $h$, and $k$.
\end{lemma}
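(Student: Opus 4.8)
The bound is established by induction on $p$, following the scheme used for the second inequality of Lemma~\ref{lem:stab L2} but applied to the $\bb{H}^1$-energy identity~\eqref{equ:nab un nab un1} obtained in the proof of Lemma~\ref{lem:stab H1}. The base case $p=1$ is already contained in Lemma~\ref{lem:stab H1} (its first and third terms); the middle term with exponent $2^{p-1}=1$ follows from the same bounds together with Young's inequality and the moment estimates of Lemma~\ref{lem:stab L2}, splitting $k\sum_j\norm{\bff{H}_h^j}{\bb{H}^1}^2\norm{\nabla\bff{u}_h^j}{\bb{L}^2}$ by Cauchy--Schwarz against $\max_j\big(1+\norm{\nabla\bff{u}_h^j}{\bb{L}^2}^2\big)$. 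For the inductive step I would re-estimate the terms $I_1,\dots,I_9$ of~\eqref{equ:nab un nab un1} under the standing hypothesis $\beta_2=0$, multiply the resulting inequality by $\norm{\nabla\bff{u}_h^n}{\bb{L}^2}^{2^{p}}$, use $a(a-b)=\tfrac12(a^2-b^2)+\tfrac12(a-b)^2$ with $a=\norm{\nabla\bff{u}_h^n}{\bb{L}^2}^{2^{p}}$ and $b=\norm{\nabla\bff{u}_h^{n-1}}{\bb{L}^2}^{2^{p}}$ to create a telescoping principal part, sum over $j\le l$, take the maximum over $l\le n$ and then expectations, and finally invoke the discrete Gronwall lemma, exactly as in the passage from~\eqref{equ:ineq 2p} to its analogue with $p$ replaced by $p+1$.

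\textbf{Re-estimating the right-hand side of~\eqref{equ:nab un nab un1}.} The decisive simplification is that with $\beta_2=0$ the term~\eqref{equ:torque} is linear, $\mathcal{C}(\bff{u}_h^n)=\beta_1(\bff{\nu}\cdot\nabla)\bff{u}_h^n$, so that $I_1$ and $I_6$ obey
\[
|I_1|+|I_6|\le Ck\norm{\nabla\bff{u}_h^n}{\bb{L}^2}^2+C_Rk+\tfrac18 k\norm{\bff{H}_h^n}{\bb{H}^1}^2
\]
by~\eqref{equ:nu}, Young's inequality and the $\bb{L}^2$-stability of $\Pi_h$, \emph{without} using~\eqref{equ:disc lapl L infty}; this removes the factor $\norm{\Delta_h\bff{u}_h^{n-1}}{\bb{L}^2}^2$ forced in~\eqref{equ:est 5} when $\beta_2\neq0$, which would otherwise prevent the $\bb{H}^1$-recursion from closing. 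The remaining terms use that $f_R=\varphi_R f$ and $\mathcal{M}_R=\varphi_R\mathcal{M}$ are bounded pointwise (their arguments being cut off by $\varphi_R$, supported in $B_{2R}$): thus $I_2,I_3,I_4,I_7$ and the deterministic part of $I_9$ are bounded by $C_Rk\big(1+\norm{\bff{u}_h^{n-1}}{\bb{H}^1}^2\big)+\tfrac18 k\norm{\bff{H}_h^n}{\bb{H}^1}^2+\tfrac14\norm{\nabla\bff{u}_h^n-\nabla\bff{u}_h^{n-1}}{\bb{L}^2}^2$; the cross-product term $I_5$ is handled by $|\inpro{\bff{a}\times\bff{b}}{\bff{c}}|\le\norm{\bff{c}}{\bb{L}^\infty}\norm{\bff{a}}{\bb{L}^2}\norm{\bff{b}}{\bb{L}^2}$ with $\bff{c}=\Pi_h f_R(\bff{u}_h^{n-1})$, using the $\bb{L}^\infty$-stability of $\Pi_h$ on the quasi-uniform mesh, to give $|I_5|\le C_Rk\norm{\bff{u}_h^n}{\bb{L}^2}^2+\tfrac18 k\norm{\bff{H}_h^n}{\bb{H}^1}^2$; and the stochastic term $I_8=-\inpro{\nabla\Pi_h G(\bff{u}_h^{n-1})}{\nabla\bff{u}_h^{n-1}}\overline{\Delta}W^n$ is retained for Burkholder--Davis--Gundy. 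After absorbing the $k\norm{\bff{H}_h^n}{\bb{H}^1}^2$ contributions and the increments into the left-hand side of~\eqref{equ:nab un nab un1} one obtains
\[
\tfrac12\big(\norm{\nabla\bff{u}_h^n}{\bb{L}^2}^2-\norm{\nabla\bff{u}_h^{n-1}}{\bb{L}^2}^2\big)+ck\norm{\bff{H}_h^n}{\bb{H}^1}^2
\le C_Rk\big(1+\norm{\bff{u}_h^n}{\bb{H}^1}^2+\norm{\bff{u}_h^{n-1}}{\bb{H}^1}^2\big)+I_8 .
\]

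\textbf{Higher moments and closing the induction.} Multiplying this inequality by $\norm{\nabla\bff{u}_h^n}{\bb{L}^2}^{2^{p}}$ and proceeding as above, the right-hand side after summation and maximisation splits into (a) sums $k\sum_j\norm{\nabla\bff{u}_h^{j}}{\bb{L}^2}^{2^{p+1}}$ and $k\sum_j\norm{\nabla\bff{u}_h^{j-1}}{\bb{L}^2}^{2^{p+1}}$, ready for the discrete Gronwall lemma; (b) contributions of type $C_Rk\sum_j\big(1+\norm{\bff{u}_h^j}{\bb{L}^2}^2\big)\norm{\nabla\bff{u}_h^j}{\bb{L}^2}^{2^{p}}$, controlled by H\"older's inequality together with the all-moments bounds of Lemma~\ref{lem:stab L2} (for $\max_j\norm{\bff{u}_h^j}{\bb{L}^2}$) and the inductive hypothesis~\eqref{equ:H1 high} at level $p$; and (c) the martingale term $\sum_j I_8^{(j)}\,\norm{\nabla\bff{u}_h^{j}}{\bb{L}^2}^{2^{p}}$, dominated via Burkholder--Davis--Gundy, the linear growth of $G$, the $\bb{H}^1$-stability of $\Pi_h$ and Young's inequality by $\tfrac14\bb{E}\big[\max_{l\le n}\norm{\nabla\bff{u}_h^l}{\bb{L}^2}^{2^{p+1}}\big]+C+Ck\sum_j\bb{E}\big[\norm{\nabla\bff{u}_h^{j-1}}{\bb{L}^2}^{2^{p+1}}\big]$, exactly as in~\eqref{equ:E sto}. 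The discrete Gronwall lemma then yields $\bb{E}[\max_{l\le n}\norm{\nabla\bff{u}_h^l}{\bb{L}^2}^{2^{p+1}}]\le C$; combined with Lemma~\ref{lem:stab L2} this is the first term of~\eqref{equ:H1 high} at level $p+1$, and the two summed terms come out of the same inequality (the middle one being precisely the part of $k\norm{\bff{H}_h^n}{\bb{H}^1}^2\norm{\nabla\bff{u}_h^n}{\bb{L}^2}^{2^{p}}$ that was not absorbed).

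\textbf{Main obstacle.} The essential difficulty is the nonlinear bookkeeping: one must verify that, thanks to $\beta_2=0$ and to the compact support of $\varphi_R$, \emph{every} right-hand term in~\eqref{equ:nab un nab un1} is either absorbable into $ck\norm{\bff{H}_h^n}{\bb{H}^1}^2$ or into the squared increments, or of the form (a power of $\norm{\nabla\bff{u}_h^j}{\bb{L}^2}^2$) times a quantity already bounded in sufficiently high moments by Lemma~\ref{lem:stab L2} --- because as soon as $\norm{\bff{u}_h^n}{\bb{L}^\infty}$, hence $\norm{\Delta_h\bff{u}_h^n}{\bb{L}^2}$, enters, as it does when $\beta_2\ne0$, the $\bb{H}^1$-estimate couples to an a priori uncontrolled $\bb{H}^2$-quantity and the induction fails. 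The remaining, more routine point is to carry the slowly growing weight $\norm{\nabla\bff{u}_h^{j-1}}{\bb{L}^2}^{2^{p}}$ through the Burkholder--Davis--Gundy step without losing a power of the maximal function, which is done by the $\max$-splitting and Young inequality trick already used in~\eqref{equ:E sto}.
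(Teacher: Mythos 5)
Your proposal is correct and takes essentially the same route as the paper: multiply the $\bb{H}^1$ identity~\eqref{equ:nab un nab un1} by powers of $\norm{\nabla\bff{u}_h^n}{\bb{L}^2}^2$, re-estimate the terms $I_1,\dots,I_9$ using $\beta_2=0$, treat the stochastic terms as in \eqref{equ:I4 sto}, \eqref{equ:I5 sto}, \eqref{equ:E sto}, close with the discrete Gronwall lemma and induction, and obtain the last term in \eqref{equ:H1 high} by summing the inequality and raising it to a power as in \eqref{equ:E uh 2 4}. The only (harmless) difference is in the bookkeeping of the nonlinear terms: you use the pointwise boundedness of $f_R$, $\mathcal{M}_R$ and the stability of $\Pi_h$ to avoid the $\norm{\Delta_h\bff{u}_h^n}{\bb{L}^2}^2$-weighted contributions, whereas the paper retains them (via \eqref{equ:interp disc Lap nab L2}) and controls them with the moment bounds of Lemma~\ref{lem:stab L2}; both close the argument.
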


\begin{proof}
As before, we prove the case $p=2$ in detail. Similarly to the proof of Lemma~\ref{lem:stab L2}, we multiply \eqref{equ:nab un nab un1} by $\norm{\nabla \bff{u}_n}{\bb{L}^2}^2$ to obtain
\begin{align}\label{equ:nab un I1 I7}
	&\frac{1}{4} \left(\norm{\nabla \bff{u}_h^{n}}{\bb{L}^2}^4 - \norm{\nabla \bff{u}_h^{n-1}}{\bb{L}^2}^4 + \left(\norm{\nabla \bff{u}_h^n}{\bb{L}^2}^2 - \norm{\nabla\bff{u}_h^{n-1}}{\bb{L}^2}^2\right)^2 \right)
	+
	\frac{1}{2}\norm{\nabla \bff{u}_h^{n}- \nabla \bff{u}_h^{n-1}}{\bb{L}^2}^2  \norm{\nabla\bff{u}_h^n}{\bb{L}^2}^2
	\nonumber \\
	&\quad
	+
	k\norm{\bff{H}_h^n}{\bb{L}^2}^2 \norm{\nabla\bff{u}_h^n}{\bb{L}^2}^2
	+
	k\norm{\nabla \bff{H}_h^n}{\bb{L}^2}^2 \norm{\nabla\bff{u}_h^n}{\bb{L}^2}^2
	\nonumber\\
	&=
	- k\inpro{\mathcal{C}(\bff{u}_h^n)}{\bff{H}_h^n} \norm{\nabla\bff{u}_h^n}{\bb{L}^2}^2
    - k\inpro{\mathcal{M}_R(\bff{u}_h^{n-1})}{\bff{H}_h^n} \norm{\nabla\bff{u}_h^n}{\bb{L}^2}^2
	+
	k\inpro{\bff{H}_h^n}{f_R(\bff{u}_h^{n-1})} \norm{\nabla\bff{u}_h^n}{\bb{L}^2}^2
    \nonumber\\
	&\quad
	+
	k\inpro{\nabla \bff{H}_h^n}{\nabla\Pi_h f_R(\bff{u}_h^{n-1})} \norm{\nabla\bff{u}_h^n}{\bb{L}^2}^2
	-
	k\inpro{\bff{u}_h^n\times \bff{H}_h^n}{\Pi_h f_R(\bff{u}_h^{n-1})} \norm{\nabla\bff{u}_h^n}{\bb{L}^2}^2
    \nonumber\\
	&\quad
    +
	k\inpro{\mathcal{C}(\bff{u}_h^n)}{\Pi_h f_R(\bff{u}_h^{n-1})} \norm{\nabla\bff{u}_h^n}{\bb{L}^2}^2
    +
	k\inpro{\mathcal{M}_R(\bff{u}_h^{n-1})}{\Pi_h f_R(\bff{u}_h^{n-1})} \norm{\nabla\bff{u}_h^n}{\bb{L}^2}^2
	\nonumber\\
	&\quad
	-
	\inpro{\nabla\Pi_h G(\bff{u}_h^{n-1})}{\nabla\bff{u}_h^{n-1}} \overline{\Delta}W^n  \norm{\nabla\bff{u}_h^n}{\bb{L}^2}^2
	-
	\inpro{\nabla \Pi_h G(\bff{u}_h^{n-1})}{\nabla \bff{u}_h^n-\nabla \bff{u}_h^{n-1}} \overline{\Delta}W^n  \norm{\nabla\bff{u}_h^n}{\bb{L}^2}^2
	\nonumber\\
	&=: I_1+I_2+\cdots+I_9.
\end{align}
Noting $\beta_2=0$, we can estimate the first five terms following the corresponding bounds in~\eqref{equ:est 1} to \eqref{equ:est 5}:
\begin{align*}
	\abs{I_1}
	&\leq
	Ck\norm{\bff{u}_h^n}{\bb{L}^2}^2 \norm{\nabla \bff{u}_h^n}{\bb{L}^2}^2
	+
	Ck \norm{\nabla \bff{u}_h^n}{\bb{L}^2}^4
	+
	\frac{k}{16} \norm{\bff{H}_h^n}{\bb{L}^2}^2  \norm{\nabla \bff{u}_h^n}{\bb{L}^2}^2,
	\\
	\abs{I_2}
	&\leq
	Ck \norm{\bff{u}_h^{n-1}}{\bb{L}^2}^4
    +
    Ck \norm{\bff{u}_h^n}{\bb{L}^2}^4
	+
	Ck \norm{\nabla \bff{u}_h^n}{\bb{L}^2}^4
	+
	\frac{k}{16} \norm{\bff{H}_h^n}{\bb{L}^2}^2  \norm{\nabla \bff{u}_h^n}{\bb{L}^2}^2,
    \\
	\abs{I_3}
	&\leq 
	C k \norm{\bff{u}_h^{n-1}}{\bb{L}^2}^4 +
	\frac{k}{16} \norm{\nabla \bff{u}_h^n}{\bb{L}^2}^4
	+ \frac{k}{16} \norm{\bff{H}_h^n}{\bb{L}^2}^2 \norm{\nabla \bff{u}_h^n}{\bb{L}^2}^2,
	\\
	\abs{I_4} 
	&\leq
	C k \norm{\nabla \bff{u}_h^{n-1}}{\bb{L}^2}^4
	+ \frac{k}{16} \norm{\nabla \bff{u}_h^n}{\bb{L}^2}^4 
	+ \frac{k}{16} \norm{\nabla \bff{H}_h^n}{\bb{L}^2}^2 \norm{\nabla \bff{u}_h^n}{\bb{L}^2}^2,
	\\
	\abs{I_5}
	&\leq
	Ck \norm{\bff{u}_h^n}{\bb{L}^2}^2 \norm{\bff{u}_h^{n-1}}{\bb{L}^2}^2 \left(\norm{\nabla \bff{u}_h^n}{\bb{L}^2}^2 + \norm{\Delta_h \bff{u}_h^n}{\bb{L}^2}^2\right)
	+
	\frac{k}{16} \norm{\bff{H}_h^n}{\bb{H}^1}^2 \norm{\nabla \bff{u}_h^n}{\bb{L}^2}^2,
    \\
	\abs{I_6}+\abs{I_7}
	&\leq
        Ck \norm{\bff{u}_h^n}{\bb{L}^2}^2 \norm{\bff{u}_h^{n-1}}{\bb{L}^2}^2 \left(\norm{\nabla \bff{u}_h^n}{\bb{L}^2}^2 + \norm{\Delta_h \bff{u}_h^n}{\bb{L}^2}^2\right)
        +
        Ck \norm{\bff{u}_h^{n-1}}{\bb{L}^2}^4
        +
        Ck \norm{\nabla \bff{u}_h^n}{\bb{L}^2}^4,
\end{align*}
where for the terms $I_5$ and $I_6$ we also used \eqref{equ:interp disc Lap nab L2}. The expected values of the terms $I_8$ and $I_9$ can be estimated as \eqref{equ:I4 sto}, \eqref{equ:I5 sto}, and \eqref{equ:E sto}. Substituting these estimates into \eqref{equ:nab un I1 I7}, summing, taking the maximum and the expected value, and applying the results of Lemma~\ref{lem:stab L2} as done previously shows~\eqref{equ:H1 high} for the first two terms. To establish the inequality for the last term, we sum \eqref{equ:nab un I1 I7} over $j\in\{1,2,\ldots,n\}$, square the result, and follow the same argument as in \eqref{equ:E uh 2 4}. The general case follows by induction as in the proof of Lemma~\ref{lem:stab L2}. We omit further details for brevity.
\end{proof}

To facilitate the proof of the error estimate, we decompose the error of the numerical method at time $t_n$, $n=0,1,\ldots,N$, as:
\begin{align}
    \label{equ:split u}
    \bff{u}(t_n)- \bff{u}_h^n
    &= 
    \left(\bff{u}(t_n)-\R_h \bff{u}(t_n)\right)
    +
    \left(\R_h \bff{u}(t_n)- \bff{u}_h^n\right)
    =:
    \bff{\rho}^n+ \bff{\theta}^n,
    \\
    \label{equ:split H}
    \bff{H}(t_n)-\bff{H}_h^n
    &=
    \left(\bff{H}(t_n) - \R_h \bff{H}(t_n)\right)
    +
    \left(\R_h \bff{H}(t_n)- \bff{H}_h^n\right)
    =:
    \bff{\eta}^n+ \bff{\xi}^n.
\end{align}
As such by the definition of the Ritz projection~\eqref{equ:Ritz},
\begin{equation}\label{equ:Ritz zero}
\inpro{\nabla \bff{\rho}^n}{\nabla \bff{\chi}_h}= \inpro{\nabla \bff{\eta}^n}{\nabla\bff{\chi}_h}=0, \quad \forall \bff{\chi}_h\in \bb{V}_h.
\end{equation}
Furthermore, define a sequence of subsets of $\Omega$ which depend on $\kappa$ and $m$:
\begin{align}\label{equ:Omega k m}
    \Omega_{\kappa,m}:= \left\{\omega\in \Omega: \max_{t\leq t_{m} \wedge T} \norm{\bff{u}(t)}{\bb{H}^2}^2 
    + \max_{t\leq t_{m} \wedge T} \norm{\bff{H}(t)}{\bb{L}^2}^2 
    + \max_{n\leq {m}} \norm{\bff{u}_h^n}{\bb{H}^1}^2 \leq \kappa \right\},
\end{align}
where $\kappa>0$ is to be specified. It is clear that for any $\kappa>0$ and $m\in \bb{N}$, we have $\Omega_{\kappa,m} \supset \Omega_{\kappa,m+1}$. Thus, for any time-discrete random variable $\bff{v}^n$,
\begin{align}\label{equ:1 vn vn1}
    &\bb{E}\left[\max_{m\leq n} \sum_{\ell=1}^m \one_{\Omega_{\kappa,\ell-1}} \inpro{\bff{v}^\ell-\bff{v}^{\ell-1}}{\bff{v}^\ell}\right]
    \nonumber\\
    &=
    \frac12 \bb{E}\left[\max_{m\leq n} \left( \one_{\Omega_{\kappa,m-1}} \norm{\bff{v}^m}{\bb{L}^2}^2 - \one_{\Omega_{\kappa,0}} \norm{\bff{v}^0}{\bb{L}^2}^2 
    +
    \sum_{\ell=2}^m \left( \one_{\Omega_{\kappa,m-2}}- \one_{\Omega_{\kappa,m-1}}\right) \norm{\bff{v}^{m-1}}{\bb{L}^2}^2 \right)
    \right]
    \nonumber\\
    &\quad
    +
    \frac12  \sum_{\ell=1}^n \bb{E} \left[\one_{\Omega_{\kappa,\ell-1}} \norm{\bff{v}^\ell-\bff{v}^{\ell-1}}{\bb{L}^2}^2 \right]
    \nonumber\\
    &\geq
    \frac12 \bb{E}\left[\max_{m\leq n} \left( \one_{\Omega_{\kappa,m-1}} \norm{\bff{v}^m}{\bb{L}^2}^2 \right)- \one_{\Omega_{\kappa,0}} \norm{\bff{v}^0}{\bb{L}^2}^2  \right] 
	+
    \frac12  \sum_{\ell=1}^n \bb{E} \left[\one_{\Omega_{\kappa,\ell-1}} \norm{\bff{v}^\ell-\bff{v}^{\ell-1}}{\bb{L}^2}^2 \right].
\end{align}

The following technical lemmas will be needed in the analysis.

\begin{lemma}\label{lem:Holder D A beta}
Let $(\bff{u},\bff{H})$ be the solution of~\eqref{equ:weakform} with initial data $\bff{u}_0\in \mathrm{D}(A^{\frac12})$. Suppose that $\{t_\ell\}_{\ell=0}^N$ is a uniform partition of $[0,T]$ with $k=T/N$. Then for any $\beta\in [\frac12,1)$ and $\alpha\in (0,1-\beta)$, there exists a constant $C$ such that for any $n\in \{1,2,\ldots,N\}$,
\begin{align*}
    \bb{E}\left[\sum_{\ell=1}^n \one_{\Omega_{\kappa,\ell-1}} \int_{t_{\ell-1}}^{t_\ell} \norm{\bff{u}(t_{\ell})-\bff{u}(s)}{\mathrm{D}(A^\beta)}^2 \ds \right] 
    &\leq
    C k^{2\alpha}.
\end{align*}
The constant $C$ depends on $\alpha$, $\beta$, and $T$, but is independent of $n$ and $k$.
\end{lemma}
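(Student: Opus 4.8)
The plan is to discard the indicator $\one_{\Omega_{\kappa,\ell-1}}\le 1$, interchange expectation with the time integral (all integrands being nonnegative), and thereby reduce the claim to a deterministic computation, provided one has the \emph{quantitative} form of the smoothing property of Proposition~\ref{pro:Holder u}. The precise input I would use is: for $\beta\in[\tfrac12,1)$, $\alpha\in(0,1-\beta)$ and $0<s\le t\le T$,
\begin{equation}\label{equ:wHolder}
\bb{E}\left[\norm{\bff{u}(t)-\bff{u}(s)}{\mathrm{D}(A^\beta)}^2\right]\le C\,|t-s|^{2\alpha}\,s^{-2(\beta-\frac12+\alpha)},
\end{equation}
with $C=C(\alpha,\beta,T)$. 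This is what the smoothing estimate amounts to once one tracks the blow-up at $t=0$: starting from the mild formulation associated with~\eqref{equ:weakform}, the identity $e^{-(t-s)A}-I=-\int_0^{t-s}Ae^{-\tau A}\,\dtau$ together with $\norm{A^{\gamma}e^{-rA}}{\mathcal{L}(\bb{L}^2)}\le Cr^{-\gamma}$ controls the linear part by $\norm{A^{1/2}\bff{u}_0}{\bb{L}^2}\int_s^t r^{-(1/2+\beta)}\,\dr\le C|t-s|^\alpha s^{-(\beta-1/2+\alpha)}$, while the Duhamel term (using $\norm{\Delta f_R(\bff{v})}{\bb{L}^2}$ and the cross-product bound from $\bb{H}^2\hookrightarrow\bb{L}^\infty$) and the stochastic convolution (via the factorisation estimate and $\norm{G(\bff{v})}{\bb{H}^2}\le C(1+\norm{\bff{v}}{\bb{H}^2})$) are handled in the same way, exactly as in~\cite{GolSoeTra24b}. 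The point is that $\alpha<1-\beta$ is precisely the condition making the exponent $2(\beta-\tfrac12+\alpha)$ strictly less than $1$.

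Granting~\eqref{equ:wHolder}, the remainder is immediate. Bounding $\one_{\Omega_{\kappa,\ell-1}}\le1$ and applying Tonelli's theorem,
\[
\bb{E}\left[\sum_{\ell=1}^n \one_{\Omega_{\kappa,\ell-1}}\int_{t_{\ell-1}}^{t_\ell}\norm{\bff{u}(t_\ell)-\bff{u}(s)}{\mathrm{D}(A^\beta)}^2\,\ds\right]\le\sum_{\ell=1}^n\int_{t_{\ell-1}}^{t_\ell}\bb{E}\left[\norm{\bff{u}(t_\ell)-\bff{u}(s)}{\mathrm{D}(A^\beta)}^2\right]\ds.
\]
For $s\in[t_{\ell-1},t_\ell]$ one has $|t_\ell-s|\le k$, so~\eqref{equ:wHolder} gives $\bb{E}\left[\norm{\bff{u}(t_\ell)-\bff{u}(s)}{\mathrm{D}(A^\beta)}^2\right]\le Ck^{2\alpha}s^{-2(\beta-\frac12+\alpha)}$, and summing the resulting integrals telescopes the partition:
\[
\sum_{\ell=1}^n\int_{t_{\ell-1}}^{t_\ell}\bb{E}\left[\norm{\bff{u}(t_\ell)-\bff{u}(s)}{\mathrm{D}(A^\beta)}^2\right]\ds\le Ck^{2\alpha}\int_0^{t_n}s^{-2(\beta-\frac12+\alpha)}\,\ds\le Ck^{2\alpha}\int_0^{T}s^{-2(\beta-\frac12+\alpha)}\,\ds,
\]
and the last integral is finite because $2(\beta-\tfrac12+\alpha)<1$, yielding the bound $Ck^{2\alpha}$. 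The term $\ell=1$, where $s$ reaches $0$, needs no separate treatment: although $\bff{u}_0$ need not belong to $\mathrm{D}(A^\beta)$, the integrand is finite for a.e.\ $s\in(0,t_1)$ by smoothing and is integrable there by the same exponent count.

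The only genuinely nontrivial ingredient is therefore~\eqref{equ:wHolder}; everything after it is elementary. If one prefers not to invoke the weighted bound as a black box, the same conclusion can be reached by interpolating the globally-in-time Hölder estimate of Proposition~\ref{pro:Holder u} in $\mathrm{D}(A^{\beta_0})$ for some $\beta_0<\tfrac12$ against the smoothing bound for $\sup_{r\ge s}\norm{\bff{u}(r)}{\mathrm{D}(A^{\beta_1})}$ with $\beta<\beta_1<1$, at the cost of tracking the powers of $k$ that this produces, which again recover~\eqref{equ:wHolder} and the rate $k^{2\alpha}$. I expect the only real obstacle to be this bookkeeping of the singularity at $t=0$, which, as noted, closes exactly at the threshold $\alpha<1-\beta$.
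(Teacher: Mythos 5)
Your proof is correct and follows essentially the same route as the paper: both rest on the weighted temporal H\"older/smoothing estimate $\bb{E}\big[\norm{\bff{u}(t)-\bff{u}(s)}{\mathrm{D}(A^\beta)}^2\big]\leq C(t-s)^{2\alpha}s^{-2(\alpha+\beta-\frac12)}$ (the paper cites it directly from the proof of \cite[Lemma~4.10]{GolSoeTra24b}, you sketch its derivation), after which one discards the indicator, bounds $(t_\ell-s)^{2\alpha}\leq k^{2\alpha}$, and uses integrability of $s^{-2(\alpha+\beta-\frac12)}$ on $(0,T)$, which holds precisely because $\alpha<1-\beta$.
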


\begin{proof}
From the proof of~\cite[Lemma~4.10]{GolSoeTra24b}, for $\bff{u}_0\in \mathrm{D}(A^{\frac12})$, we have the following estimate
\begin{align}\label{equ:Holder est}
    \bb{E}\left[\norm{\bff{u}(t)-\bff{u}(s)}{\mathrm{D}(A^\beta)}^p\right] \leq C(t-s)^{\alpha p} s^{-p(\alpha+\beta-\frac12)}, \quad \forall t\geq s>0.
\end{align}
Therefore,
\begin{align*}
    \bb{E}\left[\sum_{\ell=1}^n \one_{\Omega_{\kappa,\ell-1}} \int_{t_{\ell-1}}^{t_\ell} \norm{\bff{u}(t_\ell)-\bff{u}(s)}{\mathrm{D}(A^\beta)}^2 \ds \right] 
    &\leq
    C \sum_{\ell=1}^n \int_{t_{\ell-1}}^{t_\ell} (t_\ell-s)^{2\alpha} s^{-2(\alpha+\beta-\frac12)} \ds 
    \\
    &\leq
    Ck^{2\alpha} \int_0^T s^{-2(\alpha+\beta-\frac12)} \ds 
    \leq Ck^{2\alpha},
\end{align*}
as required.
\end{proof}

\begin{lemma}
Let $(\bff{u},\bff{H})$ be the solution of~\eqref{equ:weakform} with initial data $\bff{u}_0\in \mathrm{D}(A^{\frac12})$. Suppose that $\{t_\ell\}_{\ell=0}^N$ is a uniform partition of $[0,T]$ with $k=T/N$. Suppose that $X$ is a Banach space and $\{\bff{\phi}^\ell\}_{\ell=0}^N$ is a sequence of functions in $X$ such that
\[
    \bb{E}\left[\max_{\ell\leq N} \norm{\bff{\phi}^\ell}{X}^4\right] < \infty.
\]
Then for any $\alpha\in (0,\frac12)$, there exists a constant $C$ such that for any $n\in \{1,2,\ldots,N\}$,
\begin{align}\label{equ:E us utl phi}
    \bb{E}\left[\sum_{\ell=1}^n \one_{\Omega_{\kappa,\ell-1}} \int_{t_{\ell-1}}^{t_{\ell}} \norm{\bff{u}(s)-\bff{u}(t_{\ell-1})}{\mathrm{D}(A^\frac12)}^2 \norm{\bff{\phi}^\ell}{X}^2 \ds \right] 
    &\leq
    Ck^{2\alpha}.
\end{align}
Consequently, we have for any $\alpha\in (0,\frac12)$,
\begin{align}\label{equ:E us phi}
    \bb{E}\left[\sum_{\ell=1}^n \one_{\Omega_{\kappa,\ell-1}} \int_{t_{\ell-1}}^{t_{\ell}} \norm{\bff{u}(s)}{\mathrm{D}(A^\frac12)}^2 \norm{\bff{\phi}^\ell}{X}^2 \ds \right] 
    &\leq
    Ck^{2\alpha} + C\kappa k \bb{E} \left[\sum_{\ell=1}^n \one_{\Omega_{\kappa,\ell-1}} \norm{\bff{\phi}^\ell}{X}^2 \right].
\end{align}
The constant $C$ depends on $\alpha$ and $T$, but is independent of $n$ and $k$.
\end{lemma}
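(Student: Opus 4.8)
The plan is to establish~\eqref{equ:E us utl phi} first and then to deduce~\eqref{equ:E us phi} from it together with the definition~\eqref{equ:Omega k m} of $\Omega_{\kappa,\ell-1}$.

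\emph{Decoupling.} For~\eqref{equ:E us utl phi} the first move is to separate the random weight $\norm{\bff{\phi}^\ell}{X}$ from the temporal increments of $\bff{u}$. Since $\one_{\Omega_{\kappa,\ell-1}}\le 1$ and $\norm{\bff{\phi}^\ell}{X}\le\max_{\ell\le n}\norm{\bff{\phi}^\ell}{X}$, the left-hand side of~\eqref{equ:E us utl phi} is at most $\bb{E}\big[\big(\max_{\ell\le n}\norm{\bff{\phi}^\ell}{X}^2\big)B^2\big]$ with
\[
    B^2 := \sum_{\ell=1}^n \int_{t_{\ell-1}}^{t_\ell} \norm{\bff{u}(s)-\bff{u}(t_{\ell-1})}{\mathrm{D}(A^\frac12)}^2\,\ds,
\]
and by the Cauchy--Schwarz inequality this is bounded by $\big(\bb{E}[\max_{\ell\le n}\norm{\bff{\phi}^\ell}{X}^4]\big)^{1/2}\big(\bb{E}[B^4]\big)^{1/2}$. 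The first factor is finite by hypothesis, so everything reduces to proving $\bb{E}[B^4]\le Ck^{4\alpha}$.

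\emph{The core bound.} For $\bb{E}[B^4]\le Ck^{4\alpha}$ I would use the temporal H\"older estimate~\eqref{equ:Holder est} with $\beta=\frac12$, i.e. $\bb{E}\big[\norm{\bff{u}(t)-\bff{u}(s)}{\mathrm{D}(A^\frac12)}^p\big]\le C(t-s)^{\alpha p}s^{-\alpha p}$ for $t\ge s>0$, which encodes the parabolic smoothing of the solution away from $t=0$. Keeping the sum over $\ell$ outside the $L^2(\Omega)$-norm via Minkowski's inequality and then applying the Cauchy--Schwarz inequality in $s$ on each subinterval of length $k$ gives
\[
    \big(\bb{E}[B^4]\big)^{1/2} \le \sum_{\ell=1}^n k^{1/2}\Big(\int_{t_{\ell-1}}^{t_\ell}\bb{E}\big[\norm{\bff{u}(s)-\bff{u}(t_{\ell-1})}{\mathrm{D}(A^\frac12)}^4\big]\,\ds\Big)^{1/2}.
\]
For $\ell\ge 2$, \eqref{equ:Holder est} with $p=4$ bounds the $\ell$-th summand by $Ck^{2\alpha+1}t_{\ell-1}^{-2\alpha}=Ck(\ell-1)^{-2\alpha}$, and since $2\alpha<1$ the remaining sum satisfies $\sum_{\ell\ge 2}Ck(\ell-1)^{-2\alpha}\le CkN^{1-2\alpha}\le CT^{1-2\alpha}k^{2\alpha}$ using $N\le T/k$. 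The term $\ell=1$ must be handled separately, since~\eqref{equ:Holder est} is not valid at the left endpoint $s=0$; there I would use $\bff{u}\in L^4(\Omega;\mathcal{C}^0_T(\mathrm{D}(A^\frac12)))$ from Proposition~\ref{pro:Holder u} together with the pathwise bound $\int_0^{t_1}\norm{\bff{u}(s)-\bff{u}(0)}{\mathrm{D}(A^\frac12)}^2\,\ds\le 4k\norm{\bff{u}}{\mathcal{C}^0_T(\mathrm{D}(A^\frac12))}^2$, whose $L^2(\Omega)$-norm is $\le Ck\le Ck^{2\alpha}$. Adding the two contributions gives $\big(\bb{E}[B^4]\big)^{1/2}\le Ck^{2\alpha}$, hence~\eqref{equ:E us utl phi}; here $C$ depends additionally on $\bb{E}[\max_{\ell\le n}\norm{\bff{\phi}^\ell}{X}^4]$ and on the moment bounds for $\bff{u}$ (hence on $R$), but not on $n$, $h$, or $k$.

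\emph{The second bound.} To obtain~\eqref{equ:E us phi} I would write, for $s\in[t_{\ell-1},t_\ell]$, $\norm{\bff{u}(s)}{\mathrm{D}(A^\frac12)}^2\le 2\norm{\bff{u}(s)-\bff{u}(t_{\ell-1})}{\mathrm{D}(A^\frac12)}^2+2\norm{\bff{u}(t_{\ell-1})}{\mathrm{D}(A^\frac12)}^2$. The first term contributes $\le Ck^{2\alpha}$ by~\eqref{equ:E us utl phi}. For the second, on $\Omega_{\kappa,\ell-1}$ one has $\norm{\bff{u}(t_{\ell-1})}{\mathrm{D}(A^\frac12)}^2\le C\norm{\bff{u}(t_{\ell-1})}{\bb{H}^2}^2\le C\kappa$ by~\eqref{equ:Omega k m} (recall $A=\Delta^2-\Delta$, whence $\norm{\cdot}{\mathrm{D}(A^\frac12)}^2=\norm{\cdot}{\bb{L}^2}^2+\norm{\nabla\cdot}{\bb{L}^2}^2+\norm{\Delta\cdot}{\bb{L}^2}^2$), and since $\bff{\phi}^\ell$ does not depend on $s$ the inner integral produces a factor $k$; thus the second term contributes $\le C\kappa k\,\bb{E}\big[\sum_{\ell=1}^n\one_{\Omega_{\kappa,\ell-1}}\norm{\bff{\phi}^\ell}{X}^2\big]$, which gives~\eqref{equ:E us phi}.

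\emph{Main obstacle.} The delicate point is recovering the sharp power $k^{2\alpha}$ for the full range $\alpha\in(0,\frac12)$: applying Jensen's or the Cauchy--Schwarz inequality to the \emph{entire} space-time sum-integral before invoking~\eqref{equ:Holder est} costs a factor $\int_0^T s^{-4\alpha}\,\ds$, which is finite only for $\alpha<\frac14$. The remedy is precisely to keep the sum over $\ell$ outside the $L^2(\Omega)$-norm (Minkowski) and to absorb the singular weight $t_{\ell-1}^{-2\alpha}$ at the level of the Riemann sum $k\sum_\ell t_{\ell-1}^{-2\alpha}\approx\int_0^T s^{-2\alpha}\,\ds<\infty$ (finite for $\alpha<\frac12$), handling the endpoint $\ell=1$ via the mere continuity of $\bff{u}$ at $t=0$ rather than by the H\"older estimate.
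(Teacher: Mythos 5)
Your proposal is correct and follows essentially the same route as the paper: decouple $\norm{\bff{\phi}^\ell}{X}$ via Cauchy--Schwarz against $\bb{E}[\max_\ell\norm{\bff{\phi}^\ell}{X}^4]$, invoke the smoothing/H\"older estimate~\eqref{equ:Holder est} with $\beta=\tfrac12$, $p=4$ on $[t_{\ell-1},t_\ell]$ for $\ell\ge 2$, treat $\ell=1$ separately using $\bff{u}\in L^4(\Omega;\mathcal{C}^0_T(\mathrm{D}(A^{\frac12})))$, bound the singular weight by the Riemann sum $k\sum_\ell t_{\ell-1}^{-2\alpha}\lesssim\int_0^T s^{-2\alpha}\,\ds$, and then deduce~\eqref{equ:E us phi} by splitting $\bff{u}(s)=\big(\bff{u}(s)-\bff{u}(t_{\ell-1})\big)+\bff{u}(t_{\ell-1})$ and using the $\bb{H}^2$-bound encoded in $\Omega_{\kappa,\ell-1}$. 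The only (harmless) difference is the order of decoupling — you take the max of $\norm{\bff{\phi}^\ell}{X}$ first and then use Minkowski plus Cauchy--Schwarz in $s$, whereas the paper applies Cauchy--Schwarz in $\Omega$ pointwise in $(\ell,s)$ — which leads to the same deterministic sum.
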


\begin{proof}
We have by H\"older's inequality,
\begin{align*}
    &\bb{E}\left[\sum_{\ell=1}^n \one_{\Omega_{\kappa,\ell-1}} \int_{t_{\ell-1}}^{t_{\ell}} \norm{\bff{u}(s)-\bff{u}(t_{\ell-1})}{\mathrm{D}(A^\frac12)}^2 \norm{\bff{\phi}^\ell}{X}^2 \ds \right]
    \\
    &\leq
    \left(\bb{E}\left[\max_{\ell\leq N} \norm{\bff{\phi}^\ell}{X}^4 \right]\right)^{\frac12}
    \left(\sum_{\ell=1}^n \int_{t_{\ell-1}}^{t_{\ell}} \left(\bb{E} \norm{\bff{u}(s)-\bff{u}(t_{\ell-1})}{\mathrm{D}(A^\frac12)}^4 \right)^{\frac12} \ds \right)
    \\
    &\leq
    C \int_0^{t_1} \left(\bb{E}\norm{\bff{u}(s)-\bff{u}_0}{\mathrm{D}(A^\frac12)}^4 \right)^{\frac12} \ds 
    +
    C \left(\sum_{\ell=2}^n \int_{t_{\ell-1}}^{t_{\ell}} \left(\bb{E} \norm{\bff{u}(s)-\bff{u}(t_{\ell-1})}{\mathrm{D}(A^\frac12)}^4 \right)^{\frac12} \ds \right)
    \\
    &=: I_1+I_2.
\end{align*}
For $I_1$, we use the fact that $\bff{u}\in L^p\big(\Omega; \mathcal{C}([0,T]; \text{D}(A^\frac12))\big)$ to obtain $I_1 \leq Ck$.
For the term $I_2$, we use \eqref{equ:Holder est} to infer for any $\alpha\in (0,\frac12)$,
\begin{align*}
    I_2 &\leq
    C \sum_{\ell=2}^n \int_{t_{\ell-1}}^{t_\ell} (s-t_{\ell-1})^{2\alpha}  t_{\ell-1}^{-2\alpha} \ds 
    \leq
    Ck^{2\alpha} \sum_{\ell=2}^n k t_{\ell-1}^{-2\alpha} 
    \leq
    Ck^{2\alpha} \int_0^T s^{-2\alpha} \,\ds 
    \leq Ck^{2\alpha},
\end{align*}
thus proving \eqref{equ:E us utl phi}. To show \eqref{equ:E us phi}, we write $\bff{u}(s)= \bff{u}(s)-\bff{u}(t_{\ell-1})+ \bff{u}(t_{\ell-1})$, and employ \eqref{equ:Omega k m} to note that
\begin{align*}
    \bb{E}\left[\sum_{\ell=1}^n \one_{\Omega_{\kappa,\ell-1}} \int_{t_{\ell-1}}^{t_{\ell}} \norm{\bff{u}(t_{\ell-1})}{\mathrm{D}(A^\frac12)}^2 \norm{\bff{\phi}^\ell}{X}^2 \ds \right] \leq 
    \kappa k \bb{E} \left[ \sum_{\ell=1}^n \one_{\Omega_{\kappa,\ell-1}} \norm{\bff{\phi}^\ell}{X}^2 \right].
\end{align*}
Inequality \eqref{equ:E us phi} then follows by using \eqref{equ:E us utl phi} and the triangle inequality.
This completes the proof of the lemma.
\end{proof}

We are now ready to prove an auxiliary error estimate. Similar to the assumptions in the following proposition, a technical mesh constraint condition $h=O(k)$ is also implicitly assumed in~\cite{GolJiaLe24}.

\begin{proposition}\label{pro:E theta n L2}
Let $(\bff{u},\bff{H})$ be the solution of~\eqref{equ:weakform} with initial data $\bff{u}_0\in \mathrm{D}(A^{\frac12})$, and let $(\bff{u}_h^n,\bff{H}_h^n)$ be the solution to~\eqref{equ:euler}. Let $\Omega_{\kappa,m}$ be as defined in~\eqref{equ:Omega k m}. Let $\bff{\theta}^n$ and $\bff{\xi}^n$ be as defined in~\eqref{equ:split u} and \eqref{equ:split H}, respectively. Suppose that $h=O(k)$ and $n\in \{1,2,\ldots, N\}$. Then for any $\delta>0$,
\begin{align*}
    \bb{E}\left[\max_{m\leq n} \left( \one_{\Omega_{\kappa,m-1}} \norm{\bff{\theta}^m}{\bb{L}^2}^2 \right) \right]  
    +
    k  \sum_{\ell=1}^n \bb{E} \left[ \one_{\Omega_{\kappa,\ell-1}} \left(\norm{\nabla \bff{\theta}^\ell}{\bb{L}^2}^2 +\norm{\bff{\xi}^\ell}{\bb{L}^2}^2\right) \right]
    &\leq
    e^{C \kappa} \left(h^2+k^{\frac12-\delta}\right),
\end{align*}
where $C$ is a constant depending on $R$, $T$, \blue{and $\delta$}, but is independent of $h$ and $k$.
\end{proposition}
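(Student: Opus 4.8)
The plan is to obtain error equations by subtracting the scheme~\eqref{equ:euler} from the weak formulation~\eqref{equ:weakform} integrated over each subinterval $[t_{\ell-1},t_\ell]$, to insert the Ritz splittings~\eqref{equ:split u}--\eqref{equ:split H}, to localise onto $\Omega_{\kappa,\ell-1}$ (using the nesting $\Omega_{\kappa,m}\subset\Omega_{\kappa,m-1}$ and the identity~\eqref{equ:1 vn vn1}), and finally to close a discrete Gronwall estimate. Writing $\bff{u}(t_\ell)-\bff{u}_h^\ell=\bff{\rho}^\ell+\bff{\theta}^\ell$ and $\bff{H}(t_\ell)-\bff{H}_h^\ell=\bff{\eta}^\ell+\bff{\xi}^\ell$, for all $\bff{\chi}_h\in\bb{V}_h$ the \emph{evolution error identity} expresses $\inpro{\bff{\theta}^\ell-\bff{\theta}^{\ell-1}}{\bff{\chi}_h}$ as $k\inpro{\bff{\xi}^\ell}{\bff{\chi}_h}+k\inpro{\nabla\bff{\xi}^\ell}{\nabla\bff{\chi}_h}$ plus: the projection residual $-\inpro{\bff{\rho}^\ell-\bff{\rho}^{\ell-1}}{\bff{\chi}_h}+k\inpro{\bff{\eta}^\ell}{\bff{\chi}_h}$ (here $\nabla\bff{\eta}^\ell$ disappears by~\eqref{equ:Ritz zero}); the cross-product discrepancy $k\inpro{\bff{u}_h^\ell\times\bff{H}_h^\ell}{\bff{\chi}_h}-\int_{t_{\ell-1}}^{t_\ell}\inpro{\bff{u}(s)\times\bff{H}(s)}{\bff{\chi}_h}\,\ds$ and the analogous differences for $\mathcal{C}$ and $\mathcal{M}_R$; a time-stepping consistency term $\int_{t_{\ell-1}}^{t_\ell}\big(\inpro{\bff{H}(s)-\bff{H}(t_\ell)}{\bff{\chi}_h}+\inpro{\nabla\bff{H}(s)-\nabla\bff{H}(t_\ell)}{\nabla\bff{\chi}_h}\big)\ds$; and the stochastic term $\int_{t_{\ell-1}}^{t_\ell}\inpro{G(\bff{u}(s))-G(\bff{u}_h^{\ell-1})}{\bff{\chi}_h}\,\dW(s)$. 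Together with the \emph{elliptic error identity}
\[
\inpro{\bff{\xi}^\ell}{\bff{\phi}_h}=-\inpro{\bff{\eta}^\ell}{\bff{\phi}_h}-\inpro{\nabla\bff{\theta}^\ell}{\nabla\bff{\phi}_h}+\inpro{f_R(\bff{u}(t_\ell))-f_R(\bff{u}_h^{\ell-1})}{\bff{\phi}_h},\qquad\bff{\phi}_h\in\bb{V}_h,
\]
these are the two relations on which everything rests.

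The key algebraic point is to choose the test functions so as to recover dissipation. Taking $\bff{\chi}_h=\bff{\theta}^\ell$ in the evolution identity, the terms stemming from $\bff{H}-\Delta\bff{H}$ assemble into $k\inpro{\bff{\xi}^\ell}{\bff{\theta}^\ell-\Delta_h\bff{\theta}^\ell}$ (using the definition~\eqref{equ:disc laplacian} of $\Delta_h$). Testing the elliptic identity with $\bff{\phi}_h=\bff{\theta}^\ell-\Delta_h\bff{\theta}^\ell\in\bb{V}_h$ and using $\inpro{\nabla\bff{\theta}^\ell}{\nabla(\bff{\theta}^\ell-\Delta_h\bff{\theta}^\ell)}=\norm{\nabla\bff{\theta}^\ell}{\bb{L}^2}^2+\norm{\Delta_h\bff{\theta}^\ell}{\bb{L}^2}^2$, this turns into $-k\norm{\nabla\bff{\theta}^\ell}{\bb{L}^2}^2-k\norm{\Delta_h\bff{\theta}^\ell}{\bb{L}^2}^2$ together with residuals in $\bff{\eta}^\ell$ and in $f_R(\bff{u}(t_\ell))-f_R(\bff{u}_h^{\ell-1})$. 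Combined with $\inpro{\bff{\theta}^\ell-\bff{\theta}^{\ell-1}}{\bff{\theta}^\ell}\geq\tfrac12\big(\norm{\bff{\theta}^\ell}{\bb{L}^2}^2-\norm{\bff{\theta}^{\ell-1}}{\bb{L}^2}^2\big)+\tfrac12\norm{\bff{\theta}^\ell-\bff{\theta}^{\ell-1}}{\bb{L}^2}^2$, we obtain on the left-hand side the dissipative quantities $k\norm{\nabla\bff{\theta}^\ell}{\bb{L}^2}^2+k\norm{\Delta_h\bff{\theta}^\ell}{\bb{L}^2}^2$ (the $\Delta_h\bff{\theta}^\ell$-control is stronger than needed but is used to absorb several nonlinear terms). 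The bound for $\bff{\xi}^\ell$ is then recovered \emph{a posteriori}: taking $\bff{\phi}_h=\bff{\xi}^\ell$ in the elliptic identity and using $\inpro{\nabla\bff{\theta}^\ell}{\nabla\bff{\xi}^\ell}=-\inpro{\Delta_h\bff{\theta}^\ell}{\bff{\xi}^\ell}$ gives $\norm{\bff{\xi}^\ell}{\bb{L}^2}^2\leqs\norm{\bff{\eta}^\ell}{\bb{L}^2}^2+\norm{\Delta_h\bff{\theta}^\ell}{\bb{L}^2}^2+\norm{f_R(\bff{u}(t_\ell))-f_R(\bff{u}_h^{\ell-1})}{\bb{L}^2}^2$, all of which will already have been controlled.

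It then remains to estimate the residuals, multiply by $\one_{\Omega_{\kappa,\ell-1}}$, sum, take the maximum and the expectation. The projection residuals are $O(h)$ by~\eqref{equ:Ritz approx}; the term $-\inpro{\bff{\rho}^\ell-\bff{\rho}^{\ell-1}}{\bff{\theta}^\ell}$ is where $h=O(k)$ enters, since $\sum_\ell\norm{\bff{\rho}^\ell-\bff{\rho}^{\ell-1}}{\bb{L}^2}\leqs h^2\sum_\ell\norm{\bff{u}(t_\ell)-\bff{u}(t_{\ell-1})}{\bb{H}^2}$ is offset against $h^4/k\sim k^3$ after H\"older in $\ell$ and the $\bb{H}^2$-regularity of $\bff{u}$, then absorbed into $\tfrac18\max_\ell\one_{\Omega_{\kappa,\ell-1}}\norm{\bff{\theta}^\ell}{\bb{L}^2}^2$. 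The consistency terms are controlled by Young against $\tfrac{k}{8}(\norm{\nabla\bff{\theta}^\ell}{\bb{L}^2}^2+\norm{\Delta_h\bff{\theta}^\ell}{\bb{L}^2}^2)$, leaving $\sum_\ell\bb{E}\!\int_{t_{\ell-1}}^{t_\ell}\norm{\bff{H}(s)-\bff{H}(t_\ell)}{\bb{H}^1}^2\ds$; since $\bff{H}=\Delta\bff{u}+f_R(\bff{u})$ this is governed by the temporal H\"older continuity of $\bff{u}$ in $\mathrm{D}(A^{3/4})\simeq\bb{H}^3$, which by the smoothing estimate~\eqref{equ:Holder est} holds only with exponent $\alpha<\tfrac14$ and integrable weight $s^{-2(\alpha+\frac14)}$, giving the bottleneck rate $k^{2\alpha}=k^{\frac12-\delta}$. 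The cross-product and convective residuals are first split as (values at $t_\ell$ minus at $t_{\ell-1}$, controlled by temporal regularity) plus (values at $t_{\ell-1}$ minus their discrete counterparts): in the latter, the skew-symmetry $\inpro{\bff{\theta}^\ell\times\bff{v}}{\bff{\theta}^\ell}=0$ kills the most dangerous contribution, the Lipschitz bounds on $f_R,\mathcal{M}_R$, the inequalities~\eqref{equ:Rv v}--\eqref{equ:Rv w}, \eqref{equ:fR v w H1}, and~\eqref{equ:disc lapl L infty} control the rest, and the bounds $\norm{\bff{u}(t)}{\bb{H}^2}^2,\ \norm{\bff{u}_h^n}{\bb{H}^1}^2\leq\kappa$ valid on $\Omega_{\kappa,\ell-1}$ are used throughout; every occurrence of $\bff{\xi}^\ell$ is absorbed into $k\norm{\Delta_h\bff{\theta}^\ell}{\bb{L}^2}^2$ via the \emph{a posteriori} bound above. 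For the stochastic term one writes $\bff{\theta}^\ell=\bff{\theta}^{\ell-1}+(\bff{\theta}^\ell-\bff{\theta}^{\ell-1})$: against $\bff{\theta}^{\ell-1}$ the sum is a genuine martingale and Burkholder--Davis--Gundy yields a term absorbed into $\tfrac14\bb{E}\big[\max_m\one_{\Omega_{\kappa,m-1}}\norm{\bff{\theta}^m}{\bb{L}^2}^2\big]$ plus a Gronwall remainder; against $\bff{\theta}^\ell-\bff{\theta}^{\ell-1}$ one uses Cauchy--Schwarz, the It\^o isometry, and $\tfrac14\sum_\ell\one_{\Omega_{\kappa,\ell-1}}\norm{\bff{\theta}^\ell-\bff{\theta}^{\ell-1}}{\bb{L}^2}^2$ on the left, each remainder being $\leqs k^{2\alpha}+h^2+k\sum_\ell\bb{E}[\one_{\Omega_{\kappa,\ell-1}}\norm{\bff{\theta}^{\ell-1}}{\bb{L}^2}^2]$. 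Applying the discrete Gronwall lemma---whose constant is $e^{C\kappa}$ because the localised nonlinear bounds carry the factor $\kappa$---gives the stated estimate for $\bff{\theta}$, and the claim for $\bff{\xi}$ follows from the \emph{a posteriori} bound.

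The main obstacle is the interplay of the cross-product and convective terms with the as-yet-uncontrolled field $\bff{\xi}^\ell$ and with the localisation: since neither $\bff{H}_h^\ell$ nor $\bff{u}_h^\ell$ is directly bounded on $\Omega_{\kappa,\ell-1}$ at the current level $t_\ell$, these terms must be rewritten through the splittings so that every $\bff{\xi}^\ell$ is absorbed into the left-hand side before $\bff{\xi}^\ell$ is estimated, and every quantity evaluated at $t_\ell$ (or at $\bff{u}_h^\ell$) is reduced to one at $t_{\ell-1}$ (respectively $\bff{u}_h^{\ell-1}$) plus an increment that is either absorbed or handled by the temporal regularity of $\bff{u}$ and Lemmas~\ref{lem:stab L2}--\ref{lem:stab H1}, all while tracking the $\kappa$-dependence to produce the Gronwall constant $e^{C\kappa}$. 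The other delicate point is identifying the sharp consistency exponent $k^{1/2-\delta}$, which forces the use of the parabolic smoothing estimate at $\beta=\tfrac34$ together with its time-weight.
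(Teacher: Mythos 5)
Your proposal is correct and follows essentially the same route as the paper: the same Ritz splitting and error identities, localisation on $\Omega_{\kappa,\ell-1}$ via \eqref{equ:1 vn vn1}, the smoothing/H\"older lemma at $\beta=\tfrac34$ for the $k^{\frac12-\delta}$ consistency rate, skew-symmetry for the cross-product terms, the $\bff{\theta}^{\ell-1}$ versus $\bff{\theta}^\ell-\bff{\theta}^{\ell-1}$ splitting with Burkholder--Davis--Gundy and It\^o isometry for the noise, and a discrete Gronwall argument yielding the $e^{C\kappa}$ factor. The only (harmless) deviation is your choice of elliptic test function $\bff{\theta}^\ell-\Delta_h\bff{\theta}^\ell$, which puts $k\norm{\nabla\bff{\theta}^\ell}{\bb{L}^2}^2+k\norm{\Delta_h\bff{\theta}^\ell}{\bb{L}^2}^2$ on the left and recovers $\norm{\bff{\xi}^\ell}{\bb{L}^2}^2$ a posteriori, whereas the paper tests \eqref{equ:Hhn Htn} separately with $k\bff{\theta}^\ell$ and $k\bff{\xi}^\ell$, cancels the $\nabla\bff{\xi}^\ell\cdot\nabla\bff{\theta}^\ell$ terms, and keeps $k\norm{\bff{\xi}^\ell}{\bb{L}^2}^2$ directly on the left; both variants close the estimate with the same residual bounds.
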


\begin{proof}
Subtracting~\eqref{equ:weakform} from~\eqref{equ:euler}, rewriting the indices, and noting~\eqref{equ:split u} and~\eqref{equ:split H}, we have for any $\bff{\chi}_h,\bff{\phi}_h\in \bb{V}_h$,
\begin{align}
\label{equ:theta n theta n1}
    \inpro{\bff{\theta}^\ell-\bff{\theta}^{\ell-1}}{\bff{\chi}_h}
    &=
    -
    \inpro{\bff{\rho}^\ell -\bff{\rho}^{\ell-1}}{\bff{\chi}_h}
    +
    \int_{t_{\ell-1}}^{t_\ell} \inpro{\bff{H}(s)-\bff{H}_h^\ell}{\bff{\chi}_h} \ds
    +
    \int_{t_{\ell-1}}^{t_\ell} \inpro{\nabla \bff{H}(s)-\nabla \bff{H}_h^\ell}{\nabla \bff{\chi}_h} \ds 
    \nonumber\\
    &\quad
    -
    \int_{t_{\ell-1}}^{t_\ell} \inpro{\big(\bff{u}(s)-\bff{u}_h^\ell \big) \times \bff{H}_h^\ell}{\bff{\chi}_h} \ds
    -
    \int_{t_{\ell-1}}^{t_\ell} \inpro{\bff{u}(s) \times \big(\bff{H}(s)- \bff{H}_h^\ell\big)}{\bff{\chi}_h} \ds 
    \nonumber\\
    &\quad
    +
    \int_{t_{\ell-1}}^{t_\ell} \inpro{\bff{\nu} \cdot \nabla\big(\bff{u}(s)- \bff{u}_h^\ell\big)}{\bff{\chi}_h} \ds 
    +
    \int_{t_{\ell-1}}^{t_\ell} \inpro{ \big(\bff{u}(s)- \bff{u}_h^\ell\big) \times (\bff{\nu}\cdot \nabla)\bff{u}_h^\ell}{\bff{\chi}_h} \ds
    \nonumber\\
    &\quad
    +
    \int_{t_{\ell-1}}^{t_\ell} \inpro{\bff{u}(s) \times (\bff{\nu}\cdot \nabla) \big(\bff{u}(s)-\bff{u}_h^\ell \big)}{\bff{\chi}_h} \ds
    +
    \int_{t_{\ell-1}}^{t_\ell} \inpro{\mathcal{M}_R(\bff{u}(s))- \mathcal{M}_R(\bff{u}_h^{\ell-1})}{\bff{\chi}_h} \ds 
    \nonumber\\
    &\quad
    +
    \int_{t_{\ell-1}}^{t_\ell} \inpro{G(\bff{u}(s))- G(\bff{u}_h^{\ell-1})}{\bff{\chi}_h} \mathrm{d}W(s),
    \\
\label{equ:Hhn Htn}
    \inpro{\bff{\xi}^\ell}{\bff{\phi}_h}
    &=
    -
    \inpro{\bff{\eta}^\ell}{\bff{\phi}_h}
    -
    \inpro{\nabla \bff{\theta}^\ell+ \nabla \bff{\rho}^\ell}{\nabla \bff{\phi}_h}
    +
    \inpro{f_R(\bff{u}(t_\ell))- f_R(\bff{u}_h^{\ell-1})}{\bff{\phi}_h}.
\end{align}
We now put $\bff{\chi}_h=\bff{\theta}^\ell$ in~\eqref{equ:theta n theta n1} and $\bff{\phi}_h=k \bff{\theta}^\ell$ in~\eqref{equ:Hhn Htn}, then multiply the resulting equations by $\one_{\Omega_{\kappa,\ell-1}}$, where the set $\Omega_{\kappa,n}$ was defined in~\eqref{equ:Omega k m}. We then sum the resulting expression over $\ell\in \{1,2,\ldots,m\}$, take the maximum over $m\leq n$, and apply the expectation value. Noting~\eqref{equ:split u}, \eqref{equ:split H}, \eqref{equ:Ritz zero}, and~\eqref{equ:1 vn vn1}, we obtain
\begin{align}\label{equ:12 theta n}
    &\frac12 \bb{E}\left[\max_{m\leq n} \left( \one_{\Omega_{\kappa,m-1}} \norm{\bff{\theta}^m}{\bb{L}^2}^2 \right) \right] 
    +
    \frac12  \sum_{\ell=1}^n \bb{E} \left[\one_{\Omega_{\kappa,\ell-1}} \norm{\bff{\theta}^\ell-\bff{\theta}^{\ell-1}}{\bb{L}^2}^2 \right]
    \nonumber\\
    &\leq
    \frac12 \bb{E}\left[\norm{\bff{\theta}^0}{\bb{L}^2}^2 \right]
    -
    \bb{E}\left[\max_{m\leq n} \sum_{\ell=1}^m \one_{\Omega_{\kappa,\ell-1}} \inpro{\bff{\rho}^\ell-\bff{\rho}^{\ell-1}}{\bff{\theta}^\ell}\right]
    \nonumber\\
    &\quad
    +
    \bb{E}\left[\max_{m\leq n} \sum_{\ell=1}^m \one_{\Omega_{\kappa,\ell-1}} \int_{t_{\ell-1}}^{t_\ell} \inpro{\bff{\eta}^\ell + \bff{\xi}^\ell+ \bff{H}(s)-\bff{H}(t_\ell)}{\bff{\theta}^\ell} \ds \right]
    \nonumber\\
    &\quad
    +
    \bb{E}\left[\max_{m\leq n} \sum_{\ell=1}^m \one_{\Omega_{\kappa,\ell-1}} \int_{t_{\ell-1}}^{t_\ell} \inpro{\nabla \bff{\xi}^\ell+ \nabla \bff{H}(s)-\nabla \bff{H}(t_\ell)}{\nabla \bff{\theta}^\ell} \ds\right]
    \nonumber\\
    &\quad
    -
    \bb{E}\left[\max_{m\leq n} \sum_{\ell=1}^m \one_{\Omega_{\kappa,\ell-1}} \int_{t_{\ell-1}}^{t_\ell} \inpro{(\bff{\rho}^\ell+ \bff{\theta}^\ell + \bff{u}(s)-\bff{u}(t_\ell)) \times \bff{H}_h^\ell}{\bff{\theta}^\ell} \ds \right]
    \nonumber\\
    &\quad
    -
    \bb{E}\left[\max_{m\leq n} \sum_{\ell=1}^m \one_{\Omega_{\kappa,\ell-1}} \int_{t_{\ell-1}}^{t_\ell} \inpro{\bff{u}(s) \times (\bff{\eta}^\ell + \bff{\xi}^\ell + \bff{H}(s)-\bff{H}(t_\ell))}{\bff{\theta}^\ell} \ds \right]
    \nonumber\\
    &\quad
    +
   	\bb{E}\left[\max_{m\leq n} \sum_{\ell=1}^m \one_{\Omega_{\kappa,\ell-1}} \int_{t_{\ell-1}}^{t_\ell} \inpro{\bff{\nu}\cdot \nabla (\bff{\rho}^\ell+ \bff{\theta}^\ell + \bff{u}(s)-\bff{u}(t_\ell))}{\bff{\theta}^\ell} \ds \right]
    \nonumber\\
    &\quad
    +
    \bb{E}\left[\max_{m\leq n} \sum_{\ell=1}^m \one_{\Omega_{\kappa,\ell-1}} \int_{t_{\ell-1}}^{t_\ell} \inpro{(\bff{\rho}^\ell + \bff{\theta}^\ell + \bff{u}(s)-\bff{u}(t_\ell)) \times (\bff{\nu}\cdot \nabla) \bff{u}_h^\ell}{\bff{\theta}^\ell} \ds \right]
    \nonumber\\
    &\quad
    +
    \bb{E}\left[\max_{m\leq n} \sum_{\ell=1}^m \one_{\Omega_{\kappa,\ell-1}} \int_{t_{\ell-1}}^{t_\ell} \inpro{\bff{u}(s) \times (\bff{\nu}\cdot \nabla)(\bff{\rho}^\ell+ \bff{\theta}^\ell+\bff{u}(s)-\bff{u}(t_\ell))}{\bff{\theta}^\ell} \ds \right]
    \nonumber\\
    &\quad
     +
    \bb{E}\left[\max_{m\leq n} \sum_{\ell=1}^m \one_{\Omega_{\kappa,\ell-1}} \int_{t_{\ell-1}}^{t_\ell} \inpro{\mathcal{M}_R(\bff{u}(s))- \mathcal{M}_R(\bff{u}_h^{\ell-1})}{\bff{\theta}^\ell} \ds \right]
    \nonumber\\
    &\quad
    +
    \bb{E}\left[\max_{m\leq n} \sum_{\ell=1}^m \one_{\Omega_{\kappa,\ell-1}} \int_{t_{\ell-1}}^{t_\ell} \inpro{G(\bff{u}(s))- G(\bff{u}_h^{\ell-1})}{\bff{\theta}^\ell} \mathrm{d}W(s) \right],
\end{align}
and
\begin{align}\label{equ:int xi n theta n}
    k \bb{E}\left[\max_{m\leq n} \sum_{\ell=1}^m \one_{\Omega_{\kappa,\ell-1}} \inpro{\bff{\xi}^\ell}{\bff{\theta}^\ell} \right]
    &=
    -k
    \bb{E}\left[\max_{m\leq n} \sum_{\ell=1}^m \one_{\Omega_{\kappa,\ell-1}} \norm{\nabla \bff{\theta}^\ell}{\bb{L}^2}^2 \right]
    -k
   	\bb{E}\left[\max_{m\leq n} \sum_{\ell=1}^m \one_{\Omega_{\kappa,\ell-1}}  \inpro{\bff{\eta}^\ell}{\bff{\theta}^\ell} \right]
    \nonumber\\
    &\quad
    +
    k\bb{E}\left[\max_{m\leq n} \sum_{\ell=1}^m \one_{\Omega_{\kappa,\ell-1}}  \inpro{f_R(\bff{u}(t_\ell))- f_R(\bff{u}_h^{\ell-1})}{\bff{\theta}^\ell} \right].
\end{align}
Next, we put $\bff{\phi}_h= k\bff{\xi}^n$ to obtain
\begin{align}\label{equ:k xi n L2}
    k \bb{E}\left[\max_{m\leq n} \sum_{\ell=1}^m \one_{\Omega_{\kappa,\ell-1}} \norm{\bff{\xi}^\ell}{\bb{L}^2}^2 \right]
    &=
    -
    k \bb{E}\left[\max_{m\leq n} \sum_{\ell=1}^m \one_{\Omega_{\kappa,\ell-1}} \inpro{\bff{\eta}^\ell}{\bff{\xi}^\ell}  \right]
    -
    k \bb{E}\left[\max_{m\leq n} \sum_{\ell=1}^m \one_{\Omega_{\kappa,\ell-1}} \inpro{\nabla \bff{\theta}^\ell}{\nabla \bff{\xi}^\ell} \right]
    \nonumber\\
    &\quad
    +
    k \bb{E}\left[\max_{m\leq n} \sum_{\ell=1}^m \one_{\Omega_{\kappa,\ell-1}} \inpro{f_R(\bff{u}(t_\ell))- f_R(\bff{u}_h^{\ell-1})}{\bff{\xi}^\ell} \right]. 
\end{align}
Adding~\eqref{equ:12 theta n}, \eqref{equ:int xi n theta n}, and~\eqref{equ:k xi n L2}, we have
\begin{align}\label{equ:theta n L2 pre}
    &\frac12 \bb{E}\left[\max_{m\leq n} \left( \one_{\Omega_{\kappa,m-1}} \norm{\bff{\theta}^m}{\bb{L}^2}^2 \right) \right] 
    +
    \frac12  \sum_{\ell=1}^n \bb{E} \left[\one_{\Omega_{\kappa,\ell-1}} \norm{\bff{\theta}^\ell-\bff{\theta}^{\ell-1}}{\bb{L}^2}^2 \right]
    \nonumber\\
    &\quad
    +
    k
    \bb{E}\left[\sum_{\ell=1}^n \one_{\Omega_{\kappa,\ell-1}} \norm{\nabla \bff{\theta}^\ell}{\bb{L}^2}^2 \right]
    +
   	k \bb{E}\left[\sum_{\ell=1}^n \one_{\Omega_{\kappa,\ell-1}} \norm{\bff{\xi}^\ell}{\bb{L}^2}^2 \right]
   	\nonumber\\
   	&\leq
   	\frac12 \bb{E}\left[\norm{\bff{\theta}^0}{\bb{L}^2}^2\right]
   	-
   	\bb{E}\left[\max_{m\leq n} \sum_{\ell=1}^m \one_{\Omega_{\kappa,\ell-1}} \inpro{\bff{\rho}^\ell-\bff{\rho}^{\ell-1}}{\bff{\theta}^\ell}\right]
   	\nonumber\\
   	&\quad
   	+
   	\bb{E}\left[\max_{m\leq n} \sum_{\ell=1}^m \one_{\Omega_{\kappa,\ell-1}} \int_{t_{\ell-1}}^{t_\ell} \inpro{\bff{\eta}^\ell + \bff{\xi}^\ell+ \bff{H}(s)-\bff{H}(t_\ell)}{\bff{\theta}^\ell} \ds \right]
   	\nonumber\\
   	&\quad
   	+
   	\bb{E}\left[\max_{m\leq n} \sum_{\ell=1}^m \one_{\Omega_{\kappa,\ell-1}} \int_{t_{\ell-1}}^{t_\ell} \inpro{ \nabla \bff{H}(s)-\nabla \bff{H}(t_\ell)}{\nabla \bff{\theta}^\ell} \ds\right]
   	\nonumber\\
   	&\quad
   	-
   	{\bb{E}\left[\max_{m\leq n} \sum_{\ell=1}^m \one_{\Omega_{\kappa,\ell-1}} \int_{t_{\ell-1}}^{t_\ell} \inpro{(\bff{\rho}^\ell+ \bff{\theta}^\ell + \bff{u}(s)-\bff{u}(t_\ell)) \times \bff{H}_h^\ell}{\bff{\theta}^\ell} \ds \right] }
   	\nonumber\\
   	&\quad
   	-
   	{\bb{E}\left[\max_{m\leq n} \sum_{\ell=1}^m \one_{\Omega_{\kappa,\ell-1}} \int_{t_{\ell-1}}^{t_\ell} \inpro{\bff{u}(s) \times (\bff{\eta}^\ell + \bff{\xi}^\ell + \bff{H}(s)-\bff{H}(t_\ell))}{\bff{\theta}^\ell} \ds \right] }
   	\nonumber\\
   	&\quad
   	+
   	\bb{E}\left[\max_{m\leq n} \sum_{\ell=1}^m \one_{\Omega_{\kappa,\ell-1}} \int_{t_{\ell-1}}^{t_\ell} \inpro{\bff{\nu}\cdot \nabla (\bff{\rho}^\ell+ \bff{\theta}^\ell + \bff{u}(s)-\bff{u}(t_\ell))}{\bff{\theta}^\ell} \ds \right]
   	\nonumber\\
   	&\quad
   	+
   	\bb{E}\left[\max_{m\leq n} \sum_{\ell=1}^m \one_{\Omega_{\kappa,\ell-1}} \int_{t_{\ell-1}}^{t_\ell} \inpro{(\bff{\rho}^\ell + \bff{\theta}^\ell + \bff{u}(s)-\bff{u}(t_\ell)) \times (\bff{\nu}\cdot \nabla) \bff{u}_h^\ell}{\bff{\theta}^\ell} \ds \right]
   	\nonumber\\
   	&\quad
   	+
   	\bb{E}\left[\max_{m\leq n} \sum_{\ell=1}^m \one_{\Omega_{\kappa,\ell-1}} \int_{t_{\ell-1}}^{t_\ell} \inpro{\bff{u}(s) \times (\bff{\nu}\cdot \nabla)(\bff{\rho}^\ell+ \bff{\theta}^\ell+\bff{u}(s)-\bff{u}(t_\ell))}{\bff{\theta}^\ell} \ds \right]
   	\nonumber\\
   	&\quad
   	+
   	\bb{E}\left[\max_{m\leq n} \sum_{\ell=1}^m \one_{\Omega_{\kappa,\ell-1}} \int_{t_{\ell-1}}^{t_\ell} \inpro{\mathcal{M}_R(\bff{u}(s))- \mathcal{M}_R(\bff{u}_h^{\ell-1})}{\bff{\theta}^\ell} \ds \right]
   	\nonumber\\
   	&\quad
   	+
   	k\bb{E}\left[\max_{m\leq n} \sum_{\ell=1}^m \one_{\Omega_{\kappa,\ell-1}}  \inpro{f_R(\bff{u}(t_\ell))- f_R(\bff{u}_h^{\ell-1})}{\bff{\theta}^\ell} \right]
   	\nonumber\\
   	&\quad
   	-k
   	\bb{E}\left[\max_{m\leq n} \sum_{\ell=1}^m \one_{\Omega_{\kappa,\ell-1}}  \inpro{\bff{\eta}^\ell}{\bff{\theta}^\ell} \right]
   	-k
   	\bb{E}\left[\max_{m\leq n} \sum_{\ell=1}^m \one_{\Omega_{\kappa,\ell-1}}  \inpro{\bff{\eta}^\ell}{\bff{\xi}^\ell} \right]
   	\nonumber\\
   	&\quad
   	+
   	\bb{E}\left[\max_{m\leq n} \sum_{\ell=1}^m \one_{\Omega_{\kappa,\ell-1}} \int_{t_{\ell-1}}^{t_\ell} \inpro{\mathcal{M}_R(\bff{u}(s))- \mathcal{M}_R(\bff{u}_h^{\ell-1})}{\bff{\xi}^\ell} \ds \right]
   	\nonumber\\
   	&\quad
   	+
   	\bb{E}\left[\max_{m\leq n} \sum_{\ell=1}^m \one_{\Omega_{\kappa,\ell-1}} \int_{t_{\ell-1}}^{t_\ell} \inpro{G(\bff{u}(s))- G(\bff{u}_h^{\ell-1})}{\bff{\theta}^\ell} \mathrm{d}W(s) \right]
   	\nonumber\\
    &=: \frac12 \bb{E}\left[\norm{\bff{\theta}^0}{\bb{L}^2}^2\right] +I_1+I_2 + \cdots + I_{14}.
\end{align}
We will estimate each term on the last line, noting the regularity of the solution in Proposition~\ref{pro:Holder u}. Let $\epsilon>0$ be a constant to be determined later. For the first term, by~\eqref{equ:Ritz approx} and Young's inequality,
\begin{align*}
    \abs{I_1}
    &\leq
    Ck^{-1} \bb{E}\left[\sum_{\ell=1}^n \norm{\bff{\rho}^\ell -\bff{\rho}^{\ell-1}}{\bb{L}^2}^2\right] + Ck \bb{E}\left[\sum_{\ell=1}^n \one_{\Omega_{\kappa,\ell-1}}  \norm{\bff{\theta}^\ell}{\bb{L}^2}^2 \right]
    \\
    &\leq
    Cnh^4 k^{-1}
    +
    Ck \bb{E} \left[\sum_{\ell=1}^n \one_{\Omega_{\kappa,\ell-1}}  \norm{\bff{\theta}^\ell}{\bb{L}^2}^2 \right].
\end{align*}
Let $\delta>0$ be arbitrary. For the second term, by Young's inequality, \eqref{equ:Ritz approx}, and the H\"older continuity in time of the solution given by Proposition~\ref{pro:Holder u},
\begin{align*}
    \abs{I_2}
    &\leq
    Ch^4 + \epsilon k \bb{E} \left[\sum_{\ell=1}^n \one_{\Omega_{\kappa,\ell-1}}  \norm{\bff{\xi}^\ell}{\bb{L}^2}^2 \right]
    +
    C k \bb{E} \left[\sum_{\ell=1}^n \one_{\Omega_{\kappa,\ell-1}} \norm{\bff{\theta}^\ell}{\bb{L}^2}^2 \right]
    \\
    &\quad
    +
    C \bb{E} \left[\sum_{\ell=1}^n \one_{\Omega_{\kappa,\ell-1}} \int_{t_{\ell-1}}^{t_\ell} \norm{\bff{H}(s)-\bff{H}(t_\ell)}{\bb{L}^2}^2 \ds \right]
    \\
    &\leq
    Ch^4+ Ck^{1-\delta} + \epsilon k \bb{E} \left[\sum_{\ell=1}^n \one_{\Omega_{\kappa,\ell-1}}  \norm{\bff{\xi}^\ell}{\bb{L}^2}^2 \right]
    +
    C k \bb{E} \left[\sum_{\ell=1}^n \one_{\Omega_{\kappa,\ell-1}} \norm{\bff{\theta}^\ell}{\bb{L}^2}^2 \right].
\end{align*}

For the term $I_3$, by Young's inequality and Lemma~\ref{lem:Holder D A beta} (essentially with $\beta=3/4$), we have
\begin{align}\label{equ:12 Holder a}
    \abs{I_3}
    &\leq
    \epsilon k \bb{E} \left[\sum_{\ell=1}^n \one_{\Omega_{\kappa,\ell-1}} \norm{\nabla \bff{\theta}^\ell}{\bb{L}^2}^2 \right]
    +
    C \bb{E} \left[\sum_{\ell=2}^n \one_{\Omega_{\kappa,\ell-1}} \int_{t_{\ell-1}}^{t_\ell} \norm{\nabla\bff{H}(s)-\nabla\bff{H}(t_\ell)}{\bb{L}^2}^2 \ds \right]
    \nonumber\\
    &\leq
    \epsilon k \bb{E} \left[\sum_{\ell=1}^n \one_{\Omega_{\kappa,\ell-1}} \norm{\nabla \bff{\theta}^\ell}{\bb{L}^2}^2 \right]
    + Ck^{\frac12 -\delta},
\end{align}

Next, we estimate $I_4$. By the H\"older, Young, and Gagliardo--Nirenberg inequalities, noting the regularity of the solution, we obtain
\begin{align}\label{equ:12 Holder n}
    \abs{I_4}
    &\leq
    C \bb{E} \left[\sum_{\ell=1}^n \one_{\Omega_{\kappa,\ell-1}} \int_{t_{\ell-1}}^{t_\ell} \left( \norm{\bff{\rho}^\ell}{\bb{L}^4} + \norm{\bff{u}(s)-\bff{u}(t_\ell)}{\bb{L}^4} \right) \norm{\bff{H}_h^{\ell}}{\bb{L}^2} \norm{\bff{\theta}^\ell}{\bb{L}^4} \right]
    \nonumber\\
    &\leq
    Ckh^2 \bb{E}\left[\sum_{\ell=1}^n \norm{\bff{u}}{L^\infty_T(\bb{W}^{1,4})}^2 \norm{\bff{H}_h^\ell}{\bb{L}^2}^2 \right] 
    + Ck \bb{E} \left[\sum_{\ell=1}^n \one_{\Omega_{\kappa,\ell-1}} \norm{\bff{\theta}^\ell}{\bb{L}^2}^2 \right]
    \nonumber\\
    &\quad
    +
    Ck \bb{E} \left[\sum_{\ell=1}^n \one_{\Omega_{\kappa,\ell-1}} k^\frac12 \norm{\bff{u}}{\mathcal{C}^{1/4}_T(\bb{H}^1)}^2 \norm{\bff{H}_h^\ell}{\bb{L}^2}^2 \right]
    + \epsilon k \bb{E} \left[\sum_{\ell=1}^n \one_{\Omega_{\kappa,\ell-1}} \norm{\nabla \bff{\theta}^\ell}{\bb{L}^2}^2 \right]
    \nonumber\\
    &\leq
    Ch^2 \bb{E}\left[\norm{\bff{u}}{L^\infty_T(\bb{H}^2)}^4 + \left( k \sum_{\ell=1}^n  \norm{\bff{H}_h^\ell}{\bb{L}^2}^2 \right)^2\right]  + Ck \bb{E} \left[\sum_{\ell=1}^n \one_{\Omega_{\kappa,\ell-1}} \norm{\bff{\theta}^\ell}{\bb{L}^2}^2 \right]
     \nonumber\\
     &\quad
    +
    C k^{\frac12} \bb{E} \left[ \norm{\bff{u}}{\mathcal{C}^{1/4}_T(\bb{H}^1)}^4 + \left( k \sum_{\ell=1}^n  \norm{\bff{H}_h^\ell}{\bb{L}^2}^2 \right)^2 \right]
    + 
    \epsilon k \bb{E} \left[\sum_{\ell=1}^n \one_{\Omega_{\kappa,\ell-1}} \norm{\nabla \bff{\theta}^\ell}{\bb{L}^2}^2 \right]
    \nonumber\\
    &\leq
    Ch^2 + {Ck^{\frac12}} + Ck \bb{E} \left[\sum_{\ell=1}^n \one_{\Omega_{\kappa,\ell-1}} \norm{\bff{\theta}^\ell}{\bb{L}^2}^2 \right]
    + 
    \epsilon k \bb{E} \left[\sum_{\ell=1}^n \one_{\Omega_{\kappa,\ell-1}} \norm{\nabla \bff{\theta}^\ell}{\bb{L}^2}^2 \right],
\end{align}
where in the last step we also used \eqref{equ:E u 2p general}.

To estimate $I_5$, we utilise \eqref{equ:E us phi}, noting that $\bb{E}\left[\max_{\ell\leq N} \norm{\bff{\theta}^\ell}{\bb{L}^2}^4\right]$ is finite by \eqref{equ:split u}, \eqref{equ:E u 2p general}, and the boundedness of the Ritz projection.
By H\"older's and Young's inequalities, for any $\epsilon>0$,
\begin{align*}
    \abs{I_5}
    &\leq
    \bb{E} \left[\sum_{\ell=1}^n \one_{\Omega_{\kappa,\ell-1}} \int_{t_{\ell-1}}^{t_\ell} \norm{\bff{u}(s)}{\bb{L}^\infty} \left(\norm{\bff{\eta}^\ell}{\bb{L}^2}+ \norm{\bff{\xi}^\ell}{\bb{L}^2} + \norm{\bff{H}(s)-\bff{H}(t_\ell)}{\bb{L}^2} \right) \norm{\bff{\theta}^\ell}{\bb{L}^2} \ds \right]
    \\
    &\leq
    C \bb{E} \left[\sum_{\ell=1}^n \one_{\Omega_{\kappa,\ell-1}} \int_{t_{\ell-1}}^{t_\ell} \norm{\bff{u}(s)}{\bb{L}^\infty}^2  \norm{\bff{\theta}^\ell}{\bb{L}^2}^2 \ds \right]
    \\
    &\quad
    +
    \epsilon \bb{E} \left[\sum_{\ell=1}^n \one_{\Omega_{\kappa,\ell-1}} \int_{t_{\ell-1}}^{t_\ell}  \left(\norm{\bff{\eta}^\ell}{\bb{L}^2}^2 + \norm{\bff{\xi}^\ell}{\bb{L}^2}^2 + \norm{\bff{H}(s)-\bff{H}(t_\ell)}{\bb{L}^2}^2 \right) \ds \right]
    \\
    &\leq
    Ck^{1-\delta} + C\kappa k \bb{E}\left[\sum_{\ell=1}^n \one_{\Omega_{\kappa,\ell-1}} \norm{\bff{\theta}^\ell}{\bb{L}^2}^2 \right] 
    \\
    &\quad
    +
    C h^4 + \epsilon k  \bb{E} \left[\sum_{\ell=1}^n \one_{\Omega_{\kappa,\ell-1}} \norm{\bff{\xi}^\ell}{\bb{L}^2}^2 \right]
    +
    \epsilon \bb{E} \left[\sum_{\ell=1}^n \one_{\Omega_{\kappa,\ell-1}} \int_{t_{\ell-1}}^{t_\ell} \norm{\bff{H}(s)-\bff{H}(t_\ell)}{\bb{L}^2}^2 \ds \right]
    \\
    &\leq
    Ch^4+Ck^{1-\delta} +  C\kappa k \bb{E}\left[\sum_{\ell=1}^n \one_{\Omega_{\kappa,\ell-1}} \norm{\bff{\theta}^\ell}{\bb{L}^2}^2 \right] 
    + 
    \epsilon k  \bb{E} \left[\sum_{\ell=1}^n \one_{\Omega_{\kappa,\ell-1}} \norm{\bff{\xi}^\ell}{\bb{L}^2}^2 \right],
\end{align*}
for any $\delta>0$, where in the last step we also used  Lemma~\ref{lem:Holder D A beta} with $\beta=\frac12$.

For $I_6$, a straightforward application of Young's inequality and Lemma~\ref{lem:Holder D A beta} yields 
\begin{align*}
    \abs{I_6}
    &\leq
    Ck \bb{E}\left[\sum_{\ell=1}^n \one_{\Omega_{\kappa,\ell-1}} \norm{\bff{\theta}^\ell}{\bb{L}^2}^2 \right]
    +
    \epsilon h^2
    +
    \epsilon k \bb{E}\left[\sum_{\ell=1}^n \one_{\Omega_{\kappa,\ell-1}} \norm{\nabla \bff{\theta}^\ell}{\bb{L}^2}^2 \right]
    \\
    &\quad
    +
    \epsilon \bb{E}\left[\sum_{\ell=1}^n \one_{\Omega_{\kappa,\ell-1}} \int_{t_{\ell-1}}^{t_\ell} \norm{\nabla \bff{u}(s)-\nabla \bff{u}(t_\ell)}{\bb{L}^2}^2 \ds \right]
    \\
    &\leq
    Ck \bb{E}\left[\sum_{\ell=1}^n \one_{\Omega_{\kappa,\ell-1}} \norm{\bff{\theta}^\ell}{\bb{L}^2}^2 \right]
    +
    \epsilon k \bb{E}\left[\sum_{\ell=1}^n \one_{\Omega_{\kappa,\ell-1}} \norm{\nabla \bff{\theta}^\ell}{\bb{L}^2}^2 \right]
    +
    Ch^2 + Ck^{1-\delta}.
\end{align*}

Next, we employ Young’s inequality and Lemma~\ref{lem:stab L2}, and proceed as in the estimate of the term $I_4$ to obtain
\begin{align*}
    \abs{I_7}
    &\leq
   	\epsilon k \bb{E}\left[\sum_{\ell=1}^n \one_{\Omega_{\kappa,\ell-1}} \norm{\bff{\theta}^\ell}{\bb{L}^4}^2 \right]
    +
    C \bb{E}\left[\max_{j\leq n} \norm{\bff{\rho}^j}{\bb{L}^4}^4\right]^{\frac12} \bb{E} \left[\left(k\sum_{\ell=1}^n \norm{\nabla \bff{u}_h^\ell}{\bb{L}^2}^2\right)^2 \right]^{\frac12}
    \\
    &\quad
    +
    C k \bb{E}\left[\sum_{\ell=1}^n \one_{\Omega_{\kappa,\ell-1}} \int_{t_{\ell-1}}^{t_\ell}  \norm{\nabla \bff{u}_h^\ell}{\bb{L}^2}^2 \norm{\bff{u}(s)-\bff{u}(t_\ell)}{\bb{L}^4}^2 \ds \right]
    \\
    &\leq
   \epsilon k \bb{E}\left[\sum_{\ell=1}^n \one_{\Omega_{\kappa,\ell-1}} \norm{\bff{\theta}^\ell}{\bb{H}^1}^2 \right]
   +
    Ch^2 + Ck^{\frac12} \bb{E}\left[ \norm{\bff{u}}{\mathcal{C}^{\frac14}_T(\bb{H}^1)}^2 \sum_{\ell=1}^n \one_{\Omega_{\kappa,\ell-1}} k \norm{\nabla \bff{u}_h^\ell}{\bb{L}^2}^2 \right]
    \\
    &\leq
    \epsilon k \bb{E}\left[\sum_{\ell=1}^n \one_{\Omega_{\kappa,\ell-1}} \norm{\bff{\theta}^\ell}{\bb{H}^1}^2 \right]
    +
    Ch^2 + Ck^{\frac12} \bb{E}\left[ \norm{\bff{u}}{\mathcal{C}^{\frac14}_T(\bb{H}^1)}^4 + \left( \sum_{\ell=1}^n k \norm{\nabla \bff{u}_h^\ell}{\bb{L}^2}^2 \right)^2 \right]
    \\
    &\leq
    Ch^2 + Ck^{\frac12} + \epsilon k \bb{E}\left[\sum_{\ell=1}^n \one_{\Omega_{\kappa,\ell-1}} \norm{\bff{\theta}^\ell}{\bb{H}^1}^2 \right].
\end{align*}
For $I_8$, we proceed as in the estimate for $I_5$ to obtain
\begin{align*}
    \abs{I_8}
    &\leq
    {C\bb{E} \left[\sum_{\ell=1}^n \one_{\Omega_{\kappa,\ell-1}} \int_{t_{\ell-1}}^{t_\ell} \norm{\bff{u}(s)}{\bb{L}^\infty} \left(\norm{\nabla \bff{\rho}^\ell}{\bb{L}^2} + \norm{\nabla \bff{\theta}^\ell}{\bb{L}^2} + \norm{\bff{u}(s)-\bff{u}(t_\ell)}{\bb{H}^1} \right) \norm{\bff{\theta}^\ell}{\bb{L}^2} \ds \right] }
    \\
    &\leq
    Ch^2 + Ck^{1-\delta} 
    +
    C\kappa k \bb{E}\left[\sum_{\ell=1}^n \one_{\Omega_{\kappa,\ell-1}} \norm{\bff{\theta}^\ell}{\bb{L}^2}^2 \right]
    +
    \epsilon k \bb{E}\left[\sum_{\ell=1}^n \one_{\Omega_{\kappa,\ell-1}} \norm{\nabla \bff{\theta}^\ell}{\bb{L}^2}^2 \right].
\end{align*}
For the terms $I_9$, $I_{10}$, and $I_{13}$, we apply the Lipschitz continuity assumption on $\mathcal{M}_R$ and $f_R$ to obtain
\begin{align*}
    \abs{I_9}+ \abs{I_{10}} + \abs{I_{13}}
    &\leq
    Ch^4 + Ck^{1-\delta} + C k \bb{E}\left[\sum_{\ell=1}^n \one_{\Omega_{\kappa,\ell-1}} \norm{\bff{\theta}^\ell}{\bb{L}^2}^2 \right] +
    \epsilon k \bb{E}\left[\sum_{\ell=1}^n \one_{\Omega_{\kappa,\ell-1}} \norm{\bff{\xi}^\ell}{\bb{L}^2}^2 \right].
\end{align*}
The terms $I_{11}$ and $I_{12}$ can be estimated easily as
\begin{align*}
    \abs{I_{11}} + \abs{I_{12}} 
    &\leq
    C k \bb{E}\left[\sum_{\ell=1}^n \one_{\Omega_{\kappa,\ell-1}} \norm{\bff{\theta}^\ell}{\bb{L}^2}^2 \right]
    +
    Ch^4
    +
    \epsilon k \bb{E}\left[\sum_{\ell=1}^n \one_{\Omega_{\kappa,\ell-1}} \norm{\bff{\xi}^\ell}{\bb{L}^2}^2 \right].
\end{align*}
For the term $I_{14}$, we split the stochastic integral as
\begin{align*}
	I_{14}
	&=
	\bb{E} \left[\max_{m\leq n} \sum_{\ell=1}^m \one_{\Omega_{\kappa,\ell-1}} \int_{t_{\ell-1}}^{t_\ell}  \inpro{G(\bff{u}(s))-G(\bff{u}_h^{\ell-1})}{\bff{\theta}^{\ell-1}} \dW(s) \right]
	\\
	&\quad
	+
	\bb{E} \left[ \max_{m\leq n} \sum_{\ell=1}^m \one_{\Omega_{\kappa,\ell-1}} \int_{t_{\ell-1}}^{t_\ell}  \inpro{G(\bff{u}(s))-G(\bff{u}_h^{\ell-1})}{\bff{\theta}^\ell-\bff{\theta}^{\ell-1}} \dW(s) \right]
	=: I_{14a}+ I_{14b}.
\end{align*}
For the first term above, noting the assumptions on $G$ and the H\"older continuity of $\bff{u}$, we apply the Burkholder--Davis--Gundy and the Young inequalities to obtain
\begin{align*}
	I_{14a}
	&\leq
	C \bb{E}\left[ \left(\sum_{\ell=1}^n \one_{\Omega_{\kappa,\ell-1}} \int_{t_{\ell-1}}^{t_\ell} \norm{G(\bff{u}(s))-G(\bff{u}_h^{\ell-1})}{\bb{L}^2}^2 \norm{\bff{\theta}^{\ell-1}}{\bb{L}^2}^2 \ds \right)^{\frac12} \right]
	\\
	&\leq
	C \bb{E} \left[ \left(\max_{m\leq n} \one_{\Omega_{\kappa,m-1}} \norm{\bff{\theta}^{m-1}}{\bb{L}^2}\right) \left(\sum_{\ell=1}^n \one_{\Omega_{\kappa,\ell-1}} \int_{t_{\ell-1}}^{t_\ell} \norm{G(\bff{u}(s))-G(\bff{u}_h^{\ell-1})}{\bb{L}^2}^2 \right)^{\frac12} \right]
	\\
	&\leq
	\epsilon \bb{E} \left[\max_{m\leq n} \one_{\Omega_{\kappa,m-1}} \norm{\bff{\theta}^m}{\bb{L}^2}^2 \right]
	+
	Ch^4
	+
	Ck^{1-\delta}
	+
	C k \bb{E}\left[\sum_{\ell=1}^n \one_{\Omega_{\kappa,\ell-1}} \norm{\bff{\theta}^{\ell-1}}{\bb{L}^2}^2 \right].
\end{align*}
For the term $I_{14b}$, we use Young's inequality, It\^o's isometry, the assumptions on $G$, and the H\"older continuity of $\bff{u}$ to obtain
\begin{align*}
	I_{14b}
	&\leq
	C\bb{E} \left[\sum_{\ell=1}^n \norm{\one_{\Omega_{\kappa,\ell-1}} \int_{t_{\ell-1}}^{t_\ell} \left(G(\bff{u}(s))- G(\bff{u}_h^{\ell-1})\right) \dW(s)}{\bb{L}^2}^2 \right] 
	+
	\epsilon \bb{E}\left[\sum_{\ell=1}^n \one_{\Omega_{\kappa,\ell-1}} \norm{\bff{\theta}^\ell -\bff{\theta}^{\ell-1}}{\bb{L}^2}^2 \right]
	\\
	&=
	C\bb{E} \left[\sum_{\ell=1}^n \one_{\Omega_{\kappa,\ell-1}} \int_{t_{\ell-1}}^{t_\ell} \norm{G(\bff{u}(s))- G(\bff{u}_h^{\ell-1})}{\bb{L}^2}^2 \ds \right] 
	+
	\epsilon \bb{E}\left[\sum_{\ell=1}^n \one_{\Omega_{\kappa,\ell-1}} \norm{\bff{\theta}^\ell -\bff{\theta}^{\ell-1}}{\bb{L}^2}^2 \right]
	\\
	&\leq
	Ch^4
	+
	Ck^{1-\delta}
	+
	C k \bb{E}\left[\sum_{\ell=1}^n \one_{\Omega_{\kappa,\ell-1}} \norm{\bff{\theta}^{\ell-1}}{\bb{L}^2}^2 \right]
	+
	\epsilon \bb{E}\left[\sum_{\ell=1}^n \one_{\Omega_{\kappa,\ell-1}} \norm{\bff{\theta}^\ell -\bff{\theta}^{\ell-1}}{\bb{L}^2}^2 \right].
\end{align*}

We substitute all the above estimates into~\eqref{equ:theta n L2 pre}, set $\epsilon=1/16$, and rearrange the terms. Now, continuing from~\eqref{equ:theta n L2 pre}, noting the assumption $h=O(k)$ we obtain for any $\delta>0$,
\begin{align}\label{equ:E 1 theta n L2}
	&\bb{E}\left[\max_{m\leq n} \left( \one_{\Omega_{\kappa,m-1}} \norm{\bff{\theta}^m}{\bb{L}^2}^2 \right) \right] 
	+
	\sum_{\ell=1}^n \bb{E} \left[\one_{\Omega_{\kappa,\ell-1}} \norm{\bff{\theta}^\ell-\bff{\theta}^{\ell-1}}{\bb{L}^2}^2 \right]
	\nonumber\\
	&\quad
	+
	k
	\bb{E}\left[\sum_{\ell=1}^n \one_{\Omega_{\kappa,\ell-1}} \norm{\nabla \bff{\theta}^\ell}{\bb{L}^2}^2 \right]
	+
	k \bb{E}\left[\sum_{\ell=1}^n \one_{\Omega_{\kappa,\ell-1}} \norm{\bff{\xi}^\ell}{\bb{L}^2}^2 \right]
	\nonumber\\
	&\leq
	\bb{E}\left[\norm{\bff{\theta}^0}{\bb{L}^2}^2 \right] 
	+
	Ch^2 + C(1+\kappa)k^{\frac12-\delta}
	+
	C(1+\kappa ) k \sum_{\ell=1}^n \bb{E} \left[\one_{\Omega_{\kappa,\ell-1}} \norm{\bff{\theta}^\ell}{\bb{L}^2}^2 \right].
\end{align}
By choosing $\bff{u}_h^0$ such that $\bb{E}\left[\norm{\bff{\theta}^0}{\bb{L}^2}^2 \right] \leq Ch^2$, say $\bff{u}_h^0=\Pi_h\bff{u}_0$, we infer the required result for sufficiently small $k$ by the discrete Gronwall lemma.
\end{proof}

We also deduce the following estimate in stronger norms.

\begin{proposition}\label{pro:E theta n H1}
Assume that the hypotheses of Proposition~\ref{pro:E theta n L2} hold and $n\in \{1,2,\ldots, N\}$. For any $\delta>0$, we have
\begin{align*}
    \bb{E}\left[\max_{m\leq n} \left( \one_{\Omega_{\kappa,m-1}} \norm{\nabla \bff{\theta}^m}{\bb{L}^2}^2 \right) \right]  
    +
    k  \sum_{\ell=1}^n \bb{E} \left[ \one_{\Omega_{\kappa,\ell-1}} \left(\norm{\nabla \Delta_h \bff{\theta}^\ell}{\bb{L}^2}^2 +\norm{\nabla \bff{\xi}^\ell}{\bb{L}^2}^2\right) \right]
    &\leq
    Ce^{C \kappa^2} \left(h^2+k^{\frac12-\delta}\right),
\end{align*}
where $C$ is a constant depending on $R$, $T$, $\delta$, and the coefficients of the equation, but is independent of $h$ and $k$.
\end{proposition}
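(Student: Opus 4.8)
The plan is to mimic the structure of the proof of Proposition~\ref{pro:E theta n L2}, but now testing the error equations with higher-order discrete operators so as to control the $\bb{H}^1$-norm of $\bff{\theta}$ and the $\bb{H}^1$-norm of $\bff{\xi}$. Concretely, starting from~\eqref{equ:theta n theta n1} and~\eqref{equ:Hhn Htn}, I would set $\bff{\chi}_h = -\Delta_h\bff{\theta}^\ell$ in the first identity (so that $\inpro{\bff{\theta}^\ell-\bff{\theta}^{\ell-1}}{-\Delta_h\bff{\theta}^\ell}$ produces the telescoping quantity $\tfrac12(\|\nabla\bff{\theta}^\ell\|_{\bb{L}^2}^2-\|\nabla\bff{\theta}^{\ell-1}\|_{\bb{L}^2}^2)+\tfrac12\|\nabla\bff{\theta}^\ell-\nabla\bff{\theta}^{\ell-1}\|_{\bb{L}^2}^2$), and combine it with a suitable choice in~\eqref{equ:Hhn Htn} — taking $\bff{\phi}_h = -k\Delta_h\bff{\theta}^\ell$ and $\bff{\phi}_h = -k\Delta_h\bff{\xi}^\ell$ — to generate the dissipative terms $k\|\nabla\Delta_h\bff{\theta}^\ell\|_{\bb{L}^2}^2$ and $k\|\nabla\bff{\xi}^\ell\|_{\bb{L}^2}^2$ on the left. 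As in the $L^2$ proof, every step is localised on $\Omega_{\kappa,\ell-1}$, summed over $\ell$, maximised over $m\le n$, and expectation is taken; the indicator manipulation~\eqref{equ:1 vn vn1} (applied with $\bff{v}^\ell=\nabla\bff{\theta}^\ell$, which is legitimate since $\nabla\bff{\theta}^\ell\in\bb{L}^2$) handles the telescoping sum. Note the appearance of $\kappa^2$ rather than $\kappa$ in the exponent: this comes from the cross-product and convective terms, where the worst nonlinear contributions carry two factors of the bound $\kappa$ (e.g. $\|\bff{u}(s)\|_{\bb{H}^2}^2\,\|\nabla\bff{u}_h^\ell\|_{\bb{L}^2}^2$-type terms, or products $\|\bff{u}_h^\ell\|_{\bb{H}^1}^2$ against $\|\bff{H}_h^\ell\|_{\bb{L}^2}^2$), so that the Gr\"onwall constant is of order $e^{C\kappa^2}$.

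The bulk of the work is the term-by-term estimation of the right-hand side. The projection-difference term $\inpro{\bff{\rho}^\ell-\bff{\rho}^{\ell-1}}{-\Delta_h\bff{\theta}^\ell}$ and the time-consistency terms involving $\bff{H}(s)-\bff{H}(t_\ell)$ and $\nabla(\bff{H}(s)-\bff{H}(t_\ell))$ are handled exactly as $I_1$–$I_3$ before, now using Lemma~\ref{lem:Holder D A beta} at the level $\beta=3/4$ and $\beta=1$ (the latter with $\alpha<1/2$, which is where the $k^{1/2-\delta}$ rate is lost — interior smoothing gives $\bff{u}\in\mathcal{C}^\alpha_T(\mathrm{D}(A^\beta))$ only for $\alpha<1-\beta$, so $\beta$ close to $1$ forces $\alpha$ close to $0$, but here we only need $\beta=3/4$ against $\|\nabla\Delta_h\bff{\theta}\|$, giving $k^{2\alpha}$ with $\alpha$ up to $1/4$). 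The cross-product and convective terms $\bff{u}\times\bff{H}$, $(\bff\nu\cdot\nabla)\bff{u}$, $\bff{u}\times(\bff\nu\cdot\nabla)\bff{u}$ are the delicate ones: after splitting $\bff{u}(t_n)-\bff{u}_h^\ell = \bff{\rho}^\ell+\bff{\theta}^\ell+(\bff{u}(s)-\bff{u}(t_\ell))$ and similarly for $\bff{H}$, I would use the inverse-type inequalities~\eqref{equ:disc lapl L infty}, \eqref{equ:disc lapl L6}, \eqref{equ:interp disc Lap nab L2}, the interpolation~\eqref{equ:v Lp interp}, and the product estimate~\eqref{equ:prod Hs mat dot}, throwing the top-order factor onto the dissipative term $\epsilon k\|\nabla\Delta_h\bff{\theta}^\ell\|_{\bb{L}^2}^2$ and the remaining factors onto $k\|\nabla\bff{\theta}^\ell\|_{\bb{L}^2}^2$ (absorbable, since it appeared on the left in the previous step if we add a small multiple of the Proposition~\ref{pro:E theta n L2} inequality) or onto $\Omega_{\kappa,\ell-1}$-controlled constants. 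The stochastic term splits as in $I_{14}$ into an $\bff{\theta}^{\ell-1}$-part (BDG plus the assumption $\|\nabla G(\bff{v})-\nabla G(\bff{w})\|_{\bb{L}^2}\le C\|\nabla\bff{v}-\nabla\bff{w}\|_{\bb{L}^2}$, which is exactly why the first gradient assumption on $G$ is stated) and an increment-part (It\^o isometry, absorbed into $\epsilon\sum\|\nabla\bff{\theta}^\ell-\nabla\bff{\theta}^{\ell-1}\|_{\bb{L}^2}^2$). The $f_R$-terms use the Lipschitz-in-$\bb{H}^1$ bound~\eqref{equ:fR v w H1} together with the uniform $\bb{L}^\infty$ control coming from $\Omega_{\kappa,\ell-1}$ and~\eqref{equ:disc lapl L infty}.

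After collecting everything, choosing $\epsilon$ small (say $1/16$) to absorb all $\epsilon$-terms into the left-hand side, using $h=O(k)$ so that $h^4 k^{-1}=O(h^2)$, and invoking Lemma~\ref{lem:stab L2}, Lemma~\ref{lem:stab H1}, Lemma~\ref{lem:stab H1 high} for the moment bounds on $\bff{u}_h^\ell$, $\bff{H}_h^\ell$, $\Delta_h\bff{u}_h^\ell$, one arrives at an inequality of the form
\[
    A_n + k\sum_{\ell=1}^n B_\ell \le Ce^{?}\,(h^2+k^{1/2-\delta}) + C(1+\kappa^2)\,k\sum_{\ell=1}^n \bb{E}\big[\one_{\Omega_{\kappa,\ell-1}}\|\nabla\bff{\theta}^\ell\|_{\bb{L}^2}^2\big],
\]
where $A_n=\bb{E}[\max_{m\le n}(\one_{\Omega_{\kappa,m-1}}\|\nabla\bff{\theta}^m\|_{\bb{L}^2}^2)]$ and $B_\ell$ collects the dissipative terms; the discrete Gr\"onwall lemma then yields the $Ce^{C\kappa^2}(h^2+k^{1/2-\delta})$ bound, upon choosing $\bff{u}_h^0=\Pi_h\bff{u}_0$ so that $\bb{E}[\|\nabla\bff{\theta}^0\|_{\bb{L}^2}^2]\le Ch^2$ (using the $\bb{H}^1$-stability of $\Pi_h$ and the approximation property of $\R_h$). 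The main obstacle is the bookkeeping for the cross-product and convective terms at the $\bb{H}^1$ level: these are the terms for which the continuous problem is not a gradient flow, so one cannot rely on any cancellation and must instead pay with the localisation constant $\kappa$ twice and with a factor of the dissipation $\epsilon k\|\nabla\Delta_h\bff{\theta}^\ell\|_{\bb{L}^2}^2$ — getting all the powers of $\kappa$, $h$, and $k$ to line up (in particular, ensuring the top-order term is genuinely absorbable and does not leave an uncontrolled $k\sum\|\Delta_h\bff{\theta}^\ell\|_{\bb{L}^2}^2$ on the right) is where the care is needed, and it is also the reason Lemma~\ref{lem:stab H1 high} (hence the restriction $\beta_2=0$) is invoked here but was not needed in Proposition~\ref{pro:E theta n L2}.
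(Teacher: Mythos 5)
There is a genuine gap in the choice of test functions for the second error equation, and it is exactly where the hard part of this proposition lives. Testing \eqref{equ:Hhn Htn} with $\bff{\phi}_h=-k\Delta_h\bff{\theta}^\ell$ does \emph{not} generate the dissipative term $k\norm{\nabla\Delta_h\bff{\theta}^\ell}{\bb{L}^2}^2$: since $-\inpro{\nabla\bff{\theta}^\ell}{\nabla(-\Delta_h\bff{\theta}^\ell)}=\inpro{\nabla\bff{\theta}^\ell}{\nabla\Delta_h\bff{\theta}^\ell}=-\norm{\Delta_h\bff{\theta}^\ell}{\bb{L}^2}^2$, this choice only produces the second-order quantity $k\norm{\Delta_h\bff{\theta}^\ell}{\bb{L}^2}^2$. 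The third-order dissipation $k\norm{\nabla\Delta_h\bff{\theta}^\ell}{\bb{L}^2}^2$ — which is part of the statement you must prove — requires testing with $\bff{\phi}_h=k\Delta_h^2\bff{\theta}^\ell$, as the paper does in \eqref{equ:Delta int xi theta n}: then $-k\inpro{\nabla\bff{\theta}^\ell}{\nabla\Delta_h^2\bff{\theta}^\ell}=-k\norm{\nabla\Delta_h\bff{\theta}^\ell}{\bb{L}^2}^2$ appears with the right sign, and simultaneously the left-hand side $\inpro{\bff{\xi}^\ell}{k\Delta_h^2\bff{\theta}^\ell}=-k\inpro{\nabla\bff{\xi}^\ell}{\nabla\Delta_h\bff{\theta}^\ell}$ cancels the unpaired coupling term $-k\inpro{\nabla\bff{\xi}^\ell}{\nabla\Delta_h\bff{\theta}^\ell}$ that arises when you test \eqref{equ:theta n theta n1} with $\bff{\chi}_h=-\Delta_h\bff{\theta}^\ell$ (from $\int\inpro{\nabla\bff{H}(s)-\nabla\bff{H}_h^\ell}{\nabla\bff{\chi}_h}$). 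Without the $\Delta_h^2$ test you have neither the cancellation nor any dissipation capable of absorbing that coupling, since at this stage $\norm{\nabla\bff{\xi}^\ell}{\bb{L}^2}$ is not yet controlled. The same missing quantity also blocks your second step: testing with $\bff{\phi}_h=-k\Delta_h\bff{\xi}^\ell$ (which \emph{is} the paper's device for $\norm{\nabla\bff{\xi}^\ell}{\bb{L}^2}^2$, cf.~\eqref{equ:kE nab xi}) leaves the term $k\inpro{\nabla\Delta_h\bff{\theta}^\ell}{\nabla\bff{\xi}^\ell}$ on the right, so closing it requires precisely the bound on $k\sum\norm{\nabla\Delta_h\bff{\theta}^\ell}{\bb{L}^2}^2$ that your scheme never establishes. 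So the overall architecture (localisation on $\Omega_{\kappa,\ell-1}$, term-by-term estimates, BDG/It\^o split of the noise term, $h=O(k)$, feeding in Proposition~\ref{pro:E theta n L2}) is right, but the key algebraic step that makes the $\bb{H}^1$-level estimate close is missing.

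Two secondary points. First, your appeal to Lemma~\ref{lem:stab H1 high} (and hence the restriction $\beta_2=0$) is out of place: the proposition only assumes the hypotheses of Proposition~\ref{pro:E theta n L2}, and the paper's proof uses only Lemmas~\ref{lem:stab L2} and~\ref{lem:stab H1} together with Propositions~\ref{pro:Holder u} and~\ref{pro:E theta n L2}; Lemma~\ref{lem:stab H1 high} is reserved for Theorem~\ref{the:uhn strong}. If your argument genuinely needed it, you would be proving a weaker statement than the one claimed. Second, the concluding Gr\"onwall step is not needed: since Proposition~\ref{pro:E theta n L2} already bounds $k\sum_{\ell}\bb{E}\big[\one_{\Omega_{\kappa,\ell-1}}\norm{\nabla\bff{\theta}^\ell}{\bb{L}^2}^2\big]$ by $e^{C\kappa}(h^2+k^{\frac12-\delta})$, those lower-order terms can be inserted directly (this is how the paper treats, e.g., the term $k\sum\norm{\Delta_h\bff{\theta}^\ell}{\bb{L}^2}^2$ via \eqref{equ:interp disc Lap Delta L2}); your Gr\"onwall route would still deliver a constant of the admissible form $Ce^{C\kappa^2}$, so this is a detour rather than an error, but it does not repair the missing dissipative term described above.
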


\begin{proof}
We put $\bff{\chi}_h= -\Delta_h \bff{\theta}^\ell$ in~\eqref{equ:theta n theta n1}, then multiply the resulting equations by $\one_{\Omega_{\kappa,\ell-1}}$, sum the resulting expression over $\ell\in \{1,2,\ldots,m\}$, take the expectation value, and argue similarly as in~\eqref{equ:12 theta n} and \eqref{equ:theta n L2 pre} to obtain
\begin{align}\label{equ:12 nabla theta n}
    &\frac12 \bb{E}\left[\max_{m\leq n} \left(\one_{\Omega_{\kappa,m-1}} \norm{\nabla \bff{\theta}^m}{\bb{L}^2}^2\right) \right] 
    +
    \frac12 \bb{E}\left[ \sum_{\ell=1}^n \one_{\Omega_{\kappa,\ell-1}} \norm{\nabla \bff{\theta}^\ell- \nabla \bff{\theta}^{\ell-1}}{\bb{L}^2}^2 \right]
    \nonumber\\
    &\leq
    \frac12 \bb{E}\left[\norm{\nabla \bff{\theta}^0}{\bb{L}^2}^2\right]
    +
    \bb{E}\left[\sum_{\ell=1}^n \one_{\Omega_{\kappa,\ell-1}} \inpro{\bff{\rho}^\ell-\bff{\rho}^{\ell-1}}{\Delta_h \bff{\theta}^\ell}\right]
    \nonumber\\
    &\quad
    -
    \bb{E}\left[\sum_{\ell=1}^n \one_{\Omega_{\kappa,\ell-1}} \int_{t_{\ell-1}}^{t_\ell} \inpro{\bff{\eta}^\ell + \bff{\xi}^\ell + \bff{H}(s)-\bff{H}(t_\ell)}{\Delta_h \bff{\theta}^\ell} \ds \right]
    \nonumber\\
    &\quad
    -
    \bb{E}\left[\sum_{\ell=1}^n \one_{\Omega_{\kappa,\ell-1}} \int_{t_{\ell-1}}^{t_\ell} \inpro{\nabla \bff{\xi}^\ell+ \nabla \bff{H}(s)-\nabla \bff{H}(t_\ell)}{\nabla \Delta_h \bff{\theta}^\ell} \ds\right]
    \nonumber\\
    &\quad
    +
    \bb{E}\left[\sum_{\ell=1}^n \one_{\Omega_{\kappa,\ell-1}} \int_{t_{\ell-1}}^{t_\ell} \inpro{(\bff{\rho}^\ell+ \bff{\theta}^\ell+ \bff{u}(s)-\bff{u}(t_\ell)) \times \bff{H}(s)}{\Delta_h \bff{\theta}^\ell} \ds \right]
    \nonumber\\
    &\quad
    +
    \bb{E}\left[\sum_{\ell=1}^n \one_{\Omega_{\kappa,\ell-1}} \int_{t_{\ell-1}}^{t_\ell} \inpro{\bff{u}_h^\ell \times (\bff{\eta}^\ell+ \bff{\xi}^\ell+ \bff{H}(s)-\bff{H}(t_\ell))}{\Delta_h \bff{\theta}^\ell} \ds \right]
    \nonumber\\
    &\quad
    -
    \bb{E}\left[\sum_{\ell=1}^n \one_{\Omega_{\kappa,\ell-1}} \int_{t_{\ell-1}}^{t_\ell} \inpro{\bff{\nu}\cdot \nabla (\bff{\rho}^\ell+ \bff{\theta}^\ell + \bff{u}(s)-\bff{u}(t_\ell))}{\Delta_h \bff{\theta}^\ell} \ds \right]
    \nonumber\\
    &\quad
    -
    \bb{E}\left[\sum_{\ell=1}^n \one_{\Omega_{\kappa,\ell-1}} \int_{t_{\ell-1}}^{t_\ell} \inpro{(\bff{\rho}^\ell+ \bff{\theta}^\ell + \bff{u}(s)-\bff{u}(t_\ell)) \times (\bff{\nu}\cdot \nabla) \bff{u}_h^\ell}{\Delta_h \bff{\theta}^\ell} \ds \right]
    \nonumber\\
    &\quad
    -
    \bb{E}\left[\sum_{\ell=1}^n \one_{\Omega_{\kappa,\ell-1}} \int_{t_{\ell-1}}^{t_\ell} \inpro{\bff{u}(s) \times (\bff{\nu}\cdot \nabla)(\bff{\rho}^\ell+ \bff{\theta}^\ell+\bff{u}(s)-\bff{u}(t_\ell))}{\Delta_h \bff{\theta}^\ell} \ds \right]
    \nonumber\\
    &\quad
    -
    \bb{E}\left[\sum_{\ell=1}^n \one_{\Omega_{\kappa,\ell-1}} \int_{t_{\ell-1}}^{t_\ell} \inpro{\mathcal{M}_R(\bff{u}(s))- \mathcal{M}_R(\bff{u}_h^{\ell-1})}{\Delta_h \bff{\theta}^\ell} \ds \right]
    \nonumber\\
    &\quad
    -
    \bb{E}\left[\sum_{\ell=1}^n \one_{\Omega_{\kappa,\ell-1}} \int_{t_{\ell-1}}^{t_\ell} \inpro{G(\bff{u}(s))- G(\bff{u}_h^{\ell-1})}{\Delta_h \bff{\theta}^\ell} \mathrm{d}W(s) \right].
\end{align}
Similarly, we take $\bff{\phi}_h=k \Delta_h^2 \bff{\theta}^\ell$ and rearrange the terms. Noting the definition of $\Delta_h$ in~\eqref{equ:disc laplacian}, we have
\begin{align}\label{equ:Delta int xi theta n}
    k\bb{E}\left[\sum_{\ell=1}^n \one_{\Omega_{\kappa,\ell-1}} \norm{\nabla \Delta_h \bff{\theta}^\ell}{\bb{L}^2}^2 \right]
    &=
    k\bb{E}\left[\sum_{\ell=1}^n \one_{\Omega_{\kappa,\ell-1}} \inpro{\nabla \bff{\xi}^\ell}{\nabla\Delta_h \bff{\theta}^\ell} \ds \right]
    \nonumber\\
    &\quad
    -
    k\bb{E}\left[\sum_{\ell=1}^n \one_{\Omega_{\kappa,\ell-1}} \inpro{\nabla\Pi_h \bff{\eta}^\ell}{\nabla\Delta_h \bff{\theta}^\ell} \right]
    \nonumber\\
    &\quad
    +
    k\bb{E}\left[\sum_{\ell=1}^n \one_{\Omega_{\kappa,\ell-1}} \inpro{\nabla\Pi_h f_R(\bff{u}(t_\ell))- \nabla\Pi_h f_R(\bff{u}_h^{\ell-1})}{\nabla\Delta_h \bff{\theta}^\ell} \right].
\end{align}
Adding~\eqref{equ:12 nabla theta n} and~\eqref{equ:Delta int xi theta n}, upon rearranging the terms we obtain
\begin{align}\label{equ:I1 to I13}
    &\frac12 \bb{E}\left[\max_{m\leq n} \left(\one_{\Omega_{\kappa,m-1}} \norm{\nabla \bff{\theta}^m}{\bb{L}^2}^2\right) \right] 
    +
    \frac12 \bb{E}\left[ \sum_{\ell=1}^n \one_{\Omega_{\kappa,\ell-1}} \norm{\nabla \bff{\theta}^\ell- \nabla \bff{\theta}^{\ell-1}}{\bb{L}^2}^2 \right]
    \nonumber\\
    &\quad
    +
    k
    \bb{E}\left[ \sum_{\ell=1}^n \one_{\Omega_{\kappa,\ell-1}} \norm{\Delta_h \bff{\theta}^\ell}{\bb{L}^2}^2 \right]
    +
    k
    \bb{E}\left[ \sum_{\ell=1}^n \one_{\Omega_{\kappa,\ell-1}} \norm{\nabla \Delta_h \bff{\theta}^\ell}{\bb{L}^2}^2 \right]
    \nonumber\\
    &\leq
     \frac12 \bb{E}\left[\norm{\nabla \bff{\theta}^0}{\bb{L}^2}^2\right]
     +
    k
    \bb{E}\left[ \sum_{\ell=1}^n \one_{\Omega_{\kappa,\ell-1}} \norm{\Delta_h \bff{\theta}^\ell}{\bb{L}^2}^2 \right]
    \nonumber\\
    &\quad
    +
    \bb{E}\left[ \sum_{\ell=1}^n \one_{\Omega_{\kappa,\ell-1}} \inpro{\bff{\rho}^\ell-\bff{\rho}^{\ell-1}}{\Delta_h \bff{\theta}^\ell}\right]
    \nonumber\\
    &\quad
    -
    \bb{E}\left[ \sum_{\ell=1}^n \one_{\Omega_{\kappa,\ell-1}} \int_{t_{\ell-1}}^{t_\ell} \inpro{\bff{\eta}^\ell + \bff{\xi}^\ell+ \bff{H}(s)-\bff{H}(t_\ell)}{\Delta_h \bff{\theta}^\ell} \ds \right]
    \nonumber\\
    &\quad
    -
     \bb{E}\left[ \sum_{\ell=1}^n \one_{\Omega_{\kappa,\ell-1}} \int_{t_{\ell-1}}^{t_\ell}  \inpro{\nabla \bff{H}(s)-\nabla \bff{H}(t_\ell)}{\nabla \Delta_h \bff{\theta}^\ell} \ds\right]
    \nonumber\\
    &\quad
    -
    k  \bb{E}\left[ \sum_{\ell=1}^n \one_{\Omega_{\kappa,\ell-1}} \inpro{\nabla\Pi_h \bff{\eta}^\ell}{\nabla\Delta_h \bff{\theta}^\ell} \right]
    \nonumber\\
    &\quad
    +
    k  \bb{E}\left[ \sum_{\ell=1}^n \one_{\Omega_{\kappa,\ell-1}}  \inpro{\nabla\Pi_h f_R(\bff{u}(t_\ell))- \nabla\Pi_h f_R(\bff{u}_h^{\ell-1})}{\nabla\Delta_h \bff{\theta}^\ell} \right]
    \nonumber\\
    &\quad
    +
     \bb{E}\left[ \sum_{\ell=1}^n \one_{\Omega_{\kappa,\ell-1}} \int_{t_{\ell-1}}^{t_\ell}  \inpro{(\bff{\rho}^\ell + \bff{\theta}^\ell + \bff{u}(s)-\bff{u}(t_\ell)) \times \bff{H}(s)}{\Delta_h \bff{\theta}^\ell} \ds \right]
    \nonumber\\
    &\quad
    +
    \bb{E}\left[ \sum_{\ell=1}^n \one_{\Omega_{\kappa,\ell-1}} \int_{t_{\ell-1}}^{t_\ell}  \inpro{\bff{u}_h^\ell \times (\bff{\eta}^\ell+ \bff{\xi}^\ell+ \bff{H}(s)-\bff{H}(t_\ell))}{\Delta_h \bff{\theta}^\ell} \ds \right]
    \nonumber\\
    &\quad
    -
     \bb{E}\left[ \sum_{\ell=1}^n \one_{\Omega_{\kappa,\ell-1}} \int_{t_{\ell-1}}^{t_\ell}  \inpro{\bff{\nu}\cdot \nabla (\bff{\rho}^\ell + \bff{\theta}^\ell + \bff{u}(s)-\bff{u}(t_\ell))}{\Delta_h \bff{\theta}^\ell} \ds \right]
    \nonumber\\
    &\quad
    -
     \bb{E}\left[ \sum_{\ell=1}^n \one_{\Omega_{\kappa,\ell-1}} \int_{t_{\ell-1}}^{t_\ell}  \inpro{(\bff{\rho}^\ell+ \bff{\theta}^\ell + \bff{u}(s)-\bff{u}(t_\ell)) \times (\bff{\nu}\cdot \nabla) \bff{u}_h^\ell}{\Delta_h \bff{\theta}^\ell} \ds \right]
    \nonumber\\
    &\quad
    -
     \bb{E}\left[ \sum_{\ell=1}^n \one_{\Omega_{\kappa,\ell-1}} \int_{t_{\ell-1}}^{t_\ell} \inpro{\bff{u}(s) \times (\bff{\nu}\cdot \nabla)(\bff{\rho}^\ell + \bff{\theta}^\ell+\bff{u}(s)-\bff{u}(t_\ell))}{\Delta_h \bff{\theta}^\ell} \ds \right]
    \nonumber\\
    &\quad
    -
     \bb{E}\left[ \sum_{\ell=1}^n \one_{\Omega_{\kappa,\ell-1}} \int_{t_{\ell-1}}^{t_\ell}  \inpro{\mathcal{M}_R(\bff{u}(s))- \mathcal{M}_R(\bff{u}_h^{\ell-1})}{\Delta_h \bff{\theta}^\ell} \ds \right]
    \nonumber\\
    &\quad
    -
     \bb{E}\left[ \sum_{\ell=1}^n \one_{\Omega_{\kappa,\ell-1}} \int_{t_{\ell-1}}^{t_\ell}  \inpro{G(\bff{u}(s))- G(\bff{u}_h^{\ell-1})}{\Delta_h \bff{\theta}^\ell} \mathrm{d}W(s) \right]
    \nonumber\\
    &=:
    \frac12 \bb{E}\left[\norm{\nabla \bff{\theta}^0}{\bb{L}^2}^2\right]
    + I_1+I_2+\cdots+I_{13}.
\end{align}
We will estimate each term on the last line. In what follows, whenever appropriate, we invoke Proposition~\ref{pro:Holder u} and~\ref{pro:E theta n L2} without further elaboration. Let $\epsilon>0$. For the first term, by \eqref{equ:interp disc Lap Delta L2} and Young's inequality we have
\begin{align*}
    \abs{I_1}
    &\leq
    Ck
    \bb{E}\left[ \sum_{\ell=1}^n \one_{\Omega_{\kappa,\ell-1}} \norm{\nabla \bff{\theta}^\ell}{\bb{L}^2}^2 \right]
    +
    \epsilon k
    \bb{E}\left[ \sum_{\ell=1}^n \one_{\Omega_{\kappa,\ell-1}} \norm{\nabla \Delta_h \bff{\theta}^\ell}{\bb{L}^2}^2 \right]
    \\
    &\leq
    Ce^{C\kappa} \left(h^2+k^{\frac12-\delta}\right)
    +
    \epsilon k
    \bb{E}\left[ \sum_{\ell=1}^n \one_{\Omega_{\kappa,\ell-1}} \norm{\nabla \Delta_h \bff{\theta}^\ell}{\bb{L}^2}^2 \right].
\end{align*}
For the second term, by Young's inequality and \eqref{equ:Ritz approx},
\begin{align*}
    \abs{I_2}
    &\leq
    Ck^{-1} \bb{E}\left[\sum_{\ell=1}^n \norm{\bff{\rho}^\ell -\bff{\rho}^{\ell-1}}{\bb{L}^2}^2\right] + \epsilon k \bb{E}\left[\sum_{\ell=1}^n \one_{\Omega_{\kappa,\ell-1}}  \norm{\Delta_h \bff{\theta}^\ell}{\bb{L}^2}^2 \right]
    \\
    &\leq
    Cnh^4 k^{-1}
    +
    \epsilon k \bb{E}\left[ \sum_{\ell=1}^n \one_{\Omega_{\kappa,\ell-1}} \norm{\Delta_h \bff{\theta}^\ell}{\bb{L}^2}^2 \right]
    \\
    &\leq
    Ck + \epsilon k \bb{E}\left[ \sum_{\ell=1}^n \one_{\Omega_{\kappa,\ell-1}} \norm{\Delta_h \bff{\theta}^\ell}{\bb{L}^2}^2 \right],
\end{align*}
where in the last step we used the assumption $h=O(k)$.

Next, noting Lemma~\ref{lem:Holder D A beta} with $\beta=1/2$, by Young's inequality we have
\begin{align*}
    \abs{I_3}
    &\leq
    Ch^4 + C k \bb{E}\left[ \sum_{\ell=1}^n \one_{\Omega_{\kappa,\ell-1}}  \norm{\bff{\xi}^\ell}{\bb{L}^2}^2 \right]
    +
    \epsilon k \bb{E}\left[ \sum_{\ell=1}^n \one_{\Omega_{\kappa,\ell-1}} \norm{\Delta_h \bff{\theta}^\ell}{\bb{L}^2}^2 \right]
    \nonumber\\
    &\quad
    +
    C \bb{E}\left[ \sum_{\ell=1}^n \one_{\Omega_{\kappa,\ell-1}} \int_{t_{\ell-1}}^{t_\ell} \norm{\bff{H}(s)-\bff{H}(t_\ell)}{\bb{L}^2}^2 \ds \right]
    \nonumber\\
    &\leq
    Ce^{C\kappa} \left(h^2+k^{\frac12-\delta}\right)
    +
    \epsilon k \bb{E}\left[ \sum_{\ell=1}^n \one_{\Omega_{\kappa,\ell-1}} \norm{\Delta_h \bff{\theta}^\ell}{\bb{L}^2}^2 \right].
\end{align*}
By a similar argument as in \eqref{equ:12 Holder a}, we have
\begin{align}
    \label{equ:12 Holder b}
    \abs{I_4}
    &\leq
    \epsilon k \bb{E}\left[ \sum_{\ell=1}^n \one_{\Omega_{\kappa,\ell-1}} \norm{\nabla \Delta_h \bff{\theta}^\ell}{\bb{L}^2}^2 \right]
    +
    C \bb{E}\left[ \sum_{\ell=1}^n \one_{\Omega_{\kappa,\ell-1}} \int_{t_{\ell-1}}^{t_\ell} \norm{\nabla\bff{H}(s)-\nabla\bff{H}(t_\ell)}{\bb{L}^2}^2 \ds \right]
    \nonumber\\
    &\leq
    \epsilon k \bb{E}\left[ \sum_{\ell=1}^n \one_{\Omega_{\kappa,\ell-1}} \norm{\nabla \Delta_h \bff{\theta}^\ell}{\bb{L}^2}^2 \right]
    +
    Ck^{\frac12 -\delta}.
\end{align}
For the term $I_5$, we use~\eqref{equ:proj stab W1p} and Young's inequality to deduce
\begin{align*}
    \abs{I_5}
    &\leq
    Ch^2
    +
    \epsilon k \bb{E}\left[ \sum_{\ell=1}^n \one_{\Omega_{\kappa,\ell-1}} \norm{\nabla \Delta_h \bff{\theta}^\ell}{\bb{L}^2}^2 \right].
\end{align*}

To estimate $I_6$, we apply Young's inequality, \eqref{equ:fR v w H1}, and \eqref{equ:Ritz approx}. Noting Proposition~\ref{pro:Holder u} (temporal H\"older continuity of $\bff{u}$) and the Sobolev embeddings $\bb{H}^2\hookrightarrow \bb{W}^{1,4}\hookrightarrow \bb{L}^\infty$, we obtain for any $\delta>0$,
\begin{align}\label{equ:term I6}
    \abs{I_6}
    &\leq
    \epsilon k \bb{E}\left[ \sum_{\ell=1}^n \one_{\Omega_{\kappa,\ell-1}} \norm{\nabla \Delta_h \bff{\theta}^\ell}{\bb{L}^2}^2 \right]
    +
    Ck \bb{E}\left[ \sum_{\ell=1}^n \one_{\Omega_{\kappa,\ell-1}} \norm{\nabla f_R(\bff{u}(t_\ell))- \nabla f_R(\bff{u}(t_{\ell-1}))}{\bb{L}^2}^2 \right]
    \nonumber\\
    &\quad
    +
    Ck \bb{E}\left[ \sum_{\ell=1}^n \one_{\Omega_{\kappa,\ell-1}} \norm{\nabla f_R(\bff{u}(t_{\ell-1}))- \nabla f_R(\bff{u}_h^{\ell-1})}{\bb{L}^2}^2 \right]
    \nonumber\\
    &\leq
    \epsilon k \bb{E}\left[ \sum_{\ell=1}^n \one_{\Omega_{\kappa,\ell-1}}  \norm{\nabla \Delta_h \bff{\theta}^\ell}{\bb{L}^2}^2 \right]
    +
    C k \bb{E}\left[ \sum_{\ell=1}^n \one_{\Omega_{\kappa,\ell-1}} \left(1+\norm{\bff{u}(t_{\ell-1})}{\bb{L}^\infty}^6\right) \norm{\nabla \bff{u}(t_\ell)-\nabla \bff{u}(t_{\ell-1})}{\bb{L}^2}^2 \right]
    \nonumber\\
    &\quad
    +
    C k \bb{E}\left[ \sum_{\ell=1}^n \one_{\Omega_{\kappa,\ell-1}} \left(1+\norm{\bff{u}(t_{\ell-1})}{\bb{L}^\infty}^6\right) \norm{\nabla \bff{u}(t_{\ell-1})}{\bb{L}^4}^2 \norm{\bff{u}(t_\ell)-\bff{u}(t_{\ell-1})}{\bb{L}^4}^2 \right] 
    \nonumber\\
    &\quad
    +
    C k \bb{E}\left[ \sum_{\ell=1}^n \one_{\Omega_{\kappa,\ell-1}} \left(1+\norm{\bff{u}(t_{\ell-1})}{\bb{L}^\infty}^6\right) \norm{\nabla \bff{u}(t_{\ell-1})-\nabla \bff{u}_h^{\ell-1}}{\bb{L}^2}^2 \right]
    \nonumber\\
    &\quad
    +
    C k \bb{E}\left[ \sum_{\ell=1}^n \one_{\Omega_{\kappa,\ell-1}} \left(1+\norm{\bff{u}(t_{\ell-1})}{\bb{L}^\infty}^6\right) \norm{\nabla \bff{u}(t_{\ell-1})}{\bb{L}^4}^2 \norm{\bff{u}(t_{\ell-1})-\bff{u}_h^{\ell-1}}{\bb{L}^4}^2 \right] 
    \nonumber\\
    &\leq
    \epsilon k \bb{E}\left[ \sum_{\ell=1}^n \one_{\Omega_{\kappa,\ell-1}}  \norm{\nabla \Delta_h \bff{\theta}^\ell}{\bb{L}^2}^2 \right]
    +
    C(1+\kappa^4) k^{\frac12 -\delta}
    \nonumber\\
    &\quad
    +
    C(1+\kappa^4) k \bb{E}\left[ \sum_{\ell=1}^n \one_{\Omega_{\kappa,\ell-1}} \norm{\nabla \bff{\theta}^{\ell-1} +\nabla\bff{\rho}^{\ell-1}}{\bb{L}^2}^2 
    +  \norm{\bff{\theta}^{\ell-1} +\bff{\rho}^{\ell-1}}{\bb{L}^4}^2 \right]
    \nonumber\\
    &\leq
    \epsilon k \bb{E}\left[ \sum_{\ell=1}^n \one_{\Omega_{\kappa,\ell-1}}  \norm{\nabla \Delta_h \bff{\theta}^\ell}{\bb{L}^2}^2 \right]
    +
    C (1+\kappa^4) \left(k^{1-\delta}+h^2 + k \bb{E}\left[ \sum_{\ell=1}^n \one_{\Omega_{\kappa,\ell-1}} \norm{\bff{\theta}^{\ell-1}}{\bb{H}^1}^2 \right] \right)
    \nonumber\\
    &\leq
    \epsilon k \bb{E}\left[ \sum_{\ell=1}^n \one_{\Omega_{\kappa,\ell-1}}  \norm{\nabla \Delta_h \bff{\theta}^\ell}{\bb{L}^2}^2 \right]
    +
    C (1+\kappa^4) e^{C\kappa} \left(h^2+k^{\frac12-\delta}\right),
\end{align}
where in the last step we also used Proposition~\ref{pro:E theta n L2}. 

For the term $I_7$, we apply Young's inequality and \eqref{equ:Ritz approx}, then invoke \eqref{equ:E us utl phi} to obtain
\begin{align*}
    \abs{I_7}
    &\leq
    \bb{E}\left[ \sum_{\ell=1}^n \one_{\Omega_{\kappa,\ell-1}} \int_{t_{\ell-1}}^{t_\ell} \norm{\bff{\rho}^\ell +\bff{\theta}^\ell + \bff{u}(s)-\bff{u}(t_\ell)}{\bb{L}^4} \norm{\bff{H}(s)}{\bb{L}^2} \norm{\Delta_h \bff{\theta}^\ell}{\bb{L}^4} \ds \right]
    \\
    &\leq
    \epsilon k \bb{E}\left[ \sum_{\ell=1}^n \one_{\Omega_{\kappa,\ell-1}}  \norm{\Delta_h \bff{\theta}^\ell}{\bb{L}^4}^2 \right]
    +
    Ch^4 \bb{E}\left[\norm{\bff{H}}{L^\infty_T(\bb{L}^2)}^4 + \norm{\bff{u}}{L^\infty_T(\bb{H}^2)}^4 \right]
    \\
    &\quad
    +
    C \bb{E}\left[ \sum_{\ell=1}^n \one_{\Omega_{\kappa,\ell-1}} \int_{t_{\ell-1}}^{t_\ell}  \norm{\bff{u}(s)-\bff{u}(t_\ell)}{\bb{L}^4}^2 \norm{\bff{H}(s)}{\bb{L}^2}^2 \ds \right]
    \\
    &\leq
    \epsilon k \bb{E}\left[ \sum_{\ell=1}^n \one_{\Omega_{\kappa,\ell-1}}  \norm{\nabla \Delta_h \bff{\theta}^\ell}{\bb{L}^2}^2 \right]
    +
    C(1+\kappa)e^{C\kappa} \left(h^2+k^{\frac12 -\delta}\right),
\end{align*}
where in the last step we also used~\eqref{equ:interp disc Lap Delta L2}, \eqref{equ:v Lp interp}, and Proposition~\ref{pro:E theta n L2}. 

Next, we estimate $I_8$. To this end, noting \eqref{equ:split H}, \eqref{equ:trunc sllbar b}, and \eqref{equ:euler}, we write
\begin{align*}
	\bff{\eta}^\ell+ \bff{\xi}^\ell 
	&= \bff{H}(t_\ell)-\bff{H}_h^n
	\\
	&=
	\Delta \bff{u}(t_\ell)- \Delta_h \bff{u}_h^\ell + f_R(\bff{u}(t_\ell)) - \Pi_h f_R(\bff{u}_h^{\ell-1})
	\\
	&=
	(I-\Pi_h)\Delta \bff{u}(t_\ell)+ \Delta_h \bff{\theta}^\ell
	+
	(I-\Pi_h) f_R(\bff{u}(t_\ell))
	+
	\Pi_h \big( f_R(\bff{u}(t_\ell))- f_R(\bff{u}_h^{\ell-1}) \big),
\end{align*}
where in the last step we used the fact that $\Delta_h \mathcal{R}_h= \Pi_h \Delta$. It follows that
\begin{align*}
    \abs{I_8}
    &\leq
	\bb{E}\left[ \sum_{\ell=1}^n \one_{\Omega_{\kappa,\ell-1}} \int_{t_{\ell-1}}^{t_\ell}  \abs{\inpro{\bff{u}_h^\ell \times \left((I-\Pi_h)\Delta \bff{u}(t_\ell)\right)}{\Delta_h \bff{\theta}^\ell}} \ds \right]
	\\
	&\quad
	+
	\bb{E}\left[ \sum_{\ell=1}^n \one_{\Omega_{\kappa,\ell-1}} \int_{t_{\ell-1}}^{t_\ell}  \abs{\inpro{\bff{u}_h^\ell \times \left(
	(I-\Pi_h) f_R(\bff{u}(t_\ell))
	\right)}{\Delta_h \bff{\theta}^\ell}} \ds \right]
	\\
	&\quad
	+
	\bb{E}\left[ \sum_{\ell=1}^n \one_{\Omega_{\kappa,\ell-1}} \int_{t_{\ell-1}}^{t_\ell}  \abs{\inpro{\bff{u}_h^\ell \times \left(\Pi_h \big( f_R(\bff{u}(t_\ell))- f_R(\bff{u}_h^{\ell-1}) \big)\right)}{\Delta_h \bff{\theta}^\ell}} \ds \right]
	\\
	&\quad
	+
	\bb{E}\left[ \sum_{\ell=1}^n \one_{\Omega_{\kappa,\ell-1}} \int_{t_{\ell-1}}^{t_\ell}  \abs{\inpro{\bff{u}_h^\ell \times \left(\bff{H}(s)-\bff{H}(t_\ell)\right)}{\Delta_h \bff{\theta}^\ell}} \ds \right]
	\\
	&\leq
	I_{8a}+I_{8b}+I_{8c}+I_{8d}.
\end{align*}
For the terms $I_{8a}$ and $I_{8b}$, we use \eqref{equ:proj approx} and Young's inequality to obtain
\begin{align*}
	\abs{I_{8a}}+\abs{I_{8b}}
	&\leq
	Ch^2 \bb{E}\left[\norm{\bff{u}}{L^\infty(t_1,T;\bb{H}^3)}^2 \sum_{\ell=1}^n k\norm{\bff{u}_h^\ell}{\bb{L}^4}^2\right]
	\\
	&\quad
	+
	\epsilon k\bb{E}\left[ \sum_{\ell=1}^n \one_{\Omega_{\kappa,\ell-1}} \norm{\Delta_h \bff{\theta}^\ell}{\bb{L}^2}^2\right]
	+
	\epsilon k\bb{E}\left[ \sum_{\ell=1}^n \one_{\Omega_{\kappa,\ell-1}} \norm{\nabla \Delta_h \bff{\theta}^\ell}{\bb{L}^2}^2\right]
	\\
	&\leq
	Ch^2 \bb{E}\left[\norm{\bff{u}}{L^\infty(t_1,T;\bb{H}^3)}^4 + \left( \sum_{\ell=1}^n k\norm{\bff{u}_h^\ell}{\bb{H}^1}^2 \right)^2 \right]
	\\
	&\quad
	+
	\epsilon k\bb{E}\left[ \sum_{\ell=1}^n \one_{\Omega_{\kappa,\ell-1}} \norm{\Delta_h \bff{\theta}^\ell}{\bb{L}^2}^2\right]
	+
	\epsilon k\bb{E}\left[ \sum_{\ell=1}^n \one_{\Omega_{\kappa,\ell-1}} \norm{\nabla \Delta_h \bff{\theta}^\ell}{\bb{L}^2}^2\right]
	\\
	&\leq
	Ch^2 +
	\epsilon k\bb{E}\left[ \sum_{\ell=1}^n \one_{\Omega_{\kappa,\ell-1}} \norm{\Delta_h \bff{\theta}^\ell}{\bb{L}^2}^2\right]
	+
	\epsilon k\bb{E}\left[ \sum_{\ell=1}^n \one_{\Omega_{\kappa,\ell-1}} \norm{\nabla \Delta_h \bff{\theta}^\ell}{\bb{L}^2}^2\right].
\end{align*}
For $I_{8c}$, we use the stability of $\Pi_h$, Lipschitz continuity of $f_R$, and H\"older continuity of $\bff{u}$ to obtain
\begin{align*}
	\abs{I_{8c}}
	&\leq
	Ck \bb{E}\left[ \sum_{\ell=1}^n \one_{\Omega_{\kappa,\ell-1}} \norm{\bff{u}_h^\ell}{\bb{L}^4} \norm{\bff{u}(t_\ell)- \bff{u}(t_{\ell-1})+ \bff{\rho}^{\ell-1}}{\bb{L}^2} \norm{\Delta_h \bff{\theta}^\ell}{\bb{L}^4} \right]
	\\
	&\quad
	+
	Ck \bb{E}\left[ \sum_{\ell=1}^n \one_{\Omega_{\kappa,\ell-1}} \norm{\bff{u}_h^\ell}{\bb{L}^2} \norm{\bff{\theta}^{\ell-1}}{\bb{L}^4} \norm{\Delta_h \bff{\theta}^\ell}{\bb{L}^4} \right]
	\\
	&\leq
	C(h^4+ k^{1 -\delta})\, \bb{E}\left[\norm{\bff{u}}{\mathcal{C}^{1/2 -\delta}_T(\bb{L}^2)}^4 + \left(\sum_{\ell=1}^n k \norm{\bff{u}_h^\ell}{\bb{H}^1}^2 \right)^2 \right]
	+
	C\kappa k \bb{E}\left[ \sum_{\ell=1}^n \one_{\Omega_{\kappa,\ell-1}} \norm{\bff{\theta}^{\ell-1}}{\bb{H}^1}^2 \right]
	\\
	&\quad
	+
	\epsilon k\bb{E}\left[ \sum_{\ell=1}^n \one_{\Omega_{\kappa,\ell-1}} \norm{\Delta_h \bff{\theta}^\ell}{\bb{L}^2}^2\right]
	+
	\epsilon k\bb{E}\left[ \sum_{\ell=1}^n \one_{\Omega_{\kappa,\ell-1}} \norm{\nabla \Delta_h \bff{\theta}^\ell}{\bb{L}^2}^2\right]
	\\
	&\leq
	C(1+\kappa)e^{C\kappa} \left(h^2+k^{\frac12 -\delta}\right) 
	+
	\epsilon k\bb{E}\left[ \sum_{\ell=1}^n \one_{\Omega_{\kappa,\ell-1}} \norm{\Delta_h \bff{\theta}^\ell}{\bb{L}^2}^2\right]
	+
	\epsilon k\bb{E}\left[ \sum_{\ell=1}^n \one_{\Omega_{\kappa,\ell-1}} \norm{\nabla \Delta_h \bff{\theta}^\ell}{\bb{L}^2}^2\right],
\end{align*}
where in the last step we again used Proposition~\ref{pro:E theta n L2}.
For $I_{8d}$, by a similar argument as in Lemma~\ref{lem:Holder D A beta}, we have
\begin{align*}
	\abs{I_{8d}} 
	&\leq
	C \bb{E}\left[ \sum_{\ell=1}^n \one_{\Omega_{\kappa,\ell-1}} \int_{t_{\ell-1}}^{t_\ell} \norm{\bff{u}_h^\ell}{\bb{L}^2}^2 \norm{\bff{H}(t_\ell)-\bff{H}(s)}{\bb{L}^4}^2 \right]
    +
	\epsilon k\bb{E}\left[ \sum_{\ell=1}^n \one_{\Omega_{\kappa,\ell-1}} \norm{\Delta_h \bff{\theta}^\ell}{\bb{L}^4}^2\right]
    \\
    &\leq
    C \left(\bb{E} \left[\max_{j\leq n} \norm{\bff{u}_h^j}{\bb{L}^2}^4\right]\right)^{\frac12} 
    \left(\sum_{\ell=1}^n \int_{t_{\ell-1}}^{t_\ell} \left(\bb{E} \norm{\bff{H}(t_\ell)-\bff{H}(s)}{\bb{L}^4}^4 \right)^{\frac12} \ds \right)
    +
	\epsilon k\bb{E}\left[ \sum_{\ell=1}^n \one_{\Omega_{\kappa,\ell-1}} \norm{\Delta_h \bff{\theta}^\ell}{\bb{L}^4}^2\right]
	\\
	&\leq
	Ck^{\frac12 -\delta} + \epsilon k\bb{E}\left[ \sum_{\ell=1}^n \one_{\Omega_{\kappa,\ell-1}} \norm{\Delta_h \bff{\theta}^\ell}{\bb{L}^2}^2\right]
	+
	\epsilon k\bb{E}\left[ \sum_{\ell=1}^n \one_{\Omega_{\kappa,\ell-1}} \norm{\nabla \Delta_h \bff{\theta}^\ell}{\bb{L}^2}^2\right],
\end{align*}
where in the last step we essentially used \eqref{equ:Holder est} with $\beta=3/4$.
Hence, altogether we obtain for any $\delta>0$,
\begin{align*}
	\abs{I_8} 
	&\leq
	C(1+\kappa)e^{C\kappa} \left(h^2+k^{\frac12 -\delta}\right) 
	+
	\epsilon k\bb{E}\left[ \sum_{\ell=1}^n \one_{\Omega_{\kappa,\ell-1}} \norm{\Delta_h \bff{\theta}^\ell}{\bb{L}^2}^2\right]
	+
	\epsilon k\bb{E}\left[ \sum_{\ell=1}^n \one_{\Omega_{\kappa,\ell-1}} \norm{\nabla \Delta_h \bff{\theta}^\ell}{\bb{L}^2}^2\right].
\end{align*}
For the terms $I_{10}$ and $I_{11}$, we apply a similar argument as above to infer that
\begin{align*}
    \abs{I_{10}}
    &\leq
    \bb{E}\left[ \sum_{\ell=1}^n \one_{\Omega_{\kappa,\ell-1}} \int_{t_{\ell-1}}^{t_\ell} \left(\norm{\bff{\rho}^\ell}{\bb{L}^4}+ \norm{\bff{\theta}^\ell}{\bb{L}^4} + \norm{\bff{u}(s)-\bff{u}(t_\ell)}{\bb{L}^4} \right) \norm{\nabla \bff{u}_h^\ell}{\bb{L}^2}  \norm{\Delta_h \bff{\theta}^\ell}{\bb{L}^4} \right] \ds 
    \\
    &\leq
    C(1+\kappa)e^{C\kappa} \left(h^2+k^{\frac12 -\delta}\right)
    +
    \epsilon k\bb{E}\left[ \sum_{\ell=1}^n \one_{\Omega_{\kappa,\ell-1}} \norm{\nabla \Delta_h \bff{\theta}^\ell}{\bb{L}^2}^2\right]
    +
	\epsilon k\bb{E}\left[ \sum_{\ell=1}^n \one_{\Omega_{\kappa,\ell-1}} \norm{\Delta_h \bff{\theta}^\ell}{\bb{L}^2}^2\right],
    \\
    \abs{I_{11}}
    &\leq
    \bb{E}\left[ \sum_{\ell=1}^n \one_{\Omega_{\kappa,\ell-1}} \int_{t_{\ell-1}}^{t_\ell}\norm{\bff{u}(s)}{\bb{L}^\infty} \left(\norm{\nabla \bff{\rho}^\ell}{\bb{L}^2}+ \norm{\nabla \bff{\theta}^\ell}{\bb{L}^2} + \norm{\nabla \bff{u}(s)- \nabla\bff{u}(t_\ell)}{\bb{L}^2} \right)  \norm{\Delta_h \bff{\theta}^\ell}{\bb{L}^2} \right] \ds 
    \\
    &\leq
    C(1+\kappa)e^{C\kappa} \left(h^2+k^{\frac12 -\delta}\right)
    +
    \epsilon k\bb{E}\left[ \sum_{\ell=1}^n \one_{\Omega_{\kappa,\ell-1}} \norm{\Delta_h \bff{\theta}^\ell}{\bb{L}^2}^2\right].
\end{align*}
Next, by Young's inequality and \eqref{equ:Ritz approx}, it is easy to see that
\begin{align*}
    \abs{I_9}+\abs{I_{12}}
    &\leq
    \epsilon k\bb{E}\left[ \sum_{\ell=1}^n \one_{\Omega_{\kappa,\ell-1}} \norm{\nabla \Delta_h \bff{\theta}^\ell}{\bb{L}^2}^2\right]
    +
    \epsilon k\bb{E}\left[ \sum_{\ell=1}^n \one_{\Omega_{\kappa,\ell-1}} \norm{\Delta_h \bff{\theta}^\ell}{\bb{L}^2}^2\right]
    +
    Ce^{C\kappa} \left(h^2+k^{\frac12 -\delta}\right).
\end{align*}
Finally, we split the stochastic integral in $I_{13}$ as
\begin{align*}
	I_{13}
	&=
	\bb{E} \left[\max_{m\leq n} \sum_{\ell=1}^m \one_{\Omega_{\kappa,\ell-1}} \int_{t_{\ell-1}}^{t_\ell}  \inpro{\nabla\Pi_h G(\bff{u}(s))-\nabla\Pi_h G(\bff{u}_h^{\ell-1})}{\nabla \bff{\theta}^{\ell-1}} \dW(s) \right]
	\\
	&\quad
	+
	\bb{E} \left[ \max_{m\leq n} \sum_{\ell=1}^m \one_{\Omega_{\kappa,\ell-1}} \int_{t_{\ell-1}}^{t_\ell}  \inpro{\nabla\Pi_h G(\bff{u}(s))-\nabla\Pi_h G(\bff{u}_h^{\ell-1})}{\nabla \bff{\theta}^\ell- \nabla \bff{\theta}^{\ell-1}} \dW(s) \right]
	\\
	&=: I_{13a}+ I_{13b}.
\end{align*}
For the first term above, noting the assumptions on $G$, the $\bb{H}^1$ stability of $\Pi_h$, and Lemma~\ref{lem:Holder D A beta}, we apply the Burkholder--Davis--Gundy and the Young inequalities to obtain
\begin{align*}
	I_{13a}
	&\leq
	C \bb{E}\left[ \left(\sum_{\ell=1}^n \one_{\Omega_{\kappa,\ell-1}} \int_{t_{\ell-1}}^{t_\ell} \norm{\nabla G(\bff{u}(s))-\nabla G(\bff{u}_h^{\ell-1})}{\bb{L}^2}^2 \norm{\nabla \bff{\theta}^{\ell-1}}{\bb{L}^2}^2 \ds \right)^{\frac12} \right]
	\\
	&\leq
	C \bb{E} \left[ \left(\max_{m\leq n} \one_{\Omega_{\kappa,m-1}} \norm{\nabla \bff{\theta}^{m-1}}{\bb{L}^2} \right) \left(\sum_{\ell=1}^n \one_{\Omega_{\kappa,\ell-1}} \int_{t_{\ell-1}}^{t_\ell} \norm{\nabla \bff{u}(s)-\nabla \bff{u}_h^{\ell-1}}{\bb{L}^2}^2 \right)^{\frac12} \right]
	\\
	&\leq
	\epsilon \bb{E} \left[\max_{m\leq n} \one_{\Omega_{\kappa,m-1}} \norm{\nabla \bff{\theta}^m}{\bb{L}^2}^2 \right]
	+
	Ce^{C\kappa} \left(h^2+k^{\frac12 -\delta}\right).
\end{align*}
For the term $I_{13b}$, we use Young's inequality, It\^o's isometry, the assumptions on $G$, and Lemma~\ref{lem:Holder D A beta} to obtain
\begin{align*}
	I_{13b}
	&\leq
	C\bb{E} \left[\sum_{\ell=1}^n \norm{\one_{\Omega_{\kappa,\ell-1}} \int_{t_{\ell-1}}^{t_\ell} \left(\nabla\Pi_h G(\bff{u}(s))- \nabla\Pi_h G(\bff{u}_h^{\ell-1})\right) \dW(s)}{\bb{L}^2}^2 \right] 
    \\
    &\quad
	+
	\epsilon \bb{E}\left[\sum_{\ell=1}^n \one_{\Omega_{\kappa,\ell-1}} \norm{\nabla \bff{\theta}^\ell -\nabla\bff{\theta}^{\ell-1}}{\bb{L}^2}^2 \right]
	\\
	&=
	C\bb{E} \left[\sum_{\ell=1}^n \one_{\Omega_{\kappa,\ell-1}} \int_{t_{\ell-1}}^{t_\ell} \norm{\nabla\Pi_h G(\bff{u}(s))- \nabla\Pi_h G(\bff{u}_h^{\ell-1})}{\bb{L}^2}^2 \ds \right] 
	\\
	&\quad
	+
	\epsilon \bb{E}\left[\sum_{\ell=1}^n \one_{\Omega_{\kappa,\ell-1}} \norm{\nabla \bff{\theta}^\ell -\nabla \bff{\theta}^{\ell-1}}{\bb{L}^2}^2 \right]
	\\
	&\leq
	Ce^{C\kappa} \left(h^2+k^{\frac12 -\delta}\right)
	+
	\epsilon \bb{E}\left[\sum_{\ell=1}^n \one_{\Omega_{\kappa,\ell-1}} \norm{\nabla \bff{\theta}^\ell -\nabla \bff{\theta}^{\ell-1}}{\bb{L}^2}^2 \right].
\end{align*}

We now substitute all the above estimates into~\eqref{equ:I1 to I13}, set $\epsilon=1/16$, and rearrange the terms.
Altogether, continuing from \eqref{equ:I1 to I13} we deduce that
\begin{align}\label{equ:E nab delta theta}
    &\bb{E}\left[\max_{m\leq n} \left(\one_{\Omega_{\kappa,m-1}} \norm{\nabla \bff{\theta}^m}{\bb{L}^2}^2\right) \right] 
    +
    k
    \bb{E}\left[ \sum_{\ell=1}^n \one_{\Omega_{\kappa,\ell-1}} \norm{\Delta_h \bff{\theta}^\ell}{\bb{L}^2}^2 \right]
    +
    k
    \bb{E}\left[ \sum_{\ell=1}^n \one_{\Omega_{\kappa,\ell-1}} \norm{\nabla \Delta_h \bff{\theta}^\ell}{\bb{L}^2}^2 \right]
    \nonumber \\
    &\leq
    \bb{E}\left[\norm{\nabla \bff{\theta}^0}{\bb{L}^2}^2 \right] 
    +
    Ce^{C\kappa} \left(h^2+k^{\frac12 -\delta}\right).
\end{align}
Furthermore, note that by setting $\bff{\phi}_h= -k\Delta_h \bff{\xi}^n$ in \eqref{equ:Hhn Htn}, multiplying by $\one_{\Omega_{\kappa,n-1}}$ and taking expectation, then applying \eqref{equ:disc laplacian} and \eqref{equ:Ritz zero}, we obtain
\begin{align}\label{equ:kE nab xi}
    k \bb{E} \left[\one_{\Omega_{\kappa,n-1}} \norm{\nabla\bff{\xi}^n}{\bb{L}^2}^2 \right] 
    &=
    k \bb{E}\left[\one_{\Omega_{\kappa,n-1}} \inpro{\nabla\Pi_h \bff{\eta}^n}{\nabla\bff{\xi}^n} \right] 
    +
    k \bb{E}\left[\one_{\Omega_{\kappa,n-1}} \inpro{\nabla\Delta_h \bff{\theta}^n}{\nabla\bff{\xi}^n} \right]
    \nonumber\\
    &\quad
    +
    k\bb{E} \left[\one_{\Omega_{\kappa,n-1}} \inpro{\nabla\Pi_h f_R(\bff{u}(t_n)) - \nabla\Pi_h f_R(\bff{u}_h^{n-1})}{\nabla \bff{\xi}^n} \right]
    \nonumber\\
    &\leq
    Ckh^2 + \epsilon k \bb{E}\left[\one_{\Omega_{\kappa,n-1}} \norm{\nabla\bff{\xi}^n}{\bb{L}^2}^2 \right] 
    +
    Ck \bb{E}\left[\one_{\Omega_{\kappa,n-1}} \norm{\nabla\Delta_h \bff{\theta}^n}{\bb{L}^2}^2 \right] 
    \nonumber\\
    &\quad
    +
    Ck \bb{E} \left[\one_{\Omega_{\kappa,n-1}} \norm{\nabla\Pi_h f_R(\bff{u}(t_n)) - \nabla\Pi_h f_R(\bff{u}_h^{n-1})}{\bb{L}^2}^2 \right], 
\end{align}
where in the last step we used Young's inequality and \eqref{equ:Ritz approx}. The final term in \eqref{equ:kE nab xi} can be estimated using \eqref{equ:fR v w H1} as done in~\eqref{equ:term I6}. Summing~\eqref{equ:kE nab xi} over $\ell \in \{1,2,\ldots,n\}$ and applying~\eqref{equ:E nab delta theta}, we obtain
\begin{align*}
    k \bb{E}\left[ \sum_{\ell=1}^n \one_{\Omega_{\kappa,\ell-1}} \norm{\nabla \bff{\xi}^\ell}{\bb{L}^2}^2 \right]
    &\leq
    \bb{E}\left[\norm{\nabla \bff{\theta}^0}{\bb{L}^2}^2 \right] 
    +
    C e^{C\kappa} \left(h^2+k^{\frac12 -\delta}\right).
\end{align*}
Choosing $\bff{u}_h^0$ such that $\bb{E}\left[\norm{\nabla \bff{\theta}^0}{\bb{L}^2}^2 \right] \leq Ch^2$, say $\bff{u}_h^0=\Pi_h \bff{u}_0$, we deduce the required result from the last estimate and \eqref{equ:E nab delta theta}.
\end{proof}

The following error estimate, which holds over a sample space with large probability, now follows from the above propositions. Indeed, note that by Chebyshev's inequality,
\begin{align*}
    \bb{P}\left[\Omega_{\kappa,m}\right] 
    &\geq
    1- \frac{1}{\kappa} \left(\bb{E} \left[ \max_{t\leq t_m \wedge T} \norm{\bff{u}(t)}{\bb{H}^2}^2 \right] 
    +
    \bb{E} \left[ \max_{t\leq t_m \wedge T} \norm{\bff{H}(t)}{\bb{L}^2}^2 \right]
    + \bb{E} \left[\max_{n\leq m} \norm{\bff{u}_h^n}{\bb{H}^1}^2 \right] \right)
    \geq
    1-\frac{C_{R,T}}{\kappa}.
\end{align*}
Here, $C_{R,T}$ is a constant depending on $R$, $T$, and the coefficients of the equation, which is conferred by Lemma~\ref{lem:stab H1} and Proposition~\ref{pro:Holder u}. Therefore, $\bb{P}\left[\Omega_{\kappa,m}\right] \to 1$ as $\kappa\to\infty$.

\begin{theorem}\label{the:uhn un 1 L2}
Assume that the hypotheses of Proposition~\ref{pro:E theta n L2} hold and $n\in \{1,2,\ldots, N\}$. For any $\delta>0$, we have
\begin{align}\label{equ:error CT}
    &\bb{E} \left[\max_{m\leq n} \left(\one_{\Omega_{\kappa,m-1}} \norm{\bff{u}(t_m)-\bff{u}_h^m}{\bb{H}^1}^2 \right) \right] 
    +
    k  \sum_{\ell=1}^n \bb{E} \left[ \one_{\Omega_{\kappa,\ell-1}} \norm{\bff{H}(t_\ell)-\bff{H}_h^\ell}{\bb{H}^1}^2 \right]
    \leq
    \widetilde{C}e^{\widetilde{C}\kappa} \left(h^2+k^{\frac12-\delta}\right),
\end{align}
where $\widetilde{C}$ is a constant depending on $R$, $T$, and $\delta$, but is independent of $h$ and $k$.
\end{theorem}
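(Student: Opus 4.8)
The plan is to obtain Theorem~\ref{the:uhn un 1 L2} as a corollary of Propositions~\ref{pro:E theta n L2} and~\ref{pro:E theta n H1} together with the error splittings~\eqref{equ:split u} and~\eqref{equ:split H}. Writing $\bff{u}(t_m)-\bff{u}_h^m=\bff{\rho}^m+\bff{\theta}^m$ and $\bff{H}(t_\ell)-\bff{H}_h^\ell=\bff{\eta}^\ell+\bff{\xi}^\ell$ and using the crude bound $\norm{a+b}{\bb{H}^1}^2\le 2\norm{a}{\bb{H}^1}^2+2\norm{b}{\bb{H}^1}^2$, the left-hand side of~\eqref{equ:error CT} is controlled by the four quantities $\bb{E}\big[\max_{m\le n}\one_{\Omega_{\kappa,m-1}}\norm{\bff{\rho}^m}{\bb{H}^1}^2\big]$, $\bb{E}\big[\max_{m\le n}\one_{\Omega_{\kappa,m-1}}\norm{\bff{\theta}^m}{\bb{H}^1}^2\big]$, $k\sum_{\ell=1}^n\bb{E}\big[\one_{\Omega_{\kappa,\ell-1}}\norm{\bff{\eta}^\ell}{\bb{H}^1}^2\big]$ and $k\sum_{\ell=1}^n\bb{E}\big[\one_{\Omega_{\kappa,\ell-1}}\norm{\bff{\xi}^\ell}{\bb{H}^1}^2\big]$. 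No cross terms or finer arguments are needed, because the rate $h^2+k^{1/2-\delta}$ furnished by the propositions already dominates the $O(h^2)$ rate of the Ritz errors.

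Three of these are essentially immediate. Since $\norm{\bff{v}}{\bb{H}^1}^2\leqs\norm{\bff{v}}{\bb{L}^2}^2+\norm{\nabla\bff{v}}{\bb{L}^2}^2$, Proposition~\ref{pro:E theta n L2} bounds the $\bb{L}^2$-parts of the $\bff{\theta}$ and $\bff{\xi}$ terms, while Proposition~\ref{pro:E theta n H1} bounds $\bb{E}\big[\max_{m\le n}\one_{\Omega_{\kappa,m-1}}\norm{\nabla\bff{\theta}^m}{\bb{L}^2}^2\big]$ and $k\sum_{\ell=1}^n\bb{E}\big[\one_{\Omega_{\kappa,\ell-1}}\norm{\nabla\bff{\xi}^\ell}{\bb{L}^2}^2\big]$; together these give the $\bff{\theta}$ and $\bff{\xi}$ contributions, with the exponential-in-$\kappa$ prefactor inherited from Proposition~\ref{pro:E theta n H1}. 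For the Ritz term $\bff{\rho}^m$, I would drop the indicator and apply~\eqref{equ:Ritz approx} with $s=2$, so that $\norm{\bff{\rho}^m}{\bb{H}^1}\le Ch\norm{\bff{u}(t_m)}{\bb{H}^2}$ and hence $\bb{E}\big[\max_{m\le n}\one_{\Omega_{\kappa,m-1}}\norm{\bff{\rho}^m}{\bb{H}^1}^2\big]\le Ch^2\,\bb{E}\big[\norm{\bff{u}}{\mathcal{C}^0_T(\bb{H}^2)}^2\big]\le Ch^2$ by Proposition~\ref{pro:Holder u}.

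The one genuinely delicate term is the Ritz error $\bff{\eta}^\ell=\bff{H}(t_\ell)-\R_h\bff{H}(t_\ell)$ in the $\bb{H}^1$ norm: estimating $\norm{\nabla\bff{\eta}^\ell}{\bb{L}^2}\le Ch\norm{\bff{H}(t_\ell)}{\bb{H}^2}$ via~\eqref{equ:Ritz approx} forces the $\bb{H}^2$-regularity of $\bff{H}(t_\ell)$, i.e. the $\bb{H}^4$-regularity of $\bff{u}(t_\ell)$, which is available only on $(0,T]$ with a norm that may blow up as $t_\ell\downarrow 0$. This is handled exactly as the analogous Ritz-error contributions in the proof of Proposition~\ref{pro:E theta n H1} (cf. the terms there involving $(I-\Pi_h)\Delta\bff{u}(t_\ell)$): the parabolic smoothing estimate of Proposition~\ref{pro:Holder u} applied with $\beta$ close to $1$, combined with $\bff{u}\in L^2\big(\Omega;L^2(0,T;\mathrm{D}(A))\big)$ and the product estimate~\eqref{equ:prod Hs mat dot} for $f_R$, yields an integrable-in-time bound for $\bb{E}\big[\norm{\bff{H}(t_\ell)}{\bb{H}^2}^2\big]$, and the uniform-step sum $k\sum_\ell$ against this singular weight stays $O(1)$ by the same integration argument as in Lemma~\ref{lem:Holder D A beta}; hence $k\sum_{\ell=1}^n\bb{E}\big[\one_{\Omega_{\kappa,\ell-1}}\norm{\bff{\eta}^\ell}{\bb{H}^1}^2\big]\le Ch^2$. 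Collecting the four estimates, using $h=O(k)$ to merge the $h^2$ and $k^{1/2-\delta}$ contributions, produces~\eqref{equ:error CT} with $\widetilde C$ depending only on $R$, $T$, $\delta$ and the coefficients. The main obstacle is precisely this $\bb{H}^1$ Ritz estimate for $\bff{H}$, since it is the only place where one must invoke the smoothing regularity of the solution rather than just $\bff{u}_0\in\mathrm{D}(A^{1/2})$; everything else is bookkeeping on top of the two propositions.
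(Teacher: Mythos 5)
Your proposal is correct and takes essentially the same route as the paper: the published proof of this theorem consists exactly of the triangle inequality applied to the splittings \eqref{equ:split u}--\eqref{equ:split H}, together with Propositions~\ref{pro:E theta n L2} and~\ref{pro:E theta n H1} and the Ritz approximation estimate \eqref{equ:Ritz approx}. Your additional discussion of the term $k\sum_{\ell}\bb{E}\big[\one_{\Omega_{\kappa,\ell-1}}\norm{\bff{\eta}^\ell}{\bb{H}^1}^2\big]$ via the parabolic smoothing of Proposition~\ref{pro:Holder u} merely spells out a point the paper leaves implicit in its one-line argument, and is consistent with how such terms are treated in the proof of Proposition~\ref{pro:E theta n H1}.
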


\begin{proof}
This follows from Propositions~\ref{pro:E theta n L2} and~\ref{pro:E theta n H1}, equations~\eqref{equ:split u} and~\eqref{equ:split H}, estimate~\eqref{equ:Ritz approx}, and the triangle inequality.
\end{proof}

We now define the following quantities:
\begin{align}\label{equ:An exp}
    A_n&:= \max_{m\leq n} \left(\one_{\Omega_{\kappa,m-1}} \norm{\bff{u}(t_m)-\bff{u}_h^m}{\bb{H}^1}^2 \right)
    +
    k  \sum_{\ell=1}^n \one_{\Omega_{\kappa,\ell-1}} \norm{\bff{H}(t_\ell)-\bff{H}_h^\ell}{\bb{H}^1}^2,
    \\
    \label{equ:An comp}
    \widetilde{A_n} &:= \max_{m\leq n} \left(\one_{\Omega_{\kappa,m-1}^\complement} \norm{\bff{u}(t_m)-\bff{u}_h^m}{\bb{H}^1}^2 \right)
    +
    k  \sum_{\ell=1}^n \one_{\Omega_{\kappa,\ell-1}^\complement} \norm{\bff{H}(t_\ell)-\bff{H}_h^\ell}{\bb{H}^1}^2.
\end{align}
By selecting a suitable value of $\kappa$, we establish several convergence results. We begin by stating a theorem regarding the rate of convergence in probability.

\begin{theorem}\label{the:uhn prob}
Assume that the hypotheses of Proposition~\ref{pro:E theta n L2} hold and $n\in \{1,2,\ldots, N\}$. For any $\delta>0$, we have
\begin{align*}
    \lim_{h,k\to 0^+}
    \bb{P}\left[\max_{m\leq n} \norm{\bff{u}(t_m)-\bff{u}_h^m}{\bb{H}^1}^2 +
    k  \sum_{\ell=1}^n \norm{\bff{H}(t_\ell)-\bff{H}_h^\ell}{\bb{H}^1}^2 \geq  \alpha \left(h^{2(1-2\delta)}+k^{\frac12 (1-8\delta)}\right) \right] = 0
\end{align*}
for any $\alpha,\delta>0$.
\end{theorem}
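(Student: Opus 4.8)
The plan is to run the localisation argument of~\cite{BesMil19,CarPro12}: decompose the event according to whether or not it occurs on the large-probability set $\Omega_{\kappa,n-1}$, bound the part on $\Omega_{\kappa,n-1}$ by Markov's inequality together with the localised estimate of Theorem~\ref{the:uhn un 1 L2}, bound the complement by the Chebyshev estimate $\bb{P}[\Omega_{\kappa,n-1}^\complement]\le C_{R,T}/\kappa$ recorded just before that theorem, and finally choose $\kappa$ (and only afterwards $h,k$) to make both pieces small.

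First I would write $B_{h,k}$ for the event in the statement and note that, since $\{\Omega_{\kappa,m}\}_m$ is nonincreasing, on $\Omega_{\kappa,n-1}$ all the indicators $\one_{\Omega_{\kappa,m-1}}$ ($m\le n$) and $\one_{\Omega_{\kappa,\ell-1}}$ ($\ell\le n$) equal $1$; hence on $\Omega_{\kappa,n-1}$ the random quantity appearing in $B_{h,k}$ coincides with $A_n$ defined in~\eqref{equ:An exp}. Therefore $B_{h,k}\subseteq\big(B_{h,k}\cap\Omega_{\kappa,n-1}\big)\cup\Omega_{\kappa,n-1}^\complement$ with $B_{h,k}\cap\Omega_{\kappa,n-1}\subseteq\{A_n\ge\alpha(h^{2(1-2\delta)}+k^{\frac12(1-8\delta)})\}$, and Markov's inequality together with Theorem~\ref{the:uhn un 1 L2} gives, for every $\kappa>0$,
\begin{align*}
\bb{P}\big[B_{h,k}\cap\Omega_{\kappa,n-1}\big]
&\le\frac{\bb{E}[A_n]}{\alpha\big(h^{2(1-2\delta)}+k^{\frac12(1-8\delta)}\big)}
\le\frac{\widetilde C\,e^{\widetilde C\kappa}\big(h^2+k^{\frac12-\delta}\big)}{\alpha\big(h^{2(1-2\delta)}+k^{\frac12(1-8\delta)}\big)}
\\
&\le\frac{\widetilde C}{\alpha}\,e^{\widetilde C\kappa}\big(h^{4\delta}+k^{3\delta}\big),
\end{align*}
where the last step uses $(a+b)/(c+d)\le\max(a/c,\,b/d)$ with $h^2/h^{2(1-2\delta)}=h^{4\delta}$ and $k^{\frac12-\delta}/k^{\frac12(1-8\delta)}=k^{3\delta}$.

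Adding $\bb{P}[\Omega_{\kappa,n-1}^\complement]\le C_{R,T}/\kappa$ I obtain
\[
\bb{P}[B_{h,k}]\le\frac{\widetilde C}{\alpha}\,e^{\widetilde C\kappa}\big(h^{4\delta}+k^{3\delta}\big)+\frac{C_{R,T}}{\kappa}\qquad\text{for every }\kappa>0 .
\]
Given $\varepsilon>0$, I would first pick $\kappa=\kappa(\varepsilon)$ so large that $C_{R,T}/\kappa<\varepsilon/2$; with this $\kappa$ now fixed, $e^{\widetilde C\kappa}$ is a constant, so $\frac{\widetilde C}{\alpha}e^{\widetilde C\kappa}(h^{4\delta}+k^{3\delta})\to0$ as $h,k\to0^+$, and there are $h_0,k_0>0$ with this term below $\varepsilon/2$ whenever $0<h<h_0$, $0<k<k_0$ (and $h=O(k)$, as assumed). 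Then $\bb{P}[B_{h,k}]<\varepsilon$ for such $h,k$, and as $\varepsilon>0$ was arbitrary the stated limit is $0$.

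The hard part — or rather the only point requiring care — is the order of the limits: the constant in Theorem~\ref{the:uhn un 1 L2} degenerates like $e^{\widetilde C\kappa}$ as $\kappa\uparrow\infty$, so $\kappa$ must not be coupled to $h,k$ in an uncontrolled way; freezing $\kappa$ first (as a function of $\varepsilon$ only) and sending $h,k\to0$ afterwards resolves this. Everything else is routine: the nesting of the $\Omega_{\kappa,m}$, Markov's inequality, and the elementary power comparison above.
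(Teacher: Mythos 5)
Your proof is correct and follows essentially the same route as the paper: split over $\Omega_{\kappa,n-1}$ and its complement, apply Markov/Chebyshev together with the localised estimate of Theorem~\ref{the:uhn un 1 L2} on the good set, and use $\bb{P}\left[\Omega_{\kappa,n-1}^\complement\right]\leq C_{R,T}/\kappa$ on the bad set. The only difference is that the paper couples $\kappa=O(\log\log(1/h))$ to the discretisation parameters, which yields a quantitative bound, whereas you fix $\kappa$ depending on $\varepsilon$ before sending $h,k\to 0^+$; since the theorem asserts only that the limit is zero, this is equally valid.
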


\begin{proof}
By Chebyshev's inequality and Theorem~\ref{the:uhn un 1 L2} with $\kappa = O\left( \log (\log 1/h)\right)$, for any $\alpha,\delta>0$ we have
\begin{align*}
    &\bb{P}\left[\max_{m\leq n} \norm{\bff{u}(t_m)-\bff{u}_h^m}{\bb{H}^1}^2 +
    k  \sum_{\ell=1}^n \norm{\bff{H}(t_\ell)-\bff{H}_h^\ell}{\bb{H}^1}^2 \geq  \alpha (h^{2(1-2\delta)}+k^{\frac12 (1-8\delta)}) \right]
    \\
    &\leq
    \alpha^{-1}  \left(h^{2(1-2\delta)}+k^{\frac12 (1-8\delta)} \right)^{-1}
    \bb{E} \left[A_n\right] + \bb{P} \left[\Omega_{\kappa,n-1}^\complement\right]
    \\
    &\leq 
    C \alpha^{-1}  \left(h^{2(1-2\delta)}+k^{\frac12 (1-8\delta)} \right)^{-1}
    \left(h^{2(1-\delta)}+ k^{\frac12 (1-4\delta)} \right)
    +
    C_{R,T} \big(\log(\log(1/h))\big)^{-\frac12},
\end{align*}
which tends to $0$ as $h,k\to 0^+$.
\end{proof}

We now assume that $\beta_2=0$ to derive a strong order of convergence for the scheme.

\begin{theorem}\label{the:uhn strong}
Suppose that $\beta_2=0$. Assume that the hypotheses of Proposition~\ref{pro:E theta n L2} hold and $n\in \{1,2,\ldots, N\}$. For any $\delta>0$, we have
\begin{align}\label{equ:limit hk}
    \bb{E}\left[\max_{m\leq n} \norm{\bff{u}(t_m)-\bff{u}_h^m}{\bb{H}^1}^2 +
    k  \sum_{\ell=1}^n \norm{\bff{H}(t_\ell)-\bff{H}_h^\ell}{\bb{H}^1}^2 \right] \leq
    C \abs{\log \left(h^2+k^{\frac12-\delta}\right)}^{-r}, \quad \forall r\geq 1.
\end{align}
The constant $C$ depends on $R$, $T$, $r$, and $\delta$, but is independent of $h$ and $k$.
In particular, the right-hand side of~\eqref{equ:limit hk} tends to $0$ as $h,k\to 0^+$.
\end{theorem}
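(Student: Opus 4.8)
The plan is to combine the localised error bound of Theorem~\ref{the:uhn un 1 L2} with a uniform-in-$(h,k)$ moment bound for the global error, and then balance the two by choosing the truncation level $\kappa$ appropriately as a function of $h$ and $k$.

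First I would split the expectation according to whether the event $\Omega_{\kappa,n-1}$ occurs. With $A_n$ and $\widetilde{A_n}$ as in~\eqref{equ:An exp}--\eqref{equ:An comp} we have
\begin{align*}
    \bb{E}\left[\max_{m\leq n} \norm{\bff{u}(t_m)-\bff{u}_h^m}{\bb{H}^1}^2 + k\sum_{\ell=1}^n \norm{\bff{H}(t_\ell)-\bff{H}_h^\ell}{\bb{H}^1}^2 \right]
    = \bb{E}[A_n] + \bb{E}[\widetilde{A_n}].
\end{align*}
Since $\{\Omega_{\kappa,m}\}_m$ is decreasing, $\one_{\Omega_{\kappa,m-1}^\complement}$ is nondecreasing in $m$, so
\[
    \widetilde{A_n} \leq \one_{\Omega_{\kappa,n-1}^\complement}\left(\max_{m\leq n}\norm{\bff{u}(t_m)-\bff{u}_h^m}{\bb{H}^1}^2 + k\sum_{\ell=1}^n\norm{\bff{H}(t_\ell)-\bff{H}_h^\ell}{\bb{H}^1}^2\right).
\]
The term $\bb{E}[A_n]$ is already controlled by $\widetilde{C}e^{\widetilde{C}\kappa}(h^2+k^{1/2-\delta})$ thanks to~\eqref{equ:error CT}.

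Next I would bound $\bb{E}[\widetilde{A_n}]$ by Cauchy--Schwarz:
\begin{align*}
    \bb{E}[\widetilde{A_n}] \leq \bb{P}\big[\Omega_{\kappa,n-1}^\complement\big]^{1/2}\,
    \bb{E}\left[\left(\max_{m\leq n}\norm{\bff{u}(t_m)-\bff{u}_h^m}{\bb{H}^1}^2 + k\sum_{\ell=1}^n\norm{\bff{H}(t_\ell)-\bff{H}_h^\ell}{\bb{H}^1}^2\right)^2\right]^{1/2}.
\end{align*}
For the second factor, the triangle inequality and $(a+b)^2\leq 2a^2+2b^2$ reduce matters to $\bb{E}[\max_m\norm{\bff{u}(t_m)}{\bb{H}^1}^4]$, $\bb{E}[\max_m\norm{\bff{u}_h^m}{\bb{H}^1}^4]$, $\bb{E}[(k\sum_\ell\norm{\bff{H}(t_\ell)}{\bb{H}^1}^2)^2]$ and $\bb{E}[(k\sum_\ell\norm{\bff{H}_h^\ell}{\bb{H}^1}^2)^2]$. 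The continuous quantities are finite and independent of $h,k$ by Proposition~\ref{pro:Holder u} (using $\bff{H}=\Delta\bff{u}+f_R(\bff{u})$; the term at $\ell=1$ is handled by the smoothing estimate~\eqref{equ:Holder est}, the prefactor $k$ absorbing the blow-up $\norm{\bff{H}(k)}{\bb{H}^1}^2\lesssim k^{-1/2}$), and the discrete quantities are bounded uniformly in $h,k$ by Lemma~\ref{lem:stab H1} and Lemma~\ref{lem:stab H1 high} (with $p=2$) --- this is exactly where the assumption $\beta_2=0$ is used. Hence the second factor is $\leq C$. Finally, Chebyshev's inequality applied with an arbitrarily large moment, which is legitimate because Proposition~\ref{pro:Holder u} and Lemma~\ref{lem:stab H1 high} provide moments of every order for $\norm{\bff{u}(t)}{\bb{H}^2}$, $\norm{\bff{H}(t)}{\bb{L}^2}$, and $\max_n\norm{\bff{u}_h^n}{\bb{H}^1}$, gives $\bb{P}[\Omega_{\kappa,n-1}^\complement]\leq C_q\kappa^{-q}$ for every $q\geq1$, so $\bb{E}[\widetilde{A_n}]\leq C_q\kappa^{-q/2}$.

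Combining the two bounds, for every $q\geq1$,
\begin{align*}
    \bb{E}\left[\max_{m\leq n}\norm{\bff{u}(t_m)-\bff{u}_h^m}{\bb{H}^1}^2 + k\sum_{\ell=1}^n\norm{\bff{H}(t_\ell)-\bff{H}_h^\ell}{\bb{H}^1}^2\right] \leq \widetilde{C}e^{\widetilde{C}\kappa}\big(h^2+k^{1/2-\delta}\big) + C_q\kappa^{-q/2}.
\end{align*}
Choosing $\kappa = \tfrac{1}{2\widetilde{C}}\big|\log(h^2+k^{1/2-\delta})\big|$ turns the first term into $\widetilde{C}\,(h^2+k^{1/2-\delta})^{1/2}$, which decays faster than any negative power of $\big|\log(h^2+k^{1/2-\delta})\big|$, while the second term becomes $C_q(2\widetilde{C})^{q/2}\big|\log(h^2+k^{1/2-\delta})\big|^{-q/2}$; taking $q=2r$ yields~\eqref{equ:limit hk}, and the right-hand side tends to $0$ since $h^2+k^{1/2-\delta}\to0$.

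The main obstacle is the uniform fourth-moment control of the error, specifically the bounds $\bb{E}[\max_m\norm{\bff{u}_h^m}{\bb{H}^1}^4]\leq C$ and $\bb{E}[(k\sum_\ell\norm{\bff{H}_h^\ell}{\bb{H}^1}^2)^2]\leq C$; these are precisely the content of Lemma~\ref{lem:stab H1 high}, whose proof requires the convective nonlinearity to contain no cross-product term, i.e.\ $\beta_2=0$ --- which is exactly the restriction imposed in the hypotheses here. A secondary technical point, already noted above, is the singular behaviour of $\bff{H}$ as $t\downarrow0$, which is dealt with via the parabolic smoothing estimate~\eqref{equ:Holder est}.
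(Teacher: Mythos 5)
Your proposal is correct and follows essentially the same route as the paper: split the error into $A_n$ and $\widetilde{A_n}$, control $A_n$ by Theorem~\ref{the:uhn un 1 L2}, control $\widetilde{A_n}$ by H\"older's inequality together with the all-order moment bounds of Proposition~\ref{pro:Holder u} and Lemma~\ref{lem:stab H1 high} (where $\beta_2=0$ enters) and a Chebyshev bound on $\bb{P}[\Omega_{\kappa,n-1}^\complement]$, and then take $\kappa$ proportional to $\abs{\log(h^2+k^{\frac12-\delta})}$. Your only deviations are cosmetic: you use Cauchy--Schwarz with arbitrarily high Chebyshev moments and $\kappa=\frac{1}{2\widetilde C}\abs{\log(h^2+k^{\frac12-\delta})}$, whereas the paper uses H\"older with $q$-dependent exponents and a log-log-corrected $\kappa$; both yield the same arbitrary-logarithmic rate.
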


\begin{proof}
Note that by H\"older's inequality with exponents $2^{q-1}$ and $p=2^{q-1}/(2^{q-1}-1)$, where $q>1$, we have
\begin{align}\label{equ:max compl u}
    \bb{E} \left[\max_{m\leq n} \one_{\Omega_{\kappa,m-1}^\complement} \norm{\bff{u}(t_m)- \bff{u}_h^m}{\bb{H}^1}^2 \right]
    \leq
    C\left[\bb{P} \left(\Omega_{\kappa,n-1}^\complement\right)\right]^{\frac{1}{p}} 
    \left[\bb{E}\left(\max_{t\in [0,T]} \norm{\bff{u}(t)}{\bb{H}^1}^{2^q} + \max_{m\leq n} \norm{\bff{u}_h^m}{\bb{H}^1}^{2^q}\right) \right]^{\frac{1}{2^{q-1}}}.
\end{align}
Similarly,
\begin{align}\label{equ:max compl H}
    \bb{E} \left[k \sum_{\ell=1}^n \one_{\Omega_{\kappa,\ell-1}^\complement} \norm{\bff{H}(t_\ell)-\bff{H}_h^\ell}{\bb{H}^1}^2 \right]
    \leq
    C\left[\bb{P} \left(\Omega_{\kappa,n-1}^\complement\right)\right]^{\frac{1}{p}} 
    \left[\bb{E}\left(k \sum_{\ell=1}^n \norm{\bff{H}(t_\ell)}{\bb{H}^1}^2 + \norm{\bff{H}_h^\ell}{\bb{H}^1}^2\right)^{2^{q-1}} \right]^{\frac{1}{2^{q-1}}}.
\end{align}
The last terms on the right-hand side of \eqref{equ:max compl u} and \eqref{equ:max compl H} are bounded due to the assumed regularity in Proposition~\ref{pro:Holder u} and the stability estimate~\eqref{equ:H1 high}. Therefore, it remains to establish a bound for the probability of the `bad' set $\Omega_{\kappa,n-1}^\complement$. To this end, by Chebyshev's inequality and the definition of the set $\Omega_{\kappa,n-1}$,
\begin{align*}
    \bb{P} \left(\Omega_{\kappa,n-1}^\complement\right)
    \leq
    \kappa^{-2^{q-1}} \left[ \bb{E}\left( \max_{t\in [0,T]} \norm{\bff{u}(t)}{\bb{H}^2}^{2^q} + \max_{t\in [0,T]} \norm{\bff{H}(t)}{\bb{L}^2}^{2^q} + \max_{m\leq n} \norm{\bff{u}_h^m}{\bb{H}^1}^{2^q} \right) \right],
\end{align*}
which implies by the definition \eqref{equ:An comp},
\begin{align}\label{equ:E An tilde}
    \bb{E}\left[\widetilde{A_n}\right] \leq C_q \kappa^{-2^{q-1}}.
\end{align}
For sufficiently small $h$ and $k$, we set
$\kappa = \frac{1}{\widetilde{C}}\Big( \big|\log (h^2+k^{\frac12-\delta})\big| - (2^{q-1}-1) \log \big|\log (h^2+k^{\frac12-\delta})\big| \Big)$,
where $\widetilde{C}$ is the constant in~\eqref{equ:error CT}. With this choice of $\kappa$, noting \eqref{equ:E An tilde}, we have by~\eqref{equ:error CT}, \eqref{equ:max compl u}, and \eqref{equ:max compl H},
\begin{align*}
    \bb{E}\left[\max_{m\leq n} \norm{\bff{u}(t_m)-\bff{u}_h^m}{\bb{H}^1}^2 +
    k  \sum_{\ell=1}^n \norm{\bff{H}(t_\ell)-\bff{H}_h^\ell}{\bb{H}^1}^2 \right]
    &=
    \bb{E}\left[A_n\right] + \bb{E} \left[\widetilde{A_n}\right]
    \\
    &\leq
    \widetilde{C} e^{\widetilde{C}\kappa} \left(h^2+k^{\frac12-\delta}\right)
    +
    C_q \kappa^{-2^{q-1}}
    \\
    &\leq
    C_r \abs{\log \left(h^2+k^{\frac12-\delta}\right)}^{-r},
\end{align*}
for any $r\geq 1$. This completes the proof of the theorem.
\end{proof}

\begin{remark}
If the initial data $\bff{u}_0$ and the noise are more regular, say belonging to $\mathrm{D}(A^{\frac34})$, then by a similar argument as in~\cite{GolSoeTra24b}, one can show that the pathwise solution $\bff{u}$ of \eqref{equ:weakform} belongs to $L^p\big(\Omega; C^\alpha (0,T;\mathrm{D}(A^{\frac34}))\big) \cap L^p\big(\Omega; C^\alpha ([0,T];\mathrm{D}(A^{\frac14}))\big)$, where $\alpha\in (0,\frac12)$, for any $p\geq 1$. In that case, an $O(k^{1-\delta})$ bound can be obtained in~\eqref{equ:12 Holder a}, \eqref{equ:12 Holder n}, and \eqref{equ:12 Holder b}, leading to an $O(k^{1-\delta})$ bound in Proposition~\ref{pro:E theta n L2}, Proposition~\ref{pro:E theta n H1}, and Theorem~\ref{the:uhn un 1 L2} (instead of $O(k^{\frac12-\delta})$ as stated currently). Consequently, the right-hand side of \eqref{equ:limit hk} would read $C_r \abs{\log \left(h^2+k^{1-\delta}\right)}^{-r}$ in this case.
\end{remark}

\begin{remark}
The convergence rate established in Theorem~\ref{the:uhn strong} is likely suboptimal. This is due to the technique of estimating errors on the event $\Omega_{\kappa,m}$, where the error bounds depend exponentially on the truncation parameter $\kappa$. It accounts for the rare possibility of a certain `blow-up' events defined by $\Omega_{\kappa,m}^\complement$. Optimising the choice of $\kappa$ with respect to $h$ and $k$ to control the probability of the complement $\Omega_{\kappa,m}^\complement$ typically results in a reduced theoretical order. However, as illustrated in the numerical experiments (Section~\ref{sec:num exp}), the observed rate of convergence is significantly better, aligning more closely with the results of Theorems~\ref{the:uhn un 1 L2} and~\ref{the:uhn prob}.
\end{remark}

\section{Numerical experiments}\label{sec:num exp}

We present a set of numerical experiments to validate the theoretical convergence properties of the proposed finite element scheme for the sLLBar equation~\eqref{equ:sllbar}, with $G(\bff{u})=\lambda_1 \bff{g}- \gamma \bff{u}\times \bff{g}$ (where $\bff{g}$ is to be specified) and $\mathcal{M}(\bff{u})=\bff{0}$. All computations are carried out in the~\textsc{FEniCS} environment. The computational domains $\mathscr{D}$ are taken to be the unit interval and the unit square. The magnetisation vector field $\bff{u}$ and the effective field $\bff{H}$ are discretised in space using continuous piecewise linear finite elements on a family of quasi-uniform meshes.

To assess convergence, we compute a reference solution on a fine mesh and with a small time step. This reference solution serves as an approximation of the exact stochastic solution for each realisation of the Wiener process. For coarser discretisations, we use the same Brownian path so that the difference between the numerical solution and the reference is meaningful pathwise. The errors $\mathcal{E}_{s}^{\bff{u}}(h,k)$ and $\mathcal{E}_{s}^{\bff{H}}(h,k)$ at final time $T$ is then measured in $L^2(\Omega; \bb{H}^s)$-norm for $s=0$ or $1$, defined by
\[
\mathcal{E}_{s}^{\bff{u}}(h,k):= \left(\bb{E}\norm{\bff{u}_h^N- \bff{u}_{\text{ref}}(T)}{\bb{H}^s}^2\right)^{\frac12}
\;\text{ and }\;
\mathcal{E}_{s}^{\bff{H}}(h,k):= \left(\bb{E}\norm{\bff{H}_h^N- \bff{H}_{\text{ref}}(T)}{\bb{H}^s}^2\right)^{\frac12}.
\]
Here, $\big(\bff{u}_h^N, \bff{H}_h^n\big)$ is the numerical solution with mesh size $h$ at time $T=Nk$, and $\big(\bff{u}_{\text{ref}}(T), \bff{H}_{\text{ref}}(T)\big)$ denotes the reference solution at the same final time along the same realisation of the Brownian motion. In practice, the expectation is approximated by a Monte Carlo average over $M$ independent sample paths.

We vary separately the mesh size $h$ and the time step $k$ to test spatial and temporal convergence. To verify spatial convergence, we fix a sufficiently small time step and compare errors across a sequence of meshes ($h=2^{-j}$ for some consecutive values of $j$). For temporal convergence, we fix a fine spatial mesh and vary the time step size ($k=(25\times 2^j)^{-1}$ for some consecutive values of $j$). In both cases, errors are averaged over $M=25$ realisations. The experimental rate of convergence is obtained by fitting the errors against mesh size or time step in a log-log plot.

\subsection{Simulation 1 (thin wire)}

Set $\mathscr{D}=[0,1]$. We take the parameters to be $\lambda_1=0.02$, $\lambda_2=0.001$, $\gamma=6.0$, $\kappa=0.5$, $\mu=1.0$, $\beta_1=0.1$, $\beta_2=0.05$. The current density is $\nu=1.0$. The initial data is specified to be
\[
    \bff{u}_0(x)=\big(0.1, \cos(2\pi x), \sin(2\pi x)\big),
\]
and the vector field $\bff{g}$ is set to be
\[
    \bff{g}(x)= \big(2\sin (\pi x), \sin (\pi x), 2\cos (2\pi x)\big).
\]
We solve the sLLBar equation by employing the implicit scheme~\eqref{equ:euler}.
Snapshots of a sample path of the magnetisation vector field $\bff{u}$ and the effective field $\bff{H}$ with mesh-size $h=1/16$ at selected times are shown in Figures~\ref{fig:snapshots u 0} and~\ref{fig:snapshots H 0}, respectively. The colour indicates the relative value of the magnitude. 

Figure~\ref{fig:mass energy exp0} shows the energy of the system over 30 independent sample paths for $h=1/16$, $k=1/50$, and for $h=1/32$, $k=1/100$. We recall that the energy~\cite{Soe24} is defined as
\[
\text{Energy}(\bff{u}):= \frac{1}{2} \norm{\nabla \bff{u}}{\bb{L}^2}^2 + \frac{\kappa}{4} \norm{\abs{\bff{u}}^2 - \mu}{\bb{L}^2}^2.
\]
The energy shows minor pathwise fluctuations and, on average, decays over time.

Now, we set $T=0.05$ and fix a reference solution with $h=1/128$ and $k=1/3200$. To verify spatial convergence, we compare errors across a sequence of meshes ($h=2^{-j}$ for $j=2,3,4$). For temporal convergence, we vary the time step size ($k=(25\times 2^j)^{-1}$ for $j=2,3,4$).
Figures~\ref{fig:order u spatial 0} and \ref{fig:order u time 0} display the plots of $\mathcal{E}_s^{\bff{u}}$ against $1/h$ and $1/k$, respectively. Similar plots for $\mathcal{E}_s^{\bff{H}}$ against $1/h$ and $1/k$ are shown in Figures~\ref{fig:order H spatial 0} and~\ref{fig:order H time 0}. These results are consistent with Theorem~\ref{the:uhn un 1 L2} in the $\bb{H}^1$ norm, while the numerical simulation indicates an even higher convergence rate in the $\bb{L}^2$ norm. Such behavior is in line with what is traditionally expected in finite element analysis, though a rigorous proof in our setting remains an interesting open question.

\begin{figure}[!htb]
	\centering
	\begin{subfigure}[b]{0.21\textwidth}
		\centering
		\includegraphics[width=\textwidth]{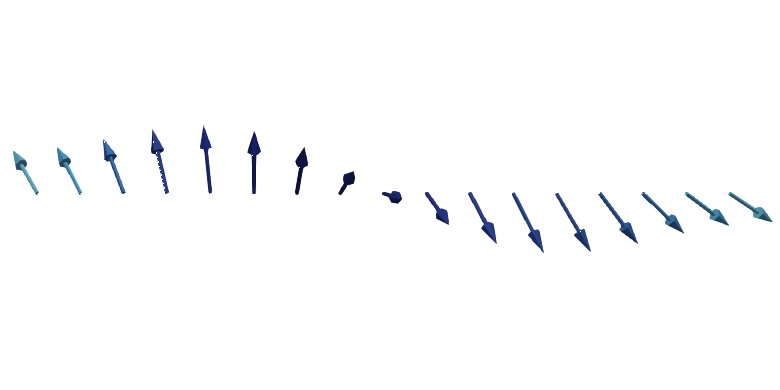}
		\caption{$t=0$}
	\end{subfigure}
	\begin{subfigure}[b]{0.21\textwidth}
		\centering
		\includegraphics[width=\textwidth]{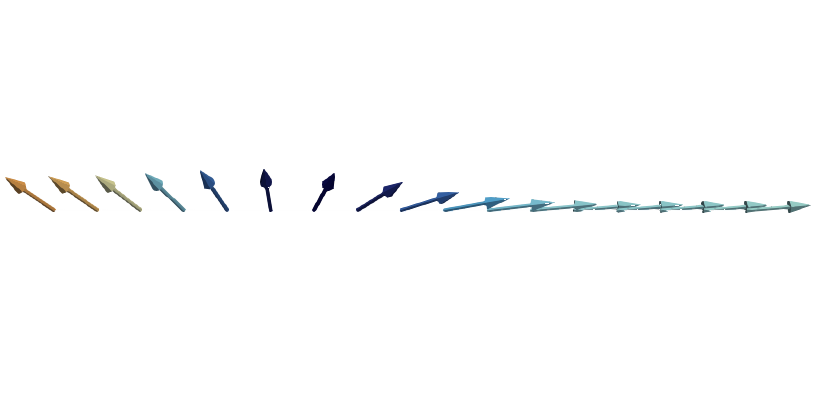}
		\caption{$t=0.02$}
	\end{subfigure}
	\begin{subfigure}[b]{0.21\textwidth}
		\centering
		\includegraphics[width=\textwidth]{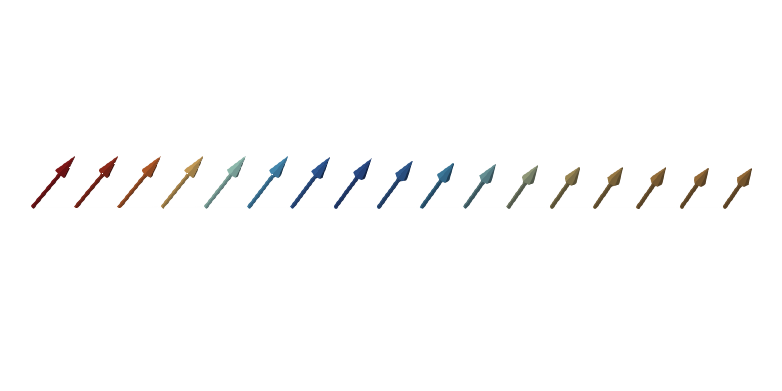}
		\caption{$t=0.1$}
	\end{subfigure}
	\begin{subfigure}[b]{0.21\textwidth}
		\centering
		\includegraphics[width=\textwidth]{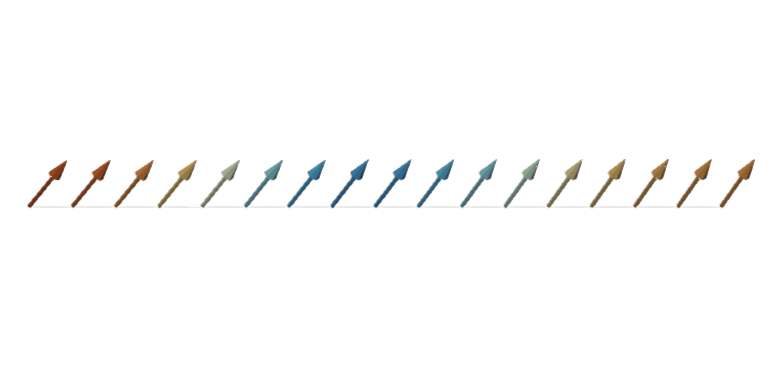}
		\caption{$t=0.2$}
	\end{subfigure}
	\begin{subfigure}[b]{0.08\textwidth}
		\centering
		\includegraphics[width=\textwidth]{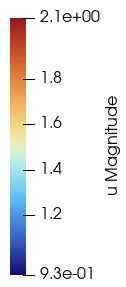}
	\end{subfigure}
	\caption{Snapshots of a sample path of the magnetisation $\bff{u}$ in simulation 1.}
	\label{fig:snapshots u 0}
\end{figure}

\begin{figure}[!htb]
\centering
\begin{subfigure}[b]{0.47\textwidth}
\centering
\includegraphics[width=\textwidth]{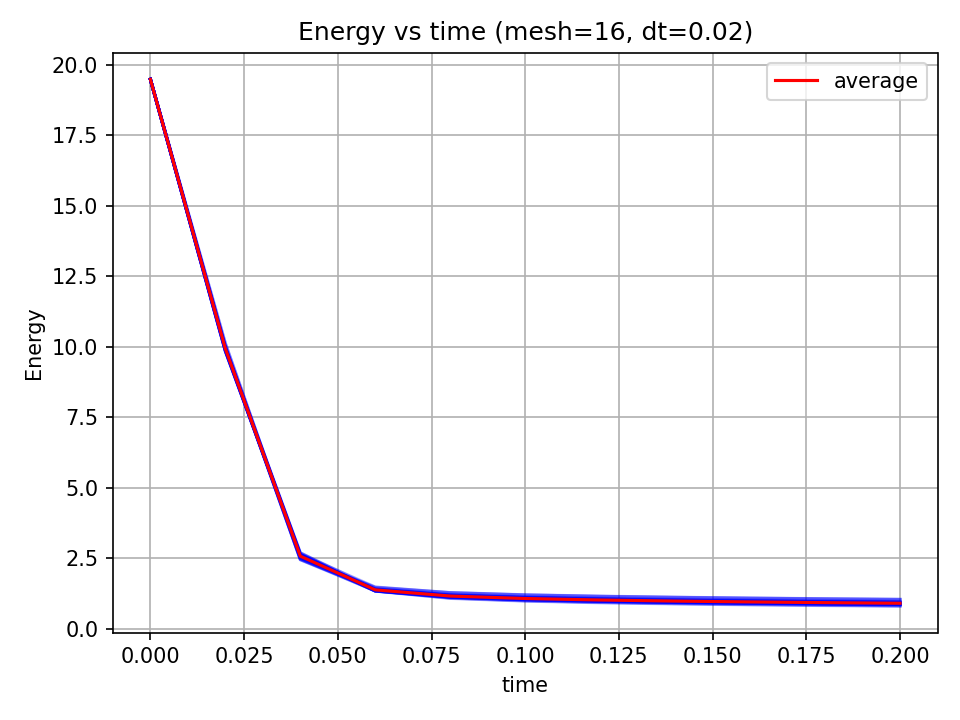}
\caption{Graph of energy vs time with $h=1/16$ and $k=1/50$ for 30 sample paths.}
\end{subfigure}
\hspace{1em}
\begin{subfigure}[b]{0.47\textwidth}
\centering
\includegraphics[width=\textwidth]{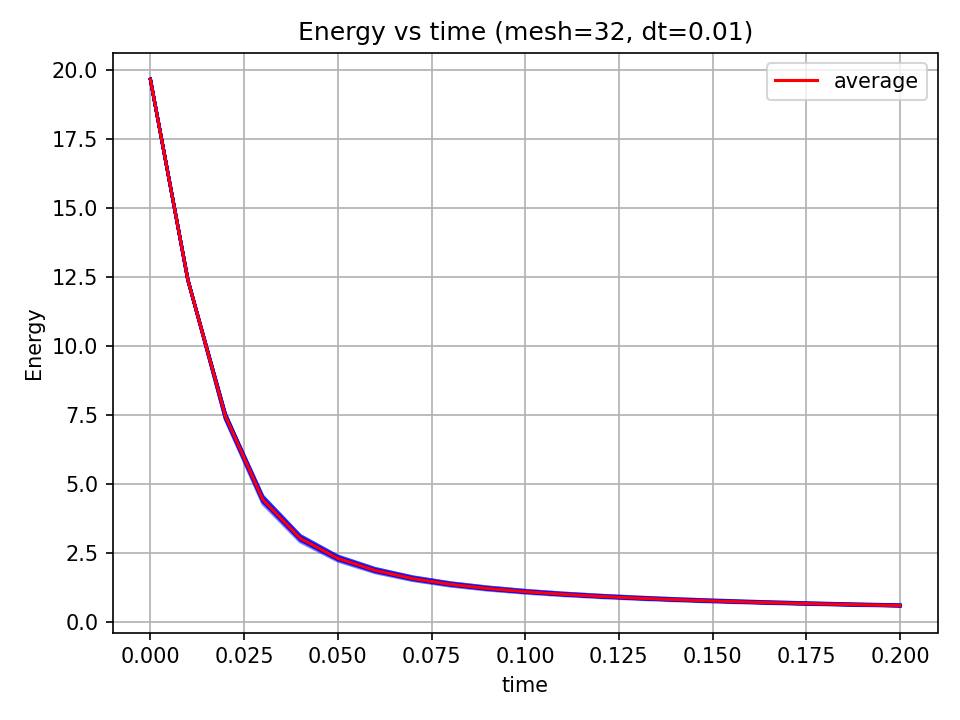}
\caption{Graph of energy vs time with $h=1/32$ and $k=1/100$ for 30 sample paths.}
\end{subfigure}
\caption{Energy evolution in simulation 1}
\label{fig:mass energy exp0}
\end{figure}

\begin{figure}[!htb]
	\begin{subfigure}[b]{0.45\textwidth}
		\centering
		\begin{tikzpicture}
			\begin{axis}[
				title=Plot of $\mathcal{E}_s^{\bff{u}}$ against $1/h$,
				height=1.1\textwidth,
				width=1\textwidth,
				xlabel= $1/h$,
				ylabel= $\mathcal{E}_s^{\bff{u}}$,
				xmode=log,
				ymode=log,
				legend pos=south west,
				legend cell align=left,
				]
				\addplot+[mark=*,red] coordinates {(4,1.12)(8,0.18)(16,0.051)};
			\addplot+[mark=*,blue] coordinates {(4,4.2)(8,1.04)(16,0.46)};
		\addplot+[dashed,no marks,blue,domain=8:16]{12.5/x};
		\addplot+[dashed,no marks,red,domain=8:16]{6.7/x^2};
		\legend{\scriptsize{$\mathcal{E}_0^{\bff{u}}(h)$}, \scriptsize{$\mathcal{E}_1^{\bff{u}}(h)$}, \scriptsize{order 1 line}, \scriptsize{order 2 line}}
	\end{axis}
\end{tikzpicture}
\caption{Spatial convergence order of $\bff{u}$.}
\label{fig:order u spatial 0}
\end{subfigure}
\hspace{1em}
\begin{subfigure}[b]{0.45\textwidth}
\centering
\begin{tikzpicture}
	\begin{axis}[
		title=Plot of $\mathcal{E}_s^{\bff{u}}$ against $1/k$,
		height=1.1\textwidth,
		width=1\textwidth,
		xlabel= $1/k$,
		ylabel= $\mathcal{E}_s^{\bff{u}}$,
		xmode=log,
		ymode=log,
		legend pos=south west,
		legend cell align=left,
		]
		\addplot+[mark=*,red] coordinates {(100,1.9)(200,1.6)(400,1.03)};
	\addplot+[mark=*,blue] coordinates {(100,5.5)(200,4.9)(400,3.27)};
\addplot+[dashed,no marks,blue,domain=180:400]{17/sqrt(x)};
\legend{\scriptsize{$\mathcal{E}_0^{\bff{u}}(k)$}, \scriptsize{$\mathcal{E}_1^{\bff{u}}(k)$}, \scriptsize{order 1/2 line}}
\end{axis}
\end{tikzpicture}
\caption{Temporal convergence order of $\bff{u}$.}
\label{fig:order u time 0}
\end{subfigure}
\caption{Convergence orders of magnetisation vector field $\bff{u}$ in simulation 1.}
\end{figure}

\begin{figure}[!htb]
\centering
\begin{subfigure}[b]{0.21\textwidth}
\centering
\includegraphics[width=\textwidth]{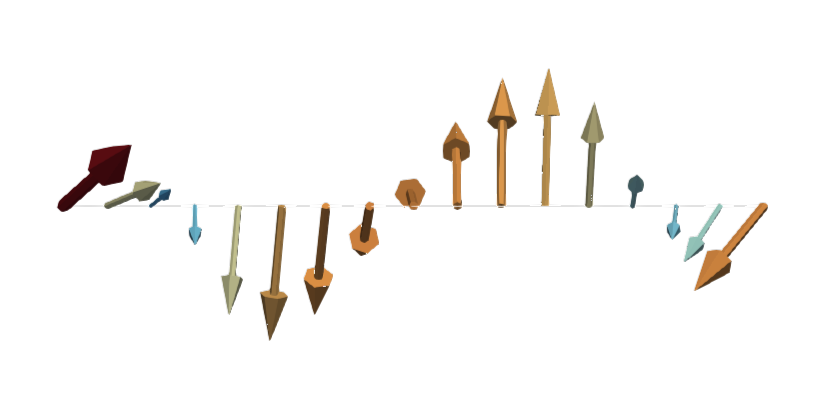}
\caption{$t=0$}
\end{subfigure}
\begin{subfigure}[b]{0.21\textwidth}
\centering
\includegraphics[width=\textwidth]{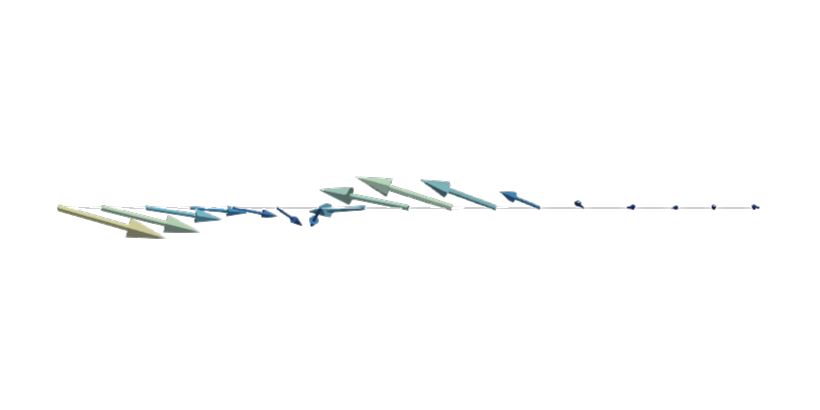}
\caption{$t=0.02$}
\end{subfigure}
\begin{subfigure}[b]{0.21\textwidth}
\centering
\includegraphics[width=\textwidth]{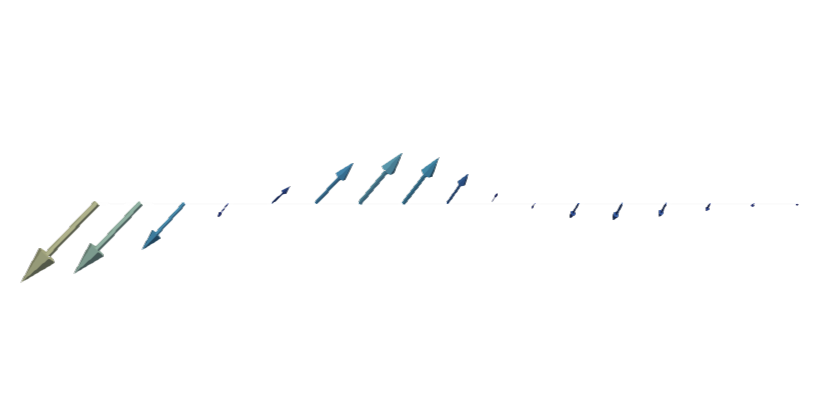}
\caption{$t=0.1$}
\end{subfigure}
\begin{subfigure}[b]{0.21\textwidth}
\centering
\includegraphics[width=\textwidth]{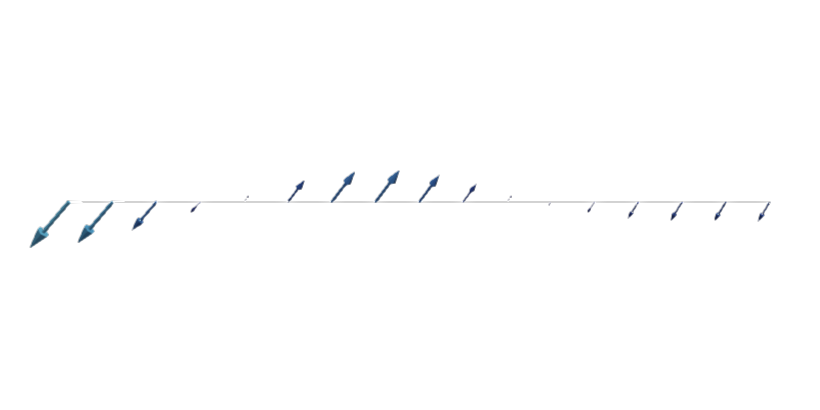}
\caption{$t=0.2$}
\end{subfigure}
\begin{subfigure}[b]{0.08\textwidth}
\centering
\includegraphics[width=\textwidth]{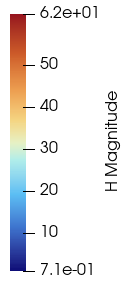}
\end{subfigure}
\caption{Snapshots of a sample path of the effective field $\bff{H}$ in simulation 1.}
\label{fig:snapshots H 0}
\end{figure}

\begin{figure}[!htb]
\begin{subfigure}[b]{0.45\textwidth}
\centering
\begin{tikzpicture}
\begin{axis}[
title=Plot of $\mathcal{E}_s^{\bff{H}}$ against $1/h$,
height=1.2\textwidth,
width=1\textwidth,
xlabel= $1/h$,
ylabel= $\mathcal{E}_s^{\bff{H}}$,
xmode=log,
ymode=log,
legend pos=south west,
legend cell align=left,
]
\addplot+[mark=*,red] coordinates {(4,23.3)(8,4.6)(16,1.3)};
\addplot+[mark=*,blue] coordinates {(4,244)(8,101)(16,49.5)}; 
\addplot+[dashed,no marks,blue,domain=7:16]{340/x};
\addplot+[dashed,no marks,red,domain=7:16]{150/x^2};
\legend{\scriptsize{$\mathcal{E}_0^{\bff{H}}(h)$}, \scriptsize{$\mathcal{E}_1^{\bff{H}}(h)$}, \scriptsize{order 1 line}, \scriptsize{order 2 line}}
\end{axis}
\end{tikzpicture}
\caption{Spatial convergence order of $\bff{H}$.}
\label{fig:order H spatial 0}
\end{subfigure}
\hspace{1em}
\begin{subfigure}[b]{0.45\textwidth}
\centering
\begin{tikzpicture}
\begin{axis}[
title=Plot of $\mathcal{E}_s^{\bff{H}}$ against $1/k$,
height=1.2\textwidth,
width=1\textwidth,
xlabel= $1/k$,
ylabel= $\mathcal{E}_s^{\bff{H}}$,
xmode=log,
ymode=log,
legend pos=south west,
legend cell align=left,
]
\addplot+[mark=*,red] coordinates {(100,28.3)(200,25)(400,18.7)};
\addplot+[mark=*,blue] coordinates {(100,228)(200,213)(400,179)};
\addplot+[dashed,no marks,blue,domain=200:400]{250/sqrt(x)};
\legend{\scriptsize{$\mathcal{E}_0^{\bff{H}}(k)$}, \scriptsize{$\mathcal{E}_1^{\bff{H}}(k)$}, \scriptsize{order 1/2 line}}
\end{axis}
\end{tikzpicture}
\caption{Temporal convergence order of $\bff{H}$.}
\label{fig:order H time 0}
\end{subfigure}
\caption{Convergence orders of effective field $\bff{H}$ in simulation 1.}
\end{figure}


\subsection{Simulation 2 (thin slab, small noise)}

Fix $\mathscr{D}=[0,1]^2$. In this simulation, we take the parameters to be $\lambda_1=0.2$, $\lambda_2=0.1$, $\gamma=5.0$, $\kappa=0.5$, $\mu=1.0$, $\beta_1=0.1$, and $\beta_2=0.05$. The current density is $\bff{\nu}=(1,0)^\top$. The initial data is specified to be
\[
\bff{u}_0(x)= \big(-y, x, 0 \big),
\]
and the vector field $\bff{g}$ is taken to be
\[
\bff{g}(x)= \big(0.8(1-x), 0.2, 0.5(1+x)\big).
\]
We solve the sLLBar equation with $T=0.2$.

Snapshots of a sample path of the magnetisation vector field $\bff{u}$ and the effective field $\bff{H}$ with mesh-size $h=1/16$ at selected times are shown in Figures~\ref{fig:snapshots u 1} and~\ref{fig:snapshots H 1}, respectively. The colour indicates the relative value of the magnitude. 

Figure~\ref{fig:mass energy exp1} shows the energy of the system over 30 independent sample paths for $h=1/16$, $k=0.01$, and for $h=1/32$, $k=0.005$. For relatively small noise intensity, the energy shows only minor pathwise fluctuations and, on average, decays over time.

Next, still with $T=0.2$, we fix a reference solution with $h=1/64$ and $k=1/800$. Figures~\ref{fig:order u spatial 1} and \ref{fig:order u time 1} display the plots of $\mathcal{E}_s^{\bff{u}}$ against $1/h$ and $1/k$, respectively. Similar plots for $\mathcal{E}_s^{\bff{H}}$ against $1/h$ and $1/k$ are shown in Figures~\ref{fig:order H spatial 1} and~\ref{fig:order H time 1}.

\begin{figure}[!htb]
	\centering
	\begin{subfigure}[b]{0.21\textwidth}
		\centering
		\includegraphics[width=\textwidth]{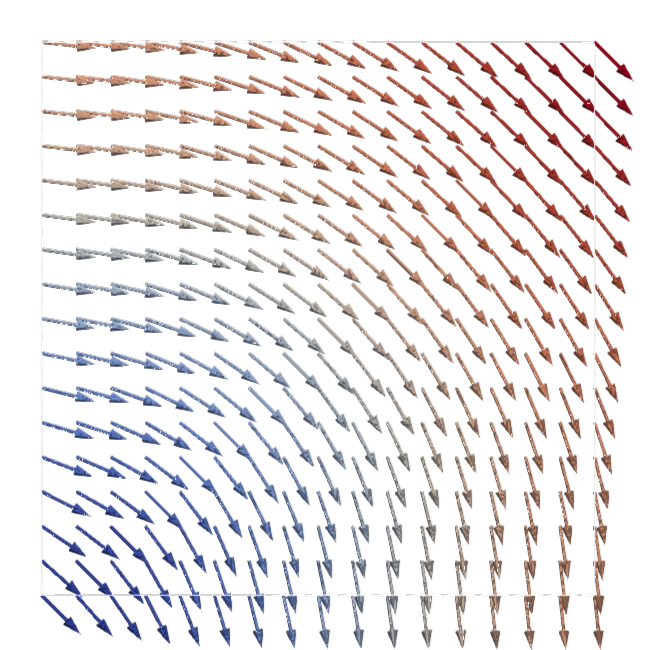}
		\caption{$t=0$}
	\end{subfigure}
	\begin{subfigure}[b]{0.21\textwidth}
		\centering
		\includegraphics[width=\textwidth]{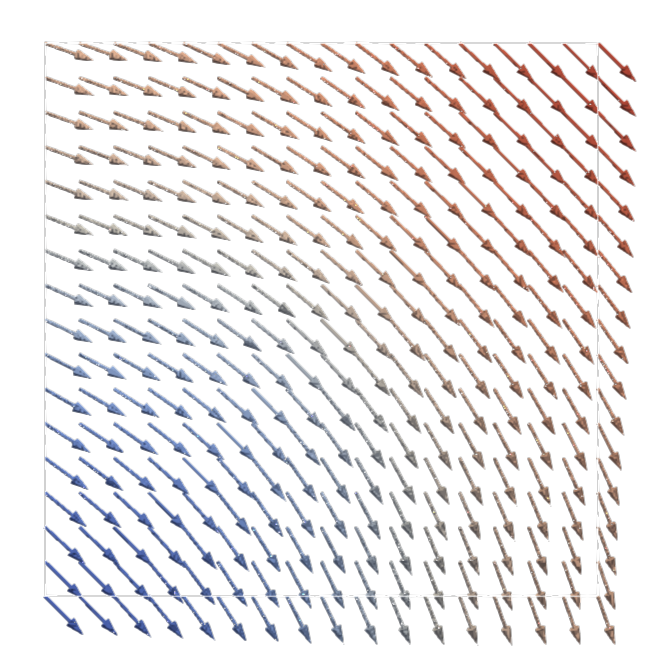}
		\caption{$t=0.02$}
	\end{subfigure}
	\begin{subfigure}[b]{0.21\textwidth}
		\centering
		\includegraphics[width=\textwidth]{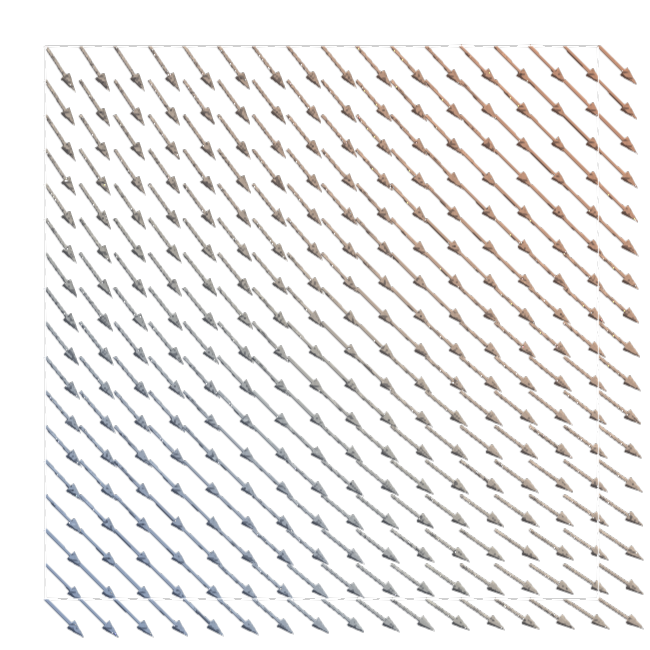}
		\caption{$t=0.1$}
	\end{subfigure}
	\begin{subfigure}[b]{0.21\textwidth}
		\centering
		\includegraphics[width=\textwidth]{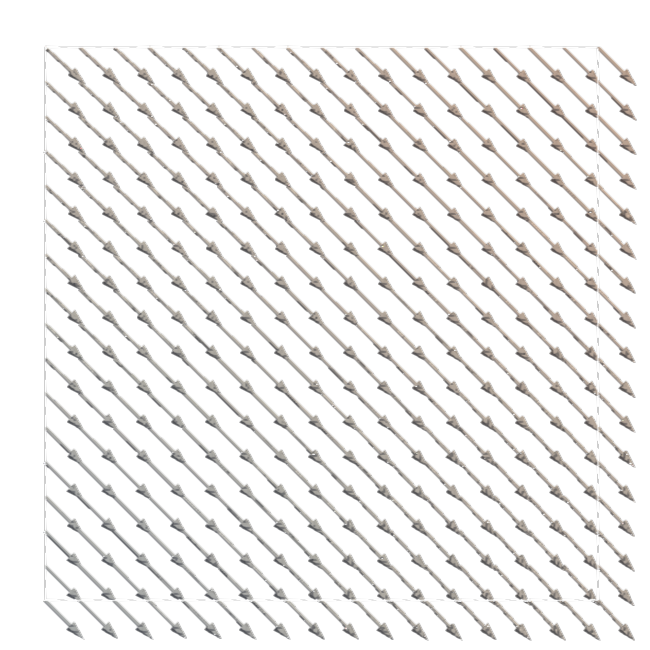}
		\caption{$t=0.2$}
	\end{subfigure}
	\begin{subfigure}[b]{0.08\textwidth}
		\centering
		\includegraphics[width=\textwidth]{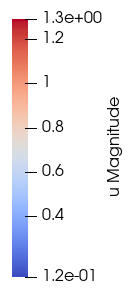}
	\end{subfigure}
	\caption{Snapshots of a sample path of the magnetisation $\bff{u}$ in simulation 2.}
	\label{fig:snapshots u 1}
\end{figure}

\begin{figure}[!htb]
\centering
\begin{subfigure}[b]{0.47\textwidth}
\centering
\includegraphics[width=\textwidth]{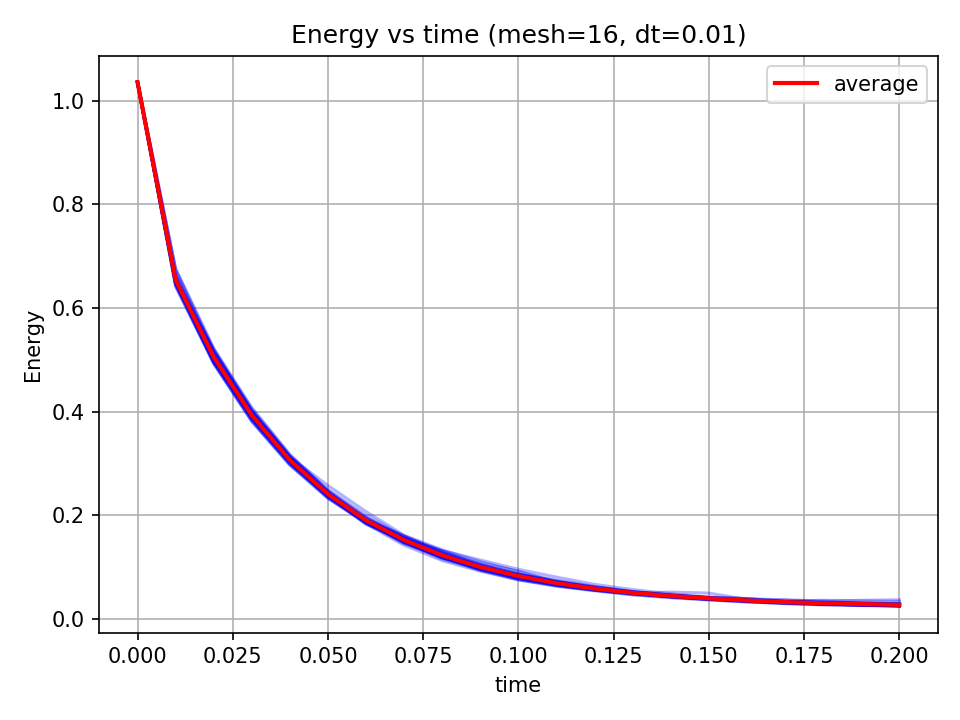}
\caption{Graph of energy vs time with $h=1/16$ and $k=1/100$ for 30 sample paths.}
\end{subfigure}
\hspace{1em}
\begin{subfigure}[b]{0.47\textwidth}
\centering
\includegraphics[width=\textwidth]{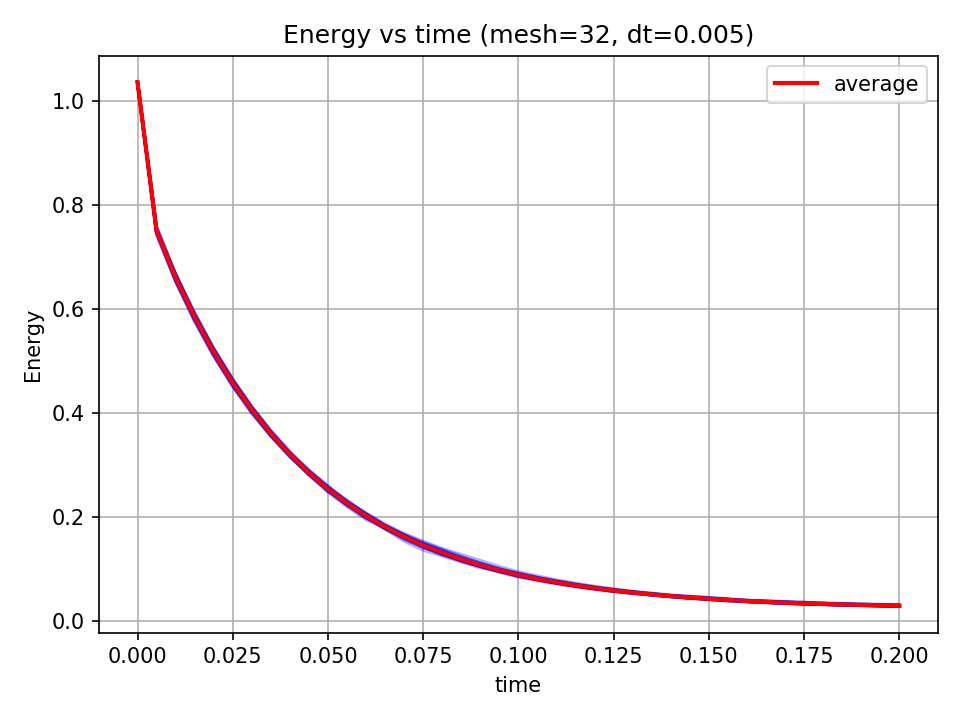}
\caption{Graph of energy vs time with $h=1/32$ and $k=1/200$ for 30 sample paths.}
\end{subfigure}
\caption{Energy evolution in simulation 2}
\label{fig:mass energy exp1}
\end{figure}

\begin{figure}[!htb]
	\begin{subfigure}[b]{0.45\textwidth}
		\centering
		\begin{tikzpicture}
			\begin{axis}[
				title=Plot of $\mathcal{E}_s^{\bff{u}}$ against $1/h$,
				height=1.1\textwidth,
				width=1\textwidth,
				xlabel= $1/h$,
				ylabel= $\mathcal{E}_s^{\bff{u}}$,
				xmode=log,
				ymode=log,
				legend pos=south west,
				legend cell align=left,
				]
				\addplot+[mark=*,red] coordinates {(4,0.014)(8,0.0036)(16,0.00088)}; 
			\addplot+[mark=*,blue] coordinates {(4,0.035)(8,0.014)(16,0.0064)}; 
		\addplot+[dashed,no marks,blue,domain=7:16]{0.2/x};
		\addplot+[dashed,no marks,red,domain=7:16]{0.37/x^2};
		\legend{\scriptsize{$\mathcal{E}_0^{\bff{u}}(h)$}, \scriptsize{$\mathcal{E}_1^{\bff{u}}(h)$}, \scriptsize{order 1 line}, \scriptsize{order 2 line}}
	\end{axis}
\end{tikzpicture}
\caption{Spatial convergence order of $\bff{u}$.}
\label{fig:order u spatial 1}
\end{subfigure}
\hspace{1em}
\begin{subfigure}[b]{0.45\textwidth}
\centering
\begin{tikzpicture}
	\begin{axis}[
		title=Plot of $\mathcal{E}_s^{\bff{u}}$ against $1/k$,
		height=1.1\textwidth,
		width=1\textwidth,
		xlabel= $1/k$,
		ylabel= $\mathcal{E}_s^{\bff{u}}$,
		xmode=log,
		ymode=log,
		legend pos=south west,
		legend cell align=left,
		]
		\addplot+[mark=*,red] coordinates {(25,0.076)(50,0.051)(100,0.031)}; 
	\addplot+[mark=*,blue] coordinates {(25,0.1)(50,0.077)(100,0.050)}; 
\addplot+[dashed,no marks,blue,domain=40:100]{0.42/sqrt(x)};
\legend{\scriptsize{$\mathcal{E}_0^{\bff{u}}(k)$}, \scriptsize{$\mathcal{E}_1^{\bff{u}}(k)$}, \scriptsize{order 1/2 line}}
\end{axis}
\end{tikzpicture}
\caption{Temporal convergence order of $\bff{u}$.}
\label{fig:order u time 1}
\end{subfigure}
\caption{Convergence orders of magnetisation vector field $\bff{u}$ in simulation 2.}
\end{figure}

\begin{figure}[!htb]
\centering
\begin{subfigure}[b]{0.21\textwidth}
\centering
\includegraphics[width=\textwidth]{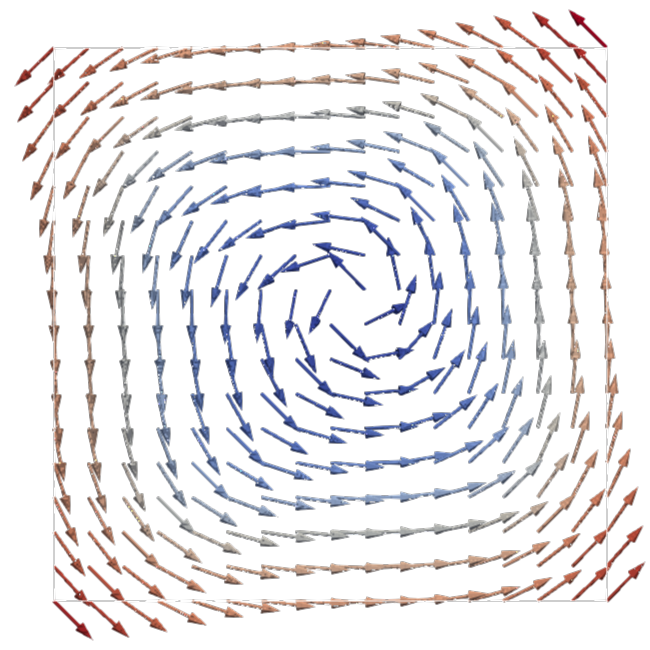}
\caption{$t=0$}
\end{subfigure}
\begin{subfigure}[b]{0.21\textwidth}
\centering
\includegraphics[width=\textwidth]{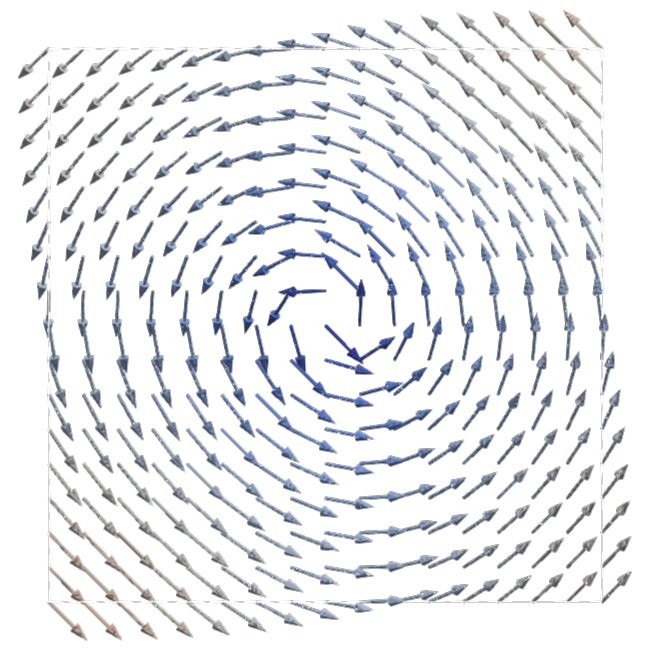}
\caption{$t=0.02$}
\end{subfigure}
\begin{subfigure}[b]{0.21\textwidth}
\centering
\includegraphics[width=\textwidth]{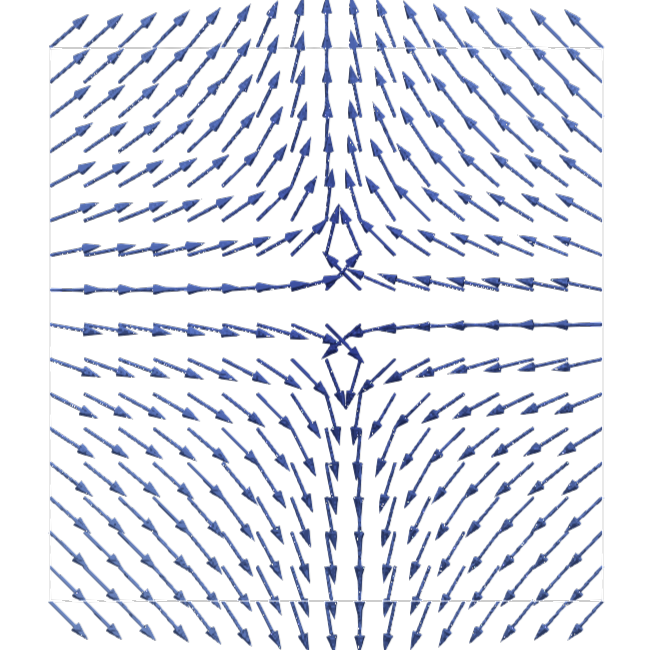}
\caption{$t=0.1$}
\end{subfigure}
\begin{subfigure}[b]{0.21\textwidth}
\centering
\includegraphics[width=\textwidth]{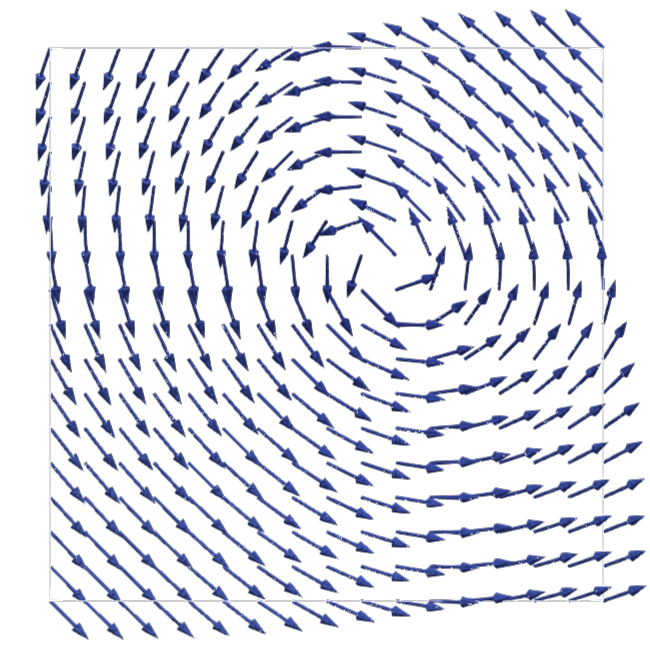}
\caption{$t=0.2$}
\end{subfigure}
\begin{subfigure}[b]{0.08\textwidth}
\centering
\includegraphics[width=\textwidth]{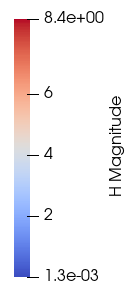}
\end{subfigure}
\caption{Snapshots of a sample path of the effective field $\bff{H}$ in simulation 2.}
\label{fig:snapshots H 1}
\end{figure}

\begin{figure}[!htb]
\begin{subfigure}[b]{0.45\textwidth}
\centering
\begin{tikzpicture}
\begin{axis}[
title=Plot of $\mathcal{E}_s^{\bff{H}}$ against $1/h$,
height=1.2\textwidth,
width=1\textwidth,
xlabel= $1/h$,
ylabel= $\mathcal{E}_s^{\bff{H}}$,
xmode=log,
ymode=log,
legend pos=south west,
legend cell align=left,
]
\addplot+[mark=*,red] coordinates {(4,0.076)(8,0.02)(16,0.0048)}; 
\addplot+[mark=*,blue] coordinates {(4,0.33)(8,0.14)(16,0.063)}; 
\addplot+[dashed,no marks,blue,domain=7:16]{1.9/x};
\addplot+[dashed,no marks,red,domain=7:16]{2/x^2};
\legend{\scriptsize{$\mathcal{E}_0^{\bff{H}}(h)$}, \scriptsize{$\mathcal{E}_1^{\bff{H}}(h)$}, \scriptsize{order 1 line}, \scriptsize{order 2 line}}
\end{axis}
\end{tikzpicture}
\caption{Spatial convergence order of $\bff{H}$.}
\label{fig:order H spatial 1}
\end{subfigure}
\hspace{1em}
\begin{subfigure}[b]{0.45\textwidth}
\centering
\begin{tikzpicture}
\begin{axis}[
title=Plot of $\mathcal{E}_s^{\bff{H}}$ against $1/k$,
height=1.2\textwidth,
width=1\textwidth,
xlabel= $1/k$,
ylabel= $\mathcal{E}_s^{\bff{H}}$,
xmode=log,
ymode=log,
legend pos=south west,
legend cell align=left,
]
\addplot+[mark=*,red] coordinates {(25,0.32)(50,0.24)(100,0.15)}; 
\addplot+[mark=*,blue] coordinates {(25,1.00)(50,0.74)(100,0.45)}; 
\addplot+[dashed,no marks,blue,domain=40:100]{1.2/sqrt(x)};
\legend{\scriptsize{$\mathcal{E}_0^{\bff{H}}(k)$}, \scriptsize{$\mathcal{E}_1^{\bff{H}}(k)$}, \scriptsize{order 1/2 line}}
\end{axis}
\end{tikzpicture}
\caption{Temporal convergence order of $\bff{H}$.}
\label{fig:order H time 1}
\end{subfigure}
\caption{Convergence orders of effective field $\bff{H}$ in simulation 2.}
\end{figure}


\subsection{Simulation 3 (thin slab, moderate noise)}

Set $\mathscr{D}=[0,1]^2$.
In this simulation, we take the parameters to be $\lambda_1=0.5$, $\lambda_2=0.05$, $\gamma=8.0$, $\kappa=0.25$, $\mu=1.0$, $\beta_1=0.2$, and $\beta_2=0.1$. The current density is $\bff{\nu}=(2,0)^\top$. The initial data is specified to be
\[
\bff{u}_0(x)= \big(\sin(2\pi y), \sin(2\pi x), 0 \big),
\]
and the vector field $\bff{g}$ is taken to be
\[
\bff{g}(x)= \big(5(1+x), 10(1+y), 2\cos(2\pi x) \big).
\]

Snapshots of a sample path of the magnetisation vector field $\bff{u}$ and the effective field $\bff{H}$ with mesh-size $h=1/16$ at selected times are shown in Figures~\ref{fig:snapshots u 2} and~\ref{fig:snapshots H 2}, respectively. The colour indicates the relative value of the magnitude.

Figure~\ref{fig:mass energy exp2} shows the energy over 30 independent sample paths for $h=1/16$, $k=1/50$, and for $h=1/32$, $k=1/100$.
Since the noise intensity is moderately large, the energy trajectory exhibits more pronounced pathwise variability around the mean profile.

Finally, we set $T=0.05$ and fix a reference solution with $h=1/64$ and $k=1/800$. Figures~\ref{fig:order u spatial 2} and \ref{fig:order u time 2} display the plots of $\mathcal{E}_s^{\bff{u}}$ against $1/h$ and $1/k$, respectively. Similar plots for $\mathcal{E}_s^{\bff{H}}$ against $1/h$ and $1/k$ are shown in Figures~\ref{fig:order H spatial 2} and~\ref{fig:order H time 2}.

\begin{figure}[!htb]
\centering
\begin{subfigure}[b]{0.21\textwidth}
\centering
\includegraphics[width=\textwidth]{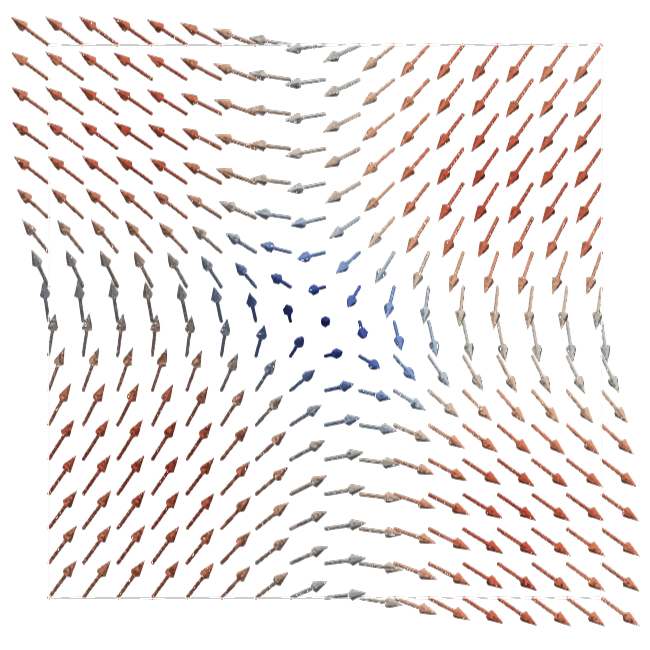}
\caption{$t=0$}
\end{subfigure}
\begin{subfigure}[b]{0.21\textwidth}
\centering
\includegraphics[width=\textwidth]{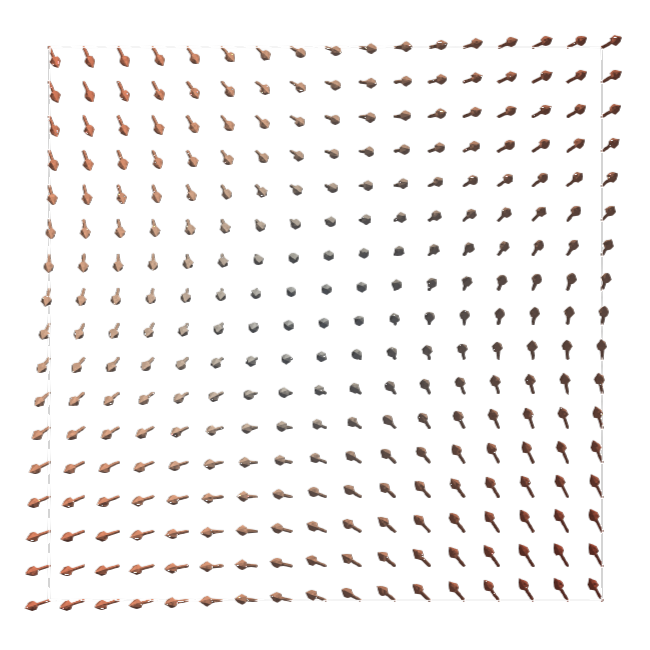}
\caption{$t=0.05$}
\end{subfigure}
\begin{subfigure}[b]{0.21\textwidth}
\centering
\includegraphics[width=\textwidth]{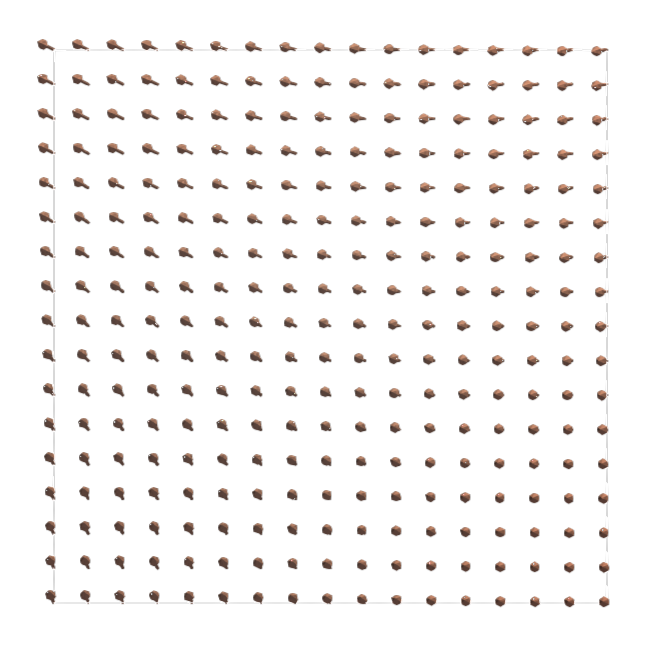}
\caption{$t=0.1$}
\end{subfigure}
\begin{subfigure}[b]{0.21\textwidth}
\centering
\includegraphics[width=\textwidth]{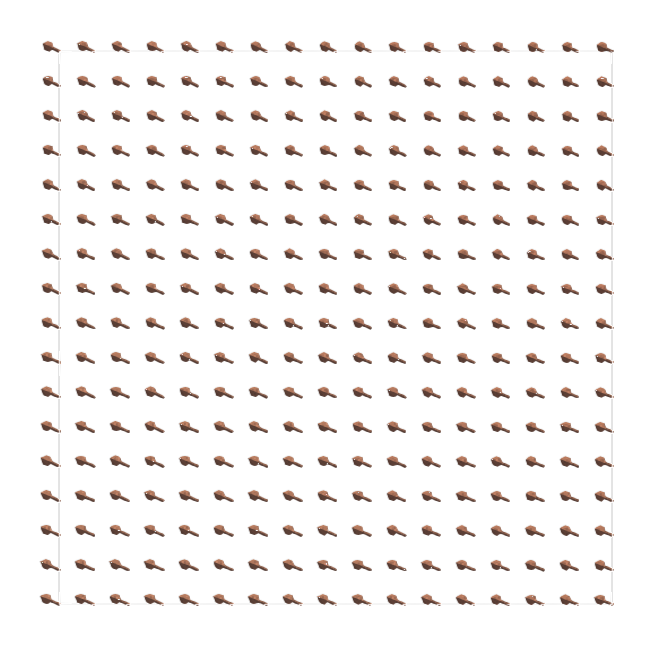}
\caption{$t=0.2$}
\end{subfigure}
\begin{subfigure}[b]{0.08\textwidth}
\centering
\includegraphics[width=\textwidth]{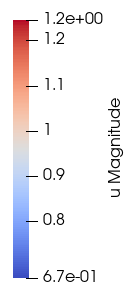}
\end{subfigure}
\caption{Snapshots of a sample path of the magnetisation $\bff{u}$ in simulation 3.}
\label{fig:snapshots u 2}
\end{figure}

\begin{figure}[!htb]
\centering
\begin{subfigure}[b]{0.47\textwidth}
\centering
\includegraphics[width=\textwidth]{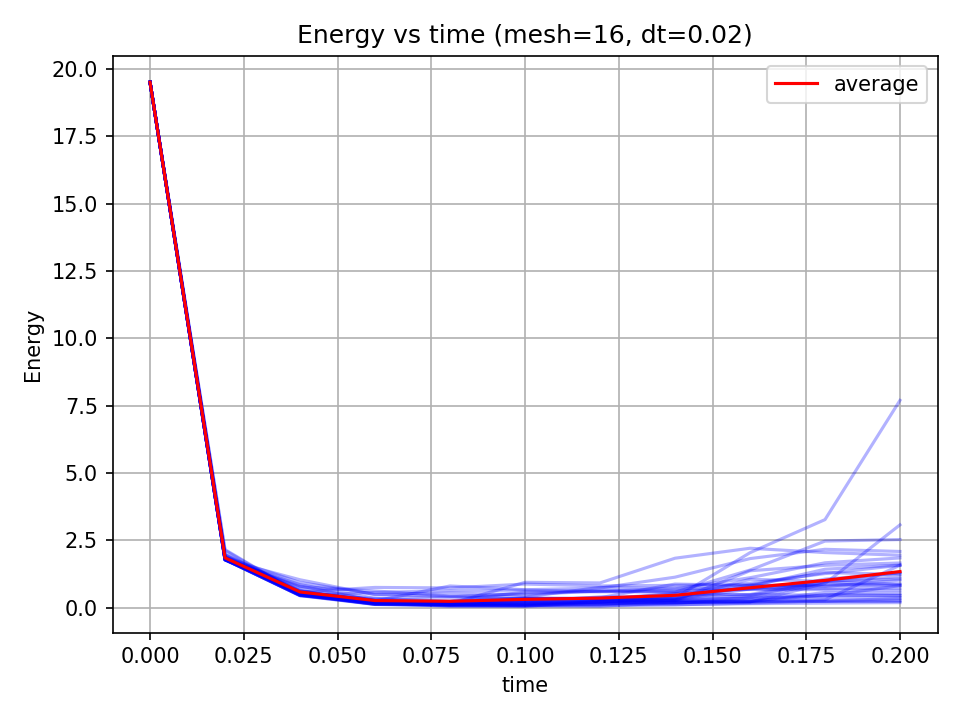}
\caption{Graph of energy vs time with $h=1/16$ and $k=1/50$ for 30 sample paths.}
\end{subfigure}
\hspace{1em}
\begin{subfigure}[b]{0.47\textwidth}
\centering
\includegraphics[width=\textwidth]{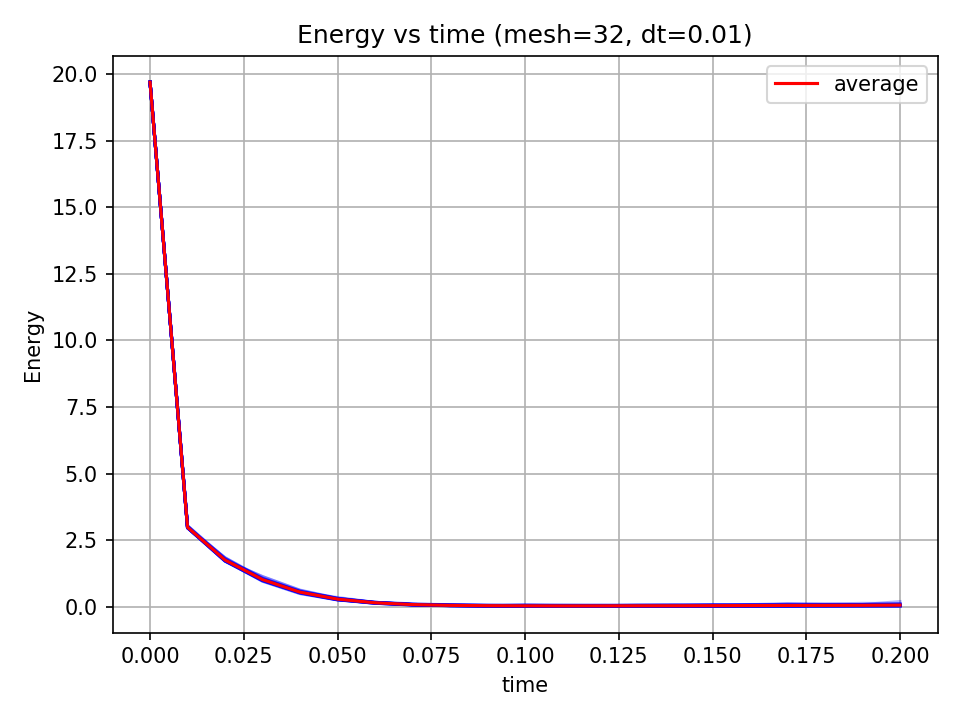}
\caption{Graph of energy vs time with $h=1/32$ and $k=1/100$ for 30 sample paths.}
\end{subfigure}
\caption{Energy evolution in simulation 3}
\label{fig:mass energy exp2}
\end{figure}

\begin{figure}[!htb]
\begin{subfigure}[b]{0.45\textwidth}
\centering
\begin{tikzpicture}
\begin{axis}[
title=Plot of $\mathcal{E}_s^{\bff{u}}$ against $1/h$,
height=1.1\textwidth,
width=1\textwidth,
xlabel= $1/h$,
ylabel= $\mathcal{E}_s^{\bff{u}}$,
xmode=log,
ymode=log,
legend pos=south west,
legend cell align=left,
]
\addplot+[mark=*,red] coordinates {(4,0.12)(8,0.0046)(16,0.0012)}; 
\addplot+[mark=*,blue] coordinates {(4,0.17)(8,0.058)(16,0.026)}; 
\addplot+[dashed,no marks,blue,domain=7:16]{0.8/x};
\addplot+[dashed,no marks,red,domain=7:16]{0.6/x^2};
\legend{\scriptsize{$\mathcal{E}_0^{\bff{u}}(h)$}, \scriptsize{$\mathcal{E}_1^{\bff{u}}(h)$}, \scriptsize{order 1 line}, \scriptsize{order 2 line}}
\end{axis}
\end{tikzpicture}
\caption{Spatial convergence order of $\bff{u}$.}
\label{fig:order u spatial 2}
\end{subfigure}
\hspace{1em}
\begin{subfigure}[b]{0.45\textwidth}
\centering
\begin{tikzpicture}
\begin{axis}[
title=Plot of $\mathcal{E}_s^{\bff{u}}$ against $1/k$,
height=1.1\textwidth,
width=1\textwidth,
xlabel= $1/k$,
ylabel= $\mathcal{E}_s^{\bff{u}}$,
xmode=log,
ymode=log,
legend pos=south west,
legend cell align=left,
]
\addplot+[mark=*,red] coordinates {(25,2.4)(50,1.5)(100,0.7)}; 
\addplot+[mark=*,blue] coordinates {(25,2.6)(50,2.1)(100,1.7)}; 
\addplot+[dashed,no marks,blue,domain=40:100]{13/sqrt(x)};
\legend{\scriptsize{$\mathcal{E}_0^{\bff{u}}(k)$}, \scriptsize{$\mathcal{E}_1^{\bff{u}}(k)$}, \scriptsize{order 1/2 line}}
\end{axis}
\end{tikzpicture}
\caption{Temporal convergence order of $\bff{u}$.}
\label{fig:order u time 2}
\end{subfigure}
\caption{Convergence orders of magnetisation vector field $\bff{u}$ in simulation 3.}
\end{figure}

\begin{figure}[!htb]
\centering
\begin{subfigure}[b]{0.21\textwidth}
\centering
\includegraphics[width=\textwidth]{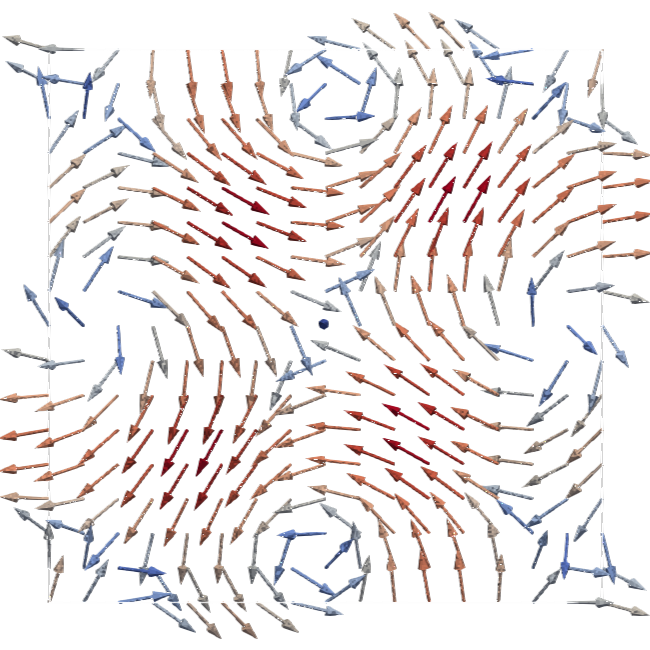}
\caption{$t=0$}
\end{subfigure}
\begin{subfigure}[b]{0.21\textwidth}
\centering
\includegraphics[width=\textwidth]{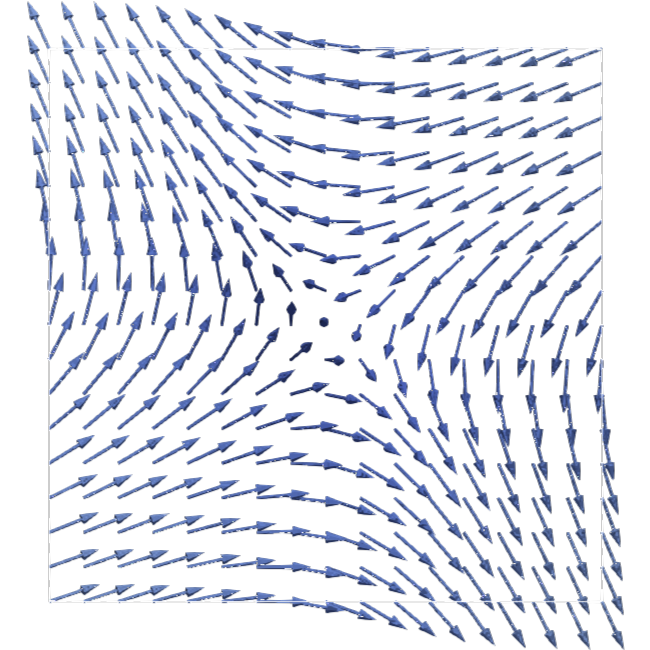}
\caption{$t=0.05$}
\end{subfigure}
\begin{subfigure}[b]{0.21\textwidth}
\centering
\includegraphics[width=\textwidth]{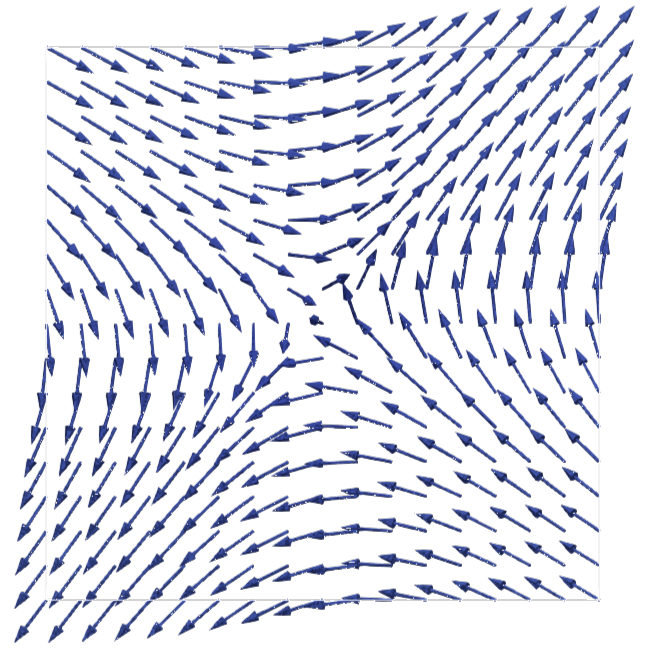}
\caption{$t=0.1$}
\end{subfigure}
\begin{subfigure}[b]{0.21\textwidth}
\centering
\includegraphics[width=\textwidth]{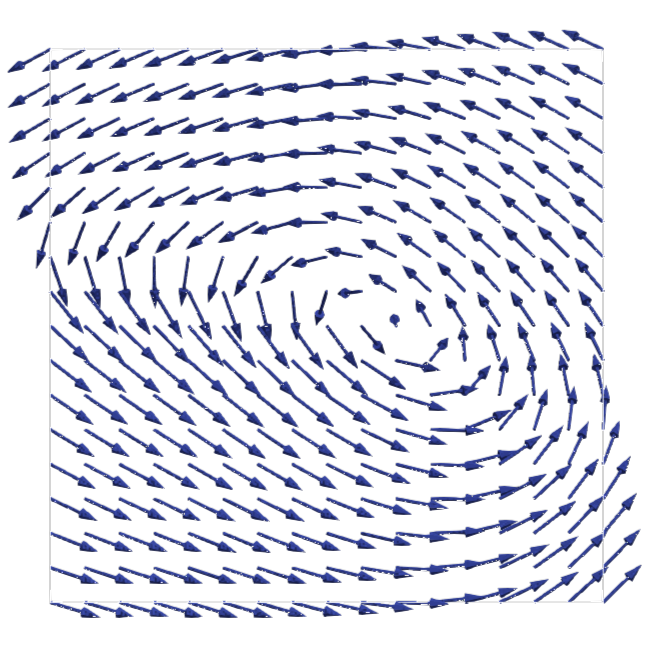}
\caption{$t=0.2$}
\end{subfigure}
\begin{subfigure}[b]{0.08\textwidth}
\centering
\includegraphics[width=\textwidth]{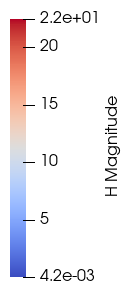}
\end{subfigure}
\caption{Snapshots of a sample path of the effective field $\bff{H}$ in simulation 3.}
\label{fig:snapshots H 2}
\end{figure}

\begin{figure}[!htb]
\begin{subfigure}[b]{0.45\textwidth}
\centering
\begin{tikzpicture}
\begin{axis}[
title=Plot of $\mathcal{E}_s^{\bff{H}}$ against $1/h$,
height=1.1\textwidth,
width=1\textwidth,
xlabel= $1/h$,
ylabel= $\mathcal{E}_s^{\bff{H}}$,
xmode=log,
ymode=log,
legend pos=south west,
legend cell align=left,
]
\addplot+[mark=*,red] coordinates {(4,0.51)(8,0.17)(16,0.044)}; 
\addplot+[mark=*,blue] coordinates {(4,1.63)(8,0.57)(16,0.16)}; 
\addplot+[dashed,no marks,blue,domain=7:16]{7/x};
\addplot+[dashed,no marks,red,domain=7:16]{5.5/x^2};
\legend{\scriptsize{$\mathcal{E}_0^{\bff{H}}(h)$}, \scriptsize{$\mathcal{E}_1^{\bff{H}}(h)$}, \scriptsize{order 1 line}, \scriptsize{order 2 line}}
\end{axis}
\end{tikzpicture}
\caption{Spatial convergence order of $\bff{H}$.}
\label{fig:order H spatial 2}
\end{subfigure}
\hspace{1em}
\begin{subfigure}[b]{0.45\textwidth}
\centering
\begin{tikzpicture}
\begin{axis}[
title=Plot of $\mathcal{E}_s^{\bff{H}}$ against $1/k$,
height=1.1\textwidth,
width=1\textwidth,
xlabel= $1/k$,
ylabel= $\mathcal{E}_s^{\bff{H}}$,
xmode=log,
ymode=log,
legend pos=south west,
legend cell align=left,
]
\addplot+[mark=*,red] coordinates {(25,2.7)(50,1.81)(100,0.7)}; 
\addplot+[mark=*,blue] coordinates {(25,3.5)(50,2.4)(100,1.78)}; 
\addplot+[dashed,no marks,blue,domain=40:100]{14.5/sqrt(x)};
\legend{\scriptsize{$\mathcal{E}_0^{\bff{H}}(k)$}, \scriptsize{$\mathcal{E}_1^{\bff{H}}(k)$}, \scriptsize{order 1/2 line}}
\end{axis}
\end{tikzpicture}
\caption{Temporal convergence order of $\bff{H}$.}
\label{fig:order H time 2}
\end{subfigure}
\caption{Convergence orders of effective field $\bff{H}$ in simulation 3.}
\end{figure}

\section*{Acknowledgements}
The authors acknowledge financial support through the Australian Research Council's Discovery Projects funding scheme (projects DP220101811 and DP240100781).
Agus L. Soenjaya is supported by the Australian Government Research Training Program (RTP) Scholarship awarded at the University of New South Wales, Sydney.

The authors would like to thank the anonymous reviewers for their valuable comments and suggestions which improve the quality of this paper.


\end{document}